\colorlet{refkey}{orange!20}
\colorlet{labelkey}{blue!60}
\crefname{equation}{}{}
\newtheorem{theorem}{Theorem}[section]
\newtheorem{proposition}[theorem]{Proposition}
\newtheorem{lemma}[theorem]{Lemma}
\newtheorem{claim}[theorem]{Claim}
\newtheorem{corollary}[theorem]{Corollary}
\newtheorem{observation}[theorem]{Observation}
\newtheorem*{question*}{Question}
\newtheorem{fact}[theorem]{Fact}
\theoremstyle{definition}
\newtheorem{definition}[theorem]{Definition}
\newtheorem{question}[theorem]{Question}
\newtheorem*{definition*}{Definition}
\newtheorem{example}[theorem]{Example}
\newtheorem{setup}[theorem]{Setup}
\theoremstyle{remark}
\newtheorem{remark}[theorem]{Remark}
\newcommand{\norm}[1]{\left\lVert#1\right\rVert}
\newcommand{\floor}[1]{\left\lfloor #1 \right\rfloor}
\newcommand{\EE}{\mathbb{E}}
\newcommand{\CC}{\mathbb{C}}
\newcommand{\RR}{\mathbb{R}}
\newcommand{\PP}{\mathbb{P}}
\newcommand{\NN}{\mathbb{N}}
\newcommand{\FF}{\mathbb{F}}
\newcommand{\ZZ}{\mathbb{Z}}
\newcommand{\QQ}{\mathbb{Q}}
\newcommand{\cA}{\mathcal{A}}
\newcommand{\cH}{\mathcal{H}}
\newcommand{\edit}[1]{{\color{red}#1}}
\title{Uncommon linear systems of two equations}
\begin{document}

\author{Dingding Dong}
\address{Dong: Department of Mathematics, Harvard University, Cambridge, MA 02138, USA.}
\email{ddong@math.harvard.edu}

\author{Anqi Li}
\address{Li: Department of Pure Mathematics and Mathematical Statistics (Centre for Mathematical Sciences), University of Cambridge, Cambridge CB30WB, England, UK.}
\email{aqli@stanford.edu}
\author{Yufei Zhao}
\address{Zhao: Department of Mathematics, Massachusetts Institute of Technology, Cambridge, MA 02139, USA.}
\email{yufeiz@mit.edu}

\thanks{Li was supported by the Trinity Studentship in Mathematics. Most of this work was completed while Li was an undergraduate at MIT. Zhao was supported in part by NSF CAREER Award DMS-2044606}

\begin{abstract}
A system of linear equations $L$ is common over $\FF_p$ if, as $n\to\infty$, any 2-coloring of $\FF_p^n$ gives asymptotically at least as many monochromatic solutions to $L$ as a random 2-coloring.

The notion of common linear systems is analogous to that of common graphs, i.e., graphs whose monochromatic density in 2-edge-coloring of cliques is asymptotically minimized by the random coloring. Saad and Wolf initiated a systematic study on identifying common linear systems, built upon the earlier work of Cameron--Cilleruelo--Serra. When $L$ is a single equation, Fox--Pham--Zhao gave a complete characterization of common linear equations. When $L$ consists of two equations, Kam\v{c}ev--Liebenau--Morrison showed that irredundant $2\times 4$ linear systems are always uncommon. In this work, (1) we determine commonness of all $2\times 5$ linear systems up to a small number of cases, and (2) we show that all $2\times k$ linear systems with $k$ even and girth (minimum number of nonzero coefficients of a nonzero equation spanned by the system) $k-1$ are uncommon, answering a question of Kamčev--Liebenau--Morrison.
\end{abstract}

\maketitle

\section{Introduction}

One of the major open problems in extremal graph theory is \emph{Sidorenko's conjecture} \cite{S93}, which roughly states that for bipartite graphs $H$, a random graph $G$ on $n$ vertices asymptotically minimizes the density of $H$ in $G$ among all $G$ with $n$ vertices and fixed edge density.
Sidorenko's conjecture has been verified for various classes of bipartite graphs $H$ (e.g. \cite{BR65,CFS10,CKLL18,CL17,CL21,H10,KLL16,L11,S91,S14} and the references therein), although a complete proof of Sidorenko's conjecture still eludes us. A closely related graph property is the following coloring variant known as \emph{commonness}: a graph $H$ is \emph{common} if the random coloring minimizes the density of monochromatic copies of $H$ in $K_n$ among all possible two-colorings of the edges of $K_n$. An early result of Goodman \cite{Goo59} showed that $K_3$ is common. Motivated by this result, Burr and Rosta \cite{BR80} conjectured that all graphs $H$ are common, which was disproved by Sidorenko (who showed that a triangle with a pendant edge is uncommon) and Thomason \cite{Tho89} (who showed that $K_4$ is uncommon). In fact, any graph containing a $K_4$ is uncommon \cite{CST96}. We do not even have a good guess at what a complete characterization of common graphs might look like.

In this paper, we are interested in an arithmetic analogue (over finite fields) of this notion of commonness. To motivate the setup, we observe that for a given set $A \subseteq \FF_p^{n}$ satisfying $-A = A$, the additive quadruples in $A$ (that is, $(x,y,z,w) \in A^4$ such that $ x+y - z - w = 0$) correspond to 4-cycles in the Cayley graph on $A$ (that is, the graph with vertex set indexed by $\FF_p^n$ where $x \sim y$ if and only if $x - y \in A$). As such, a natural arithmetic analogue of commonness would be to ask for which linear equations in $\FF_p$ is it true that the random two-coloring of $\FF_p^n$ minimizes the density of monochromatic solutions to the linear equation. We can similarly define an arithmetic analogue of Sidorenko's conjecture. The study of such arithmetic analogues was first posed by Saad and Wolf \cite{SW17}. In this paper, we are interested in the commonness of certain linear \emph{systems} of equations. Before we formalize this setup, we begin by introducing some definitions.

\begin{definition}We say that $L$ is an $m\times k$ \textit{linear system} if it is a collection of $m$ linear equations $L_1,\dots,L_m$ on $k$ variables with integer coefficients. In other words,  $L$ is a collection of linear equations
\begin{align*}
    L_1 \mathbf x &=a_{1,1}x_1+\dots+a_{1,k}x_k,\\
    &\vdots\\
    L_m \mathbf x&=a_{m,1}x_1+\dots+a_{m,k}x_k,
\end{align*}
with $\{a_{i,j} \in \ZZ :i\in[m],j\in[k]\}$. 

We note that there is a one-to-one correspondence between $m\times k$ linear systems and matrices in $\ZZ^{m\times k}$; this means we can also represent $L$ as the matrix
\begin{align*}
    L=\begin{pmatrix}
        a_{1,1} & a_{1,2} & \dots & a_{1,k}\\
        \vdots & \vdots & \ddots & \vdots\\
        a_{m,1} & a_{m,2} & \dots & a_{m,k}\\
    \end{pmatrix}\in\ZZ^{m\times k}.
\end{align*}
\end{definition}

We say that an $m \times k $ linear system $L$ is \emph{common over $\FF_p$} if for every 2-coloring of $\FF_p^n$, the number of monochromatic solutions $\mathbf{x} \in \FF_p^n$ to $L \mathbf{x} = 0$ is at least $1-o(1)$ times the expected number of monochromatic solutions given by a random 2-coloring, asymptotically as $n\to\infty$. Below, we will state an equivalent definition as a precise inequality.

In the following, we restrict our attention to \emph{irredundant linear systems}. Roughly speaking, a system $L$ is irredundant if it does not have ``repeated'' equations or variables. 

\begin{definition}Let $L$ be a linear system.
    We say that $L$ is \textit{irredundant} if:
    \begin{enumerate}
        \item the rows of $L$ are linearly independent,
        \item no column of $L$ is the zero vector,
        \item the row span of $L$ does not contain vectors of the form $(0,\dots,0,1,0,\dots,0,-1,0,\dots,0)$.
    \end{enumerate}
    If $L$ is not irredundant, we say that it is \emph{redundant}. 
\end{definition}

Such a restriction is without loss of generality: if $L$ is redundant, then by removing extraneous equations and variables from $L$ we can obtain an irredundant linear system  $L'$ such that $L$ is common if and only if $L'$ is common. Therefore, we restrict our attention to irredundant linear systems for the rest of this paper (see also \cite[Section 2]{KLM21a} for a  discussion on this). By restricting our attention to irredundant linear systems, we can also pass to a ``functional'' analogue of commonness which we introduce next.

Earlier on, we defined the commonness property of a linear system $L$ in terms of minimizing the number of monochromatic solutions to $L\mathbf{x} = 0$ in a 2-coloring of $\FF_p^n$. It turns out that there is a ``functional'' variant of this definition which is more convenient for us to work with. This is the version of the definition of common and Sidorenko systems that we now state.
We refer the interested reader to \cite[Section 2]{FPZ21} for a proof of the equivalence of the functional and set definitions of the properties of being common and Sidorenko.

\begin{definition}\label{def:common-sid}

Let $L$ be an irredundant $m\times k$ linear system and $p$ be prime. For every $n\in\NN$ and  $f:\FF_p^n \to\RR$, define the \textit{density function}
\[
t_L(f)=\EE_{\mathbf x \in \FF_p^n : L\mathbf x=0}f(x_1)\cdots f(x_k).
\]
\begin{enumerate}
    \item[(1)] We say that $L$ is \emph{common} over $\FF_p$ if for all $n\in\NN$ and $f\colon \FF_p^n\to[0,1]$, we have \[
    t_L(f)+t_L(1-f)\geq 2^{1-k}.
    \]
    If this does not hold, then we say that $L$ is \emph{uncommon} over $\FF_p$. 
\item[(2)] We say that $L$ is \emph{Sidorenko} over $\FF_p$ if for all $n\in\NN$ and $f\colon \FF_p^n\to[0,1]$, we have 
\[
t_L(f)\geq (\EE f)^k.
\]
\end{enumerate}
\end{definition}

\begin{remark}
If $L$ is Sidorenko over $\FF_p$, then it is common over $\FF_p$.
Indeed, if $L$ is Sidorenko, then $t_L(f) + t_L(1-f) \ge (\EE f)^k + (1-\EE f)^k \ge 2^{-k+1}$ by convexity.
\end{remark}

Several recent works address the question of characterizing common and Sidorenko linear systems.
When $L$ is itself a single linear equation (i.e., $m=1$),  the \textit{length} of $L$ is defined to be the number of its nonzero coefficients.
Fox, Pham and Zhao \cite{FPZ21}  showed that a linear equation $L$ is Sidorenko if and only if its coefficients can be partitioned into cancelling pairs; furthermore, it is common and not Sidorenko if and only if it has odd length. Their results give a complete characterziation of common and Sidorenko linear equations. 

No complete characterization is known for linear systems of higher rank. Some partial results in this direction include the following: Versteegen \cite{Ver21} proved that 4-AP is uncommon -- this establishes the arithmetic analogue of Thomason's result on the uncommonness of $K_4$. Kam\v{c}ev, Liebenau and Morrison proved that if a linear system consists of Sidorenko linear equations aligned in a ``tree template'', then it is Sidorenko \cite{KLM21b}; they also proved that any linear system that has an irredundant $2\times 4$  subsystem is uncommon \cite{KLM21a}. Altman \cite{Alt22a} explored the notion of local Sidorenko property and showed that any linear system of two equations, with every $2\times 2$ minor invertible, is not Sidorenko. On the other hand, Altman \cite{Alt22b} also established that there is a non-Sidorenko linear system that becomes common after adding sufficiently many free variables, answering a question of \cite{SW17}.

In this paper, we determine commonness of all $2\times 5$ linear systems up to a small number of cases. As a byproduct, we answer \cite[Question 5.2]{KLM21b} by constructing an uncommon $2\times 5$ linear system that contains an additive quadruple. We also prove \cite[Conjecture 6.1]{KLM21a} that all $2\times k$ linear systems, with $k$ even and ``girth" (which we define in \cref{def:girth}) $k-1$, are uncommon over $\FF_p$ for all sufficiently large $p$. Before we formally state our results, we need to introduce some definitions.

\begin{definition}\label{def:subsystem}
Let $L$ be an $m\times k$ linear system on variables $x_1,\dots,x_k$, and $L'$ be an $m'\times k'$ linear system on a subset of the variables $x_{i_1},\dots,x_{i_{k'}}$. Write $\mathbf x=(x_1,\dots,x_k)$ and $\mathbf x'=(x_{i_1},\dots,x_{i_{k'}})$.

\begin{enumerate}
    \item We say that $L'$ is a \textit{subsystem} of $L$ if $L \mathbf x =0$ implies $L' \mathbf x' = 0$.
    
    \item Suppose $k'=k$ (so $\mathbf x'=\mathbf x$). We say that $L$ and $L'$ are \textit{isomorphic} if we can permute the columns of $L'$ so that the obtained matrix has the same row span as that of $L$.
\end{enumerate}
\end{definition}

\begin{example}The four-term arithmetic progression
\begin{align*}
    L'(x_1,x_2,x_3,x_4)=\begin{pmatrix} 1 & -2 & 1 & 0 \\ 0 & 1 & -2 & 1 \end{pmatrix}
\end{align*} 
is a subsystem of the five-term arithmetic progression
\begin{align*}
    L(x_1,x_2,x_3,x_4,x_5)=\begin{pmatrix} 1 & -2 & 1 & 0 & 0 \\ 0 & 1 & -2 & 1 & 0 \\ 0 & 0 & 1 & -2 & 1 \end{pmatrix}.
\end{align*}

\end{example}

Finally, we recall that a graph with odd girth is not Sidorenko (see for instance \cite[Theorem 3.1]{CLS23}). In \cite{KLM21b}, the authors defined a natural notion of \emph{girth} for linear systems and proved a parallel result that a linear system with odd girth is not Sidorenko. We next give the definition of \emph{girth}  for a linear system.

\begin{definition}\label{def:girth}
    Let $L\in\ZZ^{m\times k }$ be an $m\times k$ linear system. 
    The \textit{girth} of $L$, denoted as $s(L)$, is the minimum possible size of the  support of a nonzero vector in the row span of $L$. 
\end{definition}

We are now ready to formally state our main results. While odd girth was enough to guarantee that a linear system $L$ is not Sidorenko, we cannot expect it alone to give the stronger conclusion that $L$ is uncommon: we have seen earlier that single, odd-length equations are common. But what if we restrict our attention to systems of at least two equations? Would we be able to deduce the uncommonness of such a system from its girth? Our first result gives a partial answer to this question and shows that all two-equation linear systems with even number of variables and maximum girth are uncommon.

\begin{theorem}\label{thm:2*k-even}
    Suppose $k\geq 4$ is even. Let $L$ be an irredundant $2\times k$ system with $s(L)=k-1$. Then there exists $p_L$ such that $L$ is uncommon over $\FF_p$ for all $p\geq p_L$.
\end{theorem}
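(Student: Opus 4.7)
The plan is a Fourier-analytic attack on the commonness inequality. First, I would normalize $L$: using row operations and column permutations, I may assume
\[
L = \begin{pmatrix} a_1 & a_2 & \cdots & a_{k-1} & 0 \\ b_1 & b_2 & \cdots & b_{k-1} & b_k \end{pmatrix},
\]
with $a_j \neq 0$ for $j \in [k-1]$ and $b_k \neq 0$; the girth $s(L) = k-1$ lets me pick a row of support $k-1$, placed first with the zero in position $k$, and irredundancy prevents further simplification. Then for $f = 1/2 + g$ with $\EE g = 0$ I would expand
\[
t_L(f) + t_L(1-f) - 2^{1-k} = 2^{1-k}\!\!\sum_{\substack{S\subseteq[k]\\|S|\geq 2\,\text{even}}} 2^{|S|} T_S(g),\quad T_S(g) := \EE_{L\mathbf x = 0}\prod_{j\in S} g(x_j),
\]
and use a Fourier calculation to show that $T_S(g)\neq 0$ only if $S = [k]$ or $S = [k]\setminus J$, where $J\subseteq[k-1]$ is an equivalence class of the relation defined by $j\sim j'$ iff $b_j/a_j = b_{j'}/a_{j'}$; for even $|S|\geq 2$, moreover $|J|$ must be even.

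Next, taking $\hat g$ supported in a $1$-dimensional subspace reduces matters to $n=1$, so the task becomes: find mean-zero $g:\FF_p\to\RR$ with
\[
2^k T_{[k]}(g) + \sum_{i:\,|J_i|\text{ even}} 2^{k-|J_i|}\, T_{[k]\setminus J_i}(g) < 0.
\]
I would exploit the parity ``$k-1$ odd'': the first row, viewed as a single equation $L_1 : \sum_{j<k} a_j x_j = 0$, has odd length and is therefore not Sidorenko by \cite{FPZ21}, so some mean-zero $g_0:\FF_p\to\RR$ achieves $t_{L_1}(g_0)<0$. Using the identity
\[
t_L(g) \;=\; \EE_{L_1\mathbf x = 0}\Bigl(\prod_{j=1}^{k-1} g(x_j)\Bigr)\, g\Bigl(-b_k^{-1}\sum_{j=1}^{k-1} b_j x_j\Bigr),
\]
I would build $g$ as a Fourier-based perturbation of $g_0$ (for instance, a one-parameter family $g_\lambda = g_0 + \lambda h$ for a carefully chosen $h$), tuning the parameter so that the extra factor $g(y)$ with $y = -b_k^{-1}\sum b_j x_j$ reinforces, rather than cancels, the negativity from $t_{L_1}(g_0)$, and so that the auxiliary terms $T_{[k]\setminus J_i}(g_\lambda)$ vanish or align.

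The hard part will be this last step: showing that one can actually choose $g$ so that the inner product $\sum_m \mu(m)\, g(-b_k^{-1}m)$ is negative, where $\mu(m) := \EE_{L_1 = 0,\,\sum b_j x_j = m}\prod g(x_j)$ is the signed conditional expectation along the level sets of the second equation. The hypothesis $p \geq p_L$ enters here: for large enough primes there are sufficiently many Fourier modes available to place the support of $\hat g$ so as to force the desired correlation, and simultaneously to tame contributions coming from equivalence classes of even size. In the fully generic subcase (all ratios $b_j/a_j$ distinct), only $T_{[k]}$ matters and the argument is cleanest; non-generic cases require additional bookkeeping but do not change the essential structure.
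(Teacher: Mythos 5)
Your reduction is fine as far as it goes: since $k$ is even and $s(L)=k-1$ is odd, every even-size $S$ with $2\le |S|\le k-2$ supports no nonzero vector of the row span, so in the expansion only $T_{[k]}(g)=t_L(g)$ survives; in fact the girth hypothesis forces all ratios $b_j/a_j$ to be distinct (equivalently, the columns of $L$ are pairwise linearly independent), so your equivalence classes are singletons and the ``non-generic bookkeeping'' you allude to never occurs. This matches the paper's reduction (\cref{prop:critical-uncommon} together with the Fourier-template machinery of \cref{thm:fourier-template}): everything comes down to exhibiting a finitely supported, conjugate-symmetric $\widehat g$ with $\sum_{r_1,r_2}\widehat g(a_1r_1+b_1r_2)\cdots \widehat g(a_kr_1+b_kr_2)<0$.

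The gap is precisely the step you flag as hard, and it is not a technicality that ``large $p$'' can absorb. Starting from a mean-zero $g_0$ with $t_{L_1}(g_0)<0$ for the odd-length first row and then ``tuning'' a perturbation so that the extra factor $g\bigl(-b_k^{-1}\sum_j b_j x_j\bigr)$ reinforces the negativity is exactly where no argument is given, and the obvious mechanisms fail. If you try to ``place Fourier modes'' with random phases, the expectation of the two-dimensional sum is strictly \emph{positive}, of order $M$: the lines in $(r_1,r_2)$-space along which the values $a_ir_1+b_ir_2$ pair up into cancelling pairs contribute $\Theta(M)$ deterministic terms equal to $1$; a second-moment computation gives variance $\Theta(M^2)$, so Chebyshev-type anticoncentration cannot force negativity either (the paper spells out both failures in Sections 4.1--4.3). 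The paper's proof supplies the missing mechanism: it takes a random one-dimensional Fourier template with unit-modulus phases and proves a central limit theorem for $\sigma_L(f_M)$ via Stein's method for locally dependent variables (\cref{lem:dep-stein}, \cref{prop:CLT}), where the pairwise linear independence of the columns (i.e.\ $s(L)=k-1$) is what bounds the third-moment and neighborhood sums in \cref{claim:dependency-properties}; since the mean $|B_N|=O(N)$ is comparable to the standard deviation $\Omega(N)$, the Gaussian limit puts positive mass below zero, yielding a template with $\sigma_L(g)<0$, and only then does \cref{thm:fourier-template} transfer this fixed integer template to $\FF_p$ for all large $p$. Your proposal contains no substitute for this anticoncentration step, and the role you assign to $p\ge p_L$ (``more Fourier modes to force the correlation'') is not how the largeness of $p$ can be used: $p$ only needs to exceed the size of a template constructed independently of $p$.
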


By combining the proof strategy of \cref{thm:2*k-even} and some additional ideas from  \cite{KLM21a} and \cite{Alt22a}, we obtain the following more general result, which is also a generalization of \cite[Corollary 1.2]{KLM21a}.

\begin{theorem}\label{thm:2*k-even-ext}
    Suppose $k\geq 4$ is even. Let $L$ be a linear system, with $s(L)=k-1$, that has an irredundant $2\times k$ subsystem. Then there exists $p_L$ such that $L$ is uncommon over $\FF_p$ for all $p\geq p_L$.
\end{theorem}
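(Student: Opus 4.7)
The plan is to deduce \cref{thm:2*k-even-ext} from \cref{thm:2*k-even} by a lifting argument, following the style of the passage from \cite[Theorem 1.1]{KLM21a} to \cite[Corollary 1.2]{KLM21a}, together with rank/minor input from \cite{Alt22a}. Write $L'$ for the irredundant $2 \times k$ subsystem of $L$ on a $k$-element variable set $I$, let $K$ be the number of variables of $L$, and fix a length-$(k-1)$ vector $v$ in the row span of $L$ witnessing $s(L) = k-1$.

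The first step is to extract a $2 \times k$ subsystem $L''$ of $L$ on some $k$-element variable set $I'$, with girth exactly $k-1$, so that \cref{thm:2*k-even} can be applied directly. Because $L'$ is a subsystem of $L$, the zero-extension of the row span of $L'$ embeds into the row span of $L$, so $s(L') \geq s(L) = k - 1$; since $L'$ has only two rows, $s(L') \in \{k-1, k\}$. When $s(L') = k-1$ we take $L'' = L'$ on $I' = I$. When $s(L') = k$, we use $v$ -- possibly after eliminating the ``extra'' variables of $L$ against rows of $L$ outside of $L'$ -- to produce, together with a row of $L'$, an irredundant $2 \times k$ subsystem $L''$ of $L$ on a $k$-element set $I'$ with $s(L'') = k-1$. \cref{thm:2*k-even} then yields $f_0 \colon \FF_p^n \to [0,1]$ with $t_{L''}(f_0) + t_{L''}(1 - f_0) < 2^{1-k}$ for all sufficiently large $p$.

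The second step lifts $f_0$ to a witness for $L$. Using a construction analogous to the one in \cite[Section 4]{KLM21a}, choose $f \colon \FF_p^{n'} \to [0,1]$ so that in the density $t_L(f)$ it effectively acts as $f_0$ on the $I'$-coordinates and as the constant $\tfrac12$ on the remaining coordinates (implemented by tensoring $f_0$ with the constant $\tfrac12$ on an auxiliary direct summand that separates the $I'$-variables from the others). Irredundancy of $L''$ as a subsystem of $L$ implies that marginalizing a uniform $L$-solution onto the $I'$-coordinates yields a uniform $L''$-solution with constant fiber size, giving the factorization
\[
t_L(f) + t_L(1 - f) = 2^{k - K}\bigl(t_{L''}(f_0) + t_{L''}(1 - f_0)\bigr) < 2^{k - K} \cdot 2^{1-k} = 2^{1-K}.
\]

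The main obstacle is twofold. First, producing $L''$ in the $s(L') = k$ case: the length-$(k-1)$ vector $v$ may involve extra variables of $L$, so one needs an elimination argument (in the spirit of \cite{Alt22a}) to push $v$ down to an equation supported on a $k$-element variable set where $L$ still induces an irredundant $2 \times k$ subsystem. Second, justifying the marginalization above so that the claimed factorization is exact, which amounts to verifying that the minor of $L$ encoding the extra equations on the extra variables has full rank. Both issues reduce to a rank/minor analysis of $L$ and parallel the augmentation procedure in \cite[Corollary 1.2]{KLM21a}, and handling them carefully is the bulk of the additional work beyond \cref{thm:2*k-even}.
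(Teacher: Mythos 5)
Your step 2 is where the argument breaks down, and the gap is fatal rather than technical. In the density $t_L(f)$ the \emph{same} function $f$ is plugged into every variable of the pattern; there is no mechanism by which a single $f\colon\FF_p^{n'}\to[0,1]$ can ``act as $f_0$ on the $I'$-coordinates and as the constant $\tfrac12$ on the remaining coordinates.'' Tensoring does not achieve this separation: for a product function $h(y,z)=f_1(y)f_2(z)$ on $\FF_p^{n}\times\FF_p^{m}$ one gets $t_L(h)=t_L(f_1)\,t_L(f_2)$, i.e.\ both factors still involve the full system $L$ applied to all of its variables, never a marginal system $L''$ on a subset of them. Consequently the claimed identity $t_L(f)+t_L(1-f)=2^{k-K}\bigl(t_{L''}(f_0)+t_{L''}(1-f_0)\bigr)$ is not something any admissible $f$ can realize. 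A quick sanity check shows that no repair can exist at this level of generality: your lifting step never uses that $L''$ has two rows, so the same argument would show that any system containing an uncommon length-$4$ equation on a subset of its variables is uncommon; but the system $\bigl(\begin{smallmatrix}1&1&-1&-1&0\\2&-2&1&0&-1\end{smallmatrix}\bigr)$ of \cref{thm:2*5} is common even though it contains the uncommon equation $3x_1-x_2-x_4-x_5=0$ as a subsystem (and there the projection of $L$-solutions onto those four variables is even a bijection onto the subsystem's solutions, so your ``uniform marginal'' condition holds). The correct perturbative picture is that writing $f=\tfrac12+\epsilon\widetilde f$ makes \emph{all} critical subsystems of minimal even size contribute at the same order $\epsilon^{c(L)}$ (\cref{prop:critical-uncommon}); making one chosen $2\times k$ subsystem negative says nothing about the sum.

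This is exactly the obstruction the paper's proof is built around, and its route is genuinely different from yours. It partitions the critical sets into $\mathcal C_1$ (length-$k$ equations) and $\mathcal C_2$ ($2\times k$ systems), uses the tensor-with-$\mathbf 1_{z=0}$ trick of \cref{prop:tensor} (after Altman) so that the $\mathcal C_1$ contributions are suppressed by a factor $p^{-1}$ relative to $\mathcal C_2$, and then — crucially — does not work with a single subsystem at all: it reruns the random Fourier template and Stein's-method argument of \cref{prop:CLT} for the \emph{entire family} $\{L_B:B\in\mathcal C_2\}$ simultaneously, producing one template $g$ with $\sum_{B\in\mathcal C_2}\sigma_{L_B}(g)<0$ (here the hypothesis of an irredundant $2\times k$ subsystem together with $s(L)=k-1$ guarantees $\mathcal C_2\neq\emptyset$ and the pairwise linear independence of columns needed for the dependency-graph estimates). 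Your step 1 (extracting one girth-$(k-1)$ subsystem) is also not fully justified, but even granting it, no ``lift the witness'' argument of the kind you propose can close the proof.
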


We remark that an independent proof of \cref{thm:2*k-even,thm:2*k-even-ext} was given by Altman and Liebenau \cite{AL24}.

Next, one can ask about the commonness of $2 \times k$ linear systems for $k$ odd. This turns out to be much more subtle than the $k$ even case; even for $k=5$, there are common $2 \times 5$ linear systems with $s(L) = 3$ and $s(L) = 4$. In fact, for each $2\times 5$ linear system that is not one of the two unknown cases, we are able to determine whether it is common over $\FF_p$ for all $p$, or it is uncommon over $\FF_p$ for all sufficiently large $p$.

\begin{theorem}\label{thm:2*5}
    Let $L$ be an irredundant $2\times 5$ linear system that is not isomorphic to
    \begin{align*}
        \begin{pmatrix}
    1 & 1 & -1 & -1 & 0\\
    1 & -1 & 3 & 0 & -3
\end{pmatrix}\quad\text{or}\quad \begin{pmatrix}
            1 & 1 & -1 & -1 & 0\\
            2 & -2 & 3 & 0 & -3
        \end{pmatrix}.
    \end{align*}
    \begin{enumerate}
        \item If $L$ is isomorphic to 
        \[
        \begin{pmatrix}
            1 & 0 & -1 & 2 & -2\\
            0 & 1 & 2 & -1 & -2
        \end{pmatrix}
        \text{
        or\, }
        \begin{pmatrix}
            1 & -1 & 1 & -1 & 0\\
            1 & 2 & -1 & 0 & -2
        \end{pmatrix}
        \text{
        or\, }
        \begin{pmatrix}
            1 & 1 & -1 & -1 & 0\\
            2 & -2 & 1 & 0 & -1
        \end{pmatrix}
        \]
        or
        \begin{align*}
            &\begin{pmatrix}
            a & b & 0 & 0 & c\\
            0 & 0 & a & b & c
        \end{pmatrix}\qquad \text{with $a,b,c\in\ZZ\setminus\{0\}$}
        \end{align*}
        or

        \begin{align*}
            &
        \begin{pmatrix}
        c & -c & d & -d & 0\\
        a & b & 0 & 0 & -a-b
    \end{pmatrix}
    \quad \text{with $a,b,a+b,c,d\in\ZZ\setminus\{0\}$}\\
    &\quad \text{such that }\{|a/b|,|a/(a+b)|,|b/(a+b)|\}\cap \{|c/d|,|d/c|\}\neq\emptyset,
        \end{align*}
        
        \noindent then $L$ is common over $\FF_p$ for all primes $p$.
        \item If $L$ is not isomorphic to any of the above, then there exists $p_L$ such that $L$ is uncommon over $\FF_p$ for all $p\geq p_L$.
    \end{enumerate}
\end{theorem}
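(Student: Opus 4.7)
\medskip

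\noindent\textbf{Proof proposal.}
My plan is to classify all irredundant $2\times 5$ linear systems up to the isomorphism of \cref{def:subsystem}, split by the girth $s(L)\in\{3,4,5\}$ and the combinatorial type of the minimum-support equations in the row span, and then decide commonness for each type. After normalizing, one can always assume that some row attaining the girth sits in the top row of $L$. When $s(L)=3$, up to isomorphism the top row has the form $(a,b,c,0,0)$ with $a+b+c=0$, and the bottom row is constrained to be irredundant with respect to it; this gives a short parametric list. The cases $s(L)=4$ and $s(L)=5$ are handled similarly, except one must also account for whether a $3$-support equation appears in the row span at all. The two listed exceptional matrices are exactly the ones this case analysis leaves undecided.

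For the common cases, I would argue as follows. The block-diagonal system $\bigl(\begin{smallmatrix} a & b & 0 & 0 & c\\ 0 & 0 & a & b & c\end{smallmatrix}\bigr)$ has a tree-template structure over the shared variable $x_5$ built from two copies of the Sidorenko equation $ax_1+bx_2+cx_5=0$ (which is Sidorenko by Fox--Pham--Zhao \cite{FPZ21}), so it is Sidorenko, and hence common, by the Kam\v{c}ev--Liebenau--Morrison tree theorem \cite{KLM21b}. For the three sporadic matrices and the parametric family with the ratio condition on $\{|a/b|,|a/(a+b)|,|b/(a+b)|\}\cap\{|c/d|,|d/c|\}$, I would pass to Fourier expansion: writing $f=1/2+g$ with $\EE g=0$, one expands
\[
t_L(f)+t_L(1-f)-2^{1-5}=2\sum_{\substack{S\subseteq[5]\\|S|\text{ even},\,|S|\ge 2}}\EE_{L\mathbf x=0}\prod_{i\in S}g(x_i),
\]
and interprets each term via the characters of $\FF_p^n$. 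For the sporadic matrices, the relevant Fourier moments can be expressed as (non-negative) sums of squares after an appropriate grouping of characters, exactly in the spirit of the proofs that a single odd-length equation is common in \cite{FPZ21}. The ratio condition in the parametric family is precisely what makes two of the four-variable subsystems isomorphic, so the analogous ``pairing'' trick applies and yields the inequality.

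For the uncommon cases, my strategy mirrors that used by Thomason, by Versteegen \cite{Ver21} for $4$-AP, and by Kam\v{c}ev--Liebenau--Morrison \cite{KLM21a} for $2\times 4$ systems. Namely, I take $f=1/2+\varepsilon h$ where $h=\Re\chi$ for a suitable non-trivial character $\chi$ of $\FF_p^n$, and I expand
\[
t_L(f)+t_L(1-f)-2^{-4}=\sum_{j\ge 1}c_{2j}(L,h)\,\varepsilon^{2j},
\]
with $c_{2j}(L,h)$ given by explicit counts of $2j$-tuples $S\subseteq[5]$ for which the restriction of $L$ to $S$ is consistent with a fixed non-trivial character placed on the coordinates in $S$. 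The leading coefficient $c_{2}(L,h)$ vanishes for any $h$ of mean zero, so the sign is controlled by $c_4(L,h)$. For each remaining isomorphism class, I produce an explicit $\chi$ (determined by the coefficients of $L$) making $c_4(L,h)<0$; this is possible precisely because $L$ fails the structural ``pairing'' used in the common cases above. Standard continuity in $\varepsilon$ then gives an $f\colon\FF_p^n\to[0,1]$ witnessing uncommonness for all $p$ larger than some $p_L$, where the lower bound on $p$ enters only to guarantee that the chosen non-trivial character of appropriate order exists.

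The main obstacle is the volume of bookkeeping in the case analysis: once the classification is done, the common cases demand an ``SOS''-style identification of character sums, and the uncommon cases require picking, for each of many isomorphism classes, an explicit character that makes the fourth-order Fourier coefficient negative. The two exceptional matrices excluded from the statement are exactly those for which neither side of this dichotomy works cleanly with the current method: the fourth-order coefficient is not negative under any obvious character choice, but the positive grouping used for the common cases also fails, so a finer analysis (beyond what this plan provides) would be needed to resolve them.
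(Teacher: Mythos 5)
There are two genuine gaps, and they sit at the heart of both halves of the theorem.

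First, your commonness argument for the family $\bigl(\begin{smallmatrix} a & b & 0 & 0 & c\\ 0 & 0 & a & b & c\end{smallmatrix}\bigr)$ does not work. By the Fox--Pham--Zhao characterization, the equation $ax_1+bx_2+cx_5=0$ has odd length, so it is common but \emph{not} Sidorenko; the Kam\v{c}ev--Liebenau--Morrison tree-template theorem only glues Sidorenko equations, so it cannot be invoked here, and indeed this system is not Sidorenko. The paper instead proves commonness of this family directly: it expands $t_L(f)+t_L(1-f)$ in Fourier, isolates the cross term $X=\sum_{r\neq 0}\widehat f(ar)\widehat f(br)\widehat f(cr)$ against the nonnegative term $Y=\sum_{r\neq 0}|\widehat f(ar)|^2|\widehat f(br)|^2$, bounds $X^2\le Y(\alpha-\alpha^2)$ by Cauchy--Schwarz and Parseval, and then optimizes over $\alpha$ and $Y$ to get the lower bound $1/16=2^{-4}$. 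A similar (but not identical) argument, using the ratio hypothesis to make the Cauchy--Schwarz pairing available, handles the other parametric family; your ``pairing'' remark points in the right direction for the sporadic matrices but is not a proof for these families.

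Second, and more seriously, the uncommonness strategy via a single character $h=\Re\chi$ and the fourth-order coefficient $c_4(L,h)$ fails for most of the systems you need to handle. With $\widehat h$ supported on $\{\pm r_0\}$, a $4$-variable subsystem with coefficient multiset $A_i$ contributes to $c_4$ only when all coefficients of $A_i$ agree up to sign (for $p$ large), so almost all terms vanish; but whenever one of the five $4$-variable subsystems in the row span is an additive quadruple $\{\!\!\{1,1,-1,-1\}\!\!\}$ (the paper's Cases B and C, e.g.\ $\bigl(\begin{smallmatrix}1&-1&1&-1&0\\-2&4&3&0&-9\end{smallmatrix}\bigr)$), that subsystem contributes the strictly positive quantity $\sum_r|\widehat h(r)|^4$, and no single character can make the total negative. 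This is precisely why the paper develops the Fourier-template machinery: in Case A it uses templates with a few frequencies of wildly different magnitudes; in Case B it uses templates supported on large truncations of a multiplicative grid $\{p_1^{d_1}\cdots p_\ell^{d_\ell}\}$ with multiplicative phases, the inequality $1+\cos(\gamma_1t)+\cdots+\cos(\gamma_4t)<0$, and $p$-adic-valuation-periodic sign patterns to defeat cancelling coefficient sets; and in Case C it needs an even more delicate periodic construction on $\ZZ_{2m}\times\ZZ_2\times\ZZ_2$. Your plan supplies none of these constructions, and the claim that an explicit character can always be chosen with $c_4(L,h)<0$ is false as stated. (Relatedly, the two excluded matrices are not artifacts of the classification: they are specific Case C systems for which no suitable template of manageable support was found, so your framing of why they are excluded is also off.)
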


The proofs of uncommonness in \cref{thm:2*k-even}, \cref{thm:2*k-even-ext} and \cref{thm:2*5} utilize suitably designed \emph{Fourier templates} which we introduce in \cref{sec:fourier}; these Fourier templates can be thought of as functions with suitably chosen Fourier coefficients  that certificate uncommonness over $\FF_p$ for sufficiently large $p$. We are unable to construct a suitable Fourier template for the two unknown cases in \cref{thm:2*5}, although we believe that our main constraints are essentially computational barriers which limit the size of the support of the Fourier templates we are able to consider. It is also plausible that the systems are actually common, although we are also unable to prove that. 

\begin{question}
    Are the two systems not covered by \cref{thm:2*5} common or uncommon?
\end{question}

There also remains the interesting question of classifying $2 \times k$ systems for $k$ odd and $k \geq 7$, and it is natural to ask if there is a general way to do this classification for all odd $k$ at once. 

\begin{question}
    What is the classification of the commonness of $2 \times k$ systems for $k \geq 7$ odd?
\end{question}

\paragraph{\textbf{Outline}.} In Section 2, we introduce the Fourier analytic approach to solve problems on identifying common and Sidorenko linear systems. In Section 3, we define the notion of a Fourier template and demonstrate its utility by proving  (\cref{thm:fourier-template}) that if we can find an appropriate Fourier template corresponding to a given linear system $L$, then $L$ is uncommon over all $\FF_p$ for sufficiently large $p$. Next, we prove \cref{thm:2*k-even} and \cref{thm:2*k-even-ext} in Section 4 and \cref{thm:2*5} in Sections 5 and 6. We begin each of Sections 4 (\cref{app:Even-intro}) and 5 (\cref{subsec:5-intro}) with some examples to illustrate the key proof ideas behind our construction of the relevant Fourier templates. 

\medskip

\paragraph{\textbf{Acknowledgements}.}
We thank Yunkun Zhou for providing the proof that the linear system
\[
\begin{pmatrix}
            1 & 1 & -1 & -1 & 0\\
            2 & -2 & 1 & 0 & -1
        \end{pmatrix}
\]
is common. We thank Dylan Toh for suggesting the greatly simplified proof of \cref{fact:cos-1}. We thank Pablo Parrilo for providing suggestions and help on symmetry-adapted semi-definite programs. We also thank Nitya Mani and Mark Saengrungkongka for helpful discussions.

\section{Preliminaries}

\subsection{Fourier analytic approach}

In this section, we discuss the Fourier analytic approach for identifying common and Sidorenko linear systems. Let $\widehat {\FF_p^n}$ denote the set of homomorphisms from $\FF_p^n\to \CC^\times$. Since every element in $\widehat {\FF_p^n}$ is of the form $x\mapsto e^{2\pi ir\cdot x/p}$,  we have an isomorphism  between $\FF_p^n$ and $ \widehat {\FF_p^n}$. For a function $f:\FF_p^n\to\RR$, the Fourier transform of $f$ is the function $\widehat f:\widehat{\FF_p^n}\to\CC$ defined by
\[
\widehat f(r)=\EE_{x\in \FF_p^n}f(x)\overline{r(x)}
\]
with inverse Fourier transform
\[
f(x)=\sum_{r\in \widehat{\FF_p^n}}\widehat f(r)r(x).
\]
Thus, we can rewrite the density function $t_L(f)$ as
\begin{align*}
    t_L(f)&=\EE_{L \mathbf x =0}\sum_{r_1,\dots,r_k\in \widehat{\FF_p^n}}\widehat f(r_1)\cdots \widehat f(r_k)r_1(x_1)\cdots r_k(x_k)
    =\sum_{r_1,\dots,r_k\in \widehat{\FF_p^n}}\widehat f(r_1)\cdots \widehat f(r_k)\EE_{L \mathbf x =0}r_1(x_1)\cdots r_k(x_k).
\end{align*}
Now observe that $\EE_{L \mathbf x =0}r_1(x_1)\cdots r_k(x_k)=\EE_{L \mathbf x =0}(e^{2\pi i/p})^{r_1\cdot x_1+\dots+r_k\cdot x_k}$ equals 1 if $(r_1,\dots,r_k)$ is a linear combination of the row vectors of $L$ \edit{}, and equals 0 otherwise. Let $\vec\ell_1,\dots,\vec\ell_m\in\ZZ^k$ be the row vectors of $L$ and $\vec a_1,\dots,\vec a_k\in\ZZ^m$ the column vectors of $L$. We know that 
\[
\text{span}(\vec\ell_1,\dots,\vec\ell_m)=\{( \vec a_1\cdot  r,\dots,  \vec a_k\cdot  r): r\in (\widehat{\FF_p^n})^m\}.
\]
Hence we have
\begin{align}\label{eqn:inversion1}
    t_L(f)&=\sum_{ r\in(\widehat{\FF_p^n})^m}\widehat f(\vec a_1\cdot   r)\cdots\widehat f(\vec a_k\cdot   r).
\end{align}

In particular, when $L$ itself is a linear equation $L\mathbf x=a_1x_1+\dots+a_kx_k$, we can rewrite the density function $t_L(f)$  as
\begin{align}\label{eqn:inversion2}
t_L(f)=\sum_{r\in\widehat{\FF_p^n}}\widehat f(a_1r)\cdots\widehat f(a_kr).
\end{align}

\subsection{Reduction to critical sets}

In this section, we define critical sets of a linear system and then introduce a reductive step in \cite{KLM21a} that will be useful in our proof. The following two definitions can be found in \cite[Section 2.2]{KLM21a}.

\begin{definition}\label{def:critical-1}
For every linear system $L$ with girth $s(L)$, define
\[
c(L)=\begin{cases}
    s(L) & s(L)\text{ even}\\
    s(L)+1 & s(L)\text{ odd}.
\end{cases}
\]
We say that $B\subseteq[k]$ is a \textit{critical set} of $L$ if 
\begin{enumerate}
    \item $|B|=c(L)$, and
    \item $L$ has a nonempty subsystem $L'$ whose variable set is $\{x_i:i\in B\}$.
\end{enumerate}
Let $\mathcal C(L)$ denote the collection of critical sets of $L$.
\end{definition}

\begin{definition}\label{def:critical-2}
    Let $L$ be a linear system and $B\subseteq [k]$ be a criticial set of $L$. 
    \begin{enumerate}
        \item Define 
        \begin{align*}
            m_B=\max\{m':&\text{ $L$ has an $m'\times c(L)$ subsystem  on variable set $\{x_i:i\in B\}$}\}.
        \end{align*}
        \item Define $L_B$ to be the (unique up to isomorphism) $m_B\times c(L)$ subsystem of $L$ on variable set $\{x_i:i\in B\}$. 
    \end{enumerate}
    The proof of the uniqueness of $L_B$ is standard linear algebra, and we refer the readers to \cite[Lemma 2.4]{KLM21a}.
\end{definition}

\begin{remark}\label{rem:critical-3}
   Since no equation of length $<s(L)$ can be a subsystem of $L$, we have the following:
    \begin{itemize}
        \item When $s(L)$ is even (so $c(L)=s(L)$), every $B\in\mathcal C(L)$ has $m_B=1$, and $L_B$ must be a length-$c(L)$ equation.
        \item When $s(L)$ is odd (so $c(L)=s(L)+1$), every $B\in\mathcal C(L)$ has $m_B\in\{1,2\}$, and $L_B$ is either a length-$c(L)$ equation or a $2\times c(L)$ linear system with girth $s(L)$.
    \end{itemize}
\end{remark}

One of the results in \cite{KLM21a} states that to conclude a linear system $L$ is uncommon, it sometimes suffices to study the collection $\{L_B:B\in\mathcal C(L)\}$.

\begin{proposition}[\text{\cite[Theorem 3.1]{KLM21a}}]
\label{prop:critical-uncommon}
    Let $L$ be an $m\times k$ linear system. If there exists $f:\FF_p^n\to\RR$ such that $\EE f=0$ and
    $\sum_{B\in \mathcal C(L)}t_{L_B}(f)<0$,
    then $L$ is uncommon over $\FF_p$.
\end{proposition}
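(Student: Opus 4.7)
The plan is to perturb the constant function $1/2$ in the direction of $f$ and track the leading Fourier correction. Set $g = \tfrac12 + \varepsilon f$ for a small $\varepsilon>0$; since $f$ is bounded on the finite set $\FF_p^n$, we may take $\varepsilon$ small enough that $g$ takes values in $[0,1]$. I will show $t_L(g)+t_L(1-g) < 2^{1-k}$, contradicting commonness. Because $\EE f=0$, we have $\widehat f(0)=0$, so $\widehat g(0)=\tfrac12$ and $\widehat g(r)=\varepsilon\widehat f(r)$ for $r\neq 0$, while $\widehat{1-g}(0)=\tfrac12$ and $\widehat{1-g}(r)=-\varepsilon\widehat f(r)$ for $r\neq 0$.

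\textbf{Fourier expansion.} Applying the inversion formula \cref{eqn:inversion1}, for each $r\in(\widehat{\FF_p^n})^m$ set $T(r)=\{i\in[k]:\vec a_i\cdot r\neq 0\}$. The $r$-th summand of $t_L(g)$ equals $(1/2)^{k-|T(r)|}\varepsilon^{|T(r)|}\prod_{i\in T(r)}\widehat f(\vec a_i\cdot r)$, and in $t_L(1-g)$ the same product appears with an extra factor $(-1)^{|T(r)|}$. Hence only $r$ with $|T(r)|$ even contribute to $t_L(g)+t_L(1-g)$. The contribution from $|T(r)|=0$ is exactly $2\cdot(1/2)^k=2^{1-k}$. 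Since any nonzero vector in the row span of $L$ has support of size at least $s(L)$, the smallest even value of $|T(r)|$ giving a nonzero contribution is $c(L)$, and the next is $c(L)+2$. Thus
\begin{equation*}
t_L(g)+t_L(1-g)\;=\;2^{1-k}\;+\;2\,\varepsilon^{c(L)}(1/2)^{k-c(L)}\,\Sigma\;+\;O(\varepsilon^{c(L)+2}),
\end{equation*}
where $\Sigma=\sum_{r:\,|T(r)|=c(L)}\prod_{i\in T(r)}\widehat f(\vec a_i\cdot r)$.

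\textbf{Identifying $\Sigma$ with $\sum_{B\in\mathcal C(L)}t_{L_B}(f)$.} Group the terms in $\Sigma$ by $B=T(r)$: such a $B$ has $|B|=c(L)$, and the vector $(\vec a_i\cdot r)_{i\in[k]}$ is a nonzero element of the row span of $L$ supported precisely on $B$, so $B\in\mathcal C(L)$. Conversely, for each $B\in\mathcal C(L)$, the row-span vectors of $L$ supported in $B$ correspond (via zero-extension) to the row span of $L_B$. Since $L$ and $L_B$ both have linearly independent rows, the maps $r\mapsto(\vec a_i\cdot r)_i$ and $r'\mapsto(\vec a_i^{(B)}\cdot r')_{i\in B}$ (where $\vec a_i^{(B)}$ are the columns of $L_B$) are injective, so the $r$'s with $T(r)\subseteq B$ are in bijection with $r'\in(\widehat{\FF_p^n})^{m_B}$. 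Using $\widehat f(0)=0$, terms with $T(r)\subsetneq B$ vanish, so applying \cref{eqn:inversion1} to $L_B$ yields
\begin{equation*}
\sum_{r:\,T(r)=B}\prod_{i\in B}\widehat f(\vec a_i\cdot r)\;=\;\sum_{r'\in(\widehat{\FF_p^n})^{m_B}}\prod_{i\in B}\widehat f(\vec a_i^{(B)}\cdot r')\;=\;t_{L_B}(f).
\end{equation*}
Summing over $B\in\mathcal C(L)$ gives $\Sigma=\sum_{B\in\mathcal C(L)}t_{L_B}(f)$, which is negative by hypothesis. Choosing $\varepsilon>0$ small enough that the $O(\varepsilon^{c(L)+2})$ error is dominated by the leading negative term yields $t_L(g)+t_L(1-g)<2^{1-k}$, so $L$ is uncommon.

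\textbf{Main obstacle.} The one delicate step is the bookkeeping in the previous paragraph: verifying that the grouping by $T(r)$ really produces critical sets, and that the $\widehat f(0)=0$ hypothesis cleanly converts the ``support exactly $B$'' sum into the unrestricted sum defining $t_{L_B}(f)$. The rest of the proof is a routine leading-order expansion in $\varepsilon$.
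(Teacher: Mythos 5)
Your proof is correct, and at the top level it follows the same strategy as the paper: perturb the constant function as $\tfrac12+\varepsilon f$, observe that odd-order terms cancel between $g$ and $1-g$, that the girth/critical-set structure kills everything below order $\varepsilon^{c(L)}$, and that the order-$\varepsilon^{c(L)}$ coefficient is $2^{c(L)-k}\sum_{B\in\mathcal C(L)}t_{L_B}(f)$. Where you differ is in how that coefficient is identified. The paper does the bookkeeping in physical space: it expands $\EE_{L\mathbf x=0}\prod_i(\tfrac12\pm f(x_i))$ and then converts $\EE_{L\mathbf x=0}\prod_{i\in B}f(x_i)$ into $t_{L_B}(f)$ via the solution-extension count (every solution of $L_B$ extends to the same number, $(p^n)^{k-m-c(L)+m_B}$, of solutions of $L$, citing a lemma of Kam\v{c}ev--Liebenau--Morrison). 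You instead work entirely on the Fourier side via \cref{eqn:inversion1}, grouping characters $r$ by the support pattern $T(r)$ of the corresponding row-span vector, using $\widehat f(0)=0$ to pass from ``support exactly $B$'' to the unrestricted sum defining $t_{L_B}(f)$, and injectivity of $r\mapsto(\vec a_i\cdot r)_i$ (rows independent) to match parametrizations of $L$ and $L_B$. These are dual ways of proving the same identity; yours avoids the external extension-counting lemma at the cost of the support/bijection bookkeeping you flag as the delicate step, and it makes the role of $\EE f=0$ and the girth condition very transparent. One shared caveat: both arguments implicitly assume the rank of $L$ (and of its relevant column submatrices) does not drop modulo $p$, so that critical sets defined over $\QQ$ govern the $\FF_p$-row-span supports; the paper's proof rests on the same tacit assumption, so this is not a gap specific to your write-up.
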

\begin{proof}
Consider any  $h:\FF_p^n\to[0,1]$ with $\EE h=1/2$, and define  $f=h-1/2:\FF_p^n\to[-1/2,1/2]$. Expanding each term, one can check that
\begin{align*}
    &t_L(h)+t_L(1-h)\\
    &=\EE_{L\mathbf x=0}(1/2+f(x_1))\cdots (1/2+f(x_k))+ \EE_{L\mathbf x=0}(1/2-f(x_1))\cdots (1/2-f(x_k))\\
    &=2^{1-k}+2\sum_{\substack{\emptyset\neq B\subseteq[k]\\|B|\text{ even}}}2^{|B|-k}\EE_{L\mathbf x=0}\left[\prod_{i\in B}f(x_i)\right]
    =2^{1-k}+2\sum_{\substack{ B\subseteq[k]\\|B|\geq c(L)\text{ even}}}2^{|B|-k}\EE_{L\mathbf x=0}\left[\prod_{i\in B}f(x_i)\right],
\end{align*}
where the last equality follows from the fact that $\EE f=0$, and $L$ does not have nonempty subsystems on at most $ c(L)-2$ variables.
Thus, if there is some $f:\FF_p^n\to[-1/2,1/2]$ such that $\EE f=0$ and
\begin{align*} 
    \sum_{\substack{ B\subseteq[k]\\|B|\geq c(L)\text{ even}}}2^{|B|-k}\EE_{L\mathbf x=0}\left[\prod_{i\in B}f(x_i)\right]<0,
\end{align*}
then by \cref{def:common-sid}, $L$ is uncommon over $\FF_p$.

Observe that
\begin{align*} 
    &\sum_{\substack{ B\subseteq[k]\\|B|\geq c(L)\text{ even}}}2^{|B|-k}\EE_{L\mathbf x=0}\left[\prod_{i\in B}f(x_i)\right]\\
    &=\sum_{B\in\mathcal C(L)}2^{c(L)-k}\EE_{L\mathbf x=0}\left[\prod_{i\in B}f(x_i)\right]+\sum_{\substack{ B\subseteq[k]\\|B|\geq c(L)+2\text{ even}}}2^{|B|-k}\EE_{L\mathbf x=0}\left[\prod_{i\in B}f(x_i)\right].
\end{align*}
Consider any critical set $B\in \mathcal C(L)$ and its corresponding subsystem $L_B$. A linear algebra calculation (see also \cite[Lemma 2.6]{KLM21a}) shows that every solution to $L_B$ extends to the same number of solutions to $L$. That is, for every solution $\mathbf x'$ to $L_B\mathbf x'=0$, there are $(p^n)^{k-m-c(L)+m_B}$ solutions $\mathbf x$ to $L\mathbf x=0$ such that $\mathbf x$ restricted  on the variables $\{x_i:i\in B\}$ equals $\mathbf x'$.
Thus we get that 
$$\EE_{L\mathbf x=0}\left[\prod_{i\in B}f(x_i)\right]=\EE_{L_B\mathbf x'=0}\left[\prod_{i\in B}f(x_i)\right]=t_{L_B}(f).$$

Suppose there exists $\widetilde f:\FF_p^n\to\RR$ with $\EE \widetilde f=0$ and
$\sum_{B\in \mathcal C(L)}t_{L_B}(\widetilde f)<0$. Taking $f=\epsilon \widetilde f$ with $\epsilon>0$ sufficiently small, we can obtain $f:\FF_p^n\to [-1/2,1/2]$ with
\begin{align*}
    &\sum_{B\in\mathcal C(L)}2^{c(L)-k}\EE_{L\mathbf x=0}\left[\prod_{i\in B}\epsilon \widetilde f(x_i)\right]+\sum_{\substack{ B\subseteq[k]\\|B|\geq c(L)+2\text{ even}}}2^{|B|-k}\EE_{L\mathbf x=0}\left[\prod_{i\in B}\epsilon \widetilde f(x_i)\right]\\
    &=2^{c(L)-k}\epsilon^{c(L)}\sum_{B\in \mathcal C(L)}t_{L_B}(\widetilde f)+O(\epsilon^{c(L)+2}),
\end{align*}
which is negative. Hence $L$ is uncommon over $\FF_p$.
\end{proof}

\section{Fourier template}\label{sec:fourier}

As discussed in Section 2, for linear system $L$ and function $f:\FF_p\to\RR$, we can write the solution density  $t_L(f)$ in terms of the Fourier coefficients of $f$. In order to construct a function $f $ that certifies uncommonness, it is often fruitful to  view the problem in the Fourier space and construct a suitable $\widehat{f} \colon \widehat{\FF_p} \to \CC$.
We will introduce a notion of a \emph{Fourier template} which can then be turned into some $\widehat{f} \colon \widehat{\FF_p} \to \CC$ for all sufficiently large $p$.

\begin{example}\label{ex:fourier-template}
    As an example, to show that the linear equation $$L \mathbf x=x_1+2x_2+3x_3+4x_4$$ is uncommon over $\FF_p$, we can consider the function  $f:\FF_p\to \RR$ defined by
\begin{align*}
    \begin{cases}
        \widehat f(1)=\widehat f(-1)=-1\\
        \widehat f(2)=\widehat f(-2)=\widehat f(3)=\widehat f(-3)=\widehat f(4)=\widehat f(-4)=1\\
        \widehat f(r)=0\qquad \text{otherwise.}
    \end{cases}
\end{align*}
Notice that $\EE f=\widehat f(0)=0$. Moreover, for all sufficiently large $p$, we have
\begin{align*}
    t_L(f)=\sum_{r\in \widehat{\FF_p}}\widehat f(r)\widehat f(2r)\widehat f(3r)\widehat f(4r)=\widehat f(1)\widehat f(2)\widehat f(3)\widehat f(4)+\widehat f(-1)\widehat f(-2)\widehat f(-3)\widehat f(-4)=-2<0.
\end{align*}
By taking $\epsilon>0$ sufficiently small such that $\epsilon f:\FF_p\to[-1/2,1/2]$ and letting $h=1/2+ \epsilon f$, we can obtain a function $h:\FF_p\to[0,1]$ such that
\begin{align*}
    &t_L(h)+t_L(1-h)\\
    &=\EE_{L\mathbf x=0}[(1/2+\epsilon f(x_1))\cdots (1/2+\epsilon f(x_4))]+ \EE_{L\mathbf x=0}[(1/2-\epsilon f(x_4))\cdots (1/2-\epsilon f(x_4))]\\
    &=1/8+2\epsilon^4\EE_{L\mathbf x=0}[f(x_1)\cdots f(x_4)]=1/8+2\epsilon^4t_L(f)<1/8.
\end{align*}
Therefore, $L$ is uncommon over $\FF_p$ for all sufficiently large $p$.
\end{example}

The key step in \cref{ex:fourier-template} is to construct an appropriate Fourier transform $\widehat f:\widehat{\FF_p}\to\CC$ such that $t_L(f)<0$. Since $\widehat f$ has finite support, this construction  works for all sufficiently large $p$. What we  essentially did is creating a finitely supported map $g:\ZZ\to\CC$ from the \textit{integers}, defined by $\pm 1\mapsto -1$ and $\pm 2,\pm 3,\pm 4\mapsto 1$, such that the Fourier transform $\widehat f=g$ (which is well-defined for all sufficiently large $p$) gives us a function $f$ that helps certify uncommonness. This map $g:\ZZ\to \CC$ is what we call a 1-dimensional Fourier template.

We now formally introduce the definition of Fourier templates.

\begin{definition}\label{def:fourier-template}
    For every $m\times k$ linear system $L$ and $d\in\NN$, we say that $g:\ZZ^d\to\CC$ is a \textit{$d$-dimensional Fourier template} if the following holds:
    \begin{itemize}
        \item $g$ has finite support,
        \item $g(-r)=\overline{g(r)}$ for all $r\in\ZZ^d$,
        \item $g(0)=0$.
    \end{itemize}
\end{definition}

\begin{definition}\label{def:fourier-template-sum}
 Let $g:\ZZ^d\to\CC$ be a $d$-dimensional Fourier template, and
 \[
 L=\begin{pmatrix}
        \vec a_1 & \vec a_2 & \cdots & \vec a_k
    \end{pmatrix}=\begin{pmatrix}
        a_{1,1} & a_{1,2} & \dots & a_{1,k}\\
        \vdots & \vdots & \ddots & \vdots\\
        a_{m,1} & a_{m,2} & \dots & a_{m,k}\\
    \end{pmatrix}\in\ZZ^{m\times k}
 \]
be an $m\times k$ linear system. (Here $\vec a_1,\dots,\vec a_k\in\ZZ^m$ are the columns of $L$.)
\begin{enumerate}
    \item Define the associated function $g_L:\ZZ^{d\times m}\to\CC$ by
 \[
 g_L(R)=g(R\vec a_1)\cdots g(R\vec a_k)=\prod_{i=1}^kg(R\vec a_i).
 \]
 \item Define $\sigma_L(g)=\sum_{R\in\ZZ^{d\times m}}g_L(R)$.
\end{enumerate}
\end{definition}

\begin{remark}\label{rem:real-sum}
    Notice that $\sigma_L(g)$ is well-defined as $g$ has finite support. Moreover, since $$\sigma_L(g)=\sum_{R\in\ZZ^{d\times m}}g_L(R)=\sum_{R\in\ZZ^{d\times m}}g_L(-R)=\sum_{R\in\ZZ^{d\times m}}\overline{g_L(R)}=\overline{\sigma_L (g)},$$ we always have $\sigma_L(g)\in\RR$.
\end{remark}

We now state and prove the main theorem  of this section.

\begin{theorem}\label{thm:fourier-template}
Let $L$ be an $m\times k$ linear system. If there exists a $d$-dimensional Fourier template $g:\ZZ^d\to\CC$ such that $\sum_{B\in\mathcal C(L)} \sigma_{L_B}(g)<0$,
then $L$ is uncommon over $\FF_p$ for all sufficiently large $p$.
\end{theorem}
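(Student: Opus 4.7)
The plan is to reduce the problem to \cref{prop:critical-uncommon}: it suffices to exhibit, for all sufficiently large $p$, a function $f:\FF_p^n\to\RR$ with $\EE f=0$ and $\sum_{B\in\mathcal C(L)}t_{L_B}(f)<0$. We will construct such an $f$ directly on the Fourier side from the template $g$, taking $n=d$, and verify that $t_{L_B}(f)$ converges to $\sigma_{L_B}(g)$ in the large-$p$ limit.

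For $p$ large enough that the finite support $\support(g)\subset\ZZ^d$ embeds injectively into the centered fundamental domain $(-p/2,p/2]^d\subset\ZZ^d$, identify $\widehat{\FF_p^d}$ with this fundamental domain and define $\widehat{f}(r)=g(r)$ on it, and $\widehat{f}\equiv 0$ off the image of $\support(g)$. The condition $g(-r)=\overline{g(r)}$ ensures that the inverse Fourier transform $f(x)=\sum_{r\in\widehat{\FF_p^d}}\widehat{f}(r)r(x)$ is real-valued, while $g(0)=0$ forces $\EE f=\widehat{f}(0)=0$. Now for every critical set $B\in\mathcal C(L)$, the subsystem $L_B$ has $m_B$ equations on $c(L)$ variables, with columns $\vec a_1,\dots,\vec a_{c(L)}\in\ZZ^{m_B}$; applying the Fourier inversion formula \cref{eqn:inversion1} gives
\[
t_{L_B}(f)=\sum_{R\in(\widehat{\FF_p^d})^{m_B}}\widehat f(R\vec a_1)\cdots\widehat f(R\vec a_{c(L)}).
\]
Since $\widehat f$ is supported in a fixed finite subset of $\ZZ^d$ independent of $p$, and the entries of each $\vec a_i$ are fixed integers, only an a priori bounded finite set of matrices $R\in\ZZ^{d\times m_B}$ (independent of $p$) contribute; for $p$ larger than some constant depending only on $g$ and $L$, the identification with $\widehat{\FF_p^d}$ introduces no modular wrap-around, and the sum equals exactly $\sigma_{L_B}(g)$.

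Summing over $B\in\mathcal C(L)$ yields $\sum_{B\in\mathcal C(L)}t_{L_B}(f)=\sum_{B\in\mathcal C(L)}\sigma_{L_B}(g)<0$ for all sufficiently large $p$, and \cref{prop:critical-uncommon} concludes that $L$ is uncommon over $\FF_p$. The only subtlety, and the main point requiring care, is the uniform absence of wrap-around: one must check that the threshold for $p$ depends only on $g$, $L$, and $\mathcal C(L)$—which is immediate because $\support(g)$ and the column vectors of each $L_B$ are all finite and fixed. Everything else is a direct translation between the physical-side quantities $t_{L_B}(f)$ and the integer-side quantities $\sigma_{L_B}(g)$ via Fourier inversion.
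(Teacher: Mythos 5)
Your overall strategy is viable and in fact takes a slightly different route from the paper: instead of compressing the $d$-dimensional template into $\widehat{\FF_p}$ through a base-$C$ (Freiman) embedding of $[\![-M,M]\!]^d$ into $\ZZ$, you take $n=d$ and plant $g$ directly on $\widehat{\FF_p^d}$ via centered representatives. That is legitimate, since \cref{prop:critical-uncommon} allows any $n$, and your checks that $f$ is real-valued and mean zero are fine. The problem is that the one step carrying all the content — ``no modular wrap-around for large $p$'' — is asserted with a justification that does not work. In $t_{L_B}(f)=\sum_{R}\widehat f(R\vec a_1)\cdots\widehat f(R\vec a_{c(L)})$ the dual variable $R$ ranges over all of $(\widehat{\FF_p^d})^{m_B}$, not over ``an a priori bounded finite set of matrices $R\in\ZZ^{d\times m_B}$'': an $R$ whose centered representative has size comparable to $p$ can still send every column $\vec a_i$ into the support box modulo $p$ (already for $d=m_B=1$ and a coefficient $2$, the element $r=(p+1)/2$ maps to $1$). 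So finiteness of $\support(g)$ and of the coefficients does not by itself rule out spurious terms; you must prove that every nonvanishing term comes from a genuine bounded integer matrix, and that is not ``immediate''.

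The missing argument is precisely where the paper brings in the matrix $Q_B$ whose kernel is the row span of $L_B$. If a term is nonzero, let $v_i\in\support(g)\subseteq[\![-M,M]\!]^d$ be the centered representative of $R\vec a_i$, and let $V\in\ZZ^{c(L)\times d}$ have rows $v_1,\dots,v_{c(L)}$. Then $Q_BV$ is an integer matrix with entries $O_{L,M}(1)$ that is $\equiv 0\pmod p$, hence $Q_BV=0$ exactly once $p$ is large; thus each column of $V$ lies in the rational row span of $L_B$, which produces a matrix $R_0$ with $R_0\vec a_i=v_i$, and (since the $\vec a_i$ span and the rows of $L_B$ are independent mod $p$ for large $p$) $R_0$ is unique and satisfies $R_0\equiv R\pmod p$; conversely, every integer matrix contributing to $\sigma_{L_B}(g)$ has entries $O_{L,M}(1)$ and injects mod $p$. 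Only after this chain does $t_{L_B}(f)=\sigma_{L_B}(g)$ follow. (Note you also inherit the paper's tacit step that a bounded integer vector in the rational row span of $L_B$ is an \emph{integer} combination of its rows, i.e.\ that the chosen representative of $L_B$ has a saturated row lattice; this is the analogue of the paper's final display converting the sum over $\{U:Q_BU=0\}$ into a sum over integer $R$.) With this lemma supplied, your construction goes through and is arguably a cleaner transfer than the paper's one-dimensional compression.
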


When $d=1$, the proof of \cref{thm:fourier-template} essentially goes as \cref{ex:fourier-template}. When $d>1$, the main idea is to construct an appropriate Freiman homormophism  $\gamma:\mathbb{Z}^d\to \mathbb{Z}$ that projects  $\ZZ^d$ onto a generalized arithmetic progression in $\ZZ$. We will then build the certificate $f:\FF_p\to\CC$  by setting  $\widehat{f}(r)=g(\gamma^{-1}(r))$ for all $r\in\text{im}\gamma\subseteq \widehat{\FF_p}$.

\begin{proof}
Consider any critical set $B\in \mathcal C(L)$. Suppose
\[L_B=\begin{pmatrix}
        \vec a_1  & \cdots & \vec a_{k'}
    \end{pmatrix}=
\begin{pmatrix}
    a_{1,1}x_1 & \dots & a_{1,k'}x_{k'}\\
    \vdots &\ddots &\vdots\\
    a_{m',1}x_1 & \dots & a_{m',k'}x_{k'}
\end{pmatrix}\in\ZZ^{m'\times k'},
\]
so that $\vec a_{1},\dots,\vec a_{k'}\in\ZZ^{m'}$ are the column vectors of $L_B$. Recall from \cref{rem:critical-3} that we have $m'\in\{1,2\}$ and $k'=c(L)$.

Pick matrix $Q_B\in \ZZ^{(k'-m')\times k'}$ so that the rows of $L_B$ span the kernel of $Q_B$. Then for $f:\FF_p\to\RR$, we have (recall \cref{eqn:inversion1})
\begin{align}\label{eqn:freiman}
    t_{L_B}(f)&=\sum_{ r\in (\widehat {\FF_p})^{m'} }\widehat {f}(\vec  a_1\cdot  r)\cdots \widehat {f}(\vec  a_{k'}\cdot r)
    =\sum_{\substack{ \vec v\in (\widehat{\FF_p})^{k'}  \\Q_B  \vec v=0}}\widehat {f}( v_1)\cdots \widehat {f}( v_{k'}).
\end{align}

For $M\in\NN$, let $[\![-M,M]\!]$ denote the subset $\{-M,\dots,-1,0,1,\dots,M\}\subseteq\ZZ$.
Suppose the Fourier template $g:\ZZ^d\to\CC$ vanishes outside $[\![-M,M]\!]^d$. Observe that, for  $C\in\NN$ sufficiently large, the projection $\gamma:[\![-M,M]\!]^d\to \ZZ$ defined by
\[
\gamma(w_0,\dots,w_{d-1})=w_0+w_1C+\dots+w_{d-1}C^{d-1}
\]
is injective. Moreover, the inverse $\phi:=\gamma^{-1}:\text{im}\gamma\to[\![-M,M]\!]^d$ satisfies the following property: for all $B\in\mathcal C(L)$ and $\vec v\in(\text{im}\gamma)^{k'}$, we have $Q_B  \vec v=0\in\ZZ^{k'-m'}$ if and only if  $Q_B\phi  (\vec v)=0\in\ZZ^{(k'-m')\times d}$, where
\begin{align*}
    \phi  (\vec v)
    =\begin{pmatrix}
        \phi (v_1)\\
        \vdots\\
        \phi (v_{k'})
    \end{pmatrix}\in\ZZ^{k'\times d}.
\end{align*}

Suppose $p$ is sufficiently large. Construct $f:\FF_p\to \RR$ as follows:
\begin{itemize}
    \item For all $-p/2< r<p/2$ with $r\in \text{im} \gamma$, set $\widehat {f}(r)=g(\phi r)$.
    \item Otherwise, set $\widehat {f} (r)=0$.
\end{itemize}
By \cref{eqn:freiman}, we have
\begingroup
\addtolength{\jot}{1em}
\begin{align*}
     t_{L_B}(f)&=\sum_{\substack{ v\in (\widehat{\FF_p})^{k'}  \\Q_B \vec v=0}}\widehat {f}( v_1)\cdots \widehat {f}( v_{k'})=\sum_{\substack{  v\in (\widehat{\FF_p})^{k'}\\Q_B\phi   (\vec v)=0}}\widehat {f}( v_1)\cdots \widehat {f}( v_{k'})=\sum_{\substack{U\in \ZZ^{k'\times d}\\Q_{B}U=0}}g(U_1)\cdots g( U_{k'}),
\end{align*}
\endgroup
where $U_1,\dots,U_{k'}\in\ZZ^d$ are the row vectors for each $U\in\ZZ^{k'\times d}$.
Since $Q_BU=0$ if and only if
\begin{align*}
    U=\begin{pmatrix}
        \vec a_1\cdot  r_1 &\dots & \vec a_1\cdot  r_d\\
        \vdots &\ddots&\vdots\\
        \vec a_{k'}\cdot  r_1 &\dots & \vec a_{k'}\cdot  r_d
    \end{pmatrix} \text{ for } r_1,\dots, r_d\in\ZZ^{m'},
\end{align*}
we get that
\begin{align*} 
     t_{L_B}(f)
     &=\sum_{\substack{r_1,\dots, r_d\in \ZZ^{m'}}}g(\vec  a_1\cdot r_1,\dots, \vec a_{1}\cdot  r_d)\cdots g(\vec a_{k'}\cdot  r_1,\dots, \vec a_{k'}\cdot r_d)
     \\&=\sum_{R\in\ZZ^{d\times m'}}g(R\vec a_1)\cdots g(R\vec a_{k'})=\sigma_{L_B}(g).
\end{align*}
Therefore, if $\sum_{B\in\mathcal C(L)} \sigma_{L_B}(g)<0,$ then we have
$
\sum_{B\in\mathcal C(L)}t_{L_B}(f)=\sum_{B\in\mathcal C(L)}\sigma_{L_B}(g)<0
$
for all sufficiently large $p$.
By \cref{prop:critical-uncommon}, $L$ is uncommon over $\FF_p$.
\end{proof}

Finally, we remark that we can join two Fourier templates $g,\widetilde g$ to form a new Fourier template $h$. Moreover, the joined Fourier template satisfies $\sigma_{L}(h)= \sigma_{L}(g)\sigma_{L}(\widetilde g)$.
\begin{proposition}\label{prop:join}
Let $g:\ZZ^{d_1}\to\CC$, $\widetilde{g}:\ZZ^{d_1}\to\CC$ be two Fourier templates. Define the map $h:\ZZ^{d_1+d_2}\to\CC$ by $$h(r_1,r_2)=g( r_1)\widetilde{g}( r_2)\qquad\text{for all $r_1\in\ZZ^{d_1}$, $ r_2\in\ZZ^{d_2}$.}$$ Then $h$ is a $(d_1+d_2)$-dimensional Fourier template. Moreover, for every linear system $L$, we have
\[
\sigma_{L}(h)= \sigma_{L}(g)\sigma_{L}(\widetilde{g}).
\]
\end{proposition}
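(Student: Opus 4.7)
\medskip

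\textbf{Proof proposal.} The statement has two parts, and both reduce to routine manipulations using the definitions of \cref{def:fourier-template} and \cref{def:fourier-template-sum}, together with the block structure of matrices in $\ZZ^{(d_1+d_2)\times m}$.

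First I would verify that $h$ is a $(d_1+d_2)$-dimensional Fourier template. The support of $h$ is contained in $\support(g)\times\support(\widetilde g)$, which is finite since both factors are. The conjugate symmetry follows from
\[
h(-r_1,-r_2)=g(-r_1)\widetilde g(-r_2)=\overline{g(r_1)}\,\overline{\widetilde g(r_2)}=\overline{h(r_1,r_2)}.
\]
Finally, $h(0,0)=g(0)\widetilde g(0)=0$.

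For the main identity, let $L=(\vec a_1\mid\cdots\mid\vec a_k)\in\ZZ^{m\times k}$. Any matrix $R\in\ZZ^{(d_1+d_2)\times m}$ decomposes uniquely as a vertical stack $R=\begin{pmatrix}R_1\\R_2\end{pmatrix}$ with $R_1\in\ZZ^{d_1\times m}$ and $R_2\in\ZZ^{d_2\times m}$, and this decomposition gives a bijection $\ZZ^{(d_1+d_2)\times m}\cong\ZZ^{d_1\times m}\times\ZZ^{d_2\times m}$. For each column $\vec a_i$ we have $R\vec a_i=(R_1\vec a_i,R_2\vec a_i)\in\ZZ^{d_1}\times\ZZ^{d_2}$, so by the definition of $h$,
\[
h_L(R)=\prod_{i=1}^k h(R\vec a_i)=\prod_{i=1}^k g(R_1\vec a_i)\widetilde g(R_2\vec a_i)=g_L(R_1)\cdot\widetilde g_L(R_2).
\]
Summing over $R$ using the bijection above and then factoring the double sum,
\[
\sigma_L(h)=\sum_{R\in\ZZ^{(d_1+d_2)\times m}}h_L(R)=\sum_{R_1\in\ZZ^{d_1\times m}}\sum_{R_2\in\ZZ^{d_2\times m}}g_L(R_1)\,\widetilde g_L(R_2)=\sigma_L(g)\,\sigma_L(\widetilde g),
\]
where the interchange and factoring are justified because all sums are finite (both $g$ and $\widetilde g$ have finite support).

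There is no real obstacle here; the only thing to be careful about is the block decomposition of $R$ and the matching of the column action $\vec a_i\mapsto R\vec a_i$ with that decomposition. Everything else is a direct application of the definitions.
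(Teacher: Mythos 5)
Your proof is correct and follows essentially the same route as the paper: the same block decomposition $R=\begin{pmatrix}R_1\\R_2\end{pmatrix}$, the identity $h(R\vec a_i)=g(R_1\vec a_i)\widetilde g(R_2\vec a_i)$, and factoring the resulting finite double sum. The only difference is that you spell out the verification that $h$ is a Fourier template, which the paper leaves as a direct check.
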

\begin{proof}
    One can check directly from \cref{def:fourier-template} that $h$ is a $(d_1+d_2)$-dimensional Fourier template. 
    Let 
    \[
    L=\begin{pmatrix}
        \vec  a_1&\dots &\vec a_k
    \end{pmatrix}\in\ZZ^{m\times k} 
    \]
    be an $m\times k$ linear system.
    For  matrix
    $R\in\ZZ^{(d_1+d_2)\times m}$, write $$R=\begin{pmatrix}R_1\\R_2\end{pmatrix}$$ where $R_1\in \ZZ^{d_1\times m}$ and $R_2\in \ZZ^{d_2\times m}$. We then have $R\vec a_i=(R_1\vec a_i,R_2\vec a_i)$ and $h(R\vec a_i)=g(R_1\vec a_i)\widetilde g(R_2\vec a_i)$ for every $i\in[k]$, which gives
    \begin{align*}
        \sigma_L(h)&=\sum_{R\in\ZZ^{(d_1+d_2)\times m}} h_L(R)
        =\sum_{R\in\ZZ^{(d_1+d_2)\times m}}  h(R\vec a_1)\cdots h(R\vec a_k)\\
        &=\sum_{R_1\in\ZZ^{d_1\times m}} \sum_{R_2\in\ZZ^{d_2\times m}} g(R_1\vec a_1)\cdots g(R_1\vec a_k) \widetilde{g}(R_2\vec a_1)\cdots \widetilde{g}(R_2\vec a_k)\\
        &=\sum_{R_1\in\ZZ^{d_1\times m}}g(R_1\vec a_1)\cdots g(R_1\vec a_k)\sum_{R_2\in\ZZ^{d_2\times m}}\widetilde{g}(R_2\vec a_1)\cdots \widetilde{g}(R_2\vec a_k)
        =\sigma_L(g)\sigma_L(\widetilde{g}).
    \end{align*}
\end{proof}

\section{Proof of \cref{thm:2*k-even}}

\subsection{Introduction}
\label{app:Even-intro}

In this section, we prove \cref{thm:2*k-even} and \cref{thm:2*k-even-ext}.  We prove \cref{thm:2*k-even} by showing that, for all $2\times k$ linear systems $L$ with $s(L)=k-1$, there exists a Fourier template $g$ that satisfies the condition in \cref{thm:fourier-template}.

The existence of such Fourier template will be given by a probabilistic argument. Recall that for every $M\in\NN$, we let $[\![-M,M]\!]$ denote the subset $\{-M,\dots,-1,0,1,\dots,M\}\subseteq\ZZ$. Define the random Fourier template $f_M\colon \ZZ \to \CC$ by \[\begin{cases} f_M(r) = \xi_r &r \in [\![-M,M]\!]\setminus\{0\} \\ f_M(r) = 0 &\text{otherwise,}  \end{cases}\]
where every $\xi_r$ is an independent random unit complex number, subject to the constraint that $\xi_r = \overline{\xi_{-r}}$. 

We say that two numbers form a \textit{cancelling pair} if they sum up to zero. The key observation is that, for all $a_1,\dots,a_t\in\ZZ$, we have
\begin{align*}
    \EE[f_M(a_1)\cdots f_M(a_t)]=\begin{cases}
        1 &a_1,\dots,a_t\in [\![-M,M]\!]\setminus\{0\}\\
         &\text{and $a_1,\dots,a_t$ can be partitioned into cancelling pairs}\\
         0& \text{otherwise.}
    \end{cases}\tag{$\dagger$}
\end{align*}

A result in \cite{FPZ21} showed that  any even length equation whose coefficients cannot be partitioned into cancelling pairs  is uncommon over sufficiently large $\FF_p$. This can be proved using a first moment argument on the above $f_M$: if $L=a_1x_1+\dots+a_kx_k$
is such an equation, then for every $r\in\ZZ$, we have $\EE[f_M(a_1r)\dots f_M(a_kr)]=0$ as $a_1,\dots,a_k$ cannot be partitioned into cancelling pairs. Thus,  for sufficiently large $M$, we have $\EE[\sigma_{L}(f_M)]=\EE[\sum_{r\in\ZZ}f_M(a_1r)\dots f_M(a_kr)]= 0$ while $\PP[\sigma_{L}(f_M)>0]>0$, which means that there is a choice of $f_M$ with $\sigma_{L}(f_M)<0$.

When $L$ is a $2\times k$ linear system, it turns out that a first moment argument is not enough. For example, consider the linear system
\begin{align*}
    L=\begin{pmatrix}
        1 & 0 & -3 & -4\\
        0 & 1 & 1 & 2
    \end{pmatrix},
\end{align*}
so that $$\sigma_L(f_M)=\sum_{r_1,r_2\in\ZZ}f_M(r_1)f_M(r_2)f_M(-3r_1+r_2)f_M(-4r_1+2r_2).$$ 

By the above key observation $(\dagger)$, we get that
\begin{align*}
    \EE[\sigma_L(f_M)]&=|\{(r_1,r_2)\in\ZZ^2:r_1,r_2,-3r_1+r_2,-4r_1+2r_2\in [\![-M,M]\!]\setminus \{0\},\\
    &\hspace{3.15cm}r_1,r_2,-3r_1+r_2,-4r_1+2r_2\text{ can be partitioned into cancelling pairs}\}|\\
    &=|\{(r_1,r_2)\in\ZZ^2:r_1,r_2,-3r_1+r_2,-4r_1+2r_2\in [\![-M,M]\!]\setminus \{0\},\\
    &\hspace{3.08cm}-3r_1+2r_2=0\\
    &\hspace{3.1cm}\text{or }r_1+r_2=-7r_1+3r_2=0\\
    &\hspace{3.1cm}\text{or }-2r_1+r_2=-4r_1+3r_2=0\}|\\
    &=\Theta(M)>0.
\end{align*}
Therefore, to prove \cref{thm:2*k-even}, a first moment argument is insufficient and we need a more careful analysis on the distribution of $\sigma_L(f_M)$. 

\subsection{First attempt: second moment method}

The most natural next step is to try to extract more information about $\sigma_L(f_M)$ from higher moments. For the sake of concreteness, we continue to work with 
\[ L = \begin{pmatrix} 1 & 0 & -3 & -4 \\ 0 & 1 & 1 & 2 \end{pmatrix}.\]

First we compute the variance of $\sigma_L(f_M)$. Define
\begin{align*}
    A_M&=\{(r_1,r_2)\in\ZZ^2: r_1,r_2,-3r_1+r_2,-4r_1+r_2\in [\![-M,M]\!]\setminus \{0\},\\
    &\hspace{3.08cm}r_1,r_2,-3r_1+r_2,-4r_1+r_2\text{ cannot be partitioned into cancelling pairs}\},\\
    B_M&=\{(r_1,r_2)\in\ZZ^2: r_1,r_2,-3r_1+r_2,-4r_1+r_2\in [\![-M,M]\!]\setminus \{0\},\\
    &\hspace{3.08cm}r_1,r_2,-3r_1+r_2,-4r_1+r_2\text{ can be partitioned into cancelling pairs}\}.
\end{align*}
Then we know that
\begin{enumerate}
    \item $\sigma_L(f_M)=\sum_{(r_1,r_2)\in A_M\cup B_M}f_M(r_1)f_M(r_2)f_M(-3r_1+r_2)f_M(-4r_1+2r_2)$;
    \item If $(r_1,r_2)\in B_M$,  then $f_M(r_1)f_M(r_2)f_M(-3r_1+r_2)f_M(-4r_1+2r_2)$ is constant 1;
    \item If $(r_1,r_2)\in A_M$,  then $\EE[f_M(r_1)f_M(r_2)f_M(-3r_1+r_2)f_M(-4r_1+2r_2)]=0$.
\end{enumerate}
Thus, we have
\begin{align*}
    \text{Var}[\sigma_L(f_M)]
    &=\EE[(\sum_{(r_1,r_2)\in A_M} f_M(r_1)f_M(r_2)f_M(-3r_1+r_2)f_M(-4r_1+2r_2))^2]\\
    &= \sum_{(r_1,r_2),(r_1',r_2')\in A_M} \EE [ f_M(r_1)f_M(r_2)f_M(-3r_1+r_2)f_M(-4r_1+2r_2)\\
        &\hspace{3.5cm} \cdot f_M(r_1')f_M(r_2')f_M(-3r_1'+r_2')f_M(-4r_1'+2r_2')] \\
        &\overset{(\dagger)}{=} | \{ ((r_1,r_2),(r_1',r_2'))\in A_M^2:  r_1, r_2, -3r_1 + r_2, -4r_1 + 2r_2, r_1', r_2', -3r_1'+r_2', -4r_1' + 2r_2' \\
        &\hspace{4.93cm}\text{ can be partitioned into cancelling pairs} \}| \\
        &\geq  |\{ ((r_1,r_2),(r_1',r_2'))\in A_M^2: r_1' = -r_1, r_2' = -r_2\}|= \Omega(M^2).
\end{align*}

In particular, this implies that $\mathrm{Var}[\sigma_L(f_M)] = \Omega\left(\EE[\sigma_L(f_M)]^2 \right)$, so the second moment argument also does not provide a sufficiently strong anticoncentration statement to give the desired conclusion.

\subsection{Speculating on higher moments}

However, the observation ($\dagger$) suggests a way to compute higher moments. In particular, the $d$-th moment of $\sigma_L(f_M)$ is number of $d$-tuples $((r_1^{(1)}, r_2^{(1)})$, \dots, $(r_1^{(d)}, r_2^{(d)}))\in A_M^d$ such that the union of multisets $\bigcup_{i=1}^{d} \{\!\!\{ r_1^{(i)}, r_2^{(i)}, -3r_1^{(i)} + 2r_2^{(i)}, -4r_1^{(i)} + 2r_2^{(i)} \}\!\!\}$ can be partitioned into cancelling pairs.

When $d=2t$ is even, this number is at least
\[ \binom{2t}{2}\binom{2t-2}{2} \ldots \binom{4}{2}\binom{2}{2}A_M^t = \frac{(2t)!}{2^tt!}(cM)^{2t}. \]
Here $\binom{2t}{2}\binom{2t-2}{2} \ldots \binom{4}{2}\binom{2}{2}$ is the number of ways to pair up $[2t]$ into $t$ pairs $(i_1, i_1')$, \dots, $(i_t, i_t')$, as in each such pairing, we can pick arbitrary $(r_1^{(i_{j})},r_2^{(i_{j})})\in A_M$ and set $(r_1^{(i'_{j})},r_2^{(i'_{j})}) = -(r_1^{(i_{j})},r_2^{(i_{j})})$ to meet the requirement. It is plausible to guess that this is the dominant term in the $(2t)$-th moment, and this ansatz suggests that $\sigma_L(f_M)$ might be close  to a normal random variable with mean $\Theta(M)$ and variance $\Theta(M^2)$. 

However, if we want to prove this by the method of higher moments, then we need to show that the other ways to partition $\bigcup_{i=1}^{d} \{\!\!\{ r_1^{(i)}, r_2^{(i)}, -3r_1^{(i)} + 2r_2^{(i)}, -4r_1^{(i)} + 2r_2^{(i)} \}\!\!\}$ into cancelling pairs correspond to lower order terms $o(M^d)$. There are $\frac{(4d)!}{2^{2d}(2d)!}$ ways to partition this set into pairs; although most of them cannot be a valid partition into cancelling pairs, such  approach becomes computationally messy very quickly. This motivates us to find another way to prove that $\sigma_L(f_M)$ is close in distribution to a normal random variable, with information from low moments only.

\subsection{Second attempt: Stein's method}

As mentioned earlier, we would like to prove that $\sigma_L(f_M)$ is close in distribution to a normal random variable without appealing to high moment calculations. The method of moments fundamentally relies on the fact that a distribution is uniquely determined by its characteristic function. Another approach to distributional approximation is \emph{Stein's method} in which we replace the characteristic function with a characterizing operator.

\begin{lemma}[Stein's lemma]
    Define the functional operator $\cA$ by $\cA f(x) = f'(x) - xf(x)$. If a random variable $Z$ is such that $\EE \cA f(Z) =0$ for all absolutely continuous functions $f$ with $\norm{f'}_{\infty} < \infty$, then $Z \sim \mathcal{N}(0,1)$.
\end{lemma}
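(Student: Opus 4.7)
The plan is to exploit the operator $\cA$ via the associated \emph{Stein equation}. For any bounded Borel function $h\colon \RR \to \RR$, I would consider the first-order linear ODE
\begin{equation*}
f'(x) - x f(x) = h(x) - \EE h(N), \qquad N \sim \mathcal{N}(0,1),
\end{equation*}
and solve it explicitly using the integrating factor $e^{-x^2/2}$, obtaining the unique bounded solution
\begin{equation*}
f_h(x) = e^{x^2/2} \int_{-\infty}^x \bigl(h(t) - \EE h(N)\bigr)\, e^{-t^2/2}\, dt.
\end{equation*}
If I can show that $f_h$ is absolutely continuous with $\snorm{f_h'}_\infty < \infty$, then $f_h$ is an admissible test function in the statement of the lemma, and the hypothesis yields
\begin{equation*}
0 = \EE[\cA f_h(Z)] = \EE[f_h'(Z) - Z f_h(Z)] = \EE h(Z) - \EE h(N).
\end{equation*}

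Thus the plan reduces to (i) verifying the regularity of $f_h$ and (ii) deducing the distributional identity from the equality $\EE h(Z) = \EE h(N)$ over a rich enough class of $h$. For (ii), I would take $h$ to range over bounded continuous functions (or, by an approximation argument, over indicators $\mathbf{1}_{(-\infty, z]}$ for $z \in \RR$), so that the equality of expectations forces the cumulative distribution functions to agree, giving $Z \sim \mathcal{N}(0,1)$.

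The main obstacle is step (i): bounding $\snorm{f_h'}_\infty$. Rearranging the Stein equation gives $f_h'(x) = h(x) - \EE h(N) + x f_h(x)$, so it suffices to bound $x f_h(x)$ uniformly in $x$. For $x \geq 0$, using that $\int_{-\infty}^\infty (h(t) - \EE h(N)) e^{-t^2/2}\,dt = 0$, rewrite $f_h(x) = -e^{x^2/2}\int_x^\infty (h(t)-\EE h(N)) e^{-t^2/2}\, dt$, and apply the Mills-ratio estimate $x \cdot e^{x^2/2} \int_x^\infty e^{-t^2/2}\,dt \leq 1$; an analogous bound handles $x \leq 0$. This yields $\snorm{f_h'}_\infty \leq C \snorm{h}_\infty$ for a universal constant $C$, placing $f_h$ in the admissible class. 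The routine verification that $f_h$ is absolutely continuous (it is in fact $C^1$ where $h$ is continuous, and Lipschitz in general) then completes the regularity check, and the argument above closes the proof.
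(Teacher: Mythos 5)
Your argument is correct and is the canonical proof of Stein's characterization: solve the Stein equation by the integrating factor $e^{-x^2/2}$, verify via the Mills-ratio bound that $f_h$ is absolutely continuous with bounded derivative, and then let $h$ range over bounded continuous functions so that $\EE h(Z)=\EE h(N)$ for all such $h$ forces $Z\sim\mathcal{N}(0,1)$ (restricting to continuous $h$ also sidesteps the issue of $f_h'$ being undefined at a point where $Z$ might have an atom). The paper states this lemma as a classical fact without proof (it is the standard argument found in the cited Stein's-method references), so there is nothing to contrast with; your write-up fills that gap correctly.
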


We say that $\cA$ is the \emph{characterizing operator} of the standard normal. An immediate consequence of the above is that we can get quantitative distributional approximation using the characterizing operator. 

\begin{corollary}[Another form of Stein's lemma]
    Let $W$ be any random variable and $Z\sim\mathcal{N}(0,1)$. For any family of functions $\cH$, we have that 
    \[ \sup_{h \in \cH}[|\EE h(W)- \EE h(Z)|] = \sup_{h \in \cH} |\EE[f_h'(W) - Wf_h(W)]|,\]
    where $f_h'$ is the solution to 
    \[ f_h'(x)- xf_h(x) = h(x) - \EE h(Z).\]
\end{corollary}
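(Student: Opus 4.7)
The plan is to prove this identity by direct substitution, which reduces it to a routine verification of the Stein equation. For each $h \in \cH$ satisfying $\EE|h(Z)| < \infty$, the Stein equation $f_h'(x) - x f_h(x) = h(x) - \EE h(Z)$ is a first-order linear ODE admitting the explicit solution
\[ f_h(x) = e^{x^2/2} \int_{-\infty}^x \bigl(h(t) - \EE h(Z)\bigr) e^{-t^2/2} \, dt, \]
as can be verified by differentiation. The key regularity facts, which one would record as a lemma, are that under standard assumptions on $\cH$ (for instance, $h$ bounded or Lipschitz) one has uniform bounds on $\snorm{f_h}_\infty$ and $\snorm{f_h'}_\infty$, so that $f_h$ qualifies as a legitimate absolutely continuous test function to feed into the previous form of Stein's lemma.

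Next, the main step is to evaluate the Stein equation pointwise at the random variable $W$, yielding
\[ f_h'(W) - W f_h(W) = h(W) - \EE h(Z), \]
and then take expectations on both sides. Since $\EE h(Z)$ is a deterministic constant, this gives $\EE[f_h'(W) - W f_h(W)] = \EE h(W) - \EE h(Z)$. Taking absolute values and the supremum over $h \in \cH$ on both sides yields the desired identity.

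Since the argument is essentially tautological once the ODE is set up, the only real obstacle is verifying the integrability and regularity of $f_h$ and $f_h'$ so that the expectation on the right-hand side is well-defined; this is handled by classical Stein-method estimates on the solution to the Stein equation and is precisely where the choice of the family $\cH$ enters. The value of this reformulation, for our later application to $W = \sigma_L(f_M)$, is that $\sup_{h \in \cH} |\EE[f_h'(W) - W f_h(W)]|$ is typically far more tractable to bound than the left-hand side directly, because $f_h$ and $f_h'$ can be controlled uniformly in $h$ while allowing one to exploit independence structure in $W$ through the $W f_h(W)$ term.
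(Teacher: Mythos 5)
Your proof is correct and is the standard (essentially the only) derivation: evaluate the Stein equation pointwise at $W$, take expectations to get $\EE[f_h'(W) - W f_h(W)] = \EE h(W) - \EE h(Z)$ for each $h$, and pass to absolute values and the supremum, with the only substantive point being the integrability/regularity of $f_h$ and $f_h'$, which you correctly flag and which the classical bounds on the solution of the Stein equation supply. The paper states this corollary without proof as a known fact (deferring quantitative details to the cited reference), so your argument matches the intended justification.
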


A working mathematician's form of the above corollary is the following:

\begin{definition}[Wasserstein distance]
     For random variables $X \sim \mu$ and $Y \sim \nu$ defined on a probability space $\Omega$,  the \emph{Wasserstein distance} between $X,Y$ is defined by 
    \[ d_{\text{Wass}}(X,Y) = \sup  \left\{ \left| \int f d \mu - \int f d \nu  \right| : f\colon \Omega \to \mathbb{R} \text{ is } 1\text{-Lipschitz}  \right\}.\]
\end{definition}

\begin{corollary}[{\cite[Theorem 3.1]{R11}}]\label{cor:Stein}
Define \[\mathcal{D} = \left\{ f\in \mathcal{C}^1(\RR): f' \text{ is absolutely continuous},\, \norm{f}_{\infty} \leq 1,\, \norm{f'}_{\infty} \leq \sqrt{2/\pi}, \,\norm{f''}_{\infty} \leq 2 \right\}.\] 
If $W$ is a random variable and $Z\sim\mathcal{N}(0,1)$, then
\[ d_\mathrm{Wass}(W,Z) \leq \sup_{f \in \mathcal{D}} \left| \EE[f'(W) - Wf(W)] \right|.\]
\end{corollary}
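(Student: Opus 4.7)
The plan is to follow the standard Stein's method approach: unpack the Wasserstein distance, for each 1-Lipschitz test function $h$ solve the associated Stein equation to get a function $f_h$, verify that $f_h$ lies in $\mathcal D$, and then take the supremum. Concretely, by the definition of $d_\mathrm{Wass}(W,Z)$, it suffices to show that for every 1-Lipschitz $h\colon \RR\to\RR$,
\[
\bigl|\EE h(W) - \EE h(Z)\bigr| \;\leq\; \sup_{f\in\mathcal D}\bigl|\EE[f'(W)-Wf(W)]\bigr|.
\]
We would do this by exhibiting an explicit $f_h\in\mathcal D$ such that $f_h'(x)-xf_h(x) = h(x) - \EE h(Z)$ holds pointwise; plugging $W$ into this identity and taking expectations then gives exactly the left-hand side as $\bigl|\EE[f_h'(W)-Wf_h(W)]\bigr|$, which is bounded by the supremum over $\mathcal D$.

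The key step is to write down $f_h$ explicitly via the integrating factor $e^{-x^2/2}$. This yields
\[
f_h(x) \;=\; e^{x^2/2}\int_{-\infty}^{x}\bigl(h(y)-\EE h(Z)\bigr)e^{-y^2/2}\,dy \;=\; -\,e^{x^2/2}\int_{x}^{\infty}\bigl(h(y)-\EE h(Z)\bigr)e^{-y^2/2}\,dy,
\]
where the two representations are equal because $\EE h(Z)=\int h(y)\varphi(y)\,dy$ with $\varphi$ the standard Gaussian density. A direct check shows $f_h\in \cC^1(\RR)$, that $f_h'$ is absolutely continuous, and that $f_h$ satisfies the Stein equation; in particular $f_h''(x) = h'(x) + f_h(x) + xf_h'(x)$ almost everywhere, which we will use to estimate $\|f_h''\|_\infty$.

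The main work, and the main obstacle, is establishing the three sharp uniform bounds $\|f_h\|_\infty \leq 1$, $\|f_h'\|_\infty \leq \sqrt{2/\pi}$, and $\|f_h''\|_\infty \leq 2$ assuming only that $h$ is 1-Lipschitz. For the first two, I would split at $x$ and use whichever of the two integral representations is smaller, then bound $|h(y)-\EE h(Z)|$ by the Lipschitz constant times $|y-Z|$ and integrate against the Gaussian tail; the quantity $\sqrt{2/\pi}$ appears as the $L^1$ norm of $\varphi'$, which is the source of that sharp constant. The bound $\|f_h''\|_\infty\le 2$ is the most delicate: using the identity $f_h''=h'+f_h+xf_h'$ one gets $|h'|\le 1$ a.e., so the difficulty is controlling $|f_h(x)+xf_h'(x)|\le 1$ uniformly in $x$. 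This requires more care: one writes $f_h(x)+xf_h'(x) = \frac{d}{dx}(xf_h(x))$ and expresses it using the explicit formula for $f_h$, then uses integration by parts together with the fact that $h$ is Lipschitz (so $h-\EE h(Z)$ has a bounded derivative in the distributional sense).

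Finally, once $f_h \in \mathcal D$ is established, taking the supremum over 1-Lipschitz $h$ on the left and noting the containment of $\{f_h : h \text{ is } 1\text{-Lipschitz}\}$ in $\mathcal D$ on the right yields the claimed inequality. I expect no obstacle in this concluding step; the entirety of the content is in the bounds on $f_h$ and its derivatives, which are standard but nontrivial estimates on the Stein solution.
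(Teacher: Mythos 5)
The paper does not actually prove this statement—it is quoted verbatim from Ross's survey (\cite[Theorem 3.1]{R11})—so any self-contained argument is by definition a different route; yours is the standard one underlying Ross's proof (solve the Stein equation $f_h'(x)-xf_h(x)=h(x)-\EE h(Z)$ for each $1$-Lipschitz $h$ via the integrating factor $e^{-x^2/2}$, verify $f_h\in\mathcal D$, and take suprema), and at the level of structure it is correct and complete.

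The one place where your sketch, as written, would not deliver the statement exactly as formulated is the bound $\norm{f_h}_\infty\le 1$. The estimate you propose—bounding $|h(y)-\EE h(Z)|\le \EE|y-Z|$ pointwise and integrating against the Gaussian tail—discards the cancellation between $h(y)$ and $\EE h(Z)$ and only yields a constant strictly larger than $1$ (already at $x=0$ it gives about $\sqrt{2}$; the bound obtained this way is the familiar $\norm{f_h}_\infty\le 2\norm{h'}_\infty$ of the standard references, including Ross's own Lemma 2.4). Since $\mathcal D$ as defined in the corollary imposes $\norm{f}_\infty\le 1$, proving only $\norm{f_h}_\infty\le 2$ gives an inequality with a larger function class on the right, which does not formally imply the stated one. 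The sharp bound is nevertheless true: for fixed $x$, maximizing $|f_h(x)|$ over $1$-Lipschitz $h$ is a linear problem whose value is
\[
e^{x^2/2}\sqrt{2\pi}\Bigl[(1-\Phi(x))\int_{-\infty}^{x}\Phi(t)\,dt+\Phi(x)\int_{x}^{\infty}(1-\Phi(t))\,dt\Bigr]=e^{x^2/2}\sqrt{2\pi}\,\varphi(x)=1,
\]
using $\int_{-\infty}^{x}\Phi=x\Phi(x)+\varphi(x)$ and $\int_{x}^{\infty}(1-\Phi)=\varphi(x)-x(1-\Phi(x))$; equality is attained by linear $h$, for which $f_h\equiv\pm1$. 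So either carry out this sharper computation (or an equivalent known argument), or settle for the constant $2$ and enlarge $\mathcal D$ accordingly—harmless for the paper's application in \cref{lem:dep-stein}, where the absolute constants are irrelevant, but needed if you want the corollary verbatim. The bounds $\norm{f_h'}_\infty\le\sqrt{2/\pi}$ and $\norm{f_h''}_\infty\le2$, and your route to them via $f_h''=h'+f_h+xf_h'$ and controlling $|f_h(x)+xf_h'(x)|\le\norm{h'}_\infty$, are the classical estimates and fine as sketched.
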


Note that in our previous example,  two variables $f_M(r_1)f_M(r_2)f_M(-3r_1+r_2)f_M(-4r_1+2r_2)$, $f_M(r_1')f_M(r_2')f_M(-3r_1'+r_2')f_M(-4r_1'+2r_2')$ are independent unless $r_1,r_1',r_2,r_2'$ satisfy certain linear relations. This is also the reason why we would expect \cref{cor:Stein} to be  useful for us: it allows us to quantify the intuitive idea that a sum of only locally dependent random variables  is approximately normal. More precisely, we have the following consequence of \cref{cor:Stein}, which we formalize using the notion of \textit{dependency graph}.  
\begin{definition}[Dependency graph]\label{def:dependency-graph}
    Given a graph $G = (V, E)$ and a collection of random variables $\{X_i \}_{i \in V}$, we say that $G$ is a \emph{dependency graph} for $\{X_i \}_{i \in V}$ if for any disjoint subsets $S,T \subseteq V$ with no edges between $S$ and $T$, the two random vectors $(X_{i})_{i\in S}$ and $(X_{i})_{i\in T}$ are independent. 
    
    For a graph $G=(V,E)$ and vertex $u\in V$, we use $N_u$ to denote the closed neighborhood of $u$; that is, $N_u = \{ v: uv \in E \} \cup \{u \}$. 
\end{definition}

\begin{lemma}\label{lem:dep-stein}
    Let $\{X_v\}_{v \in V}$ be a family of random variables with dependency graph $G=(V,E)$. 
    Let $\rho = \sqrt{\mathrm{Var}\left( \sum_v X_v \right)}$ and $W = \rho^{-1}\sum_{ v \in V} X_v$. Suppose $\EE X_v = 0$ for all $v \in V$. Then for $Z \sim \mathcal N(0,1)$ and any $x\in\RR$, we have 
\begin{align*}
    &|\PP[W \leq x] - \PP[Z \leq x]|^2 \\
    &\leq 2/\pi\cdot \rho^{-3} \sum_{u \in V} \left( 3 \sum_{v,w \in N_u}\left| \EE [X_uX_vX_w] \right|  + 4 \sum_{v \in N_u} | \EE [X_uX_v]|\cdot \EE \left[ \left| \sum_{w \in N_u \cup N_v} X_w  \right| \right]  \right). 
\end{align*}

\end{lemma}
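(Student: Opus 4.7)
The plan is to apply Stein's method via \cref{cor:Stein} to bound the Wasserstein distance $d_\mathrm{Wass}(W, Z)$, and then convert to the Kolmogorov distance using that the density of $Z\sim\mathcal{N}(0,1)$ is bounded by $1/\sqrt{2\pi}$. Specifically, the standard inequality $\sup_x |F(x)-G(x)|^2 \leq 2\|G'\|_\infty\cdot d_\mathrm{Wass}(F,G)$, applied with $G=\Phi$, gives $|\PP[W\leq x] - \PP[Z\leq x]|^2 \leq \sqrt{2/\pi}\cdot d_\mathrm{Wass}(W, Z)$. The remaining $\sqrt{2/\pi}$ factor in the stated constant $2/\pi$ comes from the bound $\|f'\|_\infty \le \sqrt{2/\pi}$ built into the test class $\mathcal{D}$ from \cref{cor:Stein}.

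For each $u\in V$, set $Y_u = \sum_{v\in N_u}X_v$ and $W_u = W - \rho^{-1}Y_u$. The dependency graph hypothesis implies that $X_u$ is independent of $W_u$, so $\EE[X_u f(W_u)] = 0$, and hence
\[ \EE[Wf(W)] = \rho^{-1}\sum_{u\in V}\EE[X_u(f(W) - f(W_u))]. \]
I would then Taylor-expand $f(W) = f(W_u) + \rho^{-1}Y_u f'(W_u) + \tfrac{1}{2}\rho^{-2}Y_u^2 f''(\xi_u)$. The quadratic remainder, after expanding $Y_u^2 = \sum_{v,w\in N_u}X_vX_w$ and using $\|f''\|_\infty \leq 2$, contributes the first sum $\sum_{v,w\in N_u}|\EE[X_uX_vX_w]|$ in the stated bound. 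The same independence argument also gives the identity $\rho^2 = \sum_u \EE[X_uY_u]$, so I can write $\EE[f'(W)] = \rho^{-2}\sum_u \EE[X_uY_u]\cdot\EE[f'(W)]$ and rearrange $\EE[f'(W)] - \EE[Wf(W)]$ to pair naturally with the Taylor expansion above.

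The main obstacle is the residual linear contribution $\rho^{-2}\sum_u\bigl(\EE[X_uY_u]\EE[f'(W)] - \EE[X_uY_uf'(W_u)]\bigr)$, because $X_uY_u=\sum_{v\in N_u}X_uX_v$ is not independent of $W_u$: a vertex in $N_u$ may have neighbors outside $\{u\}\cup N_u$, so information leaks through second neighborhoods. To bypass this, for each $v\in N_u$ I would introduce $W_{u,v}=W-\rho^{-1}\sum_{w\in N_u\cup N_v}X_w$, which by the dependency graph is independent of the pair $(X_u,X_v)$. Writing $f'(W)-f'(W_{u,v}) = \int_{W_{u,v}}^W f''(t)\,dt$ and using $\|f''\|_\infty \leq 2$, the perturbation is controlled by $|\EE[X_uX_v]|\cdot \EE\bigl|\sum_{w\in N_u\cup N_v}X_w\bigr|$, producing the second sum in the stated bound. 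Collecting the two contributions inside $\sup_{f\in\mathcal{D}}|\EE[f'(W)-Wf(W)]|$ and applying the Wasserstein-to-Kolmogorov conversion gives the claimed inequality with constant $2/\pi$.
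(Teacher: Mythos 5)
Your overall strategy is the same as the paper's (Stein's lemma via \cref{cor:Stein}, local-independence decoupling through $W_u$ and $W_{u,v}$, Taylor expansion, then a Kolmogorov--Wasserstein conversion), but as written there is a concrete missing step in the treatment of the linear term. To exploit the independence of $(X_u,X_v)$ from $W_{u,v}$ you must compare $\EE[X_uX_vf'(W_u)]$ with $\EE[X_uX_vf'(W_{u,v})]=\EE[X_uX_v]\,\EE[f'(W_{u,v})]$; your proposal only accounts for the other replacement, namely $\EE[f'(W)]$ versus $\EE[f'(W_{u,v})]$. The leftover error $\EE\bigl[X_uX_v\bigl(f'(W_u)-f'(W_{u,v})\bigr)\bigr]$, which is controlled by $\norm{f''}_\infty\rho^{-1}$ times third-moment quantities over $w\in N_v\setminus N_u$, is not zero and must be bounded; it is exactly the paper's term $(\textrm{II})$, and it is where two of the three units in the coefficient $3$ come from. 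Your accounting attributes the entire first sum to the quadratic Taylor remainder, which by itself only yields coefficient $1$ (via $\tfrac12\norm{f''}_\infty\le 1$); without estimating the residual above, the argument is incomplete.

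The constant bookkeeping is also not right. The conversion you invoke, $\sup_x|\PP[W\le x]-\PP[Z\le x]|^2\le 2\norm{\Phi'}_\infty\, d_{\mathrm{Wass}}=\sqrt{2/\pi}\, d_{\mathrm{Wass}}$, is the generic density-bound inequality; the paper instead uses the sharper normal-specific bound $|\PP[W\le x]-\PP[Z\le x]|\le\sqrt{2\,d_{\mathrm{Wass}}/\pi}$ from \cite{BC05}, which is what produces the factor $2/\pi$. Your claim that the remaining $\sqrt{2/\pi}$ is supplied by $\norm{f'}_\infty\le\sqrt{2/\pi}$ has no mechanism behind it: $\norm{f'}_\infty$ never enters the remainder estimates (both the Taylor and mean-value errors are bounded using $\norm{f''}_\infty\le 2$), so that factor does not multiply the Stein-side bound. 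Consequently, even after repairing the missing term (which restores the triple-sum coefficient $3$), your route proves the inequality only with $\sqrt{2/\pi}$ in place of $2/\pi$, which is strictly weaker than the stated lemma; for the paper's application any absolute constant would do, but as a proof of the statement as given, the constant is not justified.
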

A proof of \cref{lem:dep-stein} (which is effectively a modified version of \cite[Theorem 3.1]{BRS89}) is provided in \cref{app:stein}.

\subsection{Proof of \cref{thm:2*k-even}}

Suppose $k\geq 4$ is even, and $L$ is a $2\times k$ linear system with $s(L)=k-1$.
To prove \cref{thm:2*k-even}, by \cref{thm:fourier-template}, it suffices to construct a Fourier template $g:\ZZ\to\CC$ such that $\sigma_L(g)<0$.

Suppose $L$ takes the form
\begin{align*}
    L=\begin{pmatrix}
        a_1&\dots&a_k\\
        b_1&\dots&b_k
    \end{pmatrix}.
\end{align*}
Since $s(L)=k-1$, the column vectors $(a_1,b_1),\dots,(a_k,b_k)\in\ZZ^2$ are pairwise linearly independent.
For any Fourier template $g \colon \ZZ \to \CC$, we have
\begin{align*}
    \sigma_L(g) = \sum_{r_1, r_2 \in \ZZ} g(a_1r_1+b_1r_2)  \cdots g(a_kr_1 + b_kr_2).
\end{align*}

The key idea for proving \cref{thm:2*k-even} is to combine \cref{thm:fourier-template} with a probabilistic argument.
\begin{proposition}\label{prop:CLT}

    Let $L$ be a $2 \times k$ linear system with $k\geq 4$ even and  $s(L) = k-1$.
    For every $M \in \NN$, define the random Fourier template $f_M\colon \ZZ \to \CC$ by \[\begin{cases} f_M(r) = \xi_r &r \in [\![-M,M]\!]\setminus\{0\} \\ f_M(r) = 0 &\text{otherwise,}  \end{cases}\]
where every $\xi_r$ is a uniformly random unit complex number, subject to $\xi_r = \overline{\xi_{-r}}$ but independent otherwise. 
Then  $\PP[\sigma_L(f_M)<0]>0$ for all sufficiently large $M$.
\end{proposition}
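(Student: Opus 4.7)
I plan to apply Lemma~\ref{lem:dep-stein} (Stein's method for locally dependent random variables) to show that $\sigma_L(f_M)$ is approximately Gaussian, with both mean and standard deviation of order $M$. Writing $\sigma_L(f_M) = \sum_{v \in V} X_v$ with $X_v := \prod_{j=1}^k f_M(a_j r_1 + b_j r_2)$ for $v = (r_1, r_2)$ in the (bounded) active region of $\ZZ^2$, observation $(\dagger)$ gives $\EE X_v \in \{0, 1\}$: it equals $1$ exactly when the $k$ arguments are nonzero, lie in $[\![-M,M]\!]$, and partition into cancelling pairs. Let $B_M := \{v : \EE X_v = 1\}$ and $A_M := V \setminus B_M$. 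Since $s(L) = k - 1$ implies pairwise linear independence of the columns $(a_j, b_j)$, no perfect matching of columns pairs them cancellingly; hence for each perfect matching $\pi$ of $[k]$, the constraint system $\{(a_i + a_j) r_1 + (b_i + b_j) r_2 = 0 : \{i, j\} \in \pi\}$ has rank at least $1$ and at most $O(M)$ solutions in $(r_1, r_2)$. Summing over the finitely many matchings, $|B_M| = O(M)$ and $\EE[\sigma_L(f_M)] = |B_M| = O(M)$. After pairing $v$ with $-v$ to form the real variables $Y_v := X_v + X_{-v}$ (indexed by representatives $V_+$ of $V/\{v \sim -v\}$), one has $\sigma_L(f_M) = |B_M| + \sum_{v \in V_+ \cap A_M} Y_v$ with the latter summands centered, and the variance $\rho^2 := \mathrm{Var}[\sigma_L(f_M)]$ satisfies $\rho^2 \geq \sum_v \EE Y_v^2 = \Theta(M^2)$; an off-diagonal counting argument (again using pairwise linear independence) gives the matching upper bound, so $\rho = \Theta(M)$.

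\textbf{Dependency graph and Stein bound.} Define the dependency graph $G$ on $V_+ \cap A_M$ with edge $uv$ iff some $a_i r_1^u + b_i r_2^u = \pm(a_j r_1^v + b_j r_2^v) \neq 0$ (so $Y_u$ and $Y_v$ share a $\xi_r$). Each such relation is a nontrivial linear constraint on $v$ given $u$, giving $|N_u| = O(M)$. Apply Lemma~\ref{lem:dep-stein} to the centered sum $W := \rho^{-1} \sum Y_v$; the resulting bound on $|\PP[W \leq x] - \Phi(x)|^2$ is $\tfrac{2}{\pi}\rho^{-3}$ times a pair-moment and a triple-moment term. The pair-moment term is bounded by $O(M^{5/2}) = o(M^3)$, using $\sum_{u, v} |\EE[Y_u Y_v]| = O(M^2)$ and $\EE\left|\sum_{w \in S} Y_w\right| \leq \sqrt{\mathrm{Var}\left(\sum_{w \in S} Y_w\right)} = O(\sqrt{|S|}) = O(\sqrt{M})$ for $|S| = O(M)$. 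For the triple-moment term $\sum_u \sum_{v, w \in N_u} |\EE[Y_u Y_v Y_w]|$, the expectation is nonzero only when the $3k$ Fourier arguments pair cancellingly; since $u \in A_M$ forbids purely internal pairings on $u$, at least one cross-pair involving $u$ is forced, and a case analysis over the finite set of pairing types (each yielding a linear system on $(u, v, w) \in \ZZ^6$ of rank $\geq 4$ for the dominant contributions) gives $o(M^3)$.

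\textbf{Conclusion and main obstacle.} By Lemma~\ref{lem:dep-stein} the standardized sum $W$ converges in distribution to $\mathcal{N}(0, 1)$; since $|\EE[\sigma_L(f_M)]|/\rho = O(1)$,
\[
\PP[\sigma_L(f_M) < 0] = \PP\bigl[W < -\EE[\sigma_L(f_M)]/\rho\bigr] \to \Phi(-c/c') > 0
\]
as $M \to \infty$, where $c := \lim |B_M|/M \geq 0$ and $c' := \lim \rho/M > 0$ (if $c = 0$ this limit is $1/2$). The hardest step is the $o(M^3)$ bound on the triple-moment Stein term. A naive count gives only $O(M^4)$ (from $|A_M| \cdot |N_u|^2$ together with the uniform bound $|\EE[Y_u Y_v Y_w]| \leq 8$), so we must save a factor of $M$ by exploiting the vanishing of $\EE[Y_u Y_v Y_w]$ for most triples: for each pairing pattern $\tau$ of the $3k$ Fourier arguments, the corresponding linear system in $(u, v, w) \in \ZZ^6$ must have rank $\geq 4$ after enforcing $u \in A_M$ (which excludes patterns where $u$'s arguments pair internally). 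This linear algebra bookkeeping, depending crucially on $s(L) = k-1$ through the pairwise linear independence of the columns, is the technical heart of the proof.
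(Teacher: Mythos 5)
Your route is the same as the paper's: split $\sigma_L(f_M)$ into the deterministic part $|B_M|=O(M)$ plus a centered sum of locally dependent variables, show the variance is $\Omega(M^2)$, and feed the dependency graph into \cref{lem:dep-stein} to get an approximate CLT, whence $\PP[\sigma_L(f_M)<0]>0$. The problem is the step you yourself label the technical heart: the $o(M^3)$ bound on the triple-moment term $\sum_u\sum_{v,w\in N_u}\left|\EE[X_uX_vX_w]\right|$. You assert that for every pairing pattern of the $3k$ Fourier arguments the associated linear system in $(u,v,w)\in\ZZ^6$ has rank at least $4$, offering as justification only pairwise linear independence of the columns together with ``$u\in A_M$ forbids a purely internal pairing of $S_u$,'' and you do not carry out the case analysis. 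That one-line justification is not sufficient, and the verification is exactly the content of the paper's \cref{claim:dependency-properties}(3): one needs (i) a parity argument using that $k$ is \emph{even} to upgrade ``at least one cross-matched coordinate'' to ``at least two'' for each of $S_u,S_v,S_w$ (and for $v,w$ this uses that they too lie in $A_M$ and that all arguments are in range, since nonzero expectation forces $S_u\cup S_v\cup S_w\subseteq[\![-M,M]\!]\setminus\{0\}$); (ii) a dichotomy according to whether some pair of sets is joined by at least two cross edges, or the cross edges form a triangle, in which latter case one needs $k\ge 4$ to produce an internal cancelling pair inside each set; and (iii) the nonvanishing of the arguments to pin down $v$ and $w$ given $u$ when a cross equation is parallel to an internal one. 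Without this bookkeeping the claimed $o(M^3)$ bound, and hence the entire Stein estimate, is unproved, so what you have is an outline of the paper's argument rather than a proof.

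A secondary unjustified step: in the pair-moment term you bound $\EE\bigl|\sum_{w\in N_u\cup N_v}Y_w\bigr|$ by $O(\sqrt{M})$ via ``$\mathrm{Var}=O(|S|)$ for $|S|=O(M)$.'' The variance of a sum of \emph{dependent} bounded variables is not automatically linear in the number of summands; a priori it is $O(|S|^2)=O(M^2)$, which would only give $O(M)$ here and would make the pair-moment contribution $O(\rho^{-3}M^3)=O(1)$, not $o(1)$. What rescues it is the local correlation bound that each $w_1$ has only $O(1)$ partners $w_2$ with $\EE[X_{w_1}\overline{X_{w_2}}]\neq 0$ (the paper's \cref{claim:dependency-properties}(2) and (4), again via pairwise linear independence). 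Since you invoke essentially this count elsewhere, this is a presentation gap rather than a fatal one; the missing triple-moment analysis above is the substantive omission.
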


\cref{thm:2*k-even} then directly follows from \cref{prop:CLT} and \cref{thm:fourier-template}.
\begin{proof}[Proof of \cref{thm:2*k-even}]
    Let $L$ be a $2 \times k$ linear system with $k\geq 4$ even and  $s(L) = k-1$. By \cref{prop:CLT}, there exists a Fourier template $g:\ZZ\to\CC$ such that $\sigma_L(g)<0$. By \cref{thm:fourier-template}, $L$ is uncommon over $\FF_p$ for all sufficiently large $p$.
\end{proof}

It remains to show \cref{prop:CLT}.

\begin{proof}[Proof of \cref{prop:CLT}]
Fix a $2 \times k$ linear system $L$ with $k\geq 4$ even and  $s(L) = k-1$. Choose $\lambda\in\NN$ sufficiently large such that, for all $M\in\NN$, if $ a_1r_1+b_1r_2$, \dots, $a_kr_1+b_kr_2\in [\![-M,M]\!]$, then $-\lambda M\leq r_1,r_2\leq \lambda M$. 

Let $N=\lambda M$. Since $f_M$ is always supported on $[\![-M,M]\!]\setminus\{0\}$, we have
\begin{align*}
    \sigma_L(f_M)&=\sum_{r_1,r_2\in\ZZ} f_M(a_1r_1+a_1r_2)\cdots f_M(a_kr_1+b_kr_2)
    =\sum_{ r_1,r_2\in[\![-N,N]\!]} f_M(a_1r_1+a_1r_2)\cdots f_M(a_kr_1+b_kr_2).
\end{align*}
For every multiset $S\subseteq \ZZ$, define the associated random variable
$$X_{S} = \prod_{s\in S} f_M(s),$$
where the product goes over all $s\in S$ counting multiplicities. Note that $X_S$ has the following properties:
\begin{enumerate}
\item[(i)] $X_SX_{S'}=X_{S\cup S'}$.
    \item[(ii)] 
    \begin{align*}
    \begin{cases}
        X_S=1 & S\subseteq  [\![-M,M]\!]\setminus\{0\}, \, S\text{ can be partitioned into cancelling pairs,}\\
        \EE X_{S}=0 &\text{otherwise.}
    \end{cases} 
\end{align*}
    \item[(iii)] Let $\mathcal S$ and $\mathcal S'$ be two collections of subsets of $\ZZ$. If  $\pm \bigcup_{S\in \mathcal S} S$ and  $\pm \bigcup_{ S'\in \mathcal S'} S'$ are disjoint, then the two random vectors $(X_{S})_{{S}\in \mathcal S}$ and $(X_{S})_{{S}\in \mathcal S'}$ are independent.
\end{enumerate}

For every $(r_1,r_2)\in[\![-N,N]\!]^2$, define the associated multiset 
\[
S_{(r_1,r_2)}=\{\!\!\{a_1r_1+b_1r_2,\dots,a_kr_1+b_kr_2\}\!\!\}.
\]
We simply write $X_{(r_1,r_2)}$ to denote $X_{S(r_1,r_2)}$. Thus, we have
\begin{align*}
    \sigma_L(f_M)&=\sum_{ r_1,r_2\in[\![-N,N]\!]} f_M(a_1r_1+a_1r_2)\cdots f_M(a_kr_1+b_kr_2)
=\sum_{ r_1,r_2\in [\![-N,N]\!]}X_{(r_1,r_2)}.
\end{align*}
Consider the partition $[\![-N,N]\!]^2=A_N\cup B_N$ defined by
\begin{align*}
    A_N&=\{(r_1,r_2)\in[\![-N,N]\!]^2: S_{(r_1,r_2)}\not\subseteq[\![-M,M]\!]\setminus\{0\}\\
    &\hspace{4.25cm}\text{or }S_{(r_1,r_2)}\text{ cannot be partitioned into cancelling pairs} \},\\ 
    B_N&=\{(r_1,r_2)\in[\![-N,N]\!]^2:  S_{(r_1,r_2)}\subseteq[\![-M,M]\!]\setminus\{0\}\\&\hspace{4.25cm}\text{and }S_{(r_1,r_2)}\text{ can be partitioned into cancelling pairs}\}.
\end{align*}

For all $(r_1,r_2)\in B_N$, we always have $X_{(r_1,r_2)}=1$. Therefore we have
$$\sigma_L(f_M)=|B_N|+\sum_{ (r_1,r_2)\in A_N}X_{(r_1,r_2)}.$$

We study the distribution of $\sum_{ (r_1,r_2)\in A_N}X_{(r_1,r_2)}$ using \cref{lem:dep-stein}. Let $G$ be a dependency graph for the family of variables $\{X_{(r_1,r_2)}:(r_1,r_2)\in A_N\}$, defined by
\begin{align*}
    V(G)&=A_N,\\
    E(G)&=\{\{(r_1,r_2),(r_1',r_2')\}: \pm S_{(r_1,r_2)}\cap \pm S_{(r_1',r_2')}\neq\emptyset\}\subseteq \binom{A_N}{2}.
\end{align*}
Recall from \cref{def:dependency-graph} that $N_{(r_1,r_2)}$ denotes the closed neighborhood of $(r_1,r_2)$ in $G$, which in this case equals 
\[
\{(r_1',r_2')\in A_N:\pm S_{(r_1,r_2)}\cap \pm S_{(r_1',r_2')}\neq\emptyset\}.
\]
We will verify the following properties of $\sum_{ (r_1,r_2)\in A_N}X_{(r_1,r_2)}$. In the asymptotic bounds below, we allow the constants in $O(\cdot)$ and $\Omega(\cdot)$  to depend on $k$, as $L$ is fixed.
\begin{claim}
\label{claim:dependency-properties}\quad
\textup{
  \begin{enumerate}
    \item $\EE[\sum_{ u\in A_N}X_{u}]=0$, and $\textup{Var}(\sum_{ u\in A_N}X_{u})=\Omega(N^2)$;
    \item For every $u\in A_N$, 
    $\sum_{v\in A_N}\EE[X_{u}X_{v}]=O(1)$;
    \item $\sum_{u,v,w\in A_N}\EE[X_{u}X_{v}X_{w}]=O(N^2);$
    \item For every $u,v\in A_N$, 
    $\sum_{w_1,w_2\in N_u\cup N_{v}}\EE[X_{w_1}\overline{X_{w_2}}]=O(N)$.
\end{enumerate}  }
\end{claim}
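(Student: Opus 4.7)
The plan is to establish all four parts via a single combinatorial framework. The key identity, used throughout, is that for any multisets $S_{u_1},\ldots,S_{u_t}$ one has $\EE[X_{u_1}\cdots X_{u_t}]\in\{0,1\}$, and it equals $1$ precisely when every element of $\bigcup_i S_{u_i}$ lies in $[\![-M,M]\!]\setminus\{0\}$ and the multiset admits a partition $\pi$ into cancelling pairs $\{s,-s\}$. Each such partition yields a linear system on the variables $(r_1^{(i)},r_2^{(i)})_{i=1}^{t}$, and the number of valid tuples is controlled by a lattice-point count on the solution subspace. Throughout, the pairwise linear independence of the columns $(a_i,b_i)$ of $L$ (which follows from $s(L)=k-1$) is used to ensure that any two cross-pair equations uniquely determine one block's variables from the others.

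For claim (1), $\EE X_u=0$ holds for $u\in A_N$ by construction. Since $Y:=\sum_{u\in A_N}X_u$ is real (as $A_N=-A_N$ and $\overline{X_u}=X_{-u}$), we have $\mathrm{Var}(Y)=\sum_{u,v\in A_N}\EE[X_uX_{-v}]$; each summand lies in $\{0,1\}$, hence is nonnegative, so the variance is bounded below by the diagonal contribution $\sum_{u\in A_N}\EE[|X_u|^2]$, which counts $u\in A_N$ with good support. The number of $u\in[\![-N,N]\!]^2$ with good support is $\Theta(N^2)$, and at most $O(N)$ of these have $S_u$ partitionable into cancelling pairs: there are $O_k(1)$ possible partitions, each imposing $k/2\ge 2$ nontrivial linear constraints of the form $(a_i+a_j)r_1+(b_i+b_j)r_2=0$ (the coefficient vector is nonzero since no two columns of $L$ are negatives of each other), cutting the $[\![-N,N]\!]^2$ grid down to $O(N)$ points. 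Hence $\mathrm{Var}(Y)=\Omega(N^2)$.

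Claims (2) and (4) go together. For claim (2), if $u$ has bad support then $X_u=0$ and the sum vanishes; otherwise $S_u$ is not partitionable, so every partition of $S_u\cup S_v$ into cancelling pairs has at least one cross-pair between the two blocks. Since $k$ is even, the number of cross-pairs is also even, hence at least $2$, and these two cross-pair equations uniquely determine $v$ from $u$ by pairwise column independence. Each of the $O_k(1)$ partitions thus gives at most one valid $v$. For claim (4), write $\EE[X_{w_1}\overline{X_{w_2}}]=\EE[X_{w_1}X_{-w_2}]$; since $-w_2\in A_N$, claim (2) bounds $\sum_{w_1\in A_N}\EE[X_{w_1}X_{-w_2}]$ by $O(1)$, and a fortiori the same holds for the subsum with $w_1\in N_u\cup N_v\subseteq A_N$. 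Furthermore, $N_u$ is contained in a union of $O(k^2)$ lines in $\ZZ^2$ (one per equation $a_ir_1+b_ir_2=\pm(a_jr_1'+b_jr_2')$ in $v$'s variables), so $|N_u\cup N_v|=O(N)$, and summing the $O(1)$ bound over $w_2$ yields $O(N)$.

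Claim (3) is the main obstacle. We enumerate over the $O_k(1)$ partitions $\pi$ of $S_u\cup S_v\cup S_w$ into $3k/2$ cancelling pairs; each yields a linear system of $3k/2$ equations in the $6$ unknowns $(u,v,w)$, whose solution subspace of rank $r$ contributes $O(N^{6-r})$ lattice points in $[\![-N,N]\!]^6$. The constraints $u,v,w\in A_N$ force each block's internal-pair count $l_X<k/2$, so each block contributes at least $2$ cross-pair elements by parity. The crux is to prove $r\ge 4$ for every valid partition. In the prototypical case $e_{UV},e_{UW}\ge 2$, two UV cross-pair equations already express $v$ as a linear function of $u$, and two UW equations similarly express $w$; the corresponding $4\times 6$ submatrix has full rank $4$ (its UV and UW rows together span a $4$-dimensional space, via their disjoint nonzero projections onto columns $3,4$ and $5,6$ respectively), giving $O(N^2)$ solutions for this partition. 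In each remaining configuration $(l_U,l_V,l_W,e_{UV},e_{UW},e_{VW})$ a similar rank-$4$ argument applies by combining the available cross-pair equations with the internal-pair equations and pairwise column independence. The main technical hurdle is uniformly enumerating and verifying these subcases.
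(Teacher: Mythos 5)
Your treatments of parts (1), (2) and (4) are correct and essentially follow the paper's own arguments: (1) is the same diagonal lower bound on the second moment (your extra count showing that only $O(N)$ good-support points have partitionable $S_u$ is a harmless elaboration), (2) is the paper's two-cross-pair argument with the even-parity observation made explicit, and (4) is a mild repackaging of the paper's proof (apply (2) with $u=-w_2$ and use $\lvert N_u\cup N_v\rvert=O(N)$, each $N_u$ lying on $O(k^2)$ lines), which works.

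The gap is in part (3). You verify the rank-$4$ bound only in the configuration where two cross-pairs join $S_u$ to $S_v$ and two join $S_u$ to $S_w$, and then assert that ``a similar rank-4 argument applies'' to all remaining configurations, explicitly deferring their enumeration as ``the main technical hurdle.'' But those remaining configurations are exactly where the work lies, and your prototypical computation does not transfer. Up to permuting blocks there are two cases: (i) some pair of blocks is joined by at least two cross-pairs, where a projection argument like your prototype does give rank $4$ (two cross rows with independent projections onto one block's coordinates, together with the two cross rows incident to the third block); and (ii) every pair of blocks is joined by exactly one cross-pair. In case (ii) the three cross equations alone have rank only $3$, so any rank bound must bring in the internal-pair equations and rule out the degeneration in which all cross rows lie in the span of the internal rows. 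The paper handles (ii) by a different mechanism: it uses one internal pair inside each of $S_v$ and $S_w$ together with the nonvanishing (good support) of the cross values $a_ir_1+b_ir_2$, $a_jr_1+b_jr_2$ to pin $v$ and $w$ down to at most one choice each for a given $u$; your sketch never invokes this nonvanishing, and ``similar rank-4 argument'' hides precisely this step. The claim you need is in fact true purely at the level of rank --- if each block's internal rows spanned only one direction and all cross rows lay in their span, then the two distinct cross columns attached to $S_u$ would both be parallel to that single direction, contradicting the pairwise linear independence of the columns of $L$; and if some block's internal rows have rank $2$, the internal rows alone already give rank $\geq 4$ --- but some such argument (or the paper's Case~2 argument) must actually be carried out for each configuration. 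As written, part (3) is asserted rather than proved.
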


We first prove \cref{prop:CLT} assuming \cref{claim:dependency-properties}.  Let $Z\sim \mathcal N(0,1)$ be the normal distribution and $\rho=\sqrt{\text{Var}(\sum_{u\in A_N}X_{u})}=\Omega(N)$ (per \cref{claim:dependency-properties}(1)). We know from \cref{claim:dependency-properties}(4) that
\begin{enumerate}
    \item [(5)] For every $u,v\in A_N$, by applying the Cauchy--Schwarz inequality, we have
    \begin{align*}
    (\EE | \sum_{w\in N_{u}\cup N_{v}} X_{w}|)^2&\leq \EE[|\sum_{w\in N_{u}\cup N_{v}} X_{w}|^2] 
    =\EE[(\sum_{w\in N_{u}\cup N_{v}} X_{w})(\sum_{w\in N_{u}\cup N_{v}} \overline{X_{w}})] \\
    &= \sum_{w_1,w_2\in N_u\cup N_{v}}\EE [X_{w_1}\overline{X_{w_2}}]=O(N).
\end{align*}
\end{enumerate}
Let  $W=\rho^{-1}\sum_{u\in A_N}X_{u}$. By \cref{lem:dep-stein}, for $Z\sim \mathcal N(0,1)$ and all $x\in\RR$, we have
\begin{align*}
    &\pi/2\cdot |\PP[W\leq x]-\PP[Z\leq x]|^2\\
    &\leq \rho^{-3} \sum_{u \in A_N}( 3 \sum_{v,w \in N_u} |\EE [X_uX_vX_w]|  + 4 \sum_{v \in N_u} |\EE [X_uX_v]|\cdot   \EE|  \sum_{w \in N_u \cup N_v}  X_w |  )\\
    &\overset{(*)}{\leq}\rho^{-3}(3\sum_{u,v,w\in A_N}\EE[X_uX_vX_w]+4\sum_{u,v\in A_N}\EE[X_uX_v]\cdot \EE|  \sum_{w \in N_u \cup N_v}  X_w | ) \\
    &\leq O(N^{-3})(O(N^2)+O(N^{1/2})\sum_{u,v\in A_N}\EE [X_uX_v]) \tag*{(1)(3)(5)}\\
    &\leq O(N^{-3})(O(N^2)+O(N^{5/2}))\tag*{(2)}=O(N^{-1/2}),
\end{align*}
where $(\ast)$ is due to the fact that (recall properties (i)(ii) at the beginning of proof) $$0\leq \EE[X_uX_v], \EE[X_uX_vX_w]\leq 1\qquad \text{for all $u,v,w\in A_N$}.$$
Since $\sigma_L(f_M)=|B_N|+\sum_{v\in A_N}X_v$, we have
\[
\PP(\sigma_L(f_M)<0)=\PP(\sum_{v\in A_N}X_v<-|B_N|)=\PP(W<-|B_N|/\rho),
\]
and therefore
\begin{align*}
    |\PP(\sigma_L(f_M)<0)-\PP(Z<-|B_N|/\rho)|=O(N^{-1/4}).
\end{align*}
Since $|B_N|=O(N)$ and $\rho=\Omega(N)$, we have $|B_N|/\rho=O(1)$. Thus $\PP(Z<-|B_N|/\rho)=c>0$, and therefore $\PP(\sigma_L(f_M)<0)\geq c-O(N^{-1/4})>0$ for all $N$ sufficiently large.
\end{proof}

It remains to verify \cref{claim:dependency-properties}.

\begin{proof}[Proof of \cref{claim:dependency-properties}]\quad

(1) For every $u=(r_1,r_2)\in A_N$, since $S_{(r_1,r_2)}$ cannot be partitioned into cancelling pairs, we have $\EE X_{(r_1,r_2)}=0$.  Moreover, we have
\begin{align*}
    &\EE[(\sum_{u\in A_N}X_{u})^2]=\sum_{u,v\in A_N}\EE[X_{u}X_{v}]=\sum_{u,v\in A_N}\EE[X_{S_u\cup S_v}]\\
    &=|\{(u,v)\in A_N^2: S_{u}\cup S_{v}\subseteq [\![-M,M]\!]\setminus\{0\}, \, S_{u}\cup S_{v}\text{ can be partitioned into cancelling pairs}\}|\\
    &\geq |\{((r_1,r_2),(r_1',r_2'))\in A_N^2: S_{(r_1,r_2)}\subseteq[\![-M,M]\!]\setminus\{0\}, r_1'=-r_1, r_2'=-r_2\}|\\
    &=\Omega(N^2). 
\end{align*}
This gives $\text{Var}(\sum_{u\in A_N}X_{u})=\Omega(N^2)$.

(2) For all $u=(r_1,r_2)\in A_N$, we have
\begin{align*}
    &\sum_{v\in A_N}\EE[X_{u}X_{v}]=\sum_{v\in A_N}\EE[X_{S_u\cup S_v}]\\
&\leq |\{v\in A_N: S_{u}\cup S_{v} \text{ can be partitioned into cancelling pairs}\}|.
\end{align*}
Recall that $S_{u}$ itself cannot be partitioned into cancelling pairs. Therefore, if $S_{u}\cup S_{v}$ can be partitioned into cancelling pairs, then $v=(r_1',r_2')$ must satisfy equations of the form
\begin{align*}
    a_{i'}r_1'+b_{i'}r_2'&=-(a_{i}r_1+b_ir_2)\\
    a_{j'}r_1'+b_{j'}r_2'&=-(a_{j}r_1+b_jr_2)
\end{align*}
with $i\neq j$ and $i'\neq j'$. Since $(a_{i'},b_{i'})$ and $(a_{j'},b_{j'})$ are \textit{linearly independent} due to the girth condition $s(L)=k-1$, every such pair of equations has at most one solution. Moreover, there are at most $k^4$ such pairs. Thus, for every $u\in A_N$, we get that
$$|\{v\in A_N: S_{u}\cup S_{v} \text{ can be partitioned into cancelling pairs}\}|=O(1).$$

Proofs of (3)(4) have similar ideas as in (2).

(3) For every triple $(u,v,w)\in A_N^3$, define a graph $H_{u,v,w}$  on the multiset $S_{u}\cup S_{v}\cup S_{w}$, such that $s_1\sim s_2$ in $H_{u,v,w}$ if and only if $s_1+s_2=0$. Observe that
\begin{align*}
    \sum_{u,v,w\in A_N}\EE[X_{u}X_{v}X_{w}]&=\sum_{u,v,w\in A_N}\EE[X_{S_u\cup S_v\cup S_w}]\\
    &=|\{(u,v,w)\in A_N^3: S_u,S_v,S_w\subseteq[\![-M,M]\!]\setminus\{0\}, \\
    &\hspace{3.5cm} S_{u}\cup S_{v}\cup S_w \text{ can be partitioned into cancelling pairs}\}|\\
    &=|\{(u,v,w)\in A_N^3: S_u,S_v,S_w\subseteq[\![-M,M]\!]\setminus\{0\}, \\
    &\hspace{3.5cm} H_{u,v,w}\text{ has a perfect matching}\}|.
\end{align*}
Note that every graph $H_{u,v,w}$ has $|S_{u}\cup S_{v}\cup S_{w}|=3k$ vertices.
We prove that, for every perfect matching $\mathcal M$ on $3k$ vertices, there are $O(N^2)$  choices of $(u,v,w)\in A_N^3$ such that $S_{u}\cup S_{v}\cup S_{w}\subseteq[\![-M,M]\!]\setminus\{0\}$ and $H_{u,v,w}$ contains $\mathcal M$ as a subgraph. Since there are finitely many perfect matchings on $3k$ vertices, we get that 
\[
|\{(u,v,w)\in A_N^3:  S_u,S_v,S_w\subseteq[\![-M,M]\!]\setminus\{0\},\, H_{u,v,w}\text{ has a perfect matching}\}|=O(N^2).
\]

Fix  a perfect matching $\mathcal M$ on $3k$ vertices. Suppose $(u,v,w)\in A_N^3$ satisfies $S_{u}\cup S_{v}\cup S_{w}\subseteq[\![-M,M]\!]\setminus\{0\}$,  and $H_{u,v,w}$ contains $\mathcal M$ as a subgraph.
For $U,V\subseteq S_{u}\cup S_{v}\cup S_{w}$, let $e_{\mathcal M}(U,V)$ denote the number of edges in $\mathcal M$ whose vertices intersect both $U$  and $V$. Since none of $S_{u}$, $S_{v}$, $S_{w}$ can be partitioned into cancelling pairs, we must have
\begin{align*}
&e_{\mathcal M}(S_{u},\, S_{v}\cup S_{w})\geq 2, \qquad e_{\mathcal M}(S_{v},\, S_{u}\cup S_{w})\geq 2,\qquad e_{\mathcal M}(S_{w},\, S_{u}\cup S_{v})\geq 2.
\end{align*}
Up to permuting $u,v,w$, we have the following two cases.

\textbf{Case 1.} $e_{\mathcal M}(S_{u},S_{v})\geq 2$.
In this case, $\mathcal M$ gives $i\neq j$ and $i'\neq j'$ such that
\begin{align*}
    a_{i'}r_1'+b_{i'}r_2'=-(a_{i}r_1+b_ir_2),\qquad 
    a_{j'}r_1'+b_{j'}r_2'=-(a_{j}r_1+b_jr_2)
\end{align*}
for $u=(r_1,r_2)$ and $v=(r_1',r_2')$. Since $(a_{i'},b_{i'})$ and $(a_{j'},b_{j'})$ are linearly independent, $v=(r_1',r_2')$ is uniquely determined by $u=(r_1,r_2)$. Since $e_{\mathcal M}(S_{w}, S_{u}\cup S_{v})\geq 2$, $\mathcal M$ also gives $i''\neq j''$ and $i^\circ,i^*$ such that
\begin{align*}
    &a_{i''}r_1''+b_{i''}r_2''=-(a_{i^\circ}r_1^{\circ}+b_{i^\circ}r_2^{\circ}),\qquad a_{j''}r_1''+b_{j''}r_2''=-(a_{i^*}r_1^{*}+b_{i^*}r_2^{*})
\end{align*}
for $w=(r_1'',r_2'')$ and $(r_1^\circ,r_2^\circ),(r_1^*,r_2^*)\in\{(r_1,r_2),(r_1',r_2')\}=\{u,v\}$. Since $(a_{i''},b_{i''})$ and $(a_{j''},b_{j''})$ are linearly independent, 
$w=(r_1'',r_2'')$ is uniquely determined by $u=(r_1,r_2)$ and $v=(r_1',r_2')$. Therefore, the number of choices for $u,v,w$ is at most the number of choices for $u=(r_1,r_2)$, which is $O(N^2)$.

\textbf{Case 2.} $
    e_{\mathcal M}(S_{u},S_{v})=e_{\mathcal M}(S_{u},S_{w})=e_{\mathcal M}(S_{v},S_{w})=1.$
In this case, $\mathcal M$ gives $i\neq j$, $i'\neq j'$ and $i''\neq j''$ such that
\begin{align*}
    a_{i'}r_1'+b_{i'}r_2'&=-(a_{i}r_1+b_{i}r_2),\\
    a_{i''}r_1''+b_{i''}r_2''&=-(a_{j}r_1+b_{j}r_2),\\
    a_{j''}r_1''+b_{j''}r_2''&=-(a_{j'}r_1'+b_{j'}r_2').
\end{align*}
Since $S_{u}\cup S_{v}\cup S_{w}\subseteq[\![-M,M]\!]\setminus\{0\}$, none of the above three equations can be 0.

Now since $|S_u|=|S_v|=|S_w|=k\geq 4$,  $\mathcal M$ also has at least one edge within  each of $S_{u},S_{v},S_{w}$. Therefore, $\mathcal M$ in particular gives  $i',k',\ell'$ distinct,  $i'',k'',\ell''$ distinct, and $i\neq j$ such that
\begin{align*}
    &a_{i'}r_1'+b_{i'}r_2'=-(a_{i}r_1+b_{i}r_2),\\
    &a_{i''}r_1''+b_{i''}r_2''=-(a_{j}r_1+b_{j}r_2),\\
    &a_{k'}r_1'+b_{k'}r_2'=-(a_{\ell'}r_1'+b_{\ell'}r_2'),\\
    &a_{k''}r_1''+b_{k''}r_2''=-(a_{\ell''}r_1''+b_{\ell''}r_2'').
\end{align*}
Since
\begin{enumerate}
    \item[(i)] $(a_{k'},b_{k'})$ and $(a_{\ell'},b_{\ell'})$ are  linearly independent,
    \item[(ii)]$(a_{k''},b_{k''})$ and $(a_{\ell''},b_{\ell''})$ are  linearly independent, 
    \item[(iii)]  $a_ir_1+b_ir_2$ 
 and $a_jr_1+b_jr_2$ are nonzero,
\end{enumerate}
we get that $v=(r_1',r_2') $ and $ w=(r_1'',r_2'')$ are uniquely determined by $u=(r_1,r_2)$. Hence the number of choices for $u,v,w$ is at most the number of choices for $u=(r_1,r_2)$, which is $O(N^2)$.

(4) For every $u=(r_1,r_2),v=(r_1',r_2')\in A_N$, we have
\begin{align*}
    &\sum_{\substack{w_1,w_2\in N_u\cup N_v}}\EE[X_{w_1}\overline{X_{w_2}}]=\sum_{\substack{w_1,w_2\in N_u\cup N_v}}\EE[X_{w_1}X_{-w_2}]\\
    &\leq |\{(w_1,w_2)\in (N_{u}\cup N_{v})^2: S_{w_1}\cup S_{-w_2}\text{ can be partitioned into cancelling pairs}\}|.
\end{align*}
Recall that each of $S_{w_1},S_{w_2}$ cannot be partitioned into cancelling pairs. Therefore, if $S_{w_1}\cup S_{-w_2}$ can be partitioned into cancelling pairs, then $w_1=(r_1'',r_2'')$, $w_2=(r_1''',r_2''')$ must satisfy two equations
\begin{align*}
    a_{i''}r_1''+b_{i''}r_2''=-(a_{i'''}r_1'''+b_{i'''}r_2'''),\qquad 
    a_{j''}r_1''+b_{j''}r_2''=-(a_{j'''}r_1'''+b_{j'''}r_2''')
\end{align*}
with $i''\neq j''$ and $i'''\neq j'''$. Again, by linear independence of the columns of $L$, for fixed choices of $i'',j'',i''',j'''$, each one of $w_1,w_2$ will uniquely determine another.

We now look at the number of possible choices for $w_1$. Since $w_1\in N_{u}\cup N_{v}$, by \cref{def:dependency-graph}, $w_1=(r_1'',r_2'')$ must also satisfy
\begin{align*}
a_{\ell}r_1''+b_{\ell}r_2''&=a_{\ell^\circ}r_1^\circ+ b_{\ell^\circ}r_2^\circ \qquad\text{ or }\qquad a_{\ell}r_1''+b_{\ell}r_2''=-(a_{\ell^\circ}r_1^\circ+ b_{\ell^\circ}r_2^\circ)
\end{align*}
for some $\ell,\ell^\circ$ and
 $(r_1^\circ,r_2^\circ)\in \{(r_1,r_2),(r_1',r_2')\}$. Given $\ell,\ell^\circ$ and $(r_1^\circ,r_2^\circ)$, since we cannot have $a_\ell=b_\ell=0$, the number of possible choices for $w_1$ is  at most $O(N)$. 

Overall, since there are finitely many choices for $i'',j'',i''',j''',\ell,\ell^\circ$ and  $(r_1^\circ,r_2^\circ)\in \{(r_1,r_2),(r_1',r_2')\}$, we get that
\[
|\{(w_1,w_2)\in (N_{u}\cup N_{v})^2: S_{w_1}\cup S_{-w_2}\text{ can be partitioned into cancelling pairs}\}|=O(N).
\]
\end{proof}

\subsection{Proof of \cref{thm:2*k-even-ext}}

    Let $L$ be a linear system with $c=c(L)$. Recall from \cref{rem:critical-3} that all elements in $\{L_B:B\in \mathcal C(L)\}$ are either length-$c$ linear equations or $2\times c$ systems  with  girth $c-1$.
    
    Consider the partition $\mathcal C(L)=\mathcal C_1\cup \mathcal C_2$ defined by
    \begin{align*}
        \mathcal C_1&=\{B\in \mathcal C(L):L_B\text{ is length-$c$ linear equation}\},\\
        \mathcal C_2&=\{B\in \mathcal C(L):L_B\text{ is $2\times c$ linear system}\}.
    \end{align*}

The following lemma is essentially due to Altman \cite[Theorem 5.2]{Alt22a}, although Altman did not work in the language of Fourier templates. For the sake of completeness, we provide its proof here.
\begin{proposition}\label{prop:tensor}
    Let $L$ be a linear system with $s(L)$ odd. Suppose there exists a Fourier template $g:\ZZ^d\to\CC$ such that $\sum_{B\in\mathcal C_2}\sigma_{L_B}(g)<0$. Then $L$ is uncommon over all sufficiently large $p$.
\end{proposition}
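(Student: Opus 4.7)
The plan is to apply \cref{thm:fourier-template} by exhibiting a Fourier template $h$ with $\sum_{B\in\mathcal{C}(L)}\sigma_{L_B}(h)<0$. Since $\mathcal{C}(L)=\mathcal{C}_1\sqcup\mathcal{C}_2$ while the hypothesis only controls the $\mathcal{C}_2$ sum, the strategy is to join $g$ with a suitable auxiliary template $\tilde g$ via \cref{prop:join}, producing $h=g\otimes\tilde g$ with $\sigma_{L_B}(h)=\sigma_{L_B}(g)\,\sigma_{L_B}(\tilde g)$, and to choose $\tilde g$ so as to amplify the $\mathcal{C}_2$ contributions relative to the $\mathcal{C}_1$ ones.

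Concretely, I would take $\tilde g_M\colon\ZZ\to\RR$ to be (up to the defining conditions of a Fourier template) the indicator of the symmetric interval $[\![-M,M]\!]\setminus\{0\}$, where $M$ is a large parameter. The key observation is a dimensional asymmetry: $\sigma_{L_B}(\tilde g_M)=\sum_{\vec r\in\ZZ^{m_B}}\prod_i\tilde g_M(\vec a_i\cdot\vec r)$ is a lattice-point count in an $m_B$-dimensional polytope, so it is of order $\alpha_B M^{m_B}$ for a positive geometric constant $\alpha_B$. Since $m_B=1$ for $B\in\mathcal{C}_1$ (single length-$c$ equations) and $m_B=2$ for $B\in\mathcal{C}_2$ ($2\times c$ subsystems with girth $c-1$), joining yields
\[
\sum_{B\in\mathcal{C}(L)}\sigma_{L_B}(g\otimes\tilde g_M)
= M^2\sum_{B\in\mathcal{C}_2}\alpha_B\,\sigma_{L_B}(g)+O(M),
\]
so for $M$ sufficiently large the total sum is dominated by its leading $M^2$ coefficient. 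It is also useful that for any $B\in\mathcal{C}_1$ whose length-$c$ equation has coefficients partitionable into cancelling pairs, $\sigma_{L_B}(g)\ge0$ automatically, since the sum reduces to one over nonnegative $|g|^2$-style terms; this clarifies that only the weighted $\mathcal{C}_2$ term needs to be controlled.

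The main obstacle is that the weights $\alpha_B$ generally differ across $B\in\mathcal{C}_2$, so negativity of $\sum_{B\in\mathcal{C}_2}\sigma_{L_B}(g)$ does not immediately translate into negativity of $\sum_{B\in\mathcal{C}_2}\alpha_B\,\sigma_{L_B}(g)$. Overcoming this is the core technical step, and is where Altman's construction enters: one replaces the plain indicator $\tilde g_M$ by a more flexible choice, for instance an appropriate average of indicators over a family of lattice dilations, or an iterated join (via \cref{prop:join}) with several auxiliary templates tuned to individual elements of $\mathcal{C}_2$, so that the induced weights are uniformized or at least made compatible with preserving the sign from the hypothesis. Once such an enhanced $\tilde g$ is produced, the joined template $h=g\otimes\tilde g$ satisfies $\sum_{B\in\mathcal{C}(L)}\sigma_{L_B}(h)<0$, and \cref{thm:fourier-template} immediately gives that $L$ is uncommon over all sufficiently large primes $p$.
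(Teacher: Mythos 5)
Your reduction correctly identifies the right target (make the $\mathcal{C}_2$ contributions dominate the $\mathcal{C}_1$ ones) and the dimensional asymmetry $m_B=1$ versus $m_B=2$ is real, but the proof has a genuine gap exactly where you flag it: after joining with the interval-indicator template you are left needing $\sum_{B\in\mathcal{C}_2}\alpha_B\,\sigma_{L_B}(g)<0$, where the $\alpha_B$ are the (generally distinct, strictly positive) lattice-point densities attached to the various $2\times c$ subsystems. The hypothesis only controls the unweighted sum, and since you do not know which individual terms $\sigma_{L_B}(g)$ are negative, no inequality transfers unless the weights are (essentially) equal. The proposed fixes — averaging indicators over dilations, or iterated joins ``tuned to individual elements of $\mathcal{C}_2$'' — are not constructions: with a single auxiliary template $\tilde g$ the weights $\sigma_{L_B}(\tilde g)$ for different $B\in\mathcal{C}_2$ are all determined by the same function and cannot be adjusted independently, and joining more templates only multiplies these coupled weights together. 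So the ``core technical step'' you defer is precisely the content of the proposition, and it is not supplied.

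The paper avoids this weight problem by leaving the world of integer Fourier templates at the key moment. It first converts $g$ into an actual mean-zero function $f:\FF_p\to\RR$ with $\sum_{B\in\mathcal{C}_2}t_{L_B}(f)<0$ (the construction inside \cref{thm:fourier-template}), and then tensors in physical space: $h(y,z)=f(y)\mathbf{1}_{z=0}$ on $\FF_p^2$. The multiplier this produces is $t_{L_B}(\mathbf{1}_{z=0})=p^{m_B-c}$, which depends only on $m_B$, so every $B\in\mathcal{C}_2$ gets the identical factor $p^{2-c}$ (preserving the sign of the hypothesis verbatim) while every $B\in\mathcal{C}_1$ is suppressed by the uniform relative factor $1/p$; \cref{prop:critical-uncommon} then finishes. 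Note that this tensoring corresponds on the Fourier side to weighting \emph{all} frequencies of $\widehat{\FF_p}$ equally by $1/p$, which is not a finitely supported function on $\ZZ$ and hence cannot be realized through \cref{prop:join}; this is exactly why your route through joined templates stalls, and why the uniformization has to be done over $\FF_p$ rather than over $\ZZ$.
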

\begin{proof}Suppose $\sum_{B\in\mathcal C_2}\sigma_{L_B}(g)<0$ for some Fourier template $g:\ZZ^d\to\CC$. By \cref{thm:fourier-template}, this gives a function $f:\FF_p\to \RR$ with $\EE f=0$ and $\sum_{B\in\mathcal C_2}t_{L_B}(f)<0$.

However, to show that $L$ is uncommon over all sufficiently large $p$, we should take into account subsystems in $\mathcal C_1$ as well. That is, we wish to show that there is some function $h:\FF_p^n\to \RR$ with $\EE h=0$ and $\sum_{B\in\mathcal C(L)}t_{L_B}(h)<0$, so that we can utilize \cref{prop:critical-uncommon}. 

To do so, we take a ``tensoring'' of $f$ with the indicator function of 0. Namely, define $h:\FF_p^2\to\RR$ by $h(y,z)=f(y)\mathbf 1_{z=0}$. Since $f$ has mean 0, so does $h$. Moreover, for any $B\in \mathcal C(L)$,  we have (with $c=c(L)$)
\begin{align*}
    t_{L_B}(h)&=\EE_{\substack{\mathbf x\in(\FF_p^2)^{c}\\L_B\mathbf x=0}}h(\mathbf x_1)\cdots h(\mathbf x_{c})
    =\EE_{\substack{\mathbf y\in\FF_p^{c}\\L_B\mathbf y=0}}f(y_1)\cdots f(y_{c})\cdot \EE_{\substack{\mathbf z\in\FF_p^{c}\\L_B\mathbf z=0}}\mathbf 1_{z_1=0}\cdots \mathbf 1_{z_{c}=0}
    =t_{L_B}(f)t_{L_B}(\mathbf 1_{z=0}).
\end{align*}
Observe that if $L_B$ is a length-$c$ equation, then it has $p^{c-1}$ solutions $\mathbf z\in \FF_p^{c}$; if $L_B$ is a $2\times c$ linear system, then it has $p^{c-2}$ solutions $\mathbf z\in \FF_p^{c}$. Thus, we have
\begin{align*}
    t_{L_B}(\mathbf 1_{z=0})=\begin{cases}
        p^{1-c} &B\in\mathcal C_1\\
        p^{2-c} &B\in\mathcal C_2.
    \end{cases}
\end{align*}
Therefore, given that $\sum_{B\in\mathcal C_2}t_{L_B}(f)<0$, we have
\begin{align*}
    \sum_{B\in\mathcal C(L)}t_{L_B}(h)&=\sum_{B\in\mathcal C_1}t_{L_B}(h)+\sum_{B\in\mathcal C_2}t_{L_B}(h)
    =p^{1-c}\sum_{B\in\mathcal C_1} t_{L_B}(f)+p^{2-c}\sum_{B\in\mathcal C_2} t_{L_B}(f),
\end{align*}
which is negative for $p$ sufficiently large. Thus, by \cref{prop:critical-uncommon}, $L$ is uncommon over all sufficiently large $p$.
\end{proof}

We are now ready to prove \cref{thm:2*k-even-ext}.
\begin{proof}By \cref{prop:tensor}, it suffices to construct a Fourier template $g:\ZZ \to \CC$ such that
    $\sum_{B\in\mathcal C_2}\sigma_{L_B}(g)<0$.
    Suppose $\mathcal C_2(L)=\{B_1,\dots,B_\ell\}$, with
    \[
    L_{B_j}=\begin{pmatrix}
        a_{j,1}&\dots & a_{j,c}\\
        b_{j,1}&\dots& b_{j,c}
    \end{pmatrix}\qquad \text{ for every $j\in[\ell]$.}
    \]

    Define the random Fourier template $f_M:\ZZ\to\CC$ as in \cref{prop:CLT}.
    Also, choose $\lambda\in\NN$ sufficiently large such that   if $a_{j,1}r_1+b_{j,1}r_2,\dots, a_{j,k}r_1+b_{j,k}r_2$ lie in $[\![-M,M]\!]\setminus\{0\}$ for all $j\in [\ell]$, then $-\lambda M\leq r_1,r_2\leq \lambda M$.

    For every $(r_1,r_2)\in[\![-N,N]\!]$, define the associated multisets
    \begin{align*}
        S_{(r_1,r_2)}^1&=\{\!\!\{a_{1,1}r_1+b_{1,1}r_2,\dots, a_{1,k}r_1+b_{1,k}r_2\}\!\!\},\\
        &\vdots\\
        S_{(r_1,r_2)}^\ell&=\{\!\!\{a_{\ell,1}r_1+b_{\ell,1}r_2,\dots, a_{\ell,k}r_1+b_{\ell,k}r_2\}\!\!\}.
    \end{align*}
    We then have 
    \[
    \sum_{B\in \mathcal C_2}\sigma_{L_B}(f_M)=\sum_{r_1,r_2\in [\![-N,N]\!]}\sum_{j=1}^\ell X_{S^j_{(r_1,r_2)}}.
    \]
Let  $\rho$ denote the standard deviation of $\sum_{r_1,r_2\in [\![-N,N]\!]}\sum_{j=1}^\ell X_{S^\ell_{(r_1,r_2)}}$, and define
\[
W=\rho^{-1}\sum_{r_1,r_2\in [\![-N,N]\!]}\sum_{j=1}^\ell X_{S^\ell_{(r_1,r_2)}}.
\]
An argument almost identical to \cref{prop:CLT} shows that 
$\PP\Big(\sum_{B\in \mathcal C_2}\sigma_{L_B}(f_M)<0\Big)>0$
for all $M$ sufficiently large. In particular, there exists Fourier template $g:\ZZ\to\CC$ such that $\sum_{B\in \mathcal C_2}\sigma_{L_B}(g)<0$. Finally, by \cref{prop:tensor}, we know that $L$ is uncommon over $\FF_p$ for all $p$ sufficiently large.
\end{proof}

\section{Proof of \cref{thm:2*5}}\label{sec:2*5}

\subsection{Introduction}\label{subsec:5-intro}
In this section, we prove \cref{thm:2*5}. Let $L$ be an irredundant $2\times 5$ linear system on variables $x_1,\dots,x_5$. The bulk of \cref{thm:2*5} considers the case $s(L)=4$.
Under the assumption $s(L)=4$, we know that the elements in $\{L_B:B\in\mathcal C(L)\}$ of $L$ are five length-4 linear equations, supported on five different 4-subsets of $\{x_1,\dots,x_5\}$.

Let $L_1,\dots,L_5$ denote these five equations.
By \cref{thm:fourier-template}, to show that $L$ is uncommon, it suffices to find a Fourier template $g \colon \ZZ^d \to \CC$ such that $\sum_{i=1}^5\sigma_{L_i}(g) < 0$. Since each $\sigma_{L_i}(g)$ is defined by both $g$ and the coefficients in $L_i$, to simplify our further discussion, we give special names to the set of coefficients in each $L_i$.

\begin{setup}\label{setup:s=4} Suppose $L$ is an irredundant $2\times 5$  linear system with $s(L)=4$. For every $i=1,\dots,5$, we admit the following notation:
\begin{itemize}
    \item Let $L_i \mathbf x := a_{i,1}x_1+\dots+a_{i,5}x_5$ be the unique (up to constant multiplication) equation in the row span of $L$, such that $a_{i,i}=0$.
    \item Let $A_i=\{\!\!\{a_{i,j}:j\in[5],\,j\neq i\}\!\!\}$ be the multiset of nonzero coefficients in $L_i$.
    \item For every Fourier template $g:\ZZ^d\to\CC$, define the associated function $g_{A_i}:\ZZ^{d}\to\CC$ by
    \[
    g_{A_i}(r) :=\prod_{a\in A_i}g(ar)
    \]
    and the associated sum $\sigma_{A_i}(g)$ by $$\sigma_{A_i}(g)=\sum_{r\in\ZZ^d}g_{A_i}(r),$$
    so that $g_{A_i}=g_{L_i}$ and $\sigma_{A_i}(g)=\sigma_{L_i}(g)$ as in \cref{def:fourier-template-sum}.
\end{itemize}

\end{setup}

Suppose we wish to show that $L$ is uncommon, and hope to find some Fourier template $g$ with $\sum_{i=1}^{5} \sigma_{A_i}(g) < 0$. 
This is easy if none of $L_1,\dots,L_5$ is common, as a random Fourier template $g:\ZZ\to \CC$ has $\sum_{i=1}^{5} \EE[\sigma_{A_i}(g)]=0$. However, if some equation $L_{i_0}$ among $L_1,\dots,L_5$ is common, with $A_{i_0}=\{\!\!\{1,-1,\lambda,-\lambda \}\!\!\}$, then $\sigma_{A_{i_0}}(g)=\sum_{r}|g(r)|^2|g(\lambda r)|^2$ must be a positive contribution to $\sum_{i=1}^{5} \sigma_{A_i}(g)$. In this case, we have to carefully pick some $g$ so that the other terms $\sigma_{A_{i}}(g)$, $i\neq i_0$ are ``negative enough'' to balance out this positive term. Now, if some $L_{i_0}$ is an additive quadruple $L_{i_0}\mathbf x=x_{j_1}+x_{j_2}-x_{j_3}-x_{j_4}$, then we have $A_{i_0}=\{\!\!\{1,1,-1,-1\}\!\!\}$, and the sum $\sigma_{A_{i_0}}(g)=\sum_{r}|g(r)|^4$ would be even harder to balance out. 

This motivates us to classify all the irredundant $2\times 5$ linear systems $L$ according to the number of   common equations and additive quadruples among $L_1,\dots,L_5$,  and prove \cref{thm:2*5} under each case.

\begin{lemma}
    Let $L$ be an irredundant $2\times 5$  linear system, with $s(L)=4$. Then there cannot be two additive quadruples among $L_1,\dots,L_5$.
\end{lemma}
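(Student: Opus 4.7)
The plan is a direct case analysis, carried out after a few normalizations. Assume for contradiction that two of $L_1, \ldots, L_5$ are additive quadruples; after relabeling $x_1, \ldots, x_5$ I may take them to be $L_4$ and $L_5$, so $L_5$ is supported on $\{x_1, x_2, x_3, x_4\}$ and $L_4$ on $\{x_1, x_2, x_3, x_5\}$, each with coefficient multiset $\{1, 1, -1, -1\}$ (using the defining property $a_{i,i}=0$ together with the fact that $s(L)=4$ forces each $L_i$ to have length exactly $4$). A further relabeling among $x_1, \ldots, x_4$ normalizes $L_5 = x_1 + x_2 - x_3 - x_4$. Since $L_4$ and $L_5$ are linearly independent (only $L_4$ involves $x_5$), they span the full row space of $L$; hence any nonzero integer combination $\alpha L_4 + \beta L_5$ lies in this row span and, by $s(L)=4$ together with irredundancy condition (3), must have support of size at least $4$ and cannot be of the form $x_i - x_j$.

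I then classify $L_4$ according to whether its coefficients at positions $1$ and $2$ are equal or opposite. Using the residual symmetry $x_1 \leftrightarrow x_2$ (which fixes $L_5$) together with the freedom to rescale $L_4$ by $-1$, the six signed arrangements of $L_4$'s coefficients on $\{x_1, x_2, x_3, x_5\}$ collapse to a single representative in each class. In the \emph{equal case}, WLOG $L_4 = x_1 + x_2 - x_3 - x_5$, so that $L_4 - L_5 = x_4 - x_5$ lies in the row span, violating condition (3) of irredundancy. In the \emph{opposite case}, WLOG $L_4 = x_1 - x_2 + x_3 - x_5$, so that $L_4 + L_5 = 2x_1 - x_4 - x_5$ has support of size $3$, contradicting $s(L) = 4$. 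Either way we reach a contradiction.

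The only nontrivial point is a bookkeeping one: verifying that the six sign patterns for $L_4$ on $\{x_1, x_2, x_3, x_5\}$ really do fall into just these two orbits under $x_1 \leftrightarrow x_2$ together with overall negation. This is immediate once the orbits are written out (for instance, $(+,-,+,-)$ maps to $(+,-,-,+)$ under the swap followed by negation), so no serious obstacle is expected and the entire argument fits in a few lines.
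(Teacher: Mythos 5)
Your proof is correct and follows essentially the same route as the paper: reduce, up to relabeling and scaling, to two canonical configurations of the two additive quadruples (agreeing or disagreeing on the shared variables) and exhibit a combination in the row span of support at most $3$, contradicting $s(L)=4$ (the paper lists the same two cases as explicit matrices). One cosmetic point: the relabeling used to normalize $L_5$ should fix $x_4$ (a permutation of $x_1,x_2,x_3$ combined with possibly negating $L_5$), so that $L_4$ keeps support $\{x_1,x_2,x_3,x_5\}$ — this does not affect the argument.
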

\begin{proof}Without loss of generality, suppose $L_1$ and $L_2$ are both additive quadruple. Up to isomorphism, $L$ is one of 
\begin{align*}
    \begin{pmatrix}
        0 & 1 & 1 & -1 & -1\\
        1 & 0 & 1 & -1 & -1
    \end{pmatrix},\, 
    \begin{pmatrix}
        0 & 1 & 1 & -1 & -1\\
        1 & 0 & -1 & 1 & -1
    \end{pmatrix}.
\end{align*}
In either case, we have $s(L)\leq 3$, which is a 
contradiction.
\end{proof}

\begin{corollary}
\label{cor:2*5-classification}
    Let $L$ be an irredundant $2\times 5$  linear system, with $s(L)=4$. Then $L$ satisfies exactly one of the following:
    \begin{enumerate}\itemindent=20pt
        \item[Case A.] None of $L_1,\dots, L_5$ is an additive quadruple, and at least three of them are uncommon.
        \item[Case B.] Exactly one of $L_1,\dots,L_5$ is an additive quadruple, and none of the others is common.
        \item[Case C.] Exactly one of $L_1,\dots,L_5$ is an additive quadruple, and exactly one among the others is common but not an additive quadruple.
        \item[Case D.] At least three of $L_1,\dots,L_5$ are common.
    \end{enumerate}
\end{corollary}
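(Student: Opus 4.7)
The plan is to prove this via a direct case analysis built on the preceding lemma. That lemma guarantees that among $L_1,\dots,L_5$ there is at most one additive quadruple, so I would split first on whether this count is $0$ or $1$, and then further on the number of $L_i$ that are common over $\FF_p$. Recall from Fox--Pham--Zhao that a length-$4$ linear equation is common if and only if its coefficients partition into two cancelling pairs; an additive quadruple is the special subcase in which both pairs are $\{1,-1\}$. Hence ``is common'' and ``is an additive quadruple'' are both determined purely by the coefficients of each $L_i$, and the whole classification reduces to counting.

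For mutual exclusivity, I would argue as follows. Cases B and C each require exactly one additive quadruple while Case A requires none, so A is disjoint from both B and C. Cases B and C are distinguished by the number of common equations among the non-AQ members of $L_1,\dots,L_5$ ($0$ in B, exactly $1$ in C), so they are disjoint from each other. Counting the total number of common $L_i$: in Case A there are at most $2$ (since ``at least three uncommon'' means ``at most two common'' for a family of five), in Case B there is exactly $1$, in Case C there are exactly $2$, and in Case D there are at least $3$. These counts lie in $\{0,1,2\}$ for A, B, C and in $\{3,4,5\}$ for D, so Case D is disjoint from each of A, B, C.

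For exhaustiveness, fix an irredundant $L$ with $s(L)=4$ and let $q$ denote the number of additive quadruples and $c$ the number of common equations among $L_1,\dots,L_5$. By the preceding lemma, $q\in\{0,1\}$. If $q=0$, then either $c\leq 2$ and $L$ lies in A, or $c\geq 3$ and $L$ lies in D. If $q=1$, consider the four non-AQ equations: if none of them are common, then $c=1$ and $L$ lies in B; if exactly one is common (and by the $q=1$ hypothesis it is not itself an AQ), then $c=2$ and $L$ lies in C; if two or more are common, then together with the one AQ we obtain $c\geq 3$ and $L$ lies in D. This exhausts every possibility. Since the argument is purely combinatorial bookkeeping on top of the preceding lemma and the Fox--Pham--Zhao characterization of common length-$4$ equations, I do not anticipate any substantive obstacle.
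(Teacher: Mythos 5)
Your proposal is correct and matches the paper's (implicit) argument: the corollary is stated as an immediate consequence of the preceding lemma that at most one of $L_1,\dots,L_5$ is an additive quadruple, and your case split on the number of additive quadruples followed by counting common equations is exactly that bookkeeping. The only ingredient beyond the lemma that your argument needs—and correctly supplies—is that an additive quadruple is itself common (its coefficients partition into cancelling pairs, so it is Sidorenko), which is what forces the ``two or more common non-AQ equations'' subcase into Case D.
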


Our strategy of finding the appropriate Fourier template $g$ with $\sum_{i=1}^{5} \sigma_{A_i}(g) < 0$  differs in each case listed in \cref{cor:2*5-classification}.
Before diving into the proof of \cref{thm:2*5}, we provide some examples illustrating our main ideas in Cases A, B and C.

In Case A, mostly, we  aim to pick a Fourier template $g$ with $\sigma_{A_{i_0}}(g)$ ``very negative'' for some uncommon $L_{i_0}$, while keeping $\sum_{i\in[5],\, L_i\text{ common}}\sigma_{A_i}(g)$ relatively small.

\begin{example}[Case A]\label{ex:non-coincidental-uncommon}
Let $L$ be the linear system 
\[
\begin{pmatrix}
    1 & 3 & -1 & -3 & 0\\
    2 & 3 & -3 & 0 & -2
\end{pmatrix}.
\]
We then have

\begin{alignat*}{3}
    L_5 \mathbf x &= x_1 + 3x_2 - x_3 - 3x_4\qquad\qquad  &&A_5=\{\!\!\{1,3,-1,-3\}\!\!\}
\\ L_4 \mathbf x &=2x_1 + 3x_2 - 3x_3 - 2x_5 &&A_4=\{\!\!\{2,3,-3,-2\}\!\!\}\\
L_3 \mathbf x &=x_1 + 6x_2 - 9x_4 + 2x_5 &&A_3=\{\!\!\{1,6,-9,2\}\!\!\}  \\ 
L_2 \mathbf x &=x_1 - 2x_3 + 3x_4 - 2x_5 &&A_2=\{\!\!\{1,-2,3,-2\}\!\!\}  \\
L_1 \mathbf x &=3x_2 + x_3 - 6x_4 + 2x_5 &&A_1=\{\!\!\{3,1,-6,2\}\!\!\}.
\end{alignat*}

We wish to find a Fourier template $g:\ZZ\to\CC$ such that
\begin{align*}
    &\sum_{r\in\ZZ}(|g(r)|^2|g(3r)|^2+|g(2r)|^2|g(3r)|^2\\
    &\qquad \quad +g(r)g(6r)g(-9r)g(2r)+g(r)g(-2r)g(3r)g(-2r)+g(3r)g(r)g(-6r)g(2r))
    =\sum_{i=1}^5\sigma_{A_i}(g)<0.
\end{align*}
Observe that the first two terms always have nonnegative contribution. This motivates us to find some $g$ that makes one of the last three terms ``very negative" without making the first two terms ``too positive".

For example, consider the Fourier template $g:\ZZ\to\CC$ defined by
\begin{align*}
    \begin{cases}
    g(2) = g(-2) =g(6) = g(-6) = g(9) = g(-9)= 1\\
    g(1)=g(-1)=-100\\
    g(r)=0 \quad \text{otherwise.}
    \end{cases}
\end{align*}
We then have
\[
\sum_{r\in\ZZ}g(r)g(6r)g(-9r)g(2r)=g(1)g(6)g(-9)g(2)+g(-1)g(-6)g(9)g(-2)=-200,
\]
while other terms
\begin{align*}
    &\sum_{r\in\ZZ}|g(r)|^2|g(3r)|^2=|g(2)|^2|g(6)|^2+|g(-2)|^2|g(-6)|^2=2,\\
    &\sum_{r\in\ZZ}|g(2r)|^2|g(3r)|^2=|g(6)|^2|g(9)|^2+|g(-6)|^2|g(-9)|^2=2,\\
    &\sum_{r\in\ZZ}g(r)g(-2r)g(3r)g(-2r)=
    \sum_{r\in\ZZ}g(3r)g(r)g(-6r)g(2r)=0
\end{align*}
have small absolute value. Altogether, we get that $\sum_{i=1}^5\sigma_{A_i}(g)=-196<0$.
\end{example}

 We will discuss Case A further in \cref{subsec:A}.

In Case B, we continue to work with the Fourier template $g \colon \ZZ \to \CC$. However, the strategy in Case A fails in the presence of an additive quadruple, because setting $|g(r)|$ to be big for any particular $r \in \ZZ$ will  make $|g(r)|^4$, the term contributed by the additive quadruple,  blow-up the fastest.

As such, we need to crucially use the fact that we have at least four uncommon equations, say $A_1, \ldots, A_4$. In Case B, our construction is roughly in two steps: 
\begin{itemize}
    \item We construct our Fourier template to have support in some $S \subseteq \ZZ$ such that there are many nonvanishing terms in $ \sum_{i=1}^{4}\sum_{r \in S} g_{A_i}(r)$. We do this by taking $S$ to be a  \emph{multiplicative grid}, illustrated in the example below.
    \item Next, we choose each $g(r)$ to be complex numbers with appropriate phases that align in a way to ensure that $\sum_{i=1}^{4} \sigma_{A_i}(g)$ is a complex vector with negative real part. 
\end{itemize}

The details of this procedure is also illustrated in the example below.  
\begin{example}[Case B]\label{ex:cos-uncommon}
    Let $L$ be the $2\times 5$ linear system 
\begin{align*}
    \begin{pmatrix}
        1 & -1 & 1 & -1 & 0\\
        -2 & 4 & 3 & 0 & -9
    \end{pmatrix}.
\end{align*}
We then have
\begin{alignat*}{3}
    L_5 \mathbf x &= x_1 - x_2 + x_3 - x_4\qquad\qquad  &&A_5=\{\!\!\{1,1,-1,-1\}\!\!\}
\\ L_4 \mathbf x &=-2x_1 + 4x_2 + 3x_3 - 9 x_5 &&A_4=\{\!\!\{-2,4,3,-9\}\!\!\}\\
L_3 \mathbf x &=5x_1 - 7x_2 - 3x_4 + 9x_5 &&A_3=\{\!\!\{5,-7,-3,9\}\!\!\}  \\ 
L_2 \mathbf x &=2x_1 + 7x_3 - 4x_4 - 9 x_5 &&A_2=\{\!\!\{2,7,-4,-9\}\!\!\}  \\
L_1 \mathbf x &=2x_2 + 5x_3 -2x_4 - 9x_5 &&A_1=\{\!\!\{2,5,-2,-9\}\!\!\}.
\end{alignat*}

Again, we wish to find a Fourier template $g:\ZZ\to\CC$ such that
\begin{align*}\label{eqn:cos-ex-expand}
&\sum_{r\in \ZZ}(|g(r)|^4+ g(-2r)g(4r) g(3r) g(-9r) +  g(5r)g(-7r) g(-3r) g(9r) \\ &\qquad\qquad\qquad  + g(2r) g(7r) g(-4r) g(-9r) + g(2r) g(5r) g(-2r) g(-9r))=\sum_{i=1}^5\sigma_{A_i}(g)<0.
    \end{align*}
Since $L$ contains an additive quadruple, we cannot hope to use a construction similar to \cref{ex:non-coincidental-uncommon}, as 
enlarging  $|g(r)|$ for any particular $r$ would make $\sum_{r\in \ZZ}|g(r)|^4$ the dominant term.

Instead, we build some Fourier template whose support is ``almost closed'' under multiplication by any coefficient in $A_1,\dots,A_5$. Consider the \emph{multiplicative grid} $G$ defined by 
    \[ G = \{ 2^{d_1}3^{d_2}5^{d_3}7^{d_4}: d_1,\dots,d_4 \in \{0,1,\dots\}\}\subseteq\NN.\]
    We can think of $G$ as a four-dimensional grid, with each dimension corresponding to a prime divisor. Here we include $2,3,5,7$  as they are precisely the prime divisors of elements in $A_1\cup\dots\cup A_5$. 
    Consequently, $G$ has the nice property that $\pm 2G, \pm 3G, \pm 4G, \pm 5G, \pm 7G, \pm 9G\subseteq \pm G$; in other words, $\pm G$ is closed under multiplication by any coefficient in $A_1\cup\dots\cup A_5$.
    
    However, a Fourier template must have finite support, and therefore cannot be nonzero on the whole of $\pm G$. Therefore,  we will first build some $g:\pm G\to \CC$, and eventually replace it by its restriction on $\pm G_D$ , where
    \[
    G_D=\{ 2^{d_1}3^{d_2}5^{d_3}7^{d_4}: d_1,\dots,d_4 \in \{0,1,\dots,D\}\}
    \]
    is a finite truncation of $G$.

    Our $g:\pm G\to\CC$ will be of the form
    \[\begin{cases}
        g(2^{d_1}3^{d_2}5^{d_3}7^{d_4})=e((\theta_1d_1+\theta_2d_2+\theta_3d_3+\theta_4d_4)t)\\
        g(-2^{d_1}3^{d_2}5^{d_3}7^{d_4})=e(-(\theta_1d_1+\theta_2d_2+\theta_3d_3+\theta_4d_4)t)
    \end{cases}
    \]
    where $\theta_1,\dots,\theta_4\in\QQ$ and $t\in\RR$.
    In other words, $g$ is the composition of $e(\cdot)$ and a ``linear map" from the multidimensional grid $G$ to $\RR$. This choice of $g$ will ensure that the phases of $\sigma_{A_i}(g)$ line up in a way to ensure that $\sum_{i=2}^{5}\sigma_{A_i}(g)$ is very negative. 

Observe that  $|g(r)|=1$ for all $r\in\pm G$. Moreover, for all $x,y\in G$, we have $g(xy)=g(x)g(y)$ and $g(-xy)=g(-x)g(-y)$. Hence for all $r\in\pm G$, we have
    \begin{align*}
    g(-2r)g(4r)g(3r) g(-9r)=
\begin{cases}
    g(-2)g(4) g(3)g(-9)\cdot g(r)^2 g(-r)^2 = e((\theta_1-\theta_2)t)& r\in G\\
    g(2)g(-4) g(-3)g(9)\cdot g(r)^2 g(-r)^2 = e((\theta_2-\theta_1)t)& r\in -G.
\end{cases}
    \end{align*}
Similarly, we have
\begin{align*}
    g(5r) g(-7r)g(-3r)g(9r) &= \begin{cases}
    e((\theta_2+\theta_3-\theta_4)t)& r\in G\\
    e((-\theta_2-\theta_3+\theta_4)t)& r\in -G
\end{cases}\\
g(2r) g(7r) g(-4r) g(-9r) &= \begin{cases}
    e((-\theta_1-2\theta_2+\theta_4)t)& r\in G\\
    e((\theta_1+2\theta_2-\theta_4)t)& r\in -G
\end{cases}\\
g(2r) g(5r) g(-2r) g(-9r) &= \begin{cases}
    e((-2\theta_2+\theta_3)t)& r\in G\\
    e((2\theta_2-\theta_3)t)& r\in -G.
\end{cases}
\end{align*}
Therefore, for all $r\in\pm G$, we have
\begin{align*}
    \sum_{i=1}^5\text{Re}(g_{A_i}(r))
    &=1+\cos((\theta_1-\theta_2)t)+\cos((\theta_2+\theta_3-\theta_4)t)+\cos((-\theta_1-2\theta_2+\theta_4)t)+\cos ((-2\theta_2+\theta_3)t).
\end{align*}
Setting $\theta_1=\theta_3=1$, $\theta_2=\theta_4=0$ and $t=\pi$, we get that
\begin{align*}
    \sum_{i=1}^5\text{Re}(g_{A_i}(r))&=1+\cos\pi+\cos \pi+\cos(-\pi)+\cos\pi=-3.
\end{align*}
Finally, we switch our attention from $g$ to its restriction on $\pm G_D$. Consider the Fourier template $g_D:\ZZ\to\CC$ defined by
\begin{align*}
    g_D(r)=\begin{cases}
        e((\theta_1d_1+\theta_2d_2+\theta_3d_3+\theta_4d_4)t)& r=2^{d_1}3^{d_2}5^{d_3}7^{d_4}\in G_D\\
        e(-(\theta_1d_1+\theta_2d_2+\theta_3d_3+\theta_4d_4)t)& r=-2^{d_1}3^{d_2}5^{d_3}7^{d_4}\in -G_D\\
        0&\text{otherwise.}
    \end{cases}
\end{align*}

Observe that if $r\notin \pm G_D$, then $(g_D)_{A_i}(r)=0$ for all $i\in [5]$. Moreover, for those $r\in \pm G_D$ such that $r,2r,4r,3r,9r,5r,7r\in \pm G_D$, we have $\sum_{i=1}^5\text{Re}((g_D)_{A_i}(r))=\sum_{i=1}^5\text{Re}(g_{A_i}(r))=-3$. Since
\[
\lim_{D\to\infty}\frac{|\{r\in \pm G_D:r,2r,4r,3r,9r,5r,7r\in G_D\}|}{|\pm G_D|}=1,
\]
we get that
\begin{align*}
    \lim_{D\to\infty}\frac{\sum_{i=1}^5\sigma_{A_i}(g_D)}{|\pm G_D|}=\lim_{D\to\infty}\frac{\sum_{r\in\pm G_D}\sum_{i=1}^5\text{Re}((g_D)_{A_i}(r))}{|\pm G_D|}=-3
\end{align*}
(recall from \cref{rem:real-sum} that $\sigma_{A_i}(g_D)$ is always real). Taking $D$ sufficiently large, we can obtain the desired Fourier template $g_D$ with $\sum_{i=1}^5\sigma_{A_i}(g_D)<0$.
\end{example}

We discuss Case B further in \cref{subsec:B}.
 
Case B covers the case where exactly one of $L_1,\dots,L_5$ is additive quadruple, and the other four are uncommon. What if one of the other four becomes common? The following examples illustrates why the construction in Case B fails and what the alternate strategy is.

\begin{example}[Case C]\label{ex:1AQ-1common}
    Let $L$ be the $2\times 5$ linear system 
\begin{align*}
    \begin{pmatrix}
        1 & 1 & -1 & -1 & 0\\
        1 & -1 & 4 & 0 & -4
    \end{pmatrix}.
\end{align*}
We then have
\begin{alignat*}{3}
    L_5 \mathbf x &= x_1 + x_2 - x_3 - x_4\qquad\qquad  &&A_5=\{\!\!\{1,1,-1,-1\}\!\!\}
\\ L_4 \mathbf x &=x_1-x_2+4x_3-4x_5 &&A_4=\{\!\!\{1,-1,4,-4\}\!\!\}\\
L_3 \mathbf x &=5x_1 + 3x_2 - 4x_4 -4x_5 &&A_3=\{\!\!\{3,5,-4,-4\}\!\!\}  \\ 
L_2 \mathbf x &=2x_1 + 3x_3 - 1x_4 - 4 x_5 &&A_2=\{\!\!\{2,3,-1,-4\}\!\!\}  \\
L_1 \mathbf x &=2x_2 - 5x_2 -1x_4 + 4x_5 &&A_1=\{\!\!\{2,-5,-1,4\}\!\!\}.
\end{alignat*}

    The difference between this example and \cref{ex:cos-uncommon} is that, in addition to the additive quadruple $L_5$, we have another common linear equation $L_4$.
    
    Let us first try the strategy in \cref{ex:cos-uncommon}. Since $2,3,5$ are the prime divisors of at least one coefficient in $L_1,\dots,L_5$, consider the multiplicative grid
    $G = \{ 2^{d_1} 3^{d_2} 5^{d_3}: d_1 ,d_2,d_3 \in \{0,1,\dots\} \}$
    and some $g:\pm G\to\CC$ of the form
    \[\begin{cases}
        g(2^{d_1}3^{d_2}5^{d_3})=e((\theta_1d_1+\theta_2d_2+\theta_3d_3)t)\\
        g(-2^{d_1}3^{d_2}5^{d_3})=e(-(\theta_1d_1+\theta_2d_2+\theta_3d_3)t).
    \end{cases}
    \]
    We wish to find some $\theta_1,\theta_2,\theta_3,t\in\RR$ such that for all $r\in\pm G$, we have
\begin{align*}
    0>\sum_{i=1}^5\text{Re}(g_{A_i}(r))&=|g(r)|^4+ |g(r)|^2|g(4r)|^2 +  g(5r)g(3r) g(-4r) g(-4r) \\ &\qquad + g(2r) g(3r) g(-r) g(-4r) + g(-2r) g(5r) g(r) g(-4r)\\
    &=2+\cos((-4\theta_1+\theta_2+\theta_3)t)+\cos((-\theta_1+\theta_2)t)+\cos((-3\theta_1+\theta_3)t).
\end{align*}

    However, since there are no $\alpha,\beta\in\RR$ with $\cos\alpha+\cos\beta+\cos(\alpha+\beta)<-2$,  we cannot find solutions $\theta_1,\theta_2,\theta_3,t \in \RR$ to the above inequality.

    This motivates us to pursue a different route. 
    Instead of letting $\sum_{i=1}^5\text{Re}(g_{A_i}(r))$ be constant for every $r\in\pm G$, we choose some $g:\pm G\to\CC$ that is ``periodic" in the multiplicative grid, so that $\sum_{i=1}^5\text{Re}(g_{A_i}(r))$ sums up to a negative number  within every period.

    We construct some $g:\pm G_L\to\CC$ so that for all $d_1,d_2,d_3\in\{0,1,\dots\}$, we have $g(2^{d_1}3^{d_2}5^{d_3})=g(2^{d_1+4}3^{d_2+2}5^{d_3+2})$. More precisely, consider
    \begin{align*}
        \begin{cases}
            g(2^{d_1}3^{d_2}5^{d_3})=h(\text{Mod}(d_1,4),\text{Mod}(d_2,2),\text{Mod}(d_3,2))\\
            g(-2^{d_1}3^{d_2}5^{d_3})=\overline{h(\text{Mod}(d_1,4),\text{Mod}(d_2,2),\text{Mod}(d_3,2))},
        \end{cases}
    \end{align*}
where $h:\ZZ_4\times \ZZ_2\times \ZZ_2\to\CC$ is defined by

\[
\begin{matrix*}[l]
    & h(0,0,0)=0.25+0.42i   & h(1,0,0)=0 & h(2,0,0)=-0.35+0.35i  & h(3,0,0)=1-i\\
    & h(0,0,1)=0.35-0.35i & h(1,0,1)=-1-i & h(2,0,1)=0.48+0.12i & h(3,0,1)=0\\
    & h(0,1,0)=-0.35+0.35i  & h(1,1,0)=-1-i &h(2,1,0)=0.48+0.12i  & h(3,1,0)=0\\
    &h(0,1,1)=-0.25-0.42i & h(1,1,1)=0 &h(2,1,1)=0.35-0.35i & h(3,1,1)=1-i.
\end{matrix*}
\]

One can check that, for all $r\in G$, we have
\begin{align*}
    &\sum_{j=0}^3\sum_{k,\ell=0}^1\left(\sum_{i=1}^5 g_{A_i}(2^{j}3^k5^\ell r)\right)\\
    &=\sum_{\substack{j\in\ZZ_4\\k,\ell\in\ZZ_2}}(|h(j,k,\ell)|^4+|h(j,k,\ell)|^2|h(j+2,k,\ell)|+h(j,k,\ell+1)h(j,k+1,\ell)\overline{h(j+2,k,\ell)}^2\\
    &\qquad\quad+h(i+1, j,k)h(i, j+1, k) \overline{h(i,j,k)}\overline{h(i+2,j,k)} + \overline{h(i+1, j,k)}h(i,j,k+1) h(i,j,k) \overline{h(i+2, j,k)})\\
    &=-0.249573+0.723675i.
\end{align*}
Similarly, for all $r\in-G$, we have 
\[\sum_{j=0}^3\sum_{k,\ell=0}^1\left(\sum_{i=1}^5 g_{A_i}(2^{j}3^k5^\ell r)\right)=-0.249573-0.723675i.\]
As before, let $g_D:\ZZ\to\CC$ be the restriction of $g$ on the finite truncation $\pm G_D$, so that $g_D$ is a Fourier template. Since
\[
\lim_{D\to\infty}\frac{|\{r\in \pm G_D:\{1,2,4,8\}\times \{1,3\}\times \{1,5\}\times r\subseteq\pm G_D \}|}{|\pm G_D|}=1,
\]
we get that
\[
\lim_{D\to\infty}\frac{\sum_{i=1}^5\sigma_{A_i}(g_D)}{|\pm G_D|/16}=-0.249573<0.
\]
Taking $D$ sufficiently large,  we can obtain the desired Fourier template $g_D$ with $\sum_{i=1}^5\sigma_{A_i}(g_D)<0$. 
 \end{example}

In general, to prove \cref{thm:2*5} under Case C, we will repeatedly use and generalize the above function $h$. At this point, it might not be intuitive why the values of $h(i,j,k)$ are chosen in this way, and how they can be generalized. We will discuss this further in \cref{subsec:C}, which hopefully will explain more to the readers.

The above three cases form the majority of the proof of \cref{thm:2*5}. Case D does not have many possibilities, as each linear system under Case D can be parameterized by one single variable; we will prove \cref{thm:2*5} under Case D in \cref{subsec:D}. In \cref{subsec:s<4}, we will show \cref{thm:2*5} for $2\times 5$ linear systems with girth $s(L)\leq 3$. Finally, in \cref{sec:common}, we show commonness of the several linear systems indicated in \cref{thm:2*5}.

Having illustrated some of our proof strategies, we now begin the proof of \cref{thm:2*5}.

\subsection{Proof of \cref{thm:2*5} under Case A}\label{subsec:A}

In this section, we prove \cref{thm:2*5} under Case A listed in \cref{cor:2*5-classification}.

\begin{definition}
Suppose $L$ is an irredundant $2\times 5$ system with $s(L)=4$. Recall \cref{setup:s=4}.
    \begin{itemize}
\item For $i\in[5]$ and $\lambda\in\QQ\setminus\{0\}$, we say that $L_i$ is \textit{$\lambda$-common} if  $A_i$ is of the form
$\{\!\!\{a,-a,\lambda a,-\lambda a\}\!\!\}$.
\item For $a,b\in \QQ$, we say that $a,b$ are \emph{$L$-coincidental} if there exist $\lambda\in\{\pm a/b,\pm b/a\}$ and $i \in [5]$ such that $L_i$ is $\lambda$-common.
    \end{itemize}
\end{definition}

Suppose $L$ is a linear system that falls under Case A listed in \cref{cor:2*5-classification}. We show that $L$ is always uncommon. This follows from combining the following two statements:
\begin{enumerate}
    \item (\cref{lem:finding-non-coincidental}) If $L$ falls under Case A in \cref{cor:2*5-classification}, then there exists $A_i$ that cannot be partitioned into two $L$-coincidental pairs. 
    \item (\cref{lem:coincidence-k13,lem:coincidence-isolated}) If for some $i\in[5]$, $A_i$ cannot be partitioned into two $L$-coincidental pairs, then $L$ is uncommon.

    Note that, if $A_i$ cannot be partitioned into two $L$-coincidental pairs, then one of the following occurs:
    \begin{enumerate}
        \item there exists $a\in A_i$ that is not $L$-coincidental with any element in $A_i\setminus\{a\}$,
        \item there exists $a\in A_i$ that is $L$-coincidental with every element in $A_i\setminus\{a\}$, while no two of the three elements in $A_i\setminus\{a\}$ are $L$-coincidental.
    \end{enumerate}

    In particular, \cref{lem:coincidence-isolated} shows that $L$ is uncommon when (a) occurs, and \cref{lem:coincidence-k13} shows that $L$ is uncommon when (b) occurs.
\end{enumerate}

The detailed statements are as follows.

\begin{lemma}\label{lem:finding-non-coincidental}
    Suppose $L$ is an irredundant $2\times 5$ linear system that falls under Case A in \cref{cor:2*5-classification}. Assume \cref{setup:s=4}. Then there exists $i\in[5]$ such that $A_i$ cannot be partitioned into two $L$-coincidental pairs.
\end{lemma}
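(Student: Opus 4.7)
The strategy is to assume, toward a contradiction, that every $A_i$ admits a partition into two $L$-coincidental pairs, and to derive a violation of Case A. Write $C \subseteq [5]$ for the set of indices $j$ with $L_j$ common. Under Case A we have $|C| \leq 2$ and no $L_j$ is an additive quadruple, so every $L_j$ with $j \in C$ is $\lambda_j$-common for some $\lambda_j \in \QQ_{>0}\setminus\{1\}$. Writing $R = \bigcup_{j\in C}\{\lambda_j, 1/\lambda_j\}$ for the set of admissible absolute coincidence ratios, we have $|R|\leq 4$, and for every uncommon $L_i$ the multiset $\{|a_{i,j}|: j\neq i\}$ must equal $\{c, \rho c, d, \rho' d\}$ for some $c, d > 0$ and $\rho, \rho' \in R$.

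I would split by $|C|$. If $|C|=0$ then no pair is coincidental, so no $A_i$ is partitionable --- an immediate contradiction. For $|C|\in\{1,2\}$, I would use the fact that if $v_1,\dots,v_5 \in \QQ^2$ denote the columns of $L$, then after fixing an overall scale of each $L_i$ we may take $a_{i,j}=\det[v_i \mid v_j]$; the hypothesis ``$L_j$ is $\lambda_j$-common'' then translates into a specific symmetry pattern on the values $\det[v_j\mid v_i]$ for $i\neq j$. In the $|C|=1$ subcase, say $L_5$ is $\lambda$-common, I would fix coordinates so that $v_5 = (0,1)$ and the first coordinates of $v_1,\dots,v_4$ are $-a, a, -\lambda a, \lambda a$ in some order; the remaining freedom is parameters $y_1,\dots,y_4$. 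For each $i\in\{1,2,3,4\}$ there are only three possible pairings of the four elements of $A_i$, and requiring each to be a product of two ratios in $\{\lambda, 1/\lambda\}$ imposes multiplicative equations on the $y_i$. I would then verify that simultaneously satisfying four such constraints forces either an additional $L_j$ to be common (contradicting $|C|=1$) or a collinearity $v_i \parallel v_j$ violating $s(L)=4$. The $|C|=2$ subcase proceeds analogously, with only three uncommon equations to analyze and the two common equations already pinning down most of the column configuration.

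The main obstacle is the combinatorial case analysis in the $|C|=1$ subcase, where one must rule out the pairing patterns for $A_1,A_2,A_3,A_4$ one at a time. The key tool is the Pl\"ucker relation $a_{i,j}a_{k,l} - a_{i,k}a_{j,l} + a_{i,l}a_{j,k} = 0$, which ties coefficients across different $A_i$'s and allows the multiplicative pattern imposed by partitionability to be converted into linear relations among the $v_i$, ultimately producing either extra common equations or the degeneracies that contradict Case A.
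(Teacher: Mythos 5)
Your overall framing agrees with the paper's: argue by contradiction, note that under Case A at most two of $L_1,\dots,L_5$ are common, dispose of the case with no common equation trivially, and in the remaining cases parameterize the system (the paper fixes $L_5$ to be the $\lambda$-common equation and writes $L_4\mathbf x=ax_1+bx_2+cx_3+dx_5$, then expresses $A_1,A_2,A_3$ in terms of $a,b,c,d,\lambda$) and turn "each $A_i$ splits into two $L$-coincidental pairs" into a family of polynomial constraints. Your Pl\"ucker-coordinate normalization ($a_{i,j}\propto\det[v_i\mid v_j]$, Pl\"ucker relations linking the $A_i$'s) is a legitimate alternative bookkeeping device for the same reduction, and the reduction to absolute ratios is harmless since $\lambda$-common and $(-\lambda)$-common coincide.

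The genuine gap is that the heart of the lemma --- showing that the resulting constraint systems have no admissible solutions with all coefficients nonzero and $\lambda\notin\{-1,0,1\}$ --- is never carried out; you only state that you "would verify" it. This is not a routine step. In the one-common subcase, each coincidence $a\sim b$ unfolds into one of four signed identities, so each pairing pattern of $A_1,\dots,A_4$ yields on the order of $4^6$ (and, for the second pairing pattern of $A_4$, $8\cdot 4^6$) candidate polynomial systems in $a,b,c,d,\lambda$; the paper eliminates these by an explicit computer search, not by a structural argument. Moreover, your proposed dichotomy in the two-common subcase --- that the constraints force an extra common equation or a collinearity --- is not what actually happens: the paper finds that partitionability of a \emph{single} $A_i$ already admits a nontrivial finite list of ratio pairs $(\lambda,\mu)$ (fourteen families, including the one-parameter family $(\lambda,-\lambda/(1+\lambda))$), and the contradiction only emerges after checking that the remaining $A_j$'s fail to split for each of these. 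So the case analysis you defer is precisely where the content of the lemma lies, and as written the proposal does not establish it; either the exhaustive (computer-assisted) verification or a genuinely new structural argument via the Pl\"ucker relations would need to be supplied.
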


\begin{lemma}\label{lem:coincidence-isolated}
Suppose $L$ is an irredundant $2\times 5$ linear system that falls under Case A in \cref{cor:2*5-classification}. Assume \cref{setup:s=4}. If there exists $i\in[5]$ and $a\in A_i$ such that $a$ is not $L$-coincidental with any element in $A_i\setminus\{a\}$, then there exists a Fourier template $g:\ZZ^d\to\CC$ such that $\sum_{i=1}^5 \sigma_{A_i}(g)<0$.
\end{lemma}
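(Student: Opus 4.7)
I plan to construct a $1$-dimensional Fourier template $g:\ZZ\to\CC$ generalising the one in \cref{ex:non-coincidental-uncommon}, tailored to the element $a$. Write $A_i=\{\!\!\{a,b_1,b_2,b_3\}\!\!\}$ and let $T:=\{\pm a,\pm b_1,\pm b_2,\pm b_3\}$. For a large parameter $N$, set
\[
g(t)=\begin{cases} -N & t\in\{\pm a\},\\ 1 & t\in T\setminus\{\pm a\},\\ 0 & \text{otherwise.} \end{cases}
\]
This is a valid Fourier template since $g(0)=0$ and $g(-t)=\overline{g(t)}$, and by \cref{thm:fourier-template} it suffices to show that $\sum_{j=1}^{5}\sigma_{A_j}(g)<0$ for $N$ sufficiently large.

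The leading term is $\sigma_{A_i}(g)=-2N+O(1)$: the $-2N$ arises from $r=\pm 1$ (each contributing $g(\pm a)\prod_\ell g(\pm b_\ell)=-N$) and the $O(1)$ error comes from the finitely many other integer $r$'s satisfying $\{ar,b_1 r,b_2 r,b_3 r\}\subseteq T$, which are controlled by a finite list of multiplicative coincidences among $a,b_1,b_2,b_3$. It therefore suffices to verify $\sigma_{A_j}(g)=O(1)$ for every $j\ne i$.

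For each $j\ne i$, Case A of \cref{cor:2*5-classification} forces $L_j$ to be either $\lambda$-common with $\lambda\ne\pm 1$ or uncommon, and in the defining sum $\sigma_{A_j}(g)=\sum_r\prod_\ell g(c_\ell r)$ (with $A_j=\{\!\!\{c_1,c_2,c_3,c_4\}\!\!\}$) a given summand acquires a factor $N^k$ precisely when $k$ of the four arguments $c_\ell r$ equal $\pm a$. If $L_j$ is $\lambda$-common with $A_j=\{\!\!\{c,-c,\lambda c,-\lambda c\}\!\!\}$, any $N^2$-or-higher term would force $\lambda=\pm 1$, while any $N^1$-term would force $|\lambda|=|b_m/a|$ for some $m$, making $a$ and $b_m$ be $L$-coincidental via $L_j$---both contradictions. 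If $L_j$ is uncommon, a parallel analysis shows that every $N^k$-contribution ($k\ge 1$) either uses a multiplicative coincidence among $a,b_1,b_2,b_3$ (bounded), or an $L$-coincidental pair (forbidden by hypothesis), or forces a common/additive-quadruple structure on $A_j$ (excluded by Case A).

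The main obstacle is to verify that every structural configuration of an uncommon $A_j$ that might yield a positive $N^k$-term---for example $A_j=\{c,c,\pm cb_m/a,\pm cb_{m'}/a\}$ with $b_m,b_{m'}$ not forming a cancelling pair---is actually ruled out by the combined force of non-coincidentality, Case A, and the girth assumption $s(L)=4$. A related subtlety: when $a$ itself has multiplicity $m\ge 2$ in $A_i$, the above real-valued construction gives the wrong sign for $\sigma_{A_i}(g)$, and one must replace it by $g(\pm a)=Ne^{\pm i\pi/m}$, so that $\sigma_{A_i}(g)\sim -2N^m$ while $g(\pm a)^k+g(\mp a)^k=2N^k\cos(k\pi/m)$ takes the sign needed to neutralise the various accidental $N^k$-contributions from the other equations.
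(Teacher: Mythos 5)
There is a genuine gap, and in fact the construction as stated fails already for the distinguished equation $A_i$. Your hypothesis does not exclude $-a\in A_i$: under Case A no $L_j$ is an additive quadruple, so $a$ and $-a$ are never $L$-coincidental, and multisets such as $A_i=\{\!\!\{a,-a,b,c\}\!\!\}$ are squarely within the scope of the lemma (they appear explicitly in the paper's case analysis). For such an $A_i$ your template gives, at $r=\pm 1$, the term $g(a)g(-a)g(b)g(c)=|g(a)|^2\cdot 1\cdot 1=+N^2$, and since any Fourier template must satisfy $g(-a)=\overline{g(a)}$, no choice of phase (including your $e^{\pm i\pi/m}$ fix, which only addresses repeated copies of $a$ itself) can make $g(a)g(-a)$ anything but $+N^2$. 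So the dominant contribution of $\sigma_{A_i}(g)$ is $+2N^2$, the wrong sign. The paper escapes this by a different sign pattern: in that shape it puts the large weight $+C$ on $\pm a$ and gives $b$ and $c$ \emph{opposite} signs $+1,-1$, so the leading term is $-2C^2$. More generally, your key claim that $\sigma_{A_j}(g)=O(1)$ for $j\neq i$ is exactly the unverified step, and it is false as stated: an equation $A_j=\{\!\!\{a,-a,c,d\}\!\!\}$ with $c,d$ not a cancelling pair is uncommon, hence permitted under Case A and untouched by the non-coincidentality hypothesis (which only constrains $A_i$), yet if $c,d$ land in the support it contributes $+2N^2g(c)g(d)$; "bounded in number" does not mean bounded in size, since each such term carries a factor $N^k$. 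Your own closing paragraph concedes that ruling out all such configurations is the missing verification.

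The difficulty is not cosmetic: the paper's proof shows that a single one-dimensional "one big weight on $\pm a$, weight $1$ elsewhere" recipe does not suffice. Besides the sign-splitting just mentioned, for the shapes $A_{i_0}=\{\!\!\{a,a,b,b\}\!\!\}$ and $\{\!\!\{a,a,b,-b\}\!\!\}$ the paper abandons large weights altogether and uses two unit-modulus one-dimensional templates joined into a $2$-dimensional template via \cref{prop:join}, so that $\sigma_{A_j}(h)=\sigma_{A_j}(g)\sigma_{A_j}(\widetilde g)$ is nonpositive for \emph{every} $j$ simultaneously (here the non-coincidentality of $a,b$ is used only to exclude $A_j=\{\!\!\{a,-a,b,-b\}\!\!\}$). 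To salvage your approach you would have to either carry out the full case analysis of which $N^k$-terms can appear in each $\sigma_{A_j}$ (which essentially reproduces the paper's casework), or adopt the paper's mixed-sign and joined-template constructions in the cases where the uniform recipe provably fails.
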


\begin{lemma}\label{lem:coincidence-k13}
Suppose $L$ is an irredundant $2\times 5$ linear system that falls under Case A in \cref{cor:2*5-classification}. Assume \cref{setup:s=4}. Moreover, suppose:
\begin{enumerate}
    \item for every $i\in[5]$ and every $a\in A_i$, there is some $b\in A_i\setminus\{a\}$ such that $a,b$ are $L$-coincidental.
    \item there exists $i_0\in[5]$, $A_{i_0}=\{\!\!\{a_0,b_0,c_0,d_0\}\!\!\}$ such that:
\begin{itemize}
    \item $a_0$ is $L$-coincidental with each of $b_0,c_0,d_0$,
    \item $b_0,c_0,d_0$ are pairwise not $L$-coincidental.
\end{itemize}
\end{enumerate}
Then there exists a  Fourier template $g:\ZZ\to\CC$ such that $\sum_{i=1}^5 \sigma_{A_i}(g)<0$.
\end{lemma}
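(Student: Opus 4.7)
The plan is to construct an explicit 1-dimensional Fourier template $g\colon\ZZ\to\CC$ tailored to the star coincidence structure around $a_0$, so that $\sigma_{A_{i_0}}(g)$ contributes a dominant negative term while all other $\sigma_{A_i}(g)$ are controlled. As a preliminary, I note that Case A together with the star hypothesis force $L_{i_0}$ itself to be uncommon, since a common $A_{i_0}=\{c,-c,\lambda c,-\lambda c\}$ would produce a $K_{2,2}$ of coincidences instead of a star. Moreover, since Case A allows at most two common equations (none an additive quadruple), the three coincidences $(a_0,b_0),(a_0,c_0),(a_0,d_0)$ must be supplied by at most two common ratios $\lambda\neq\pm 1$, and the pairwise non-coincidence of $b_0,c_0,d_0$ further constrains the permissible configurations. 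After rescaling columns so that $a_0=1$, I thereby reduce to a short list of normal forms for $(b_0,c_0,d_0)$ and the supporting common ratios.

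For the main construction, I take $g$ supported on the symmetric set $S=\{\pm 1\}\cup\{\pm b_0\}\cup\{\pm c_0\}\cup\{\pm d_0\}$, setting
\[
g(\pm 1)=\alpha,\quad g(\pm b_0)=\beta,\quad g(\pm c_0)=\gamma,\quad g(\pm d_0)=\delta,
\]
for real parameters to be chosen. The key observation is that the non-coincidence of $b_0,c_0,d_0$ means no common equation has its ratio equal to any of $b_0/c_0,\,b_0/d_0,\,c_0/d_0$; consequently, the ``large'' products $|g(b_0)|^2|g(c_0)|^2$, $|g(b_0)|^2|g(d_0)|^2$, $|g(c_0)|^2|g(d_0)|^2$ never appear in any common $\sigma_{A_j}(g)$. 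In the generic case (where $\{1,b_0,c_0,d_0\}$ are multiplicatively independent), this gives $\sigma_{A_{i_0}}(g)\approx 2\alpha\beta\gamma\delta$ while each $\sigma_{A_j}(g)$ for common $L_j$ is of order $\alpha^2$ times a bounded combination of $\beta^2,\gamma^2,\delta^2$. Choosing $\alpha$ small and the signs so that $\alpha\beta\gamma\delta<0$, the $\sigma_{A_{i_0}}$ term is of lower order in $\alpha$ than the common terms and dominates the sum.

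The main obstacle will be the non-generic configurations: when absolute values in $\{1,b_0,c_0,d_0\}$ coincide (shrinking the effective support $S$) or when multiplicative relations among these four values produce unintended support overlaps. Such overlaps can introduce extra quartic-order contributions in the large parameters to some common $\sigma_{A_j}(g)$, potentially spoiling the estimate. I plan to handle these subcases by splitting according to the number of common equations (one or two) and their ratio values, and by further rescaling (or, if necessary, lifting to a higher-dimensional template via \cref{prop:join}) to neutralize the bad multiplicative coincidences. A further input I will use is hypothesis (1): every element in each $A_i$ has a coincidental partner in $A_i$, which constrains each uncommon $A_i$ with $i\neq i_0$ strongly enough that $\sigma_{A_i}(g)$ must carry a factor of $\alpha$ and hence be negligible for $\alpha$ sufficiently small.
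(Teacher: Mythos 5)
Your overall strategy (a one-dimensional template supported on $\pm a_0,\pm b_0,\pm c_0,\pm d_0$, with the weight at $\pm a_0$ playing a different role from the other three) is the same as the paper's, but your bookkeeping of orders and signs has a genuine gap. You split the contributions into the main term $2\alpha\beta\gamma\delta$ (order $\alpha$), common equations (order $\alpha^2$), and ``order $\alpha^0$'' terms excluded by non-coincidence. What is missing is the class of terms in which exactly one of the four arguments lands in $\pm a_0$ and the other three land in $\{\pm b_0,\pm c_0,\pm d_0\}$: these are of order $\alpha$, i.e.\ the \emph{same} order as your main term, and nothing in hypotheses (1)--(2) forbids them for uncommon $A_j$ with $j\neq i_0$ (indeed $A_{i_0}$ itself has exactly this shape, and other $A_j\cdot r$ may too). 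Your closing claim that hypothesis (1) forces each such $\sigma_{A_j}(g)$ to ``carry a factor of $\alpha$ and hence be negligible'' is therefore not a valid conclusion: carrying one factor of $\alpha$ does not make a term negligible against $2\alpha\beta\gamma\delta$. With generic distinct $\beta,\gamma,\delta$ such a term can be, say, $+2\alpha\delta^2\beta$, which the sign condition $\alpha\beta\gamma\delta<0$ does not control and which can exceed $|2\alpha\beta\gamma\delta|$ in magnitude. (Your worry about quartic terms entering \emph{common} $\sigma_{A_j}$ through multiplicative coincidences is, by contrast, moot: a common equation only ever pairs frequencies whose ratio is a common ratio, and that is ruled out purely by the pairwise non-coincidence of $b_0,c_0,d_0$.)

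The paper closes exactly this gap by making the three ``large'' weights equal: $g(\pm a_0)=1$ and $g(\pm b_0)=g(\pm c_0)=g(\pm d_0)=-C$. Then every surviving term with three large factors is automatically $1\cdot(-C)^3=-C^3<0$ regardless of which equation or frequency produces it, while hypotheses (1)--(2) (together with the Case~A fact that elements of equal absolute value are never $L$-coincidental, since $\lambda=\pm1$ would force an additive quadruple) rule out any $r$ with $A_i\cdot r\subseteq\{\pm b_0,\pm c_0,\pm d_0\}$ — coincidence is ratio-invariant, each element of $A_i$ needs a coincidental partner inside $A_i\cdot r$, and $b_0,c_0,d_0$ are pairwise non-coincidental — so no $+C^4$ term ever appears, for common or uncommon $A_i$ alike. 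This yields $\sum_i\sigma_{A_i}(g)\le -2C^3+O(C^2)<0$ for large $C$, with no need for your normal-form case analysis, the rescaling $a_0=1$ (which anyway risks non-integral support; just place the weight at $\pm a_0$), or higher-dimensional joins. To repair your write-up, either adopt the equal-weight choice so that all order-$C^3$ terms are negative by construction, or supply a genuine argument that no uncommon $A_j$, $j\neq i_0$, and no auxiliary frequency can produce a positive term with three large factors — the latter is not implied by your stated hypotheses.
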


The proof of \cref{lem:coincidence-k13} is effectively a generalization of \cref{ex:non-coincidental-uncommon}.

\begin{proof}[Proof of \cref{lem:coincidence-k13}]
Consider the Fourier template $g:\ZZ\to\CC$ defined by
\begin{align*}
    \begin{cases}
    g(a_0)=g(-a_0)=1\\
    g(b_0)=g(-b_0)=g(c_0)=g(-c_0)=g(d_0)=g(-d_0)=-C\\
    g(r)=0 \quad \text{otherwise.}
    \end{cases}
\end{align*}
We then have $\sigma_{A_{i_0}}(g)=-2C^3$.
Moreover, because of condition (1) in the lemma statement, there is no $i\in[5]$ such that $A_i$ has all elements lying in $\{\pm b_0,\pm c_0,\pm d_0\}$. Therefore, for all $i\in[5]$, we either have  $\sigma_{A_i}(g)=-2C^3$ or $|\sigma_{A_i}(g)|=O(C^2)$. This gives $\sum_{i=1}^5 \sigma_{A_i}(g)<0$ when $C>0$ is sufficiently large.
\end{proof}

The proof of \cref{lem:coincidence-isolated} has more case discussions than  \cref{lem:coincidence-k13}, although the underlying principle is identical. Similarly, the proof of \cref{lem:finding-non-coincidental} is routine. As such, we defer the proofs of these lemmas to \cref{app:A}.

\subsection{Proof of \cref{thm:2*5} under Case B}\label{subsec:B}

In this section, we prove \cref{thm:2*5} under Case B of \cref{cor:2*5-classification}. In this case, we always assume the following setup:

\begin{setup}\label{setup:b}
    Suppose
    \begin{alignat*}{3}
    L_5 \mathbf x &=x_1+x_2-x_3-x_4 && A_5=\{\!\!\{1,1,-1,-1\}\!\!\}\\
    L_4 \mathbf x &=ax_1+bx_2+cx_3+dx_5 && A_4=\{\!\!\{a,b,c,d\}\!\!\}\\
    L_3 \mathbf x &=(a+c)x_1+(b+c)x_2-cx_4+dx_5\qquad && A_3=\{\!\!\{a+c,b+c,-c,d\}\!\!\}\\
    L_2 \mathbf x &=(a-b)x_1+(b+c)x_3+bx_4+dx_5 && A_2=\{\!\!\{a-b,b+c,b,d\}\!\!\}\\
    L_1 \mathbf x &=(b-a)x_3+(a+c)x_3+ax_4+dx_5 && A_1=\{\!\!\{b-a,a+c,a,d\}\!\!\}
\end{alignat*}
with $a,b,c,d\in\ZZ\setminus\{0\}$, and none of $L_1,\dots,L_4$ is common. By replacing $a,b,c,d$ by their negations if necessary, we may suppose that at least two of $a,b,c,d$ are positive.
\end{setup}

We aim to show that $L$ is always uncommon. As in \cref{ex:cos-uncommon}, the proof strategy is in two steps:
\begin{enumerate}
    \item First, we construct an appropriate ``multiplicative grid'' $G_L$ which is the support of the Fourier template $g$.
    \item Next, we set the values of $g(r)$ for $r \in G_L$ to be some complex numbers so that the phases align in a way for $\sum_{i=1}^{5} \sigma_{A_i}(g)$ to be negative. We will in fact choose $g $ to be the finite truncation of some $h:\pm G_L\to\CC$ where each $h_{A_i}$ is periodic (in some suitable sense that we will define) on $G_L$. Most of the times $h$ will look like the functions in \cref{ex:cos-uncommon}, given by the composition of $e(\cdot)$ with a linear map from $G_L \to \RR$. 
\end{enumerate}

As such, we begin by defining the ``multiplicative grid'' $G_L$ associated to a $2 \times 5$ linear system $L$ as follows. 

\begin{definition}\label{def:mult-grid} 
Let $L$ be a $2\times 5$ linear system with $s(L)=4$.

\begin{itemize}
    \item  Let $P_L=\{p_1,\dots,p_\ell\}=\{p:p\text{ is a prime divisor of some element in }A_1\cup\dots \cup A_5\}$.
    \item  Let $G_L=\{p_1^{d_1}\cdots p_{\ell}^{d_\ell}:d_1,\dots,d_\ell\in\{0,1,\dots\}\}$.
\end{itemize}

For a function $ h: \pm G_L\to\CC$, $i\in[5]$ and $\vec u=(u_1,\dots,u_\ell)\in\NN^{\ell}$, we say that $h_{A_i}$ is \emph{$\vec u$-periodic} if for all $r\in G_L$, we have
$$h_{A_i}(r)=h_{A_i}(p_1^{u_1}\cdots p_{\ell}^{u_\ell}\cdot r).$$
\end{definition}

\begin{remark}
    We note that while a more natural definition for $\vec{u}$-periodicity  is  $h(r) = h(p_1^{u_1}\cdots p_{\ell}^{u_\ell}\cdot r)$ -- and some of our constructions satisfy this stronger periodicity condition -- we do need the more general notion of periodicity in \cref{def:mult-grid} for the construction given in \cref{ex:cos-uncommon}. 
\end{remark}

Now, we construct a function $h \colon \pm G_L \to \CC$ such that $h_{A_1},\dots,h_{A_5}$ are $\vec{u}$-periodic on $G_L$ and has the ``phase cancellation'' property on a period. That is, 
\begin{equation}\label{eq:grid}
    \mathrm{Re} \sum_{\substack{0 \leq d_1 \leq u_1-1 \\ \vdots \\ 0 \leq d_{\ell} \leq u_{\ell} -1}} \sum_{i = 1}^{5} h_{A_i}(p_1^{d_1} \ldots p_{\ell}^{d_\ell}\cdot r) < 0.
\end{equation}
It turns out that the above is enough to guarantee the existence of a Fourier template $g \colon \ZZ \to \CC$ such that $\mathrm{Re}(\sigma_{A_i}(g)) < 0$. We have seen some version of this idea in \cref{ex:cos-uncommon}. 

\begin{lemma}\label{lem:periodic}
    Let $L$ be a $2 \times 5$ system with $s(L) = 4$ and suppose $G_L$ is the associated multiplicative grid as in \cref{def:mult-grid}. Condier $h \colon \pm G_L \to \CC$ such that $h(-r)=\overline{h(r)}$ for all $r\in G_L$. Suppose $h_{A_1},\dots,h_{A_5}$ are $\vec{u}$-periodic for some $\vec{u} \in \NN^{\ell}$,  such that \cref{eq:grid} holds. Then there exists a Fourier template $g\colon \ZZ \to \CC$ such that $\sum_{i=1}^{5} \sigma_{A_i}(g) < 0$. 
\end{lemma}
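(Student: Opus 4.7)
The plan is to take $g$ to be a finite truncation of $h$ to a large box in the multiplicative grid. Writing $G_L^D := \{p_1^{d_1} \cdots p_\ell^{d_\ell} : 0 \le d_1, \ldots, d_\ell \le D\}$, set
\begin{align*}
  g_D(r) = \begin{cases} h(r) & r \in \pm G_L^D, \\ 0 & \text{otherwise.} \end{cases}
\end{align*}
Because $h(-r) = \overline{h(r)}$ on $G_L$ and $0 \notin \pm G_L$, the function $g_D$ has finite support, satisfies $g_D(-r) = \overline{g_D(r)}$, and has $g_D(0) = 0$, so it qualifies as a Fourier template in the sense of \cref{def:fourier-template}. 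It remains to show that $\sum_{i=1}^5 \sigma_{A_i}(g_D) < 0$ for all sufficiently large $D$.

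First I would control which $r \in \ZZ$ contribute non-trivially to $\sigma_{A_i}(g_D) = \sum_{r \in \ZZ} \prod_{a \in A_i} g_D(ar)$. Since every coefficient $a \in A_1 \cup \cdots \cup A_5$ has $|a|$ supported on the primes of $P_L$ and $|a| \ge 1$, the product vanishes unless $|r| \in G_L^D$. Letting $M$ denote the maximum exponent of any prime $p_j$ appearing in any coefficient from $A_1 \cup \cdots \cup A_5$, a direct check in exponent coordinates shows that for $r$ in the inner sub-box $\pm G_L^{D-M}$ one has $ar \in \pm G_L^D$ for every $a \in A_i$, hence $(g_D)_{A_i}(r) = h_{A_i}(r)$ for every $i$. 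The complementary boundary set $\pm (G_L^D \setminus G_L^{D-M})$ has only $O(D^{\ell-1})$ elements, and on it $|(g_D)_{A_i}(r)| \le \|h_{A_i}\|_\infty$, which is finite because $\vec u$-periodicity forces $h_{A_i}$ to take only finitely many values.

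The core of the proof is to evaluate the inner contribution using $\vec u$-periodicity. For any base point $r_0 \in G_L$, the block sum $\sum_{0 \le d_j \le u_j - 1} h_{A_i}(p_1^{d_1} \cdots p_\ell^{d_\ell} \cdot r_0)$ is independent of $r_0$; writing $T$ for the sum of these constants over $i = 1, \ldots, 5$, the hypothesis \cref{eq:grid} says $\mathrm{Re}(T) < 0$. I would then tile $G_L^{D-M}$ (in the exponent coordinates $(d_1, \ldots, d_\ell)$) by translates of the fundamental $u_1 \times \cdots \times u_\ell$ box, obtaining $\Theta(D^\ell / (u_1 \cdots u_\ell))$ full tiles together with an overhang of $O(D^{\ell-1})$ stray points. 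Pairing each $r$ with $-r$ doubles the real part and annihilates the imaginary part (recall from \cref{rem:real-sum} that $\sigma_{A_i}(g_D) \in \RR$), giving
\begin{align*}
  \sum_{i=1}^5 \sigma_{A_i}(g_D) = \frac{2\,\mathrm{Re}(T)}{u_1 \cdots u_\ell}\, D^\ell + O(D^{\ell-1}),
\end{align*}
where the implicit constant depends only on the finite value set of each $h_{A_i}$. Since $\mathrm{Re}(T) < 0$, the right-hand side is negative for all $D$ sufficiently large, producing the desired Fourier template. The main obstacle is the bookkeeping of the two lower-order error contributions, namely boundary contributions where $(g_D)_{A_i}$ may vanish while $h_{A_i}$ does not, and tile overhang from the fact that $D - M$ need not be a multiple of the $u_j$; but since both are $O(D^{\ell-1})$, the leading $D^\ell$ term comfortably dominates.
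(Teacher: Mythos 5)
Your proposal is correct and follows essentially the same route as the paper: truncate $h$ to $\pm G_D$, note that interior points tile into fundamental period boxes each contributing the block sum, giving a main term $2\,\mathrm{Re}(T)\,D^\ell/(u_1\cdots u_\ell)$, and bound the boundary/overhang contributions by $O(D^{\ell-1})$. The only cosmetic difference is your justification that $h_{A_i}$ is bounded via "finitely many values," which (exactly like the paper's own argument) tacitly relies on the stronger coordinate-wise periodicity enjoyed by all the actual constructions rather than on the literal single-shift definition of $\vec u$-periodicity, so this is a shared imprecision rather than a gap.
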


We defer the formal proof of \cref{lem:periodic} to \cref{app:B}. However, the main idea is just the last part of \cref{ex:cos-uncommon}.
Roughly speaking, we ``tessellate'' enough copies of $h$ on a sufficiently large truncation $\pm G_D$, where $G_D=\{p_1^{d_1}\cdots p_\ell^{d_\ell}:d_1,\dots,d_\ell\in\{0,1,\dots,D\}\}\subseteq G_L$. Then, we will define the Fourier template $g:\pm G_D\to\CC$ by
\[
g(r)=h(r) \qquad \text{for every }r\in\pm G_D
\]
(so the support of $g$ is $\pm G_D\subseteq \ZZ$, which is finite). By choosing $D$ large, we can ensure that the terms involved in the ``wrap around''/boundary terms in $\sum_{i=1}^{5} \sigma_{A_i}(g)$ is much smaller than the number of main terms coming from terms in \cref{eq:grid}, and so the overall sum stays negative.

As such, in what follows, we restrict our attention to constructing periodic functions on $\pm G_L$ that sum to a negative value in its period. Depending on the properties of $A_1, \ldots, A_5$, we construct $h$ as a suitable product formed from four special ``primitive functions'': $h_1, h_2, h_3, h_4$.

\begin{definition}\label{def:cancelling}
    Let $A$ be a multiset of size 4. We say that $A$ is \textit{cancelling} if $A$ is of the form $\{\!\!\{a_1,a_2,b_1,b_2\}\!\!\}$ such that $a_1,a_2,-b_1,-b_2\in\NN$, with $a_1a_2=b_1b_2$.
\end{definition}

We already know that $A_5=\{\!\!\{1,1,-1,-1\}\!\!\}$. For each $A_i$ among $A_1,A_2,A_3,A_4$, we know that $A_i$ satisfies one of the following:
\begin{enumerate}
    \item[(i)] $A_i$ has four positive elements,
    \item[(ii)] $A_i$ has three positive and one negative elements,
    \item[(iii)] $A_i$ has two positive and two negative elements, but is not cancelling,
    \item[(iv)] $A_i$ is cancelling (but not an additive quadruple).
\end{enumerate}

Observe that if $A_i$ meets (i) or (ii), then we can pick very simple functions $h:\pm G_L\to\CC$ to make $h_{A_i}$ negative.

\begin{observation}\label{obs:b-1-1}
    There exist functions $h_1,h_2:\pm G_L\to\CC$, defined by
    $$\begin{cases}
        h_1(r)=e(1/8) & r\in G_L\\
        h_1(r)=e(-1/8) & r\in -G_L
    \end{cases}\qquad
    \begin{cases}
        h_2(r)=i & r\in G_L\\
        h_2(r)=-i & r\in -G_L
    \end{cases}$$
    such that, for all 4-multisets $A$, $(h_1)_{A}$ and $(h_2)_{A}$ are both constant on $G_L$ with the following values:
    \begin{align*}
        (h_1)_A&=\begin{cases}
            -1 & \text{$A$ has four positive elements}\\
            i\text{ or }-i & \text{$A$ has three positive and one negative elements}\\
            1 & \text{$A$ has two positive and two negative elements,}
        \end{cases}\\
        (h_2)_A&=\begin{cases}
            -1 & \text{$A$ has three positive and one negative elements}\\
            1 & \text{$A$ has two positive and two negative elements.}
        \end{cases}
    \end{align*}
\end{observation}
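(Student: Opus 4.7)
The proof is a direct verification from the definitions, with no real obstacle. The plan is to compute $(h_1)_A(r)$ and $(h_2)_A(r)$ for $r \in G_L$ and observe that each value depends only on the sign pattern of the elements of $A$, not on $r$ itself, which immediately gives constancy on $G_L$.

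For a positive integer $a$ and $r \in G_L$, we have $ar \in G_L$, while for a negative integer $a$ and $r \in G_L$, we have $ar \in -G_L$. Therefore
\[
(h_1)_A(r) = \prod_{a \in A} h_1(ar) = e(1/8)^{p}\,e(-1/8)^{n} = e((p-n)/8),
\]
where $p$ and $n$ denote the number of positive and negative elements in $A$ respectively (so $p+n = 4$). Plugging in the three possible sign distributions gives $e(4/8) = -1$ in the four-positive case, $e(2/8) = i$ (respectively $e(-2/8) = -i$ if one reverses the roles of positive and negative) in the $(3,1)$ case, and $e(0) = 1$ in the $(2,2)$ case. The same calculation for $h_2$ yields $i^{p}\cdot(-i)^{n} = i^{p-n}$, giving $i^{2} = -1$ in the $(3,1)$ case and $i^{0} = 1$ in the $(2,2)$ case.

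Since the resulting value is the same for every $r \in G_L$, both $(h_1)_A$ and $(h_2)_A$ are constant on $G_L$ with the claimed values. The Hermitian condition $h_j(-r) = \overline{h_j(r)}$ holds by construction, so $h_1$ and $h_2$ fit the hypotheses of \cref{lem:periodic} whenever we take $\vec u = (1,\dots,1)$ (as both functions are trivially $\vec u$-periodic for any $\vec u$, being constant on $G_L$ and on $-G_L$). Thus the observation is established by inspection, and no serious difficulty arises.
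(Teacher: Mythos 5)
Your verification is correct and is exactly the direct sign-counting computation the paper intends (the observation is stated without proof precisely because it follows by this inspection): for $r\in G_L$ each positive coefficient contributes $e(1/8)$ (resp.\ $i$) and each negative one $e(-1/8)$ (resp.\ $-i$), giving $e((p-n)/8)$ and $i^{p-n}$, which depend only on the sign pattern of $A$ and hence are constant on $G_L$ with the tabulated values. Your parenthetical about $\pm i$ in the $(3,1)$ case also explains the paper's ``$i$ or $-i$'' phrasing, so there is nothing further to add.
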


However, if some $A_i$, $i\in[4]$  has an equal number of positive and negative elements, then we have $(h_1)_{A_i}= (h_2)_{A_i} = 1$, so new constructions are needed when this occurs.

We first discuss the case when all of $A_1,\dots,A_4$ meets (iii). The construction for this case generalizes that in \cref{ex:cos-uncommon}. 

\begin{proposition}\label{prop:b-1-2}
    Let $L$ be a $2 \times 5$ system that falls under Case B in \cref{cor:2*5-classification}. 
    Let $G_L$ be the associated multiplicative grid as given by \cref{def:mult-grid}. Suppose  $A_5$ is an additive quadruple, and each of $A_1, \ldots, A_4$ contains an equal number of positive and negative elements, but is not cancelling.
    
    Then there exists a function $ h_3 \colon \pm G_L \to \CC$ with $h(-r)=\overline{h(r)}$ such that each $(h_3)_{A_i}$, $i \in [5]$  is constant on $G_L$.  Furthermore, $\mathrm{Re}\sum_{i=1}^{5} (h_3)_{A_i}(r) < 0$ for every $r \in \pm G_L$.
\end{proposition}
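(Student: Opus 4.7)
The plan is to construct $h_3$ as a ``linear-phase'' function on the multiplicative grid, in direct analogy with \cref{ex:cos-uncommon}. For parameters $\theta = (\theta_1,\dots,\theta_\ell) \in \RR^\ell$ to be determined, I will set
\[
h_3(\pm p_1^{d_1}\cdots p_\ell^{d_\ell}) = e(\pm(\theta_1 d_1+\cdots+\theta_\ell d_\ell)),
\]
which automatically satisfies $h_3(-r)=\overline{h_3(r)}$. The first step is to verify the constancy of each $(h_3)_{A_i}$ on $G_L$. Writing $A_i = \{p,q,-m,-n\}$ with $p,q,m,n>0$ (possible because $A_i$ has two positive and two negative entries), a direct computation---in which the $r$-dependent factors cancel thanks to the balanced sign counts---shows
\[
(h_3)_{A_i}(r) = e(\pm \theta\cdot\mathbf{w}_i) \qquad \text{for } r\in\pm G_L,
\]
where $\mathbf{w}_i = \mathbf{v}(p)+\mathbf{v}(q)-\mathbf{v}(m)-\mathbf{v}(n) \in \ZZ^\ell$ is the exponent vector of $pq/(mn)$ in the primes $p_1,\dots,p_\ell$. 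The non-cancelling hypothesis for $A_i$ guarantees $\mathbf{w}_i \neq 0$, while $A_5 = \{1,1,-1,-1\}$ yields $(h_3)_{A_5}(r)=1$. Taking real parts,
\[
\mathrm{Re}\sum_{i=1}^{5}(h_3)_{A_i}(r) = 1 + \sum_{i=1}^{4} \cos(\theta\cdot\mathbf{w}_i)
\]
for every $r\in\pm G_L$, so the problem is independent of $r$.

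It therefore suffices to find $\theta \in \RR^\ell$ with $\sum_{i=1}^{4}\cos(\theta\cdot\mathbf{w}_i) < -1$. The cleanest approach is to force each $\cos(\theta\cdot\mathbf{w}_i) = -1$ simultaneously, by writing $\theta = \pi u$ with $u \in \ZZ^\ell$ and reducing to the $\FF_2$-affine system $\bar{\mathbf{w}}_i\cdot u \equiv 1 \pmod 2$ for $i=1,\dots,4$. By $\FF_2$-duality, this is solvable unless there exists an odd-weight $y \in \FF_2^4$ with $\sum_i y_i\bar{\mathbf{w}}_i = 0$---equivalently, an odd subset $S\subseteq\{1,\dots,4\}$ for which $\prod_{i\in S}\prod_{a\in A_i}|a|$ is a perfect square. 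In the obstruction-free case, such $u$ immediately yields $\sum_i\cos = -4$.

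The main obstacle is handling the cases in which this $\FF_2$-obstruction is present. I plan a case analysis based on the $\QQ$-rank of $\mathbf{w}_1,\dots,\mathbf{w}_4$. When the rank is at least $2$, a Kronecker density argument applies: since each $\mathbf{w}_i\neq 0$, the joint distribution of $(\cos(\theta\cdot\mathbf{w}_i))_{i=1}^{4}$ as $\theta$ varies is rich enough to push three of the four cosines arbitrarily close to $-1$, forcing the sum below $-1$. In the rank-$1$ case, where $\mathbf{w}_i = c_i\mathbf{w}_0$ for nonzero integers $c_i$, the problem reduces to showing that the single-variable trigonometric polynomial $\beta\mapsto\sum_{i=1}^{4}\cos(c_i\beta)$ attains a value strictly less than $-1$; I expect to prove this by evaluating at $\beta = \pi/2^{v}$ with $v=\min_i v_2(c_i)$ (at which at least one cosine is $-1$) combined with a product-to-sum identity and a perturbation argument. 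The structural constraints on the $\mathbf{w}_i$ inherited from \cref{setup:b}---namely that $A_1, A_2, A_3$ arise from $A_4, A_5$ by eliminating $x_1, x_2, x_3$---should restrict the possible obstruction patterns and enable closing each sub-case.
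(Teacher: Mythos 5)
Your construction of $h_3$ and the reduction are exactly the paper's first step: the same linear-phase template on the exponent vectors, the same constancy computation, $(h_3)_{A_5}=1$, and the observation that non-cancelling means each frequency vector $\mathbf{w}_i\neq 0$. The gap is in the trigonometric core. The paper picks a generic rational $\theta$ so that all $\gamma_i=\theta\cdot\mathbf{w}_i$ are nonzero rationals and then invokes \cref{fact:cos-1} along the one-parameter family $t\theta$; that fact ($1+\sum_{i=1}^4\cos(\gamma_i t)<0$ for some $t$, for any nonzero rational $\gamma_i$) is the genuinely nontrivial ingredient, proved in \cref{app:B} by averaging over $t_k=(2k+1)\pi/\gamma_4$ and analyzing the equality case. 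Your proposal in effect attempts to reprove this, and both branches that must carry the load are deficient.

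In the obstructed rank-$\geq 2$ branch, the claim that one can always push three of the four cosines arbitrarily close to $-1$ is false as stated: for frequencies of the shape $\mathbf{w},\,2\mathbf{w},\,\mathbf{w}',\,2\mathbf{w}'$ (a configuration which does trigger your $\FF_2$ obstruction, since $2\mathbf{w}\equiv 0$), $\cos\alpha=\cos 2\alpha=-1$ is impossible, so no three cosines can simultaneously approach $-1$; the desired inequality still holds there (the minimum of $\cos\alpha+\cos2\alpha$ is $-9/8$), but not by the argument you give, and you have not shown that the $\mathbf{w}_i$ arising from \cref{setup:b} avoid such configurations. In the rank-$1$ branch you have isolated precisely the content of \cref{fact:cos-1}, and your plan—evaluate at $\beta=\pi/2^{v}$ with $v=\min_i v_2(c_i)$ plus an unspecified perturbation—is not a proof: for $(c_1,\dots,c_4)=(1,2,3,4)$ your evaluation point is $\beta=\pi$, where $\sum_i\cos(c_i\beta)=0$, nowhere near $<-1$ (the actual minimum is at most $-3/2$, attained near $\beta=\pi/3$), so the "perturbation argument" would have to supply essentially the whole proof. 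Likewise, the closing appeal to structural constraints from \cref{setup:b} restricting the obstruction patterns is asserted but not substantiated. In short, the reduction is correct and matches the paper, but the statement you still owe is exactly \cref{fact:cos-1} (or a multivariable strengthening of it), and the proposal does not establish it.
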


In the proof of \cref{prop:b-1-2}, we will need the following fact, whose proof we put in \cref{app:B}. \cref{fact:cos-1} is the driving force behind the idea of getting the phases of $h_{A_i}(r)$ to align and produce a negative real value. 
\begin{fact}\label{fact:cos-1}
    For all $\gamma_1,\gamma_2,\gamma_3,\gamma_4\in\QQ\setminus\{0\}$, there exists $t\in\RR$ such that $1+\cos(\gamma_1t)+\cos(\gamma_2t)+\cos(\gamma_3t)+\cos(\gamma_4t)<0$.
\end{fact}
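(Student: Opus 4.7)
The plan is to reduce to the case of nonzero integer frequencies and then derive a contradiction from the assumption that the sum is always nonnegative, by invoking the Fej\'er--Riesz factorization of nonnegative trigonometric polynomials. Writing $\gamma_i = a_i/b$ with $a_i \in \ZZ \setminus \{0\}$ and $b$ a common positive integer denominator, the substitution $t = bs$ reduces the goal to producing $s \in \RR$ such that
\[ g(s) := 1 + \sum_{i=1}^{4} \cos(a_i s) < 0. \]
By the evenness of $\cos$, we may further assume $a_1, \ldots, a_4$ are positive integers.

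Suppose, toward a contradiction, that $g(s) \ge 0$ for every $s \in \RR$. Let $M = \max_i a_i$ and $m := \lvert \{ i : a_i = M \} \rvert$, so the Fourier expansion $g(s) = \sum_{k=-M}^{M} c_k e^{iks}$ has $c_0 = 1$ and $c_{\pm M} = m/2 \neq 0$. Since $g$ is a nonnegative real trigonometric polynomial, Fej\'er--Riesz provides a polynomial $q(z) = \sum_{j=0}^{M} q_j z^j$ with $g(s) = |q(e^{is})|^2$. The fact that $c_{\pm M} \neq 0$ forces both $q_0 \neq 0$ and $q_M \neq 0$, since otherwise the Fourier support of $|q|^2$ would lie strictly inside $\{-(M-1), \ldots, M-1\}$. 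Comparing Fourier coefficients now gives $c_0 = \sum_{j=0}^M |q_j|^2 = 1$ and $c_M = q_M \overline{q_0} = m/2$, and AM--GM yields
\[ \frac{m}{2} = |c_M| = |q_0|\cdot|q_M| \le \frac{|q_0|^2 + |q_M|^2}{2} \le \frac{1}{2} \sum_{j=0}^{M} |q_j|^2 = \frac{1}{2}, \]
which forces $m \le 1$.

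If $m \ge 2$ this is already a contradiction. If $m = 1$, equality must hold throughout the chain above, forcing $|q_0| = |q_M| = 1/\sqrt{2}$ and $q_j = 0$ for every $1 \le j \le M-1$; hence $g(s) = |q_0 + q_M e^{iMs}|^2 = 1 + \cos(Ms + \phi)$ for some phase $\phi$, and the Fourier support of $g$ sits inside $\{-M, 0, M\}$. But when $m = 1$ the remaining three indices satisfy $a_i < M$, contributing nonzero Fourier coefficients at frequencies in $\{1, \ldots, M-1\}$, contradicting the previous sentence. The main anticipated obstacle is verifying the two endpoint conditions $q_0 \neq 0$ and $q_M \neq 0$ on the Fej\'er--Riesz factor $q$, since these are precisely what make the AM--GM chain tight enough to bite; this is handled by the short Fourier-support argument indicated above.
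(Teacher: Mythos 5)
Your proof is correct, but it takes a genuinely different route from the paper. The paper argues elementarily: after reducing to positive integer frequencies and ordering them $\gamma_1\le\cdots\le\gamma_4$, it evaluates $f$ at the points $t_k=(2k+1)\pi/\gamma_4$, averages over $k$ (using orthogonality of $\cos(mt_k)$ over these points) to force $f(t_k)=0$ for all $k$ and strict maximality of $\gamma_4$, then computes $\sum_k f(t_k)^2=0$ to extract the structural relation $\gamma_1+\gamma_3=2\gamma_2=\gamma_4$, and finally kills this last case by a first-derivative perturbation at $t_0=\pi/\gamma_4$. You instead invoke the Fej\'er--Riesz factorization $g=|q(e^{is})|^2$ of the putative nonnegative trigonometric polynomial and compare extreme Fourier coefficients: $c_0=\sum_j|q_j|^2=1$, $c_M=q_M\overline{q_0}=m/2$, so AM--GM gives $m\le 1$, and the equality case ($m=1$) forces the interior coefficients $q_j$ to vanish, making $g=1+\cos(Ms+\phi)$, which is incompatible with the three remaining cosines contributing strictly positive Fourier mass at frequencies in $\{1,\dots,M-1\}$ (no cancellation is possible since all cosine coefficients are $+1$). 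All the steps check out, including $q_0,q_M\neq 0$ (immediate from $c_M=q_M\overline{q_0}\neq 0$) and the observation that $m=1$ forces $M\ge 2$ so the middle frequency range is nonempty. The trade-off: your argument is shorter and more conceptual, avoids the paper's case analysis and derivative argument, and in fact proves the stronger statement that the multiplicity of the top frequency is bounded by the constant term (so it works with any number $\ge 2$ of unit-coefficient cosines); the paper's proof is longer but entirely self-contained, using only explicit evaluation and discrete orthogonality rather than the Fej\'er--Riesz theorem.
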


The proof of \cref{prop:b-1-2} generalizes what we saw in \cref{ex:cos-uncommon}. 

\begin{proof}[Proof of \cref{prop:b-1-2}]
    Recall that $$P_L=\{p_1,\dots,p_\ell\}=\{p:p\text{ is a prime divisor of some element in }A_1\cup\dots \cup A_5\}.$$
For some $\theta_1,\dots,\theta_\ell\in\QQ$ and $t\in\RR$ to be chosen later, define $h_3:\pm  G_L\to\CC$ by
$$\begin{cases}
    h_3(p_1^{d_1}\cdots p_\ell^{d_\ell})=  e((\theta_1d_1+\dots+\theta_\ell d_\ell)t)\\
     h_3(-p_1^{d_1}\cdots p_\ell^{d_\ell})=  e(-(\theta_1d_1+\dots+\theta_\ell d_\ell)t).
\end{cases}$$

Observe that, since each of $A_1,\dots,A_5$ contains two positive and two negative elements, $(h_3)_{A_{1}}$, \dots, $(h_3)_{A_{5}}$ are constant on $G_L$. In particular. we have 
$(h_3)_{A_{5}}=1$. Because none of $A_1, \ldots, A_4$ are cancelling, by some elementary linear algebra, we can choose $\theta_1,\dots,\theta_\ell\in\QQ$ so that
\[
\text{Re}\sum_{i=1}^5(h_3)_{A_{i}}(r)=1+\cos(\gamma_1t)+\cos(\gamma_2t)+\cos(\gamma_3t)+\cos(\gamma_4t)\qquad \text{ with }\gamma_1,\gamma_2,\gamma_3,\gamma_4\in \QQ\setminus\{0\}.
\]
By \cref{fact:cos-1}, we can choose $t\in \RR$ so that $1+\cos(\gamma_1t)+\cos(\gamma_2t)+\cos(\gamma_3t)+\cos(\gamma_4t)<0$. This ensures that \cref{eq:grid} holds.
\end{proof}

It is clear that the construction in \cref{prop:b-1-2} fails when some $A_i$, $i\in[4]$ is cancelling, because then $h_{A_i}$ becomes identically 1, and we can no longer leverage \cref{fact:cos-1} to do the phase cancellation. To handle this final case, we can leverage the discrepancy between the largest exponents of primes that divide various elements of $A_i$; that is, $h$ is a function of these largest exponents, for which we use the following notation.
\begin{definition}\label{def:prime-exponent}\quad
\begin{enumerate}
    \item For prime number $p$ and $r\in\ZZ\setminus\{0\}$, define $v_p(r)=\max\{d\in\{0,1,\dots\}:p^d\mid |r|\}$. 
    \item For prime number $p$ and multiset $A$, define $V_{p}(A)=\{\!\!\{v_p(a):a\in A\}\!\!\}$.
\end{enumerate}
\end{definition}

Next, we demonstrate our construction through two examples.

\begin{example}\label{ex:cancelling-1}
    Let $L$ be the $2 \times 5$ linear system 
    \[ \begin{pmatrix}  1 & 1 & -1 & -1 & 0 \\ 1 & - 3 & -4 & 0 & 12 \end{pmatrix}.\]
    We then have 
    \begin{alignat*}{3}
    L_5 \mathbf x &= x_1 + x_2 - x_3 - x_4\qquad\qquad  &&A_5=\{\!\!\{1,1,-1,-1\}\!\!\}
\\ L_4 \mathbf x &= x_1 -3x_2 -4x_3 + 12 x_5 &&A_4=\{\!\!\{1,-3,-4,12\}\!\!\}\\
L_3 \mathbf x &=-3x_1 - 7x_2 +4x_4 + 12x_5 &&A_3=\{\!\!\{-3,-7,4,12\}\!\!\}  \\ 
L_2 \mathbf x &= 4x_1 -7x_3 - 3x_4 + 12 x_5 &&A_2=\{\!\!\{4,-7,-3,12\}\!\!\}  \\
L_1 \mathbf x &= -4x_2 -3x_3 +x_4 + 12x_5 &&A_1=\{\!\!\{-4,-3,1,12\}\!\!\}.
\end{alignat*}
Note that in this context we have that $G_L = \{2^{d_1}3^{d_2}7^{d_3}: d_1,d_2,d_3 \in \{0,1,\dots\} \}$. By \cref{lem:periodic}, to show that $L$ is uncommon, it suffices to find a $\vec{u}$-periodic function $h\colon \pm G_L\to\CC$ with $h(-r)=\overline{h(r)}$ such that \cref{eq:grid} holds.

Note that $v_2(3) = 0, v_2(4) = 2, v_2(12) = 2$ and $v_3(3) = 1, v_3(4) = 0, v_3(12) = 1$. It turns out to be advantageous to consider the largest exponent of 2 and 3, which are the primes that ``witness'' the cancelling structure in $A_4$. In particular, sending $A_i\mapsto \{\!\!\{(v_2(r), v_3(r)):r\in A_i\}\!\!\}$ for each $i$, we have:
    \begin{align*}
        &A_5=\{\!\!\{1,1,-1,-1\}\!\!\}\mapsto \{\!\!\{(0,0),(0,0),(0,0), (0,0)\}\!\!\} \\ 
        &A_4=\{\!\!\{1,-3,-4,12\}\!\!\}\mapsto\{\!\!\{(0,0),(0,1),(2,0), (2,1)\}\!\!\} \\
        &A_3=\{\!\!\{-3,-7,4,12\}\!\!\}\mapsto\{\!\!\{(0,1),(0,0),(2,0), (2,1)\}\!\!\} \\
        &A_2=\{\!\!\{4,-7,-3,12\}\!\!\}\mapsto \{\!\!\{(2,0),(0,0),(1,0), (2,1)\}\!\!\} \\
        &A_1=\{\!\!\{-4,-3,1,12\}\!\!\}\mapsto\{\!\!\{(2,0),(0,1),(0,0), (2,1)\}\!\!\}. 
    \end{align*}
    Consider the following function $h\colon \pm G_L \to \CC$ given by
    \[ h(r) = \begin{cases} -1 &\text{if } \left \lfloor \frac{v_2(r)}{2} \right \rfloor \equiv v_3(r)\equiv 1 \pmod{2} \\ 1 &\text{otherwise.} \end{cases}\]
    Then, since $h(2^4\cdot 3^2\cdot r)=h(r)$ for every $r$, we know that every $h_{A_i}$ is $\vec u$-periodic with $\vec u=(4,2,1)$. Moreover, we have
\begin{align*}
&\sum_{\substack{0 \leq d_1 \leq 3\\ 0 \leq d_{2} \leq 1 \\ r = 2^{d_1} 3^{d_{2}}}}(|h(r)|^4+ h(r)h(-3r) h(-4r) h(12r) +  h(-3r)h(-7r) h(4r) h(12r) \\ &\qquad\qquad\qquad  + h(4r) h(-7r) h(-3r) h(12r) + h(-4r) h(-3r) h(r) h(12r)) \\
&=8-8-8-8-8=-24<0.
    \end{align*}
    Therefore, $h$ satisfies the condition in \cref{lem:periodic}.

\end{example}

\begin{example}
\label{ex:cancelling-2}
    Let $L$ be the $2 \times 5$ linear system 
    \[ \begin{pmatrix}  1 & 1 & -1 & -1 & 0 \\ 6 & 24 & -144 & 0 & -1 \end{pmatrix}.\]
    We then have 
    \begin{alignat*}{3}
    L_5 \mathbf x &= x_1 + x_2 - x_3 - x_4\qquad\qquad\qquad  &&A_5=\{\!\!\{1,1,-1,-1\}\!\!\}
\\ L_4 \mathbf x &= 6x_1 + 24x_2 -144x_3 - x_5 &&A_4=\{\!\!\{6,24,-144, -1\}\!\!\}\\
L_3 \mathbf x &=-138x_1 - 120x_2 + 144x_4 - x_5 &&A_3=\{\!\!\{-138,-120,144,-1\}\!\!\}  \\ 
L_2 \mathbf x &= -18x_1 -120x_3 + 24x_4 - x_5 &&A_2=\{\!\!\{-18,-120,24,-1\}\!\!\}  \\
L_1 \mathbf x &= 18x_2 -138x_3 + 6x_4  - x_5 &&A_1=\{\!\!\{18,-138,6,-1\}\!\!\}
\end{alignat*}
Note that in this context we have $G_L = \{2^{d_1}3^{d_2}5^{d_3}23^{d_4}: d_1, d_2, d_3, d_4 \in \NN \}$. By \cref{lem:periodic}, to show that $L$ is uncommon, it suffices to find a $\vec{u}$-periodic function $h\colon \pm G_L\to\CC$ such that \cref{eq:grid} holds.

    In this case, note that $v_2(6), v_2(24), v_2(144) \geq 1$ and $v_2(6) + v_2(24) = v_2(144)$, and so the cancelling structure in $A_4$ is ``witnessed'' by the prime 2. This stands in contrast to \cref{ex:cancelling-1}, where two primes were needed to ``witness'' this cancelling structure. At present it turns out to be advantageous to isolate the largest exponent of 2 that divide each of the elements of $A_i$. In particular, sending $A_i\mapsto \{\!\!\{v_2(r):r\in A_i\}\!\!\}$ for each $i$, we have:
    \begin{align*}
        &A_5=\{\!\!\{1,1,-1,-1\}\!\!\}\mapsto \{\!\!\{0, 0, 0, 0\}\!\!\} \\
        &A_4=\{\!\!\{6,24,-144, -1\}\!\!\}\mapsto  \{\!\!\{1, 3, 4, 0\}\!\!\}\\
        &A_3=\{\!\!\{-138,-120,144,-1\}\!\!\}\mapsto \{\!\!\{1, 3, 4, 0\}\!\!\}\\
        &A_2=\{\!\!\{-18,-120,24,-1\}\!\!\} \mapsto  \{\!\!\{1,3,3,0\}\!\!\}\\
        &A_1=\{\!\!\{18,-138,6,-1\}\!\!\}\mapsto  \{\!\!\{1,1,1,0\}\!\!\} 
    \end{align*}
     We will choose an appropriate $\phi \colon \NN \to \{ \pm 1 \}$ and show that $h(r) = \phi(v_2(r))$ has the desired properties. We will choose $\phi$ to be periodic, and then it will follow that $h$ is periodic on $G_L$ as well. Consider 
    \[ \phi(v) = \begin{cases} -1 &\text{if } v \equiv 0,1 \pmod{4}, \\ 1 &\text{if } v\equiv 2,3 \pmod{4}. \end{cases} \]
Then, since $h(2^4\cdot r)=h(r)$ for every $r$, we know that every $h_{A_i}$ is $\vec u$-periodic with $\vec u=(4,1,1,1)$. Moreover, for $r=2^{d_1}$ with $d_1=0,1,2,3$, the values of each $h_{A_i}(r)$ varies as follows:
    \begin{itemize}
        \item  $h_{A_5}= 1$, as $$\phi(0)^4=\dots=\phi(3)^4=1.$$
        \item  $h_{A_4}, h_{A_3} =-1$, as $$\phi(1)\phi(3)\phi(0)^2=\phi(2)\phi(0)\phi(1)^2=\phi(3)\phi(1)\phi(2)^2=\phi(0)\phi(2)\phi(3)^2=-1.$$
        \item  $h_{A_1} =h_{A_2} = 1$ when $d_1\in\{0,2\}$ and $h_{A_1} =h_{A_2} = -1$ when $d_1\in\{1,3\}$, as
        $$\phi(1)\phi(0)=\phi(3)\phi(2)=1,\qquad \phi(2)\phi(1)=\phi(0)\phi(3)=-1.$$
    \end{itemize}
Altogether, we have
\begin{align*}
&\sum_{\substack{0 \leq d_1 \leq 3 \\ r = 2^{d_1}}}(|h(r)|^4+ h(6r)h(24r) h(-144r) h(r) +  h(-138r)h(-120r) h(144r) h(-r) \\ &\qquad\qquad\qquad  + h(-18r) h(-120r) h(24r) h(-r) + h(18r) h(-138r) h(6r) h(-r))\\
&=4-4-4+0+0=-4<0.
\end{align*}
    Therefore, $h$ satisfies the condition in \cref{lem:periodic}.
\end{example}

The general result, whose proof we put in \cref{app:B}, is the following.

\begin{proposition}\label{prop:cancelling}
    Suppose $L$ is an irredundant $2\times 5$ linear system that falls under Case B in \cref{cor:2*5-classification} and suppose that for some $i\in[4]$, $A_i$ is a cancelling pair. Then there exists a Fourier template $g\colon \ZZ \to\CC$ such that $\sum_{i=1}^5\sigma_{A_i}(g)<0$.
\end{proposition}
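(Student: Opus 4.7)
The plan is to invoke \cref{lem:periodic}: it suffices to exhibit some $\vec u \in \NN^\ell$ and a function $h\colon \pm G_L \to \CC$ with $h(-r) = \overline{h(r)}$ so that each $h_{A_i}$ is $\vec u$-periodic and the real part of $\sum_{r \in \text{one period}} \sum_{i=1}^5 h_{A_i}(r)$ is negative. Without loss of generality, write the cancelling multiset as $A_4 = \{\!\!\{a_1, a_2, -b_1, -b_2\}\!\!\}$ with $a_j, b_j \in \NN$ and $a_1 a_2 = b_1 b_2$. Since $L_4$ is not common under Case B, the Fox--Pham--Zhao characterization forces $\{a_1, a_2\} \neq \{b_1, b_2\}$, so by unique factorization there is either a single prime $p$ for which $\{v_p(a_1), v_p(a_2)\} \neq \{v_p(b_1), v_p(b_2)\}$, or else every such single-prime marginal agrees but the joint multiset on some pair of primes $p_1, p_2$ differs.

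I will take $h(r) = \phi(v_{p_1}(r), \ldots, v_{p_s}(r))$ for a $\{\pm 1\}$-valued periodic $\phi$, extended by $h(-r) = h(r)$, engineered so that $h_{A_4} \equiv -1$. In the single-prime case ($s=1$, as in \cref{ex:cancelling-2}), $\phi\colon\NN \to \{\pm 1\}$ is a block-pattern of period $n$ chosen so that $\phi(t+u_1)\phi(t+u_2)\phi(t+u_3)\phi(t+u_4) = -1$ for all $t$, where the $u_i$ are the four $v_p$-values. Writing $\phi = (-1)^\eta$ reduces this to a circulant system over $\FF_2$ whose ``symbol'' factors as $(1+x^{u_3-u_1})(1+x^{u_4-u_1})$; choosing $n$ to be a suitable multiple of the differences $u_j-u_i$ yields a solution, which can be exhibited directly as a block pattern. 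In the two-prime case ($s=2$, as in \cref{ex:cancelling-1}), the four valuation vectors $U_i = (v_{p_1}, v_{p_2})$ of $a_1, a_2, b_1, b_2$ form a non-degenerate parallelogram in $\ZZ^2$, and a bilinear-type choice $\phi(a, b) = (-1)^{\lfloor a/d_1 \rfloor \cdot b}$ (for a suitable $d_1$) makes $\phi(U_1)\phi(U_2)\phi(U_3)\phi(U_4) = -1$ by a short telescoping computation.

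Having secured $h_{A_5} \equiv 1$ and $h_{A_4} \equiv -1$, it remains to control $h_{A_1}, h_{A_2}, h_{A_3}$ over a period. The key observation (present in both examples) is that when each prime $p$ used by $h$ satisfies the ``dominant'' regime $v_p(c) \neq v_p(a), v_p(b)$ (so that $v_p(a+c) = \min(v_p(a), v_p(c))$ and similarly for $b+c$), the $v_p$-multiset of $A_3 = \{a+c, b+c, -c, d\}$ coincides with that of $A_4 = \{a, b, c, d\}$, automatically forcing $h_{A_3} \equiv -1$; a parity check on the remaining shifts then shows $h_{A_1}$ and $h_{A_2}$ each have period-average $0$, yielding $\sum_{\text{period}} \sum_i h_{A_i} = (1 - 1 - 1)\cdot |\mathrm{period}| < 0$.

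The main obstacle is the non-generic sub-cases in which $a+c$ or $b+c$ undergoes abnormal $p$-adic cancellation, breaking the identification of the $v_p$-multiset of $A_3$ with that of $A_4$. These situations correspond to finitely many explicit arithmetic relations among $a, b, c, d$ (driven by the cancelling constraint $a_1 a_2 = b_1 b_2$) and are handled by either enlarging the set of primes used by $h$ to restore the favorable structure, or by enlarging the period of $\phi$ to absorb the anomalous valuations so that $h_{A_1}+h_{A_2}+h_{A_3}$ still sums negatively; this residual case analysis is the content of \cref{app:B}.
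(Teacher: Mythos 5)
Your overall strategy (a $\{\pm1\}$-valued, valuation-based periodic $h$ on the multiplicative grid with $h_{A_5}\equiv 1$, $h_{A_4}\equiv -1$, followed by \cref{lem:periodic}) is indeed the paper's starting point, but the heart of the matter is exactly what you defer, and the "generic regime" claim that is supposed to make the deferral small is not correct. First, the dominance claim fails as stated: if $v_p(c)\neq v_p(a),v_p(b)$ you do get $v_p(a+c)=\min(v_p(a),v_p(c))$ and $v_p(b+c)=\min(v_p(b),v_p(c))$, but the multiset $V_p(A_3)$ coincides with $V_p(A_4)$ only when $v_p(c)$ exceeds \emph{both} $v_p(a)$ and $v_p(b)$; when $v_p(c)$ is smaller than one of them the multiset collapses (e.g.\ $V_p(A_4)=\{\!\!\{2,3,0,5\}\!\!\}$ gives $V_p(A_3)=\{\!\!\{0,0,0,5\}\!\!\}$), so $h_{A_3}\equiv-1$ is not automatic. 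Second, the assertion that $h_{A_1}$ and $h_{A_2}$ have period-average $0$ is unsupported and false in general: already in \cref{ex:cancelling-1} all of $h_{A_1},h_{A_2},h_{A_3}$ have nonzero period sums, and nothing in your construction prevents these sums from being \emph{positive}, in which case $(1-1-1)$ is not the tally and negativity is lost. This sign problem is precisely why the paper does not use \cref{lem:periodic} alone but proves the reduction \cref{obs:b-reduction-2}, which joins $h$ with the auxiliary templates $h_2,h_3$ (via \cref{prop:join} and \cref{fact:cos-1,fact:cos-2}) and crucially needs the parity condition $|\{i\in[3]:\mathrm{Re}(\sigma(h_{A_i},\vec u))\neq 0\}|\in\{1,3\}$ — the case of exactly two nonzero contributions genuinely cannot be fixed by phase rotation, as the remark following \cref{obs:b-reduction-2} notes.

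Third, you never treat the possibility that a second set $A_i$, $i\in[3]$, is itself cancelling (the paper first bounds how many can be cancelling and then handles the two-cancelling-set case separately in \cref{prop:b-5} by combining two functions built from different prime pairs); under your $h$ such an $A_i$ can have $h_{A_i}\equiv+1$ on the whole period, wiping out the $-1$ from $A_4$. Finally, closing all of these "non-generic sub-cases" by pointing to \cref{app:B} is circular: that appendix is where the paper's actual proof lives — the classification \cref{cor:classification-case-b} of how the cancelling structure is witnessed ($V_p(A_4)=\{\!\!\{0,v_1,v_2,v_1+v_2\}\!\!\}$, $\{\!\!\{0,v,v,2v\}\!\!\}$, or the two-prime rectangle, with or without a second cancelling set) and the bespoke constructions in \cref{prop:b-2}--\cref{prop:b-5} — and none of that case analysis is carried out in your proposal. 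As it stands the argument covers only the situation of \cref{ex:cancelling-1,ex:cancelling-2} and leaves the remaining (and hardest) configurations unproved.
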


The proof of \cref{prop:cancelling} proceeds in two steps. First, in \cref{cor:classification-case-b} we make precise the notion of ``witnessing the cancelling structure'' in the above examples, by classifying the types of cancelling sets that can occur. Second, we handle each case in the series of \cref{prop:b-2}--\cref{prop:b-5}. The main ideas on how to handle each case generalize those in \cref{ex:cancelling-1} and \cref{ex:cancelling-2}.

We summarize some of the properties of the above constructions in the following table. 
\begin{table}[!htp]
\caption {Periodic sums of $(h_j)_A$ on $G_L$}
\begin{center}
  \begin{tabular}{|l|*{4}{c|}}\hline
 &$h_1$&$h_2$&$h_3$&$h_4$\\\hline
$A$ has four positive elements &$-1$&&&\\\hline
$A$ has three positive and one negative elements &$\pm i$&$-1$&&\\\hline
\makecell[l]{$A$ has two positive and two negative elements \\ but is not cancelling} &1&1&$e^{i\gamma t}$&\\\hline
$A$ is cancelling but not additive quadruple &1&1&1&$-1$\\\hline
Relevant results & \ref{obs:b-1-1} &\ref{obs:b-1-1} &\ref{prop:b-1-2} & \ref{prop:cancelling}\\\hline
\end{tabular}  
\label{table:case-b}
\end{center}
\end{table}

Finally, we put together \cref{obs:b-1-1}, \cref{prop:b-1-2} and \cref{prop:cancelling} to complete the proof of case B. 
\begin{proposition}\label{prop:case-B}
    Suppose $L$ is an irredundant $2\times 5$ linear system that falls under Case B in \cref{cor:2*5-classification}. Then there exists a Fourier template $g:\ZZ^d\to\CC$ such that $\sum_{i=1}^5\sigma_{A_i}(g)<0$.
\end{proposition}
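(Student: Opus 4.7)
The plan is to carry out a case analysis on the multiplicities $(n_1, n_2, n_3, n_4)$ of the four sign-types (i)--(iv) (listed before \cref{obs:b-1-1}) among $A_1, \ldots, A_4$, and in each case produce the required Fourier template as a pointwise product on $\pm G_L$ of suitable primitives from \cref{obs:b-1-1}, \cref{prop:b-1-2}, and \cref{prop:cancelling}. Pointwise multiplication of Hermitian functions on $\pm G_L$ is again Hermitian, and whenever each factor $(h_j)_{A_i}$ is constant on $G_L$ (as in the filled-in rows of \cref{table:case-b}), so is the pointwise product $(h_{j_1} h_{j_2} \cdots)_{A_i}$, with value equal to the product of the constants. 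Thus $\mathrm{Re} \sum_{i=1}^{5} h_{A_i}$ can be read off directly from \cref{table:case-b}, after which \cref{lem:periodic} converts $h$ into a Fourier template $g \colon \ZZ \to \CC$ with $\sum_{i=1}^{5} \sigma_{A_i}(g) < 0$.

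The configurations with $n_1 = n_2 = 0$ are already settled: \cref{prop:b-1-2} handles $n_4 = 0$ (which forces $n_3 = 4$) and \cref{prop:cancelling} handles $n_4 \ge 1$. The latter in fact applies whenever $n_4 \ge 1$ regardless of the other $n_j$'s, so it remains to treat the case $n_4 = 0$ with $n_1 + n_2 \ge 1$. By \cref{obs:b-1-1}, one has $\mathrm{Re} \sum_{i=1}^{5} (h_1)_{A_i} = 1 - n_1$ and $\mathrm{Re} \sum_{i=1}^{5} (h_2)_{A_i} = 1 - n_2$, so $h_1$ or $h_2$ alone suffices whenever $\max(n_1, n_2) \ge 2$. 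This leaves only the three configurations $(n_1, n_2, n_3) \in \{(1,1,2),\, (1,0,3),\, (0,1,3)\}$, in all of which $n_3 \ge 2$. For these I would take $h = h_3$ from \cref{prop:b-1-2}, with the phase parameter $t$ tuned so that $\mathrm{Re}\sum_{i=1}^{5} (h_3)_{A_i} = 1 + \sum_{j \,:\, A_j \text{ type (iii)}} \cos(\gamma_j t) < 0$. The contributions of the remaining $n_1 + n_2$ terms (of types (i) and (ii)) arise from non-constant characters $(h_3)_{A_i}$ on $G_L$ whose sums over a large truncation $\pm G_D$ are $o(|G_D|)$, so they do not affect the sign of the leading-order term.

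The main obstacle is extending \cref{fact:cos-1} from four cosines down to two or three: one needs that for any $\gamma_1, \ldots, \gamma_m \in \QQ \setminus \{0\}$ with $m \in \{2, 3\}$, there exists $t \in \RR$ with $\sum_{j=1}^m \cos(\gamma_j t) < -1$. I expect this to follow by the same strategy as the proof of \cref{fact:cos-1}: after clearing denominators so that the $\gamma_j$'s are integers, one picks $t$ so that each $\gamma_j t$ lies close enough modulo $2\pi$ to an odd multiple of $\pi$, which is possible by a Dirichlet simultaneous-approximation argument pushing every $\cos(\gamma_j t)$ arbitrarily close to $-1$. Once this generalization is in hand, the three residual configurations are covered and \cref{lem:periodic} completes the proof.
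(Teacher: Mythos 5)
Your reduction agrees with the paper's proof up to a point: $n_4\ge 1$ is dispatched by \cref{prop:cancelling} and $n_3=4$ by \cref{prop:b-1-2}, exactly as the paper does. The gap is in the step ``$h_1$ or $h_2$ alone suffices whenever $\max(n_1,n_2)\ge 2$.'' The identities $\mathrm{Re}\sum_{i=1}^5 (h_1)_{A_i}=1-n_1$ and $\mathrm{Re}\sum_{i=1}^5 (h_2)_{A_i}=1-n_2$ are not what \cref{obs:b-1-1} gives: every type-(iii) set contributes $+1$ under both $h_1$ and $h_2$ (and every type-(i) set contributes $+1$ under $h_2$), so the correct values are $1-n_1+n_3$ and $1+n_1+n_3-n_2$. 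This is fatal for the configuration $(n_1,n_2,n_3,n_4)=(0,2,2,0)$ --- two sets of type (ii) and two of type (iii) --- which is exactly the configuration the paper singles out as its hardest case (case (4) of its proof): there $\mathrm{Re}\sum_{i=1}^5(h_2)_{A_i}=1+2-2=+1>0$, so $h_2$ alone does not work, contrary to your claim. The paper handles this case by choosing the phases of $h_3$ via \cref{fact:cos-2} so that the constant contribution from $A_5$ and the two type-(iii) sets equals $1+\cos(\gamma_1 t)+\cos(\gamma_2 t)<0$, and then exploiting that multiplying by $h_2$ flips the sign of the two type-(ii) contributions while fixing the rest, so that one of $h_3$ or $h_3h_2$ has negative total; this twist is absent from your argument.

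Two secondary problems with your residual cases $(1,1,2),(1,0,3),(0,1,3)$: you need $1+\sum_j\cos(\gamma_j t)<0$ with three cosines, which the paper never proves (\cref{fact:cos-2} treats two, \cref{fact:cos-1} four), and your proposed Dirichlet argument of pushing every $\cos(\gamma_j t)$ close to $-1$ cannot work in general (already for $\gamma_1=1$, $\gamma_2=2$ the numbers $t$ and $2t$ cannot both be near odd multiples of $\pi$); moreover, discarding the non-constant type-(i)/(ii) terms as $o(|G_D|)$ steps outside \cref{lem:periodic}, which requires $\vec u$-periodicity of every $h_{A_i}$ and a negative sum over a single period, whereas the paper disposes of non-constant terms via the $h_2$-twist rather than equidistribution. (Under \cref{setup:b} these residual sign patterns in fact do not arise, which is why the paper only needs the four configurations it lists; but the $(0,2,2,0)$ configuration does arise, and that is where your argument genuinely breaks.)
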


Roughly speaking, depending on the structures of $A_1,\dots,A_5$ that appear in the linear system $L$ -- in particular, which rows in Table~\ref{table:case-b} these $A_i$ belong to -- we choose some subset of $h_j$ as described in Table~\ref{table:case-b} such that the product of each row  restricted to chosen $h_j$ sums up to negative. Then we  take the  Fourier template $g_j$ to be the finite truncation of $h_j$, and finally take $g$ to be the joined Fourier template from the $g_j$.

In the proof of \cref{prop:case-B}, we will need the following analogue of \cref{fact:cos-1} (also proved in \cref{app:B}). 

\begin{fact}\label{fact:cos-2}
    For all $\gamma_1,\gamma_2\in\QQ\setminus\{0\}$, there exists $t\in\RR$ such that $1+\cos(\gamma_1t)+\cos(\gamma_2t)<0$.
\end{fact}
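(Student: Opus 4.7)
I would prove \cref{fact:cos-2} by splitting on whether the ratio $\rho := \gamma_2/\gamma_1 \in \QQ \setminus \{0\}$ lies in the exceptional set $S := \{\pm 1, \pm 2, \pm \tfrac12\}$. The exceptional cases $\rho \in S$ are handled directly: when $\rho = \pm 1$, the expression $1 + \cos(\gamma_1 t) + \cos(\gamma_2 t) = 1 + 2\cos(\gamma_1 t)$ equals $-1$ at $t = \pi/\gamma_1$; when $\rho \in \{\pm 2, \pm \tfrac12\}$ (these four subcases are equivalent up to swapping $\gamma_1 \leftrightarrow \gamma_2$), substituting $c := \cos(\gamma_1 t)$ and $\cos(2\gamma_1 t) = 2c^2 - 1$ reduces the expression to $c(1+2c)$, which attains $-\tfrac18 < 0$ at $c = -\tfrac14 \in [-1,1]$.

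For the generic case $\rho \notin S$, I would argue by contradiction. Assuming $1 + \cos(\gamma_1 t) + \cos(\gamma_2 t) \geq 0$ for every $t \in \RR$, consider
\[
F(t) := \bigl(1 + \cos(\gamma_1 t) + \cos(\gamma_2 t)\bigr)\bigl(1 - \cos(\gamma_1 t)\bigr)\bigl(1 - \cos(\gamma_2 t)\bigr) \;\geq\; 0,
\]
a product of three nonnegative factors. Setting $u := \cos(\gamma_1 t)$ and $v := \cos(\gamma_2 t)$, direct expansion gives $F = 1 - u^2 - v^2 - uv + u^2 v + uv^2$; then $\cos^2 x = \tfrac12(1 + \cos 2x)$ together with product-to-sum rewrites $F$ as a constant plus a $\QQ$-linear combination of cosines whose frequencies all lie in $\{\pm 2\gamma_1,\, \pm 2\gamma_2,\, \pm(\gamma_1 \pm \gamma_2),\, \pm \gamma_1,\, \pm \gamma_2,\, \pm(2\gamma_1 \pm \gamma_2),\, \pm(\gamma_1 \pm 2\gamma_2)\}$. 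Since $\rho \notin S$, every one of these frequencies is nonzero; so averaging $F$ over a common period $T$ (which exists as $\gamma_1, \gamma_2 \in \QQ$) kills all cosine terms, and the constant term computes to $1 - \tfrac12 - \tfrac12 - 0 + 0 + 0 = 0$. Therefore $\int_0^T F(t)\, dt = 0$, and continuity together with $F \geq 0$ forces $F \equiv 0$. But each factor $1 - \cos(\gamma_i t)$ vanishes only on the countable set $(2\pi/\gamma_i)\ZZ$, so on a dense subset of $\RR$ the remaining factor $1 + \cos(\gamma_1 t) + \cos(\gamma_2 t)$ must vanish; by continuity it then vanishes identically, contradicting its period-average of $1$.

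The main obstacle is verifying that $S$ is exactly the set of ratios for which one of the cross-frequencies $\gamma_1 \pm \gamma_2$, $2\gamma_1 \pm \gamma_2$, or $\gamma_1 \pm 2\gamma_2$ collapses to zero and thereby injects an extra nonzero constant into the average of $F$; this enumeration is mechanical, after which the generic orthogonality argument is short and the six exceptional ratios are elementary substitutions.
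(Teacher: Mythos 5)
Your proposal is correct, but it takes a genuinely different route from the paper. The paper's proof is a direct construction: after normalizing $0<\gamma_1\le\gamma_2$, it splits into ranges of the ratio $\gamma_2/\gamma_1$ (namely $\gamma_1>\gamma_2/2$, $\gamma_2/3\le\gamma_1<\gamma_2/2$, $\gamma_1<\gamma_2/3$, and the boundary case $\gamma_2=2\gamma_1$) and in each range exhibits an explicit $t$ forcing both cosines to be sufficiently negative. Your argument instead handles the six degenerate ratios $\pm1,\pm2,\pm\tfrac12$ by direct substitution and, for all other ratios, argues by contradiction using the nonnegative product $F(t)=(1+\cos\gamma_1 t+\cos\gamma_2 t)(1-\cos\gamma_1 t)(1-\cos\gamma_2 t)$: your expansion $F=1-u^2-v^2-uv+u^2v+uv^2$, the list of frequencies, the verification that none of them vanishes precisely when the ratio avoids your exceptional set, and the period-average computation $1-\tfrac12-\tfrac12=0$ are all correct, and the endgame ($F\equiv0$, the zero sets of $1-\cos\gamma_i t$ are discrete, hence $1+\cos\gamma_1 t+\cos\gamma_2 t\equiv0$, contradicting its nonzero mean or its value $3$ at $t=0$) is sound. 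Your orthogonality-plus-nonnegativity scheme is close in spirit to how the paper proves \cref{fact:cos-1} (the four-cosine analogue), so it has the advantage of being uniform and of generalizing to more cosines, while the paper's proof of \cref{fact:cos-2} buys explicitness and brevity: it names a concrete $t$ in every case with no averaging machinery. One minor point worth making explicit if you write this up: the common period $T$ must be chosen as a common period of \emph{all} the frequencies appearing in $F$ (possible since every frequency is an integer combination of the rationals $\gamma_1,\gamma_2$), not merely of $\gamma_1$ and $\gamma_2$ themselves; as stated this is implicit but easily supplied.
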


\begin{proof}[Proof of \cref{prop:case-B}]
    If at least one of $A_1,\dots,A_4$ is cancelling then we can  conclude by  \cref{prop:cancelling}. It remains to handle the case when none of $A_1, A_2, A_3, A_4$ is cancelling. 
    \begin{enumerate}
        \item Suppose  two of $A_1, A_2, A_3, A_4$ contains four positive elements and the other two contain three positive and one negative elements. Then we can apply \cref{lem:periodic} and \cref{obs:b-1-1} with  $ h_1:\pm G_L\to\CC$. 
        \item Suppose all four of $A_1, A_2, A_3, A_4$ contain three positive and one negative element. Then we can apply \cref{lem:periodic} and \cref{obs:b-1-1} with  $ h_2:\pm G_L\to\CC$. 
        \item Suppose each of $A_1, \ldots, A_4$ contain two positive and two negative elements.  Then we can apply \cref{lem:periodic} and \cref{prop:b-1-2} with  $ h_3:\pm G_L\to\CC$. 
        \item Suppose that two of $A_1, \ldots A_4$ (say $A_{i_1}, A_{i_2}$) contain three positive and one negative elements, and the other two contain two positive and two negative elements (say $A_{i_3}, A_{i_4}$). We construct two functions $h_3$ and $h_3h_2$, and it will be clear in a moment why we do so. 

            We set $h_3$ to be the function that we constructed in \cref{prop:b-1-2}, but with $\theta_i$ and $t$ chosen according to \cref{fact:cos-2}. More precisely, recall that $$P_L=\{p_1,\dots,p_\ell\}=\{p:p\text{ is a prime divisor of some element in }A_1\cup\dots \cup A_5\}.$$ For some $\theta_1,\dots,\theta_\ell\in\QQ$ and $t\in\RR$ to be chosen later, define $h_3\colon \pm G_L\to\CC$ by $$\begin{cases}
                h_3(p_1^{d_1}\cdots p_\ell^{d_\ell})=  e((\theta_1d_1+\dots+\theta_\ell d_\ell)t)\\
                h_3(-p_1^{d_1}\cdots p_\ell^{d_\ell})=  e(-(\theta_1d_1+\dots+\theta_\ell d_\ell)t)
            \end{cases}$$ Observe that $(h_3)_{A_{i_1}}, (h_3)_{A_{i_2}},(h_3)_{A_{5}}$ are constant on $G_L$, and $(h_3)_{A_{5}}=1$.  Since $A_{i_1}$ and $A_{i_2}$ are not cancelling, we are able to choose $\theta_1,\dots,\theta_\ell\in\QQ$ so that
            \[
(h_3)_{A_{5}}+\text{Re}((h_3)_{A_{i_1}})+\text{Re}((h_3)_{A_{i_2}})=1+\cos(\gamma_1t)+\cos(\gamma_2t),\quad \gamma_1,\gamma_2\in\QQ\setminus\{ 0\}.
\]
        Using \cref{fact:cos-2}, we can choose $t \in \RR$ so that $1+ \cos(\gamma_1 t) + \cos(\gamma_2 t) < 0$. 

        Now consider $h_3h_2$. Since $(h_2)_{A_{i_1}}, (h_2)_{A_{i_2}}, (h_2)_{A_{i_5}} = 1$ and $(h_2)_{A_{i_3}} = (h_2)_{A_{i_4}} = -1$ (as summarized in Table~\ref{table:case-b}),  we have either 
        \[ \mathrm{Re} \sum_{i=1}^{5} h_3(r) < 0 \quad \text{or} \quad \mathrm{Re}\sum_{i=1}^{5}  h_3(r)h_2(r) < 0\]
        for every $r\in \pm G_L$. Applying \cref{lem:periodic} gives the result. 
        \end{enumerate}
\end{proof}

\subsection{Proof of \cref{thm:2*5} under Case C}\label{subsec:C}

In this section, we prove \cref{thm:2*5} under Case C listed in \cref{cor:2*5-classification}. 

Suppose $L$ falls under Case C  in \cref{cor:2*5-classification}. We show that, if $L$ is not isomoprhic to
    \begin{align*}
    &\begin{pmatrix}
            1 & 1 & -1 & -1 & 0\\
            1 & -1 & 3 & 0 & -3
        \end{pmatrix}\quad
        \begin{pmatrix}
            1 & 1 & -1 & -1 & 0\\
            2 & -2 & 3 & 0 & -3
        \end{pmatrix}
    \end{align*}
then $L$ is common if it is isomorphic to
\[
\begin{pmatrix}
            1 & 1 & -1 & -1 & 0\\
            2 & -2 & 1 & 0 & -1
        \end{pmatrix}
\]
and uncommon otherwise.

Suppose $L$ is an irredundant $2\times 5$ linear system under Case C listed in \cref{cor:2*5-classification}. In this case, we always assume the following setup:
\begin{setup}\label{setup:c}
    Suppose
\begin{alignat*}{2}
    L_5 \mathbf x &=x_1+x_2-x_3-x_4 && A_5=\{\!\!\{1,1,-1,-1\}\!\!\}\\
    L_4 \mathbf x &=bx_1-bx_2+ax_3-ax_5 && A_4=\{\!\!\{1,-1,\frac{a}{b},-\frac{a}{b}\}\!\!\}\\
    L_3 \mathbf x &=(a+b)x_1+(a-b)x_2-ax_4-ax_5 \qquad  && A_3=\{\!\!\{\frac{a+b}{b},\frac{a-b}{b},-\frac{a}{b},-\frac{a}{b}\}\!\!\}\\
    L_2 \mathbf x &=2bx_1+(a-b)x_3-bx_4-ax_5 && A_2=\{\!\!\{2,\frac{a-b}{b},-1,-\frac{a}{b}\}\!\!\}\\
    L_1 \mathbf x &=2bx_2-(a+b)x_3-bx_4+ax_5 && A_1=\{\!\!\{2,-\frac{a+b}{b},-1,\frac{a}{b}\}\!\!\}
\end{alignat*}
with $a,b\in\NN$ coprime, and none of $L_1,L_2,L_3$ is common.
\end{setup}

In order to build some intuition for this case, we revisit \cref{ex:1AQ-1common} and reframe the construction in a manner which generalizes easily. 

\begin{example}[Take two of \cref{ex:1AQ-1common}]
    We recall the setup.  Let $L$ be the linear system 
    \[ \begin{pmatrix}
        1 & 1 & -1 & -1 & 0\\
        1 & -1 & 4 & 0 & -4
    \end{pmatrix}.\]
We then have
\begin{alignat*}{3}
    L_5 \mathbf x &= x_1 + x_2 - x_3 - x_4\qquad\qquad  &&A_5=\{\!\!\{1,1,-1,-1\}\!\!\}
\\ L_4 \mathbf x &=x_1-x_2+4x_3-4x_5 &&A_4=\{\!\!\{1,-1,4,-4\}\!\!\}\\
L_3 \mathbf x &=5x_1 + 3x_2 - 4x_4 -4x_5 &&A_3=\{\!\!\{3,5,-4,-4\}\!\!\}  \\ 
L_2 \mathbf x &=2x_1 + 3x_3 - 1x_4 - 4 x_5 &&A_2=\{\!\!\{2,3,-1,-4\}\!\!\}  \\
L_1 \mathbf x &=2x_2 - 5x_2 -1x_4 + 4x_5 &&A_1=\{\!\!\{2,-5,-1,4\}\!\!\}.
\end{alignat*}

We have the corresponding multiplicative grid
\[ G_L = \{ 2^{d_1} 3^{d_2} 5^{d_3}: d_1 ,d_2,d_3 \in \{0,1,\dots\} \},\]
and the goal is to construct a $\vec{u}$-periodic $h\colon \pm G_L \to\CC$ with $h(-r)=\overline{h(r)}$ and the property \cref{eq:grid}:
\begin{equation*}
    \mathrm{Re} \sum_{\substack{0 \leq d_1 \leq u_1-1 \\  0 \leq d_2 \leq u_2-1\\ 0 \leq d_3 \leq u_3-1}} \sum_{i = 1}^{5} h_{A_i}(2^{d_1}3^{d_2}5^{d_3}\cdot r) < 0.
\end{equation*}

\begin{figure}[!htp]
    \centering
    \begin{tikzpicture}[scale=0.9]
\draw[draw=gray] (0,0) rectangle ++(6,4);
\draw[draw=gray] (0,2) to (6,2);
\draw[draw=gray] (0,1) to (6,1);
\draw[draw=gray] (0,3) to (6,3);
\draw[draw=gray] (3,0) to (3,4);
\node [label={$1$}] at (1.5,3) {};
\node [label={$3$}] at (1.5,2) {};
\node [label={$5$}] at (1.5,1) {};
\node [label={$3\cdot 5$}] at (1.5,0) {};
\node [label={$0.25+0.42i$}] at (4.5,3) {};
\node [label={$0.35-0.35i$}] at (4.5,2) {};
\node [label={$-0.35-0.35i$}] at (4.5,1) {};
\node [label={$-0.25-0.42i$}] at (4.5,0) {};

\draw[draw=gray] (0,-5) rectangle ++(6,4);
\draw[draw=gray] (0,-3) to (6,-3);
\draw[draw=gray] (0,-4) to (6,-4);
\draw[draw=gray] (0,-2) to (6,-2);
\draw[draw=gray] (3,-5) to (3,-1);
\node [label={$4$}] at (1.5,-2) {};
\node [label={$4\cdot 3$}] at (1.5,-3) {};
\node [label={$4\cdot 5$}] at (1.5,-4) {};
\node [label={$4\cdot 3\cdot 5$}] at (1.5,-5) {};
\node [label={$-0.35-0.35i$}] at (4.5,-2) {};
\node [label={$0.48+0.12i$}] at (4.5,-3) {};
\node [label={$0.48+0.12i$}] at (4.5,-4) {};
\node [label={$0.35-0.35i$}] at (4.5,-5) {};

\draw[draw=gray,fill=yellow!50] (8,0) rectangle ++(6,4);
\draw[draw=gray] (8,2) to (14,2);
\draw[draw=gray] (8,1) to (14,1);
\draw[draw=gray] (8,3) to (14,3);
\draw[draw=gray] (11,0) to (11,4);
\node [label={$2$}] at (9.5,3) {};
\node [label={$2\cdot 3$}] at (9.5,2) {};
\node [label={$2\cdot 5$}] at (9.5,1) {};
\node [label={$2\cdot 3\cdot 5$}] at (9.5,0) {};
\node [label={$0$}] at (12.5,3) {};
\node [label={$-1-i$}] at (12.5,2) {};
\node [label={$-1-i$}] at (12.5,1) {};
\node [label={$0$}] at (12.5,0) {};

\draw[draw=gray,fill=yellow!50] (8,-5) rectangle ++(6,4);
\draw[draw=gray] (8,-3) to (14,-3);
\draw[draw=gray] (8,-4) to (14,-4);
\draw[draw=gray] (8,-2) to (14,-2);
\draw[draw=gray] (11,-5) to (11,-1);
\node [label={$8$}] at (9.5,-2) {};
\node [label={$8\cdot 3$}] at (9.5,-3) {};
\node [label={$8\cdot 5$}] at (9.5,-4) {};
\node [label={$8\cdot 3\cdot 5$}] at (9.5,-5) {};
\node [label={$1-i$}] at (12.5,-2) {};
\node [label={$0$}] at (12.5,-3) {};
\node [label={$0$}] at (12.5,-4) {};
\node [label={$1-i$}] at (12.5,-5) {};

\draw [<->, very thick, green!50!black!80] (2,2.7) -- (2,3.3);
\draw [<->, very thick, green!50!black!80] (2,0.7) -- (2,1.3);
\draw [<->, very thick, green!50!black!80] (2,-2.3) -- (2,-1.7);
\draw [<->, very thick, green!50!black!80] (10,-2.3) -- (10,-1.7);
\draw [<->, very thick, green!50!black!80] (2,-4.3) -- (2,-3.7);
\draw [<->, very thick, green!50!black!80] (10,2.7) -- (10,3.3);
\draw [<->, very thick, green!50!black!80] (10,0.7) -- (10,1.3);
\draw [<->, very thick, green!50!black!80] (10,-4.3) -- (10,-3.7);
\draw[<->, very thick, dashed, blue!70] (0,3.5) to [in=130,out=-130] (0,1.5);
\draw[<->, very thick, dashed, blue!70] (0,2.5) to [in=130,out=-130] (0,0.5);
\draw[<->, very thick, dashed, blue!70] (0,-1.5) to [in=130,out=-130] (0,-3.5);
\draw[<->, very thick, dashed, blue!70] (0,-2.5) to [in=130,out=-130] (0,-4.5);
\draw[<->, very thick, dashed, blue!70] (14,3.5) to [in=50,out=-50] (14,1.5);
\draw[<->, very thick, dashed, blue!70] (14,2.5) to [in=50,out=-50] (14,0.5);
\draw[<->, very thick, dashed, blue!70] (14,-1.5) to [in=50,out=-50] (14,-3.5);
\draw[<->, very thick, dashed, blue!70] (14,-2.5) to [in=50,out=-50] (14,-4.5);
\draw[<->, very thick, dotted, red!100!black!50] (6,3.5) to [in=65,out=-65] (6,-1.5);
\draw[<->, very thick, dotted, red!100!black!50] (6,2.5) to [in=65,out=-65] (6,-2.5);
\draw[<->, very thick, dotted, red!100!black!50] (6,1.5) to [in=65,out=-65] (6,-3.5);
\draw[<->, very thick, dotted, red!100!black!50] (6,0.5) to [in=65,out=-65] (6,-4.5);

\draw[<->, very thick, dotted, red!100!black!50] (8,3.5) to [in=115,out=-115] (8,-1.5);
\draw[<->, very thick, dotted, red!100!black!50] (8,2.5) to [in=115,out=-115] (8,-2.5);
\draw[<->, very thick, dotted, red!100!black!50] (8,1.5) to [in=115,out=-115] (8,-3.5);
\draw[<->, very thick, dotted, red!100!black!50] (8,0.5) to [in=115,out=-115] (8,-4.5);
\end{tikzpicture}
    \caption{An illustration of the ``orbits'' of $h$. The pink/dotted arrow represents the action of multiplying by 4, the green/solid arrow represents the action of multiply by 3 and the blue/dashed arrow represents the action of multiplying by 5.}
    \label{fig:caseC}
\end{figure}
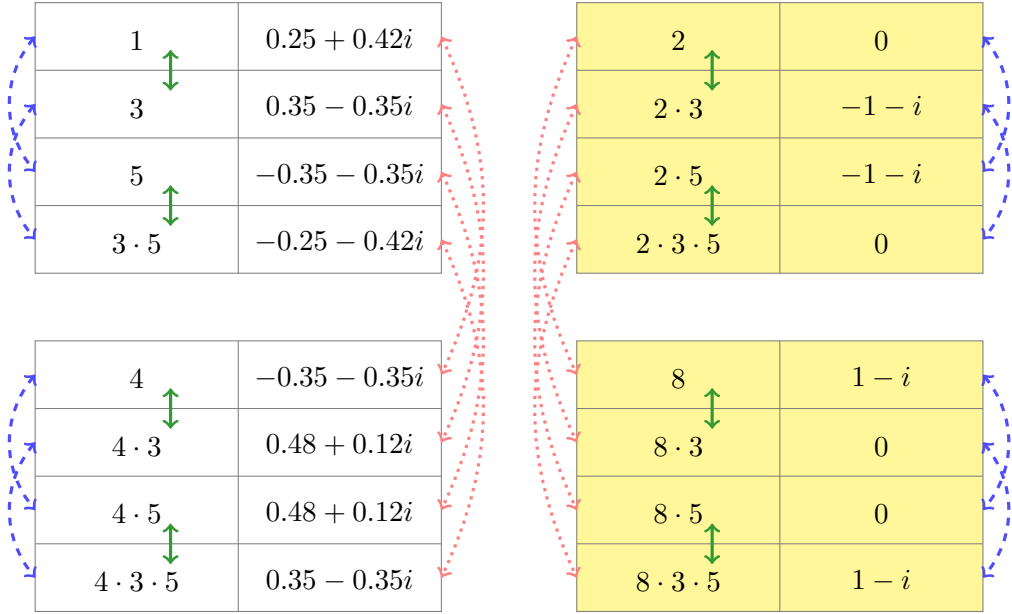

Figure~\ref{fig:caseC} illustrates our construction.
In particular, we chose $\vec{u} = (4,2,2)$, which means that $h(2^4\cdot 3^2\cdot 5^2\cdot r)=h(r)$ for every $r\in G_L$. The left column denotes the value of $h(r)$ when $r=2^i3^j5^k$ with $i\in\{0,2\}$ and $j,k\in\ZZ_2$; right left column denotes the value of $h(r)$ when $r=2^i3^j5^k$ with $i\in\{1,3\}$ and $j,k\in\ZZ_2$. And as always, we take $h(-r)=\overline{h(r)}$ for every $r\in G_L$.

The key observation is that, since $h_{A_1}(r),h_{A_2}(r), h_{A_4}(r)$ are nonvanishing only if $h(r)$, $h(4r)$ are both nonzero, we have $h_{A_1}(r)= h_{A_2}(r)= h_{A_4}(r) = 0$ if $r$ is any of the yellow/shaded elements. This gives
\begin{align*}
    \sum_{r \text{ yellow}} \sum_{i=1}^{5}h_{A_i}(r) &= \sum_{r \text{ yellow}} \left( h_{A_5}(r) + h_{A_3}(r) \right) \\
    &= 2|1-i|^4+2|-1-i|^4+2(-1-i)^2(1+i)^2+2(1-i)^2(-1+i)^2 = 0.
\end{align*}

We draw attention to the fact that multiplication by 3 and 5 create orbits of length two. This fact, coupled with the choice of values of $h$ on the yellow/shaded elements, is essential in ensuring that the contributions from $h_{A_3}$ cancels out the contributions of $h_{A_5}$ on the yellow/shaded elements. In effect the only contribution to the sum $ \sum_{\substack{0 \leq d_1 \leq 3 \\  0 \leq d_2 \leq 1\\ 0 \leq d_3 \leq 1}} \sum_{i = 1}^{5} h_{A_i}(2^{d_1} 3^{d_2} 5^{d_3})$ comes from $\sum_{i=1}^{5} h_{A_i}(r)$ when $r$ are the white elements on the left. This final part is a computation:   
\[ \text{Re}\sum_{r \text{ white}} \sum_{i=1}^{5}h_{A_i}(r) = 0.474101+0.474026-0.2401-0.4788-0.4788 = -0.249573. \]
\end{example}

To summarize, we can think of our construction as a combination of two components:
\begin{itemize}
    \item A function $f \colon \ZZ_{4} \times \ZZ_2 \times \ZZ_2 \to \CC$ represented by the table of values in \cref{fig:caseC}.
    \item A function $\phi \colon  G_L \to \ZZ_{4} \times \ZZ_2 \times \ZZ_2$ with the ``orbit structure'' given by: 
    \begin{itemize}
        \item[$\circ$] $\phi(4r) = \phi(r) + (2,0,0)$,
        \item[$\circ$] $\phi(3r) = \phi(r) + (0,1,0)$,
        \item[$\circ$] $\phi(5r) = \phi(r) + (0,0,1)$,
        \item[$\circ$] $\phi(2r) = \phi(r) + (1,0,0)$ when $\phi(r)_1 \in \{0, 2 \}$.
    \end{itemize}
\end{itemize}
The Fourier template in \cref{ex:1AQ-1common} can therefore be thought of as $h = f \circ \phi$. The combination of these two elements ensure that the yellow/shaded cells cancel in contribution to give a net zero contribution to the sum 
\begin{equation*}
    \mathrm{Re} \sum_{\substack{0 \leq d_1 \leq u_1-1 \\ \vdots \\ 0 \leq d_{\ell} \leq u_{\ell} -1}} \sum_{i = 1}^{5} h_{A_i}(p_1^{d_1} \ldots p_{\ell}^{d_\ell}\cdot r),
\end{equation*}
while the white cells contribute a negative value. Using such a framework, we can generalize our construction to handle other systems $L$ in Case C. In the first step, we show how to extend \cref{fig:caseC} to a larger table, which gives us more flexibility with constructing the function $\phi$ in the second step. This is the content of the following \cref{prop:c-reduction}.

\begin{proposition}\label{prop:c-reduction}
    Suppose $L$ is an irredundant $2\times 5$ linear system that falls under Case C in \cref{cor:2*5-classification}. Assume \cref{setup:c}. If there exists $m\geq 2$ and $\phi:G_L\to\ZZ_{2m}\times \ZZ_2\times \ZZ_2$ such that
   \begin{align*}
       \phi(\frac{a}{b}\cdot r)&=\phi(r)+(m,0,0)\\
       \phi(\frac{|a-b|}{b}\cdot r)&=\phi(r)+(0,1,0)\\
       \phi(\frac{a+b}{b}\cdot r)&=\phi(r)+(0,0,1)\\
       \phi(2 r)&=\phi(r)+(1,0,0)\text{ when $\phi(r)_1\in\{0,m\}$,}
   \end{align*}
   Then there exists a Fourier template $g:\ZZ\to\CC$ such that $\sum_{i=1}^5\sigma_{A_i}(g)<0$.
\end{proposition}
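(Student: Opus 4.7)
The plan is to build the desired Fourier template as $h = f\circ\phi$, extended to $-G_L$ by $h(-r)=\overline{h(r)}$, for a carefully chosen $f\colon \ZZ_{2m}\times \ZZ_2\times \ZZ_2 \to \CC$, and then invoke \cref{lem:periodic}. The orbit conditions imply that multiplication by each of $a/b$, $|a-b|/b$, $(a+b)/b$ has order dividing $2$ when read through $\phi$, so each $h_{A_i}(r)$ depends only on $\phi(r)$ and, via the factor $h(2r)$, on $\phi(2r)$. From this I will verify that every $h_{A_i}$ is $\vec u$-periodic on $G_L$ for a suitable $\vec u \in \NN^{\ell}$ chosen so that $(a/b)^2$, $(|a-b|/b)^2$, $((a+b)/b)^2$, and a suitable power of $2$ act trivially on the relevant prime exponents.

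Next, using the orbit conditions, for $\alpha \in \{a/b, |a-b|/b, (a+b)/b\}$ the value $h(\alpha r)$ equals $f$ evaluated at a fixed shift of $\phi(r)$, while $h(2r)=f(\phi(r)+(1,0,0))$ exactly when $\phi(r)_1 \in \{0,m\}$. This lets me expand the fundamental-domain sum $\mathrm{Re}\sum_{r\in F}\sum_{i=1}^5 h_{A_i}(r)$ as a finite polynomial in the values $\{f(j,k,\ell)\}$. Notably, $h_{A_5}(r)=|f(\phi(r))|^4$ and $h_{A_4}(r)=|f(\phi(r))|^2|f(\phi(r)+(m,0,0))|^2$ are unconditionally nonnegative, and each of $h_{A_1}(r), h_{A_2}(r)$ contains $h(2r)$ as a factor.

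The main obstacle, and the heart of the argument, is choosing $f$ so that this finite sum is strictly negative. Generalizing the construction in \cref{ex:1AQ-1common} (which handles $m=2$), the strategy is to impose a parity-based zero pattern on $f$ at the bridging coordinates $v_1 \in \{1, m+1\}$, so that whenever $\phi(r)_1 \notin \{0,m\}$ the factor $h(2r)$ vanishes, forcing $h_{A_1}(r)=h_{A_2}(r)=h_{A_4}(r)=0$ on such shaded cells. I then arrange that the residual contribution $h_{A_3}(r)+h_{A_5}(r)$ on shaded cells cancels out when summed, exploiting the $\ZZ_2\times\ZZ_2$ symmetry of the second and third coordinates (this is the role played by the conjugate and negated entries of the right-hand table in \cref{ex:1AQ-1common}).

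Finally, on the unshaded cells where $\phi(r)_1 \in \{0,m\}$ I treat the free values $f(0,k,\ell), f(m,k,\ell)$ together with the surviving nonzero entries $f(1,\cdot), f(m+1,\cdot)$ as tunable complex parameters, and the problem reduces to a small feasibility question in finitely many complex scalars. I expect this reduces to a semidefinite feasibility problem (cf.\ the acknowledgment to Parrilo) which can be solved by exhibiting an explicit ansatz extending the table in \cref{ex:1AQ-1common} uniformly in $m$; the magnitudes of the entries of $f$ can then be balanced so that the negative cross-term contribution from $h_{A_1}$, $h_{A_2}$, $h_{A_3}$ on unshaded cells dominates the positive contributions from $h_{A_4}+h_{A_5}$. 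Once such an $f$ is produced, $h=f\circ\phi$ satisfies the hypotheses of \cref{lem:periodic}, which immediately supplies the desired Fourier template $g\colon \ZZ\to \CC$ with $\sum_{i=1}^5 \sigma_{A_i}(g)<0$.
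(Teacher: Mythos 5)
Your overall architecture matches the paper's proof: set $h=f\circ\phi$ extended by $h(-r)=\overline{h(r)}$, use the zero pattern of $f$ at first coordinates outside $\{0,m\}$ so that on those cells the terms coming from $A_1,A_2,A_4$ vanish (note these terms contain both $\overline{f(\phi(r))}$ and $f(\phi(r)+(m,0,0))$, so the indeterminacy of $h(2r)$ there is harmless), arrange exact cancellation of the $A_5$ and $A_3$ contributions on those cells, and finish by a computation on the cells with first coordinate in $\{0,m\}$, then invoke \cref{lem:periodic}. Up to that point your reductions are sound.

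The gap is that you never actually produce the function $f$, and its existence is the entire content of the proposition. You write that the remaining finite problem ``reduces to a small feasibility question'' which you ``expect'' can be solved by an explicit ansatz extending the table in \cref{ex:1AQ-1common}, with magnitudes ``balanced'' so the negative terms dominate. That is precisely the step that requires proof: one must exhibit concrete values (the paper extends the $m=2$ table verbatim, setting $f$ on the bridging columns $(1,\ast,\ast),\dots,(m-1,\ast,\ast)$ and $(m+1,\ast,\ast),\dots,(2m-1,\ast,\ast)$ to the fixed nonzero/zero pattern $0,\,-1-i,\,1-i$ exactly as in the example, uniformly in $m$) and then verify both the identity $2|1-i|^4+2|-1-i|^4+2(-1-i)^2(1+i)^2+2(1-i)^2(-1+i)^2=0$ for the shaded-cell cancellation and the strict negativity of the real part of the residual sum ($-0.249573+0.723675i$ in the paper). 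Without this explicit construction and verification, the argument does not establish the conclusion; an appeal to an unspecified semidefinite feasibility argument is not a proof that the feasible set is nonempty. A smaller omission: when $a-b<0$ the factor $h(\frac{a-b}{b}r)$ is $\overline{h(\frac{|a-b|}{b}r)}$ on $G_L$, and the paper handles this by replacing $f$ with its conjugate on the slices with second coordinate $1$; your proposal does not address this sign case at all.
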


Vaguely speaking, we can think of \cref{prop:c-reduction} as ``extending'' the table in \cref{table:case-b} by duplicating more columns of yellow/shaded cells in \cref{fig:caseC} and leaving the white cells intact. 

\begin{proof}
    Suppose such $\phi$ exists.  We first consider the case $a-b>0$. In this case, define $h=\phi\circ f:G_L\to\CC$, where $f:\ZZ_{2m}\times \ZZ_2\times \ZZ_2\to\CC$ is given by
    \begin{alignat*}{2}
    & (0,0,0)\mapsto 0.25+0.42i\qquad  && (1,0,0), \dots,(m-1,0 ,0) \mapsto 0\\
    &(0,0,1) \mapsto 0.35-0.35i\qquad  && (1,0,1),\dots, (m-1,0,1) \mapsto-1-i\\
    &(0,1,0) \mapsto-0.35+0.35i\qquad  && (1,1,0),\dots,(m-1,1,0) \mapsto -1-i\\
    &(0,1,1) \mapsto -0.25-0.42i\qquad  && (1,1,1), \dots, (m-1,1,1) \mapsto0 \\
    &(m,0,0)\mapsto -0.35+0.35i\qquad  && (m+1,0,0),\dots, (2m-1,0,0)\mapsto 1-i\\
    &(m,0,1) \mapsto 0.48+0.12i\qquad  && (m+1,0,1),\dots, (2m-1,0,1)\mapsto 0\\
    &(m,1,0) \mapsto 0.48+0.12i\qquad  && (m+1,1,0),\dots, (2m-1,1,0)\mapsto 0\\
    &(m,1,1) \mapsto 0.35-0.35i \qquad  && (m+1,1,1),\dots, (2m-1,1,1)\mapsto 1-i.
\end{alignat*}
Take $\vec u=(u_1,\dots,u_\ell)\in \NN^{\ell}$ such that $\phi(p_1^{u_1}\dots p_\ell^{u_\ell}r)=\phi(r)$. Then $h_{A_1},\dots,h_{A_5}$ are $\vec u$-periodic. 
Moreover, we have
\begin{align*}
    \sum_{\substack{0 \leq d_1 \leq u_1-1 \\ \cdots \\ 0 \leq d_{\ell} \leq u_{\ell} -1}} \sum_{i = 1}^{5} h_{A_i}(p_1^{d_1} \ldots p_{\ell}^{d_\ell}\cdot r)
    =\sum_{i,j,k\in\ZZ_{2m}\times \ZZ_2\times \ZZ_2}&|f(i,j,k)|^4+|f(i,j,k)|^2|f(i+m,j,k)|^2\\
    &+f(i,j,k+1)f(i,j+1,k)\overline{f(i+m,j,k)}^2\\
    &+f(i+1,j,k)f(i,j+1,k)\overline{f(i,j,k)f(i+m,j,k)}\\
    &+f(i+1,j,k)\overline{f(i,j,k+1)f(i,j,k)}f(i+m,j,k).
\end{align*}
A quick observation is that, if $i\notin\{0,m\}$, then one of $f(i,j,k)$ and $f(i+m,j,k)$ equals 0, so the second, fourth and fifth terms above vanish. Moreover, the first and third terms sum up to
\begin{align*}
    2|1-i|^4+2|-1-i|^4+2(-1-i)^2(1+i)^2+2(1-i)^2(-1+i)^2=0.
\end{align*}
Therefore, the above sum reduces to
\begin{align*}
     \sum_{i,j,k\in\{0,m\}\times \ZZ_2\times \ZZ_2}&|f(i,j,k)|^4+|f(i,j,k)|^2|f(i+m,j,k)|^2+f(i,j,k+1)f(i,j+1,k)\overline{f(i+m,j,k)}^2\\
    &+f(i+1,j,k)f(i,j+1,k)\overline{f(i,j,k)f(i+m,j,k)}\\
    &+f(i+1,j,k)\overline{f(i,j,k+1)f(i,j,k)}f(i+m,j,k)
\end{align*}
which equals $-0.249573 + 0.723675 i$. Hence $h$ has property \cref{eq:grid}. By \cref{lem:periodic}, this gives Fourier template $g:\ZZ\to\CC$ such that $\sum_{i=1}^5\sigma_{A_i}(g)<0$.

Similarly, when $a-b<0$ (so $h(\frac{a-b}{b}\cdot r)=\overline{h(\frac{|a-b|}{b}\cdot r)}$ for $r\in G_L$), we define $h=\phi\circ f':G_L\to\CC$ where $f':\ZZ_{2m}\times \ZZ_2\times \ZZ_2\to\CC$ is given by
\begin{align*}
    f'(i,j,k)=\begin{cases}
        f(i,j,k) & j=0\\
        \overline{f(i,j,k)} & j=1.
    \end{cases}
\end{align*}
A similar argument shows that  $h_{A_1},\dots,h_{A_5}$ are $\vec u$-periodic, with property  \cref{eq:grid}. By \cref{lem:periodic}, this gives Fourier template $g:\ZZ\to\CC$ such that $\sum_{i=1}^5\sigma_{A_i}(g)<0$.
\end{proof}

In \cref{lem:phi}, we show how to construct the corresponding function $\phi \colon \ZZ_{2m} \times \ZZ_2 \times \ZZ_2$ for all but five specific linear systems under Case C in \cref{cor:2*5-classification}. Combining this with \cref{prop:c-reduction} proves that all systems that fall under Case C, barring these five exceptions, are uncommon. 

To do so, we first extend the function $v_p(\cdot):\ZZ\setminus\{0\}\to\ZZ$ introduced in \cref{def:prime-exponent} to one whose domain is the set of all nonzero rational numbers.

\begin{definition}\label{def:prime-exponent-rational}
    For every prime number $p$, define the function $v_p(\cdot):\QQ\setminus\{0\}\to\ZZ$ as follows:
    \begin{enumerate}
        \item for all $r\in \ZZ\setminus\{0\}$, define $v_p(r)=\max\{d\in\{0,1,\dots\}:p^d\mid |r|\}$;
        \item for all $r\in \QQ\setminus\ZZ$, define $v_p(r)=v_p(x)-v_p(y)$, where $r=x/y$ and $x,y\in\ZZ\setminus\{0\}$.
    \end{enumerate}
\end{definition}

\begin{lemma}\label{lem:phi}
    Suppose $L$ is a linear system that falls under Case C in \cref{cor:2*5-classification}. Then, unless $L$ is isomorphic to one of the following five linear systems
    \begin{align*}
        &\begin{pmatrix}
            1 & 1 & -1 & -1 & 0\\
            1 & -1 & 3 & 0 & -3
        \end{pmatrix}\quad
        \begin{pmatrix}
            1 & 1 & -1 & -1 & 0\\
            2 & -2 & 3 & 0 & -3
        \end{pmatrix}\\ 
        &\begin{pmatrix}
            1 & 1 & -1 & -1 & 0\\
            2 & -2 & 1 & 0 & -1
        \end{pmatrix}
        \quad\begin{pmatrix}
            1 & 1 & -1 & -1 & 0\\
            3 & -3 & 1 & 0 & -1
        \end{pmatrix}\quad
        \begin{pmatrix}
            1 & 1 & -1 & -1 & 0\\
            3 & -3 & 2 & 0 & -2
        \end{pmatrix},
    \end{align*}
    there exists $m\geq 2$ and $\phi:G_L\to\ZZ_{2m}\times \ZZ_2\times\ZZ_2$ that meets the condition in \cref{prop:c-reduction}.
\end{lemma}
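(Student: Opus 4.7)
The plan is to define $\phi$ by starting from $\phi(1)=(0,0,0)$ and a chosen value of $\phi(3)$, then propagating via the three unconditional shift rules of \cref{prop:c-reduction} (those for $\alpha=a/b$, $\beta=|a-b|/b$, $\gamma=(a+b)/b$). Since these rules only apply when the source $r$ is divisible by $b$, the values of $\phi$ on non-multiples of $b$ are constrained only indirectly, and the conditional $\delta=2$ rule fills in the rest. Composing the shifts yields derived identities such as $\phi(4r)=\phi(9r)$ (from applying $\beta$ twice when $9\mid r$ and using $2(0,1,0)=0$ in $\ZZ_2$); the greedy extension either closes consistently, or else produces a forbidden vector identity in the target $\ZZ_{2m}\times\ZZ_2\times\ZZ_2$, which signals an obstruction.

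The main step is to classify the obstructing multiplicative relations among $\alpha,\beta,\gamma,2$ in $\QQ^{>0}$. A short enumeration using coprimality of $a,b$ identifies exactly five minimal relations that force an impossible vector identity for any $m\ge 2$:
\begin{itemize}
    \item $\alpha=\beta$, giving $(a,b)=(1,2)$ and forcing $(m,0,0)=(0,1,0)$;
    \item $\gamma=\beta^2$, giving $(a,b)=(3,1)$ and forcing $(0,0,1)=(0,0,0)$;
    \item $\alpha=2\beta$, giving $(a,b)=(2,3)$ and forcing $(m,0,0)=(1,1,0)$;
    \item $\gamma=2\beta$, giving $(a,b)=(1,3)$ and forcing $(0,0,1)=(1,1,0)$;
    \item $\beta=1/2$, giving $(a,b)=(3,2)$ and forcing $(0,1,0)+(1,0,0)=0$.
\end{itemize}
The only other candidate relation $\beta=1$ arises for $(a,b)=(2,1)$, which places $L$ in Case D of \cref{cor:2*5-classification} and so lies outside the hypothesis. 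Crucially, higher-order relations such as $\gamma=4\beta$ (arising for $(a,b)=(5,3)$) or $\alpha=4\beta$ (arising for $(a,b)=(4,3)$) do \emph{not} obstruct the construction: they force $\phi$ to be non-homomorphic, but the conditional nature of the $\delta=2$ rule lets us arrange $\phi(r)_1\notin\{0,m\}$ at the problematic points.

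For each non-exceptional $(a,b)$ I would realize $\phi$ explicitly with $m=2$ (so the target is $\ZZ_4\times\ZZ_2\times\ZZ_2$) by propagating the greedy extension to all of $G_L$ and making free choices where needed. Surjectivity onto the target follows by a direct verification: the four first-coordinate values $\{0,1,2,3\}$ are each realized by $\phi$ at some small element of $G_L$, and the second- and third-coordinate values are varied independently by composing with $\beta$ and $\gamma$.

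The hardest step will be verifying completeness of the obstruction list: checking, for every $(a,b)$ outside the five listed, that the greedy construction does not secretly fail. This reduces to a finite case analysis based on the $2$-adic valuations of $b$, $|a-b|$, and $a+b$, together with a handful of small-coefficient configurations whose validity must be certified by direct computation along the lines of the non-obstructed $(4,3)$ and $(5,3)$ cases above.
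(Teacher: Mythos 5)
Your proposal has the right target (the same five exceptional pairs $(a,b)\in\{(3,1),(3,2),(1,2),(1,3),(2,3)\}$ as the paper), but it does not actually prove the lemma, because the lemma's entire content is the \emph{construction} of $\phi$ for every non-exceptional coprime pair $(a,b)$, and that part is only gestured at. You assert that a greedy extension with $m=2$ "propagated to all of $G_L$ with free choices where needed" works, and you yourself flag that verifying this ("completeness of the obstruction list") is the hardest step and defer it to an unexecuted finite case analysis plus unspecified computations. Consistency of a greedy extension on the infinite multiplicative grid, where the shifts by $a/b$, $|a-b|/b$, $(a+b)/b$ and the conditional shift by $2$ interact through infinitely many monomial relations $\alpha^i\beta^j\gamma^k2^l=1$, is precisely what must be established; nothing in the proposal rules out hidden inconsistencies beyond the five listed relations, nor shows that the resulting $\phi$ has the uniformity over $\ZZ_{2m}\times\ZZ_2\times\ZZ_2$ that \cref{prop:c-reduction} implicitly needs. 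The claim that $m=2$ always suffices is also unproven and at least non-obvious: the paper's proof chooses $m$ case-dependently (e.g.\ $m=|v_2(a/b)|$, $m=|v_2((a+b)/b)|$, etc.), and the family $|a-b|=2$, $a+b=2^m$ (such as $(a,b)=(5,3)$) requires a genuinely delicate bespoke map $\psi$ into $\ZZ_{2m-2}\times\ZZ_2\times\ZZ_2$; you mention $(5,3)$ and $(4,3)$ only as cases "whose validity must be certified by direct computation," which is exactly the missing content. By contrast, the paper's proof gives closed-form valuation-based formulas for $\phi$ in each of five structural cases, so that the shift conditions can be checked mechanically.

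A secondary issue: your "forcing" derivations for the exceptional pairs $(2,3)$, $(1,3)$, $(3,2)$ apply the $2$-shift as if it were unconditional, whereas in \cref{prop:c-reduction} it is only imposed when $\phi(r)_1\in\{0,m\}$, so the claimed impossibilities do not follow as stated. This does not damage the lemma (which never asserts that no $\phi$ exists for the five exceptional systems — it simply excludes them), but it means a substantial portion of your argument is devoted to a direction that is both unneeded and not correctly justified, while the needed direction — an explicit, verifiable $\phi$ for every non-exceptional $(a,b)$, as the paper provides via $p$-adic valuations — remains a genuine gap.
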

\begin{proof}We work under \cref{setup:c}.
    One can check that, if $L$ is not isomorphic to one of the above five systems, then one of the following occurs:
    \begin{itemize}
        \item There exist odd primes $p_1,p_2,p_3$ such that
        \begin{align*}
            &v_{p_1}(a/b)\neq 0,&& v_{p_1}(|a-b|/b)=v_{p_1}((a+b)/b)=0,\\
            &v_{p_2}(|a-b|/b)\neq 0,&& v_{p_2}(a/b)=v_{p_2}((a+b)/b)=0,\\
            &v_{p_3}((a+b)/b)\neq 0,&& v_{p_3}(a/b)=v_{p_3}(|a-b|/b)=0.
        \end{align*}
        In this case, we define $\phi:G_L\to\ZZ_{4}\times \ZZ_2\times \ZZ_2$ by 
        $$\phi(r)=(2\floor{\frac{v_{p_1}(r)}{v_{p_1}(a/b)}}+v_2(r), \floor{\frac{v_{p_1}(r)}{v_{p_2}(|a-b|/b)}}, \floor{\frac{v_{p_3}(r)}{v_{p_3}((a+b)/b)}}),$$
        so that $\phi$ meets the condition in \cref{prop:c-reduction}.

        \noindent E.g., when $a=10$ and $b=1$, we define $$\phi(r)=(2v_{5}(r)+v_2(r),\floor{v_3(r)/2}, v_{11}(r)).$$

        \item There exist odd primes $p_2,p_3$ such that
        \begin{align*}
            &v_{p_2}(|a-b|/b)\neq 0,&& v_{p_2}(a/b)=v_{p_2}((a+b)/b)=0,\\
            &v_{p_3}((a+b)/b)\neq 0,&& v_{p_3}(a/b)=v_{p_3}(|a-b|/b)=0.
        \end{align*}
        Moreover, we have
        $m:=|v_2(a/b)|\geq 2$.
        
        In this case, we define $\phi:G_L\to\ZZ_{2m}\times \ZZ_2\times \ZZ_2$ by $$\phi(r)=(v_2(r), \floor{\frac{v_{p_1}(r)}{v_{p_2}(|a-b|/b)}}, \floor{\frac{v_{p_3}(r)}{v_{p_3}((a+b)/b)}}),$$
        so that $\phi$ meets the condition in \cref{prop:c-reduction}.

        \noindent E.g., in \cref{ex:1AQ-1common}, when $a=4$ and $b=1$, we define $$\phi(r)=(v_2(r),v_3(r),v_5(r)).$$
        
        \item There exist odd primes $p_1,p_2$ such that
        \begin{align*}
            &v_{p_1}(a/b)\neq 0,&& v_{p_1}(|a-b|/b)=v_{p_1}((a+b)/b)=0,\\
            &v_{p_2}(|a-b|/b)\neq 0,&& v_{p_2}(a/b)=v_{p_2}((a+b)/b)=0.
        \end{align*}
        Moreover, we have
        $m:=|v_2((a+b)/b)|\geq 2$.
        
        In this case, we define $\phi:G_L\to\ZZ_{2m}\times \ZZ_2\times \ZZ_2$ by $$\phi(r)=(mv_{p_1}(r)+\text{Mod}(v_2(r),m), \floor{\frac{v_{p_2}(r)}{v_{p_2}(|a-b|/b)}},\floor{\frac{v_{2}(r)}{m}}),$$
        so that $\phi$ meets the condition in \cref{prop:c-reduction}.

        \noindent E.g., when $a=7$ and $b=1$, we define $$\phi(r)=(3v_{7}(r)+\text{Mod}(v_2(r),3),v_3(r), \floor{v_2(r)/3}).$$

        \item There exist odd primes $p_1,p_3$ such that
        \begin{align*}
            &v_{p_1}(a/b)\neq 0,&& v_{p_1}(|a-b|/b)=v_{p_1}((a+b)/b)=0,\\
            &v_{p_3}((a+b)/b)\neq 0,&& v_{p_1}(a/b)=v_{p_2}(|a-b|/b)=0.
        \end{align*}
        Moreover, we have
        $m:=|v_2(|a-b|/b)|\geq 2$.
        
        In this case, we define $\phi:G_L\to\ZZ_{2m}\times \ZZ_2\times \ZZ_2$ by $$\phi(r)=(mv_{p_1}(r)+\text{Mod}(v_2(r),m), \floor{\frac{v_{2}(r)}{m}}, \floor{\frac{v_{p_3}(r)}{v_{p_3}((a+b)/b)}}),$$
        so that $\phi$ meets the condition in \cref{prop:c-reduction}.

        \noindent E.g.,  when $a=5$ and $b=1$, we define $$\phi(r)=(2v_{5}(r)+\text{Mod}(v_2(r),2), \floor{v_2(r)/2},v_3(r)).$$
        
        \item  $|a-b|=2$ and $a+b=2^m$, with $m\geq 3$. Moreover, there exist odd primes $p_1,p_2$ such that
        \begin{align*}
            &v_{p_1}(a)\neq 0,&& v_{p_1}(b)=0,\\
            &v_{p_2}(b)\neq 0,&& v_{p_2}(a)=0.
        \end{align*}
        In this case, we define $\psi:G_L\to\ZZ_{2m-2}\times \ZZ_2\times \ZZ_2$ by
        \begin{align*}
            \psi(r)=(u(r)(m-1)+\text{Mod}(w(r)+u(r),m-1),\, \floor{\frac{v_{p_2}(r)}{v_{p_2}(b)}}-u(r), \, \floor{\frac{w(r)+u(r)}{m-1}})
        \end{align*}
    where
    $$u(r)=\floor{\frac{v_{p_1}(r)}{v_{p_1}(a)}},\quad  w(r)=v_2(r)-\floor{\frac{v_{p_2}(r)}{v_{p_2}(b)}}.$$
    Then, we take $\phi(r)=(\psi(r)_1, \psi(r)_2-\psi(r)_3, \psi(r)_3)$.
    
    \noindent E.g., when $a=5$ and $b=3$, we define 
    \[
    \psi(r)=(2v_5(r)+\text{Mod}(v_2(r)+v_5(r)-v_3(r),2),\, v_3(r)-v_5(r),\,\floor{\frac{v_2(r)+v_5(r)-v_3(r)}{2}}).
    \]
    \end{itemize}
\end{proof}

Finally, for the five specific linear systems not covered by \cref{lem:phi}, we will show in \cref{sec:common} that
    \[
    \begin{pmatrix}
            1 & 1 & -1 & -1 & 0\\
            2 & -2 & 1 & 0 & -1
        \end{pmatrix}\]
is common, and show in the following lemma that two of the other four are uncommon (via simple periodic constructions).  This finishes the proof of \cref{thm:2*5} under Case C.

\begin{lemma}
    The two $2\times 5$ linear systems
    \[
    \begin{pmatrix}
            1 & 1 & -1 & -1 & 0\\
            3 & -3 & 1 & 0 & -1
        \end{pmatrix}\qquad \begin{pmatrix}
            1 & 1 & -1 & -1 & 0\\
            3 & -3 & 2 & 0 & -2
        \end{pmatrix}\]
    are uncommon over $\FF_p$ for sufficiently large $p$.
\end{lemma}
\begin{proof}
    The first linear system has
\begin{alignat*}{3}
    L_5 \mathbf x &= x_1 + x_2 - x_3 - x_4\qquad\qquad  &&A_5=\{\!\!\{1,1,-1,-1\}\!\!\}
\\ L_4 \mathbf x &=3x_1-3x_2+x_3-x_5 &&A_4=\{\!\!\{3,-3,1,-1\}\!\!\}\\
L_3 \mathbf x &=4x_1 - 2x_2 - x_4 -x_5 &&A_3=\{\!\!\{4,-2,-1,-1\}\!\!\}  \\ 
L_2 \mathbf x &=6x_1 -2 x_3 - 3x_4 - 1 x_5 &&A_2=\{\!\!\{6,-2,-3,-1\}\!\!\}  \\
L_1 \mathbf x &=6x_2 - 4x_3 -3x_4 + 1x_5 &&A_1=\{\!\!\{6,-4,-3,1\}\!\!\}
\end{alignat*}
which gives $P_L=\{2,3\}$ and $G_L=\{2^{d_1}3^{d_2}:d_1,d_2\in\{0,1,\dots\}\}$.  Define $h:\pm G_L\to\CC$ by
\[
\begin{cases}
    h(r)=\phi(v_2(r),v_3(r)) & r\in G_L\\
    h(r)=\overline{\phi(v_2(r),v_3(r))} & r\in -G_L
\end{cases}
\]
where $\phi:\ZZ_2\times \ZZ_4\to\CC$ is given by
\[
\begin{matrix*}[l]
    & \phi(0,0)=1   & \phi(0,1)=0 & \phi(0,2)=0.25i  & \phi(0,3)=0.5i\\
    & \phi(1,0)=-1   & \phi(1,1)=0 & \phi(1,2)=-0.25i  & \phi(1,3)=-0.5i.\\
\end{matrix*}
\]
One can then check that
$$\text{Re}\sum_{\substack{0\leq d_1\leq1\\0\leq d_2\leq 3}}\sum_{i=1}^5h_{A_i}(2^{d_1}3^{d_2})=-0.265625<0.$$



    The second linear system has
\begin{alignat*}{3}
    L_5 \mathbf x &= x_1 + x_2 - x_3 - x_4\qquad\qquad  &&A_5=\{\!\!\{1,1,-1,-1\}\!\!\}
\\ L_4 \mathbf x &=3x_1-3x_2+2x_3-2x_5 &&A_4=\{\!\!\{3,-3,2,-2\}\!\!\}\\
L_3 \mathbf x &=5x_1 - x_2 - 2x_4 -2x_5 &&A_3=\{\!\!\{5,-1,-2,-2\}\!\!\}  \\ 
L_2 \mathbf x &=6x_1 - x_3 - 3x_4 - 2 x_5 &&A_2=\{\!\!\{6,-1,-3,-2\}\!\!\}  \\
L_1 \mathbf x &=6x_2 - 5x_3 -3x_4 + 2x_5 &&A_1=\{\!\!\{6,-5,-3,2\}\!\!\}
\end{alignat*}
which gives $P_L=\{2,3,5\}$ and $G_L=\{2^{d_1}3^{d_2}5^{d_3}:d_1,d_2,d_3\in\{0,1,\dots\}\}$.  Define $h:\pm G_L\to\CC$ by
\[
\begin{cases}
    h(r)=\phi(v_5(r),v_3(r)) & r\in G_L\\
    h(r)=\overline{\phi(v_5(r),v_3(r))} & r\in -G_L
\end{cases}
\]
where $\phi:\ZZ_2\times \ZZ_4\to\CC$ is given by
\[
\begin{matrix*}[l]
    & \phi(0,0)=1   & \phi(0,1)=0 & \phi(0,2)=0.25i  & \phi(0,3)=0.5i\\
    & \phi(1,0)=-1   & \phi(1,1)=0 & \phi(1,2)=-0.25i  & \phi(1,3)=-0.5i.\\
\end{matrix*}
\]
One can then check that
$$\text{Re}\sum_{\substack{0\leq d_1\leq1\\0\leq d_2\leq 3}}\sum_{i=1}^5h_{A_i}(2^{d_1}3^{d_2})=-0.265625<0.$$
The result follows by \cref{lem:periodic}.
\end{proof}

\subsection{Proof of \cref{thm:2*5} under Case D}
\label{subsec:D}
In this section, we prove \cref{thm:2*5} under Case D listed in \cref{cor:2*5-classification}. We show that $L$ is common if and only if it is isomorphic to
\[
        \begin{pmatrix}
            1 & -1 & 1 & -1 & 0\\
            1 & 2 & -1 & 0 & -2
        \end{pmatrix}\text{ or } 
        \begin{pmatrix}
            1 & -1 & 1 & -1 & 0\\
        0 & 1 & 2 & -1 & -2
        \end{pmatrix}.
\]

Without loss of generality, suppose 
\begin{align*}
        &L_5=x_1-x_2+\alpha x_3-\alpha x_4\\
        &L_4=x_1+\beta x_2- x_3-\beta x_5\\
        &L_3=(\alpha+1)x_1+(\alpha\beta -1)x_2-\alpha x_4-\alpha\beta x_5\\
        &L_2=(\beta+1)x_1+(\alpha\beta-1)x_3- \alpha\beta x_4-\beta x_5\\
        &L_1=(\beta+1) x_2+(-1-\alpha)x_3+ \alpha x_4-\beta x_5
    \end{align*}
with $\alpha,\beta\in\QQ$ and $\alpha,\beta>0$, and at least one of $L_1,L_2,L_3$ is common. Some elementary calculations (which we defer to \cref{app:D}) show that we can reduce to the following system: 
\begin{align}\label{eq:reduced-D}
        &L_5=x_1-x_2+\alpha x_3-\alpha x_4 \notag \\
        &L_4=x_1+\alpha x_2- x_3-\alpha x_5 \notag \\
        &L_3=(\alpha+1)x_1+(\alpha^2 -1)x_2-\alpha x_4-\alpha^2 x_5 \notag \\
        &L_2=(\alpha+1)x_1+(\alpha^2-1)x_3- \alpha^2 x_4-\alpha x_5 \notag \\
        &L_1=(\alpha+1) x_2+(-1-\alpha)x_3+ \alpha x_4-\alpha x_5
\end{align}

In this case, first observe that $L_2$ and $L_3$ are uncommon, as none of $\alpha=\alpha+1$ or $\alpha=\alpha^2-1$ has rational solutions. Next, observe that $\alpha\notin\{-1,1\}$, as either $\alpha=1$ or $\alpha=-1$ would give $s(L)<4$. This implies that $L_4$ and $L_5$ cannot be additive quadruples.

Suppose $L_1$ is an additive quadruple, i.e, $\alpha=-1/2$. In this case, $L$ is isomorphic to 
\[
        \begin{pmatrix}
            1 & -1 & 1 & -1 & 0\\
            1 & 2 & -1 & 0 & -2
        \end{pmatrix}
\]
which is common (see \cref{sec:common} for proof of its commonness).

Suppose none of $L_1,L_4,L_5$ is an additive quadruple. Then the main idea is to utilize the constructions in \cref{subsec:A}, where we set the value of the Fourier template to be large on some well-chosen values. Recall that \cref{lem:coincidence-isolated} and \cref{lem:coincidence-k13} give a criterion for when such a construction works. Applying this criterion, it follows that $L$ is common only if we can partition $\{\!\!\{\alpha+1,\alpha^2-1,-\alpha,-\alpha^2 \}\!\!\}$ into two pairs, such that the ratio between the two numbers in each pair lies in $\{\alpha,-\alpha, \frac{\alpha}{1+\alpha},-\frac{\alpha}{1+\alpha}\}$. By a finite case check, the only $\alpha\in \QQ\setminus\{-1,0,1\}$ with this property is $\alpha=1/2$ \footnote{Verifying this by hand is  straightforward. Alternatively, readers can find a Mathematica code checking this at \texttt{`case-d.nb'} in the supplemental file.}.  When $\alpha=1/2$, $L$ is isomorphic to
\[
        \begin{pmatrix}
            1 & 0 & -1 & 2 & -2\\
            0 & 1 & 2 & -1 & -2
        \end{pmatrix}
\]
which is common (see \cref{sec:common} for proof of its commonness).


\subsection{Proof of \cref{thm:2*5} for $s(L)<4$}
\label{subsec:s<4}
In this section, we prove \cref{thm:2*5} for $s(L)<4$.

Suppose $L$ is an irredundant $2\times 5$ linear system with $s(L)<4$. We show that $L$ is common over all $\FF_p$ if and only if it is isomorphic to a linear system of the form
\begin{align}\label{eqn:common-length-3}
            &\begin{pmatrix}
            a & b & 0 & 0 & c\\
            0 & 0 & a & b & c
        \end{pmatrix}\\
        \label{eqn:common-length-3-2}
        &\text{ or }\begin{pmatrix}
        1 & -1 & \lambda & -\lambda & 0\\
        a & b & 0 & 0 & -a-b
    \end{pmatrix},\quad \text{$\{|a/b|,|a/(a+b)|,|b/(a+b)|\}\cap \{|\lambda|,|\lambda|^{-1}\}\neq\emptyset$.}
\end{align}

First, we recall the following lemma from \cite{KLM21a}.

\begin{lemma}[{\cite[Theorem 1.3]{KLM21a}}]\label{lem:all-uncommon}
    Let $L$ be an irredundant $2 \times 5$ linear system with $s(L)$ even. If every $1\times s(L)$ subsystem of $L$ (i.e., every length-$s(L)$ equation that is a subsystem of $L$) is uncommon over $\FF_p$, then $L$ is uncommon over $\FF_p$.  
\end{lemma}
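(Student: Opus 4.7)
The plan is to combine the critical-set reduction (\cref{prop:critical-uncommon}) with a random-phase construction performed directly on $\FF_p^n$, rather than passing through a Fourier template over $\ZZ$ as in \cref{thm:fourier-template}. Since $s(L)$ is even, \cref{rem:critical-3} gives $c(L) = s(L)$, so every $L_B$ with $B \in \mathcal C(L)$ is a length-$s(L)$ linear equation that appears as a subsystem of $L$. By hypothesis each such $L_B$ is uncommon over $\FF_p$, and because $s(L)$ is even the Fox--Pham--Zhao classification of common single equations forces the coefficients of each $L_B$, viewed as elements of $\FF_p$, not to partition into cancelling pairs.

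To produce the certificate, I would fix any $n \geq 1$ and define a random $f\colon \FF_p^n \to \RR$ by prescribing random phases: for each pair $\{r,-r\}$ with $r \neq 0$ let $\xi_r$ be a uniformly random unit complex number, set $\xi_{-r} = \overline{\xi_r}$, and put $\widehat{f}(r) = \xi_r$ for $r \neq 0$ and $\widehat{f}(0) = 0$. Then $f$ is real-valued and $\EE f = 0$. Using the Fourier expansion \cref{eqn:inversion2}, for each $B \in \mathcal C(L)$ with coefficients $a_{B,1},\dots,a_{B,s}$ one has $t_{L_B}(f) = \sum_{r \neq 0}\prod_{i=1}^{s}\xi_{a_{B,i}r}$. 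For any $r \neq 0$, multiplication by $r$ is a bijection on $\widehat{\FF_p^n}$ that commutes with negation, so the multiset $\{\!\!\{a_{B,i} r\}\!\!\}$ partitions into cancelling pairs in $\widehat{\FF_p^n}$ iff $\{\!\!\{a_{B,i}\}\!\!\}$ does in $\FF_p$; by the previous paragraph this never happens, so each term-wise expectation vanishes and $\EE\bigl[\sum_{B \in \mathcal C(L)} t_{L_B}(f)\bigr] = 0$.

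To conclude, I would force the random sum to take negative values by showing it is not identically zero, via a second-moment computation. Expanding $\EE\bigl[\bigl(\sum_B t_{L_B}(f)\bigr)^2\bigr]$ as a sum over quadruples $(B,B',r,r')$, each term equals either $0$ or $1$ depending on whether the combined multiset $\{\!\!\{a_{B,i}r\}\!\!\}\cup\{\!\!\{a_{B',j}r'\}\!\!\}$ partitions into cancelling pairs, so every term is non-negative. The diagonal contribution from $B=B'$, $r'=-r$ is exactly $|\mathcal C(L)|(p^n-1) > 0$, since pairing each $a_{B,i}r$ with its negative is always a valid partition. Hence the variance is strictly positive; a mean-zero real random variable with positive variance is negative on a positive-probability event, so some realization of $f$ satisfies the hypothesis of \cref{prop:critical-uncommon}, yielding that $L$ is uncommon over $\FF_p$.

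The main obstacle I anticipate is the faithful translation of the Fox--Pham--Zhao non-cancelling condition from $\FF_p$ to scaled multisets inside $\widehat{\FF_p^n}$ (together with the implicit check that each $a_{B,i}$ remains nonzero in $\FF_p$, which follows from treating $L_B$ as an honest length-$s(L)$ equation over $\FF_p$); once this is in place, the first- and second-moment computations are routine.
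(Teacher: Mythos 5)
The paper never proves this lemma: it is imported wholesale from Kam\v{c}ev--Liebenau--Morrison \cite{KLM21a} (their Theorem~1.3), so there is no internal proof to compare against. Your reconstruction is, however, the natural one and fits the paper's own machinery: you reduce through \cref{prop:critical-uncommon} (KLM's Theorem~3.1), observe that for even $s(L)$ every critical subsystem $L_B$ is a single length-$s(L)$ equation covered by the hypothesis, use the Fox--Pham--Zhao classification to conclude these equations are non-cancelling, and then run a mean-zero/positive-variance argument for a random-phase $\widehat f$ on $\FF_p^n$. Working directly over $\FF_p^n$ rather than through the integer Fourier templates of Section~3 is the right move, since \cref{thm:fourier-template} only certifies uncommonness for all sufficiently large $p$, whereas the lemma is asserted at the given prime. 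The expectation computation (each term vanishes because $\{a_{B,i}r\}$ cancels iff $\{a_{B,i}\}$ cancels in $\FF_p$), the nonnegativity of the cross terms, and the deduction ``mean zero plus positive variance implies negative values with positive probability'' are all correct as far as they go.

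The genuine gap is exactly the point you flagged and then waved away: the behaviour of the coefficients modulo $p$. The Fox--Pham--Zhao criterion concerns cancelling pairs among \emph{nonzero} coefficients in $\FF_p$, while the $L_B$ have integer coefficients and length is counted over $\ZZ$; saying the nonvanishing ``follows from treating $L_B$ as an honest length-$s(L)$ equation over $\FF_p$'' is circular, because nothing in the hypothesis prevents $p$ from dividing some coefficients. If a coefficient of $L_B$ vanishes in $\FF_p$, then (i) your appeal to FPZ for that $B$ is not licensed as stated, and (ii) by \cref{eqn:inversion2} every summand of $t_{L_B}(f)$ contains the factor $\widehat f(0)=0$, so $t_{L_B}(f)=0$ identically for every mean-zero $f$; such a $B$ contributes $0$, not $1$, to your diagonal terms, so the claimed count $|\mathcal C(L)|(p^n-1)$ is wrong, and in the extreme case where every critical equation degenerates mod $p$ your random variable is identically zero --- indeed any argument routed through \cref{prop:critical-uncommon} then produces nothing. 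The degenerate $B$'s are harmless for the first moment, but strict positivity of the variance requires at least one critical equation all of whose coefficients survive mod $p$, and that needs an actual argument: for instance, show that under the hypothesis not every critical equation can degenerate modulo $p$ (a single vanishing coefficient forces an odd-length, hence common, reduction, but an even number of vanishing coefficients does not obviously lead to a contradiction), or restrict to primes exceeding all coefficients of $L$, which is the only regime in which this paper applies the lemma. As written, your proof is complete only for primes $p$ dividing no coefficient of any critical equation; closing the remaining case (or observing that KLM's formulation sidesteps it) is what is missing.
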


Applying Lemma~\ref{lem:all-uncommon}, we first handle a few simple cases.

If $s(L) = 2$, then since $L$ is irredundant (i.e., not having subsystems of the form $x_i=x_j$), we know that every $1\times 2$ subsystem of $L$ is uncommon . By \cref{lem:all-uncommon}, $L$ is uncommon .

If $s(L) = 3$ and $L$ has a $2\times 4$ subsystem, then by Theorem 1.1 of \cite{KLM21a}, $L$ is uncommon.

Suppose from now that $s(L)=3$ and  $L$ does not have a $2\times 4$ subsystem. If $L$ has two linearly independent $1\times 3$ subsystems, then it is isomorphic to
\[
\begin{pmatrix}
            a_1 & b_1 & 0 & 0 & c\\
            0 & 0 & a_2 & b_2 & c
        \end{pmatrix}.
\]
In this case, we have $c(L)=4$ and $\mathcal C(L)=\{\{1,2,3,4\}\}$, i.e., the only element in $\{L_B:B\in\mathcal C(L)\}$ is the linear equation
\[
L_5\mathbf x=a_1x_1+b_1x_2-a_2x_3-b_2x_4.
\]
If $L_5$
is uncommon, then we can find a Fourier template $g:\ZZ\to\CC$ with $\sigma_{L_5}(g)<0$, so $L$ is uncommon over sufficiently large $p$ by \cref{thm:fourier-template}. We will show in \cref{sec:common} that, when $L_5$ is common, $L$ is common if and only if it is of the form listed in \cref{eqn:common-length-3,eqn:common-length-3-2}.

Now suppose that $L$ has only one $1\times 3$ subsystem. Again, by \cref{thm:fourier-template}, if all $1\times 4$ subsystems $L$ are uncommon, then $L$ is uncommon as well. Hence it suffices to consider the case where $L$ has a common $1\times 4$ subsystem, i.e.,
$L$ is isomorphic to either of the following:
\begin{align}\label{eq:reduction-E}
    \begin{pmatrix}
        1 & -1 & \lambda & -\lambda & 0\\
        a & b & 0 & 0 & c
    \end{pmatrix}\quad
    \begin{pmatrix}
        1 & -1 & \lambda & -\lambda & 0\\
        a & 0 & b & 0 & c
    \end{pmatrix}
\end{align}
with $\lambda\in\QQ$, $\lambda>0$ and $a,b,c\in\ZZ\setminus\{0\}$.

In the first case, the elements in $\{L_B:B\in\mathcal C(L)\}$ are
\begin{alignat*}{2}
    &L_5\mathbf x=x_1-x_2+\lambda x_3-\lambda x_4 &&A_5=\{\!\!\{1,-1,\lambda,-\lambda\}\!\!\}\\
    &L_1\mathbf x =(a+b)x_2-a\lambda x_3+a\lambda x_4+cx_5\qquad  &&A_1=\{\!\!\{a+b,-a\lambda,a\lambda,c\}\!\!\}\\
    &L_2\mathbf x =(a+b)x_1+b\lambda x_3-b\lambda x_4+cx_5 &&A_2=\{\!\!\{a+b,b\lambda,-b\lambda,c\}\!\!\}.
\end{alignat*} 
We first show that, if $a+b+c \neq 0$, then $L$ is uncommon. We prove this by finding Fourier template $g:\ZZ\to\CC$ such that $\sigma_{L_1}(g)+\sigma_{L_2}(g)+\sigma_{L_5}(g)<0$ and applying \cref{thm:fourier-template}. The construction of suitable Fourier templates uses nearly identical ideas as that for \cref{obs:b-1-1} and \cref{prop:b-1-2}. Indeed, we have the following cases: 
\begin{itemize}
    \item Each of $A_1,A_2$ has three positive and one negative elements. Consider $h:\pm G_L\to\CC$
    with
    \begin{align*}
        \begin{cases}
            h(r)=i & r>0\\
            h(r)=-i & r<0.
        \end{cases}
    \end{align*}
    We know that each of $h_{A_1},h_{A_2},h_{A_5}$ is constant on $\pm G_L$, with $h_{A_5}=1$ and $h_{A_1}=h_{A_2}=-1$. Hence \cref{lem:periodic} applies.
    
    \item Each of $A_1,A_2$ contains two positive and two negative elements, with $a+b+c\neq 0$. Then since $A_1,A_2$ are not cancelling, we can find some $h:\pm G_L\to\CC$ as in \cref{prop:b-1-2}, so that each of $\text{Re}(h_{A_1}),\text{Re}(h_{A_2}),h_{A_5}$ is constant on $\pm G_L$, and $\text{Re}(h_{A_1})+\text{Re}(h_{A_2})+h_{A_5}<0$. Hence \cref{lem:periodic} applies.
\end{itemize}
Finally, for $a+b+c =0$, we show in \cref{sec:common} that $L$ is common if and only if is it of the form listed in \cref{eqn:common-length-3,eqn:common-length-3-2}.

Lastly, we have the second case, where the elements in $\{L_B:B\in\mathcal C(L)\}$ are given by
\begin{alignat}{2}\label{eq:E-case 2}
    &L_5\mathbf x =x_1-x_2+\lambda x_3-\lambda x_4 &&A_5=\{\!\!\{1,-1,\lambda,-\lambda\}\!\!\} \notag \\
    &L_1\mathbf x =ax_2+(b-a\lambda) x_3+a\lambda x_4+cx_5\qquad  &&A_1=\{\!\!\{b-a\lambda,a,a\lambda,c\}\!\!\} \notag \\
    &L_3\mathbf x =(a\lambda-b)x_1+b x_2+b\lambda x_4+c\lambda x_5 &&A_3=\{\!\!\{a\lambda-b,b,b\lambda,c\lambda\}\!\!\}.
\end{alignat}

Notice that we overlap with the previous case if $\lambda=-1$. We handle the typical case  $\lambda\neq 1$ and defer the case  $\lambda =  1$ to the appendix; the construction for the case when $\lambda =  1$ uses the same idea as that in \cref{ex:cancelling-1}, \cref{ex:cancelling-2} and the proof of \cref{prop:cancelling}.

When $\lambda \not \in \{ 1,-1 \}$, the key idea is to reduce to the setup of \cref{lem:coincidence-isolated} and \cref{lem:coincidence-k13}. Namely, \cref{lem:coincidence-isolated} and \cref{lem:coincidence-k13} together show that if the $A_i$ corresponding to $L$ cannot be partitioned into two $L$-coincidental pairs, then $L$ is uncommon. For the system of $L$ above, we will show that this is the case because of parity reasons. 

Suppose $\lambda\notin\{1,-1\}$. In this case, note that at least one of $|\frac{b-a\lambda}{c}|$ and $|\frac{a\lambda-b}{c\lambda}|$ is not an odd power of $\lambda$. Without loss of generality, suppose $|\frac{a\lambda-b}{c\lambda}|$ is not an odd power of $\lambda$. Note that $L_5$ is $\lambda$-common. Moreover, if $L_1$ is common, then it must be $\lambda$-common as well. This implies that $A_3=\{\!\!\{a\lambda-b,b,b\lambda,c\lambda\}\!\!\}$ cannot be partitioned into $L$-coincidental pairs, so $L$ is uncommon.

\subsection{Proofs of commonality}\label{sec:common}

In this section, we show the positive side of \cref{thm:2*5}. That is, $L$ is common if it is one of the forms listed in \cref{thm:2*5}(1).

\subsubsection{Common $2\times 5$ linear systems with $s(L)=4$}

In this section, we show that the three $2\times 5$ linear systems 
\[
        \begin{pmatrix}
            1 & -1 & 1 & -1 & 0\\
            1 & 2 & -1 & 0 & -2
        \end{pmatrix}\quad
        \begin{pmatrix}
            1 & 0 & -1 & 2 & -2\\
            0 & 1 & 2 & -1 & -2
        \end{pmatrix} \quad
        \begin{pmatrix}
            1 & 1 & -1 & -1 & 0\\
            2 & -2 & 1 & 0 & -1
        \end{pmatrix}
\]
are common over all $\FF_p$. Proofs that the first two linear systems are common can also be found at \cite[Examples 4.5 and 4.6]{KLM21b}.

Let $L$ be an irredundant $2\times 5$ linear system with $s(L)=4$.
For $f:\FF_p^n\to[0,1]$ with $\EE f=\alpha$,  we have
\begin{align*}
    t_L(f)+t_L(1-f)&=\EE_{L \mathbf x =0}[f(x_1)\cdots f(x_5)+(1-f(x_1))\cdots (1-f(x_5))]\\
    &=1-5\alpha+\binom{5}{2}\alpha^2-\binom{5}{3}\alpha^3+\sum_{i=1}^5 \EE_{L \mathbf x =0}[\prod_{j\in[5]\setminus\{i\}}f(x_j)]\\
    &=(1-\alpha)^5+\alpha^5-5\alpha^4+\sum_{i=1}^5 t_{L_i}(f)
    \overset{\cref{eqn:inversion2}}{=}(1-\alpha)^5+\alpha^5+\sum_{i=1}^5\sum_{r\in\widehat{\FF_p^n}\setminus\{0\}}\prod_{a\in A_i}\widehat f(ar).
\end{align*}
Thus, if
\begin{align}\label{eqn:common}
    \sum_{i=1}^5\sum_{r\in\widehat{\FF_p^n}\setminus\{0\}}\prod_{a\in A_i}\widehat f(ar)\geq 0\qquad \text{ for all $f:\FF_p^n\to[0,1]$},
\end{align}
we would have 
$t_L(f)+t_L(1-f)\geq 2^{-4}$
for all $f$, which implies that $L$ is common over all $\FF_p$.

We now verify that \cref{eqn:common} holds for the three linear systems mentioned above. The first one has
\begin{alignat*}{3}
    L_5 \mathbf x &= x_1-x_2+x_3-x_4\qquad\qquad  &&A_5=\{\!\!\{1,1,-1,-1\}\!\!\}
\\ L_4 \mathbf x &=x_1+2x_2-x_3-2x_5 &&A_4=\{\!\!\{1,-1,2,-2\}\!\!\}\\
L_3 \mathbf x &=2x_1 +x_2 - x_4 -2x_5 &&A_3=\{\!\!\{1,-1,2,-2\}\!\!\}  \\ 
L_2 \mathbf x &=3x_1+x_3-2x_4-2x_5 &&A_2=\{\!\!\{1,3,-2,-2\}\!\!\}  \\
L_1 \mathbf x &=3x_2-2x_3+x_4-2x_5 &&A_1=\{\!\!\{1,3,-2,-2\}\!\!\}.
\end{alignat*}
For all $f:\FF_p^n\to[0,1]$, we have
\begin{align*}
    \sum_{i=1}^5\sum_{r\in\widehat{\FF_p^n}\setminus\{0\}}\prod_{a\in A_i}\widehat f(ar)&=\sum_{r\in\widehat{\FF_p^n}\setminus\{0\}}2|\widehat f(r)|^2|\widehat f(2r)|^2+|\widehat f(r)|^2|\widehat f(3r)|^2+2\widehat f(r)\widehat f(2r)\widehat f(3r)\widehat f(-6r).
\end{align*}
Using the Cauchy--Schwarz inequality, we get that
\begin{align*}
    \left| \sum_r\widehat f(r)\widehat f(2r)\widehat f(3r)\widehat f(-6r) \right|
    &\leq \sqrt{\left(\sum_r|\widehat f(r)|^2|\widehat f(2r)|^2\right)\left(\sum_r|\widehat f(3r)|^2|\widehat f(-6r)|^2\right)}
    =\sum_r|\widehat f(r)|^2|\widehat f(2r)|^2,
\end{align*}
which implies that $\sum_{i=1}^5\sum_{r\in\widehat{\FF_p^n}\setminus\{0\}}\prod_{a\in A_i}\widehat f(ar)\geq 0$ always.

The second linear system has
\begin{alignat*}{3}
    L_5 \mathbf x &= x_1-x_2-3x_3+3x_4\qquad\qquad  &&A_5=\{\!\!\{1,-1,3,-3\}\!\!\}
\\ L_4 \mathbf x &=x_1+2x_2+3x_3-6x_5 &&A_4=\{\!\!\{1,2,3,-6\}\!\!\}\\
L_3 \mathbf x &=2x_1 +x_2 +3 x_4 -6x_5 &&A_3=\{\!\!\{1,2,3,-6\}\!\!\}  \\ 
L_2 \mathbf x &=x_1-x_3+2x_4-2x_5 &&A_2=\{\!\!\{1,-1,2,-2\}\!\!\}  \\
L_1 \mathbf x &=x_2+2x_3-x_4-2x_5 &&A_1=\{\!\!\{1,-1,2,-2\}\!\!\}.
\end{alignat*}

For all $f:\FF_p^n\to[0,1]$, we have
\begin{align*}
    \sum_{i=1}^5\sum_{r\in\widehat{\FF_p^n}\setminus\{0\}}\prod_{a\in A_i}\widehat f(ar)&=\sum_{r\in\widehat{\FF_p^n}\setminus\{0\}}|\widehat f(r)|^4+2|\widehat f(r)|^2|\widehat f(2r)|^2+2\widehat f(r)\widehat f(3r)\widehat f(-2r)^2.
\end{align*}
Using the Cauchy--Schwarz inequality twice, we get that
\begin{align*}
    \left|\sum_r \widehat{f}(r) \widehat{f}(3r) \widehat{f}(-2r)^2 \right| &\leq \left( \sum_r |\widehat{f}(r)|^2| \widehat{f}(-2r)|^2\right)^{1/2} \left( \sum_r |\widehat{f}(-2r)|^2|\widehat{f}(3r)|^2\right)^{1/2} \\
    &\leq \left( \sum_r |\widehat{f}(r)|^2 |\widehat{f}(-2r)|^2 \right)^{1/2} \left( \frac{1}{2} \left(\sum_r |\widehat{f}(-2r)|^4 + \sum_r |\widehat{f}(3r)|^4   \right)\right)^{1/2} \\
    &\leq \frac{1}{2} \sum_{r} |\widehat{f}(r)|^2 |\widehat{f}(-2r)|^2 + \frac{1}{4} \left( \sum_r|\widehat{f}(-2r)|^4 + \sum_r|\widehat{f}(3r)|^4 \right)\\
    &=\frac{1}{2} \sum_{r} |\widehat{f}(r)|^2 |\widehat{f}(2r)|^2 + \frac{1}{2}  \sum_r|\widehat{f}(r)|^4,
\end{align*}
which implies that $\sum_{i=1}^5\sum_{r\in\widehat{\FF_p^n}\setminus\{0\}}\prod_{a\in A_i}\widehat f(ar)\geq 0$ always.

The third linear system has
\begin{alignat*}{3}
    L_5 \mathbf x &= x_1-x_2+x_3-x_4\qquad\qquad  &&A_5=\{\!\!\{1,1,-1,-1\}\!\!\}
\\ L_4 \mathbf x &=2x_1-2x_2+x_3-x_5 &&A_4=\{\!\!\{2,-2,1,-1\}\!\!\}\\
L_3 \mathbf x &=3x_1 -x_2 - x_4 -x_5 &&A_3=\{\!\!\{3,-1,-1,-1\}\!\!\}  \\ 
L_2 \mathbf x &=4x_1-x_3-2x_4-x_5 &&A_2=\{\!\!\{4,-1,-2,-1\}\!\!\}  \\
L_1 \mathbf x &=4x_2-3x_3-2x_4+x_5 &&A_1=\{\!\!\{4,-3,-2,1\}\!\!\}.
\end{alignat*}
For all $f:\FF_p^n\to[0,1]$, we have
\begin{align*}
    &\sum_{i=1}^5\sum_{r\in\widehat{\FF_p^n}\setminus\{0\}}\prod_{a\in A_i}\widehat f(ar)\\
    &=\sum_{r\in\widehat{\FF_p^n}\setminus\{0\}}|\widehat f(r)|^4+|\widehat f(r)|^2|\widehat f(2r)|^2+\widehat f(3r)\widehat f(-r)^3  +\widehat f(4r)\widehat f(-2r)\widehat f(-r)^2+\widehat f(4r)\widehat f(-3r)\widehat f(-2r)\widehat f(r)\\
    &=\sum_{r\in\widehat{\FF_p^n}\setminus\{0\}}\frac{3}{4}|\widehat f(r)|^4+\frac{1}{4}|\widehat f(3r)|^4+|\widehat f(2r)|^2|\widehat f(4r)|^2\\
     &\qquad\qquad \quad  +\text{Re}(\widehat f(3r)\widehat f(-r)^3++\widehat f(4r)\widehat f(-2r)\widehat f(-r)^2+\widehat f(4r)\widehat f(-3r)\widehat f(-2r)\widehat f(r)).
\end{align*}
Thus, to show that $L$ is common, it suffices to show that
\begin{align*}
    &\frac{3}{4}|\widehat f(r)|^4+\frac{1}{4}|\widehat f(3r)|^4+|\widehat f(2r)|^2|\widehat f(4r)|^2\\
     &\quad +\text{Re}(\widehat f(3r)\widehat f(-r)^3+\widehat f(4r)\widehat f(-2r)\widehat f(-r)^2+\widehat f(4r)\widehat f(-3r)\widehat f(-2r)\widehat f(r))\geq 0\qquad \text{for all }r\in\widehat{\FF_p^n}.
\end{align*}
Consider any $r\in\widehat{\FF_p^n}$. Let
\begin{align*}
    &a_1=|\widehat f(r)|, \quad a_2=|\widehat f(2r)|, \quad a_3=|\widehat f(3r)|, \quad a_4=|\widehat f(4r)|.
\end{align*}
Since 
\[
\widehat f(3r)\widehat f(-r)^3\cdot \widehat f(4r)\widehat f(-3r)\widehat f(-2r)\widehat f(r)=\widehat f(4r)\widehat f(-2r)\widehat f(-r)^2\cdot |\widehat f(r)|^2|\widehat f(3r)|^2,
\]
letting $\alpha,\beta$ be the arguments of $\widehat f(3r)\widehat f(-r)^3$, $\widehat f(4r)\widehat f(-3r)\widehat f(-2r)\widehat f(r)$ respectively, our goal becomes showing that
\begin{align}\label{eq:cosine}
    I:=\frac{3}{4}a_1^4+\frac{1}{4}a_3^4+a_2^2a_4^2+a_1^3a_3\cos(\alpha)+a_1^2a_2a_4\cos(\alpha+\beta)+a_1a_2a_3a_4\cos(\beta)\geq 0.
\end{align}
Setting $x=a_1^2$, $y=a_1a_3$ and $z=a_2a_4$, we have
\begin{align*}
    I&\geq \frac{1}{2}a_1^4+\frac{1}{2}a_1^2a_3^2+a_2^2a_4^2+a_1^3a_3\cos(\alpha)+a_1^2a_2a_4\cos(\alpha+\beta)+a_1a_2a_3a_4\cos(\beta)\\
    &=\frac{1}{2}(x^2+y^2+2z^2+2xy\cos(\alpha)+2xz\cos(\alpha+\beta)+2yz\cos(\beta)).
\end{align*}
Since
\begin{align*}
    0&\leq (y+z\cos(\beta)+x\cos(\alpha))^2+(z\sin(\beta)-x\sin(\alpha))^2\\
    &=y^2+z^2+x^2+2yz\cos(\beta)+2xy\cos(\alpha)+2xz\cos(\alpha+\beta),
\end{align*}
we conclude that $I\geq 0$. This gives \cref{eq:cosine}.

\subsubsection{Common $2\times 5$ linear systems with $s(L)=3$}

We first show that linear systems of the form
\[
L=\begin{pmatrix}
    a & b & 0 & 0 & c\\
    0 & 0 & a & b & c
\end{pmatrix}\qquad a,b,c\in\ZZ\setminus\{0\}
\]
are common. 
Define
\begin{align*}
    L_1 \mathbf x &=ax_1+bx_2+cx_5 \\
    L_2 \mathbf x &=ax_1+bx_2-ax_3-bx_4.
\end{align*}

Consider any $f:\FF_p^n\to[0,1]$ with $\EE f=\alpha\in[0,1]$. For any $\{i,j,k\}\in\binom{[5]}{3}$, we have
\[
\EE_{L \mathbf x =0}f(x_i)f(x_j)f(x_k)=\begin{cases}
    t_{L_1}(f) & \{i,j,k\}=\{1,2,5\}\text{ or }\{3,4,5\}\\
    \alpha^3 & \text{otherwise.}
\end{cases}
\]
For any $\{i,j,k,\ell\}\in\binom{[5]}{4}$, we have
\[
\EE_{L \mathbf x =0}f(x_i)f(x_j)f(x_k)f(x_\ell)=\begin{cases}
    t_{L_2}(f) & \{i,j,k,\ell\}=\{1,2,3,4\}\\
    \alpha t_{L_1}(f) & \text{otherwise.}
\end{cases}
\]
Thus, we have
\begin{align*}
    t_L(f)+t_L(1-f)&=\EE_{L \mathbf x =0}f(x_1)\cdots f(x_5)+(1-f(x_1))\cdots (1-f(x_5))\\
    &=1-5\alpha+10\alpha^2-(8\alpha^3+2t_{L_1}(f))+(t_{L_2}(f)+4\alpha t_{L_1}(f)).
\end{align*}
Since 
\begin{align*}
    t_{L_1}(f)&=\sum_{r\in\widehat{\FF_p^n}}\widehat f(ar)\widehat f(br)\widehat f(cr)=\alpha^3+\sum_{r\in\widehat{\FF_p^n}\setminus\{0\}}\widehat f(ar)\widehat f(br)\widehat f(cr),\\
    t_{L_2}(f)&=\sum_{r\in\widehat{\FF_p^n}}|\widehat f(ar)|^2|\widehat f(br)|^2=\alpha^4+\sum_{r\in\widehat{\FF_p^n}\setminus\{0\}}|\widehat f(ar)|^2|\widehat f(br)|^2,
\end{align*}
we get that
\begin{align*}
    t_L(f)+t_L(1-f)&=\alpha^5+(1-\alpha)^5+(4\alpha-2)\sum_{r\in\widehat{\FF_p^n}\setminus\{0\}}\widehat f(ar)\widehat f(br)\widehat f(cr)+ \sum_{r\in\widehat{\FF_p^n}\setminus\{0\}}|\widehat f(ar)|^2|\widehat f(br)|^2.
\end{align*}
Letting 
\[
X=\sum_{r\in\widehat{\FF_p^n}\setminus\{0\}}\widehat f(ar)\widehat f(br)\widehat f(cr),\qquad Y=\sum_{r\in\widehat{\FF_p^n}\setminus\{0\}}|\widehat f(ar)|^2|\widehat f(br)|^2,
\]
we know from Cauchy--Schwarz and Parseval that
\begin{align*}
    X^2&\leq Y\sum_{r\in\widehat{\FF_p^n}\setminus\{0\}}|\widehat f(cr)|^2=Y\sum_{r\in\widehat{\FF_p^n}\setminus\{0\}}|\widehat f(r)|^2
    =Y(\EE[|f|^2]-\alpha^2)\leq Y(||f||_\infty\EE[f]-\alpha^2)
    \leq Y(\alpha-\alpha^2).
\end{align*}
Since $0\leq Y=t_{L_2}(f)-\alpha^4\leq 1$, we get that
\[
t_L(f)+t_L(1-f)\geq \min_{0\leq\alpha,Y\leq 1}\alpha^5+(1-\alpha^5)-|4\alpha-2|\sqrt{Y(\alpha-\alpha^2)}+Y\geq \frac{1}{16}.
\]
Hence $L$ is common.

Next, we show that linear systems of the form
$$
L=\begin{pmatrix}
    1 & -1 & \lambda & -\lambda & 0\\
    a & b & 0 & 0 & -a-b
\end{pmatrix}
,\quad \{ |a/b|,|(a+b)/b|,|(a+b)/a|\}\cap\{|\lambda|,|\lambda|^{-1}\}\neq\emptyset$$
are common. Define
\begin{align*}
    L_1 \mathbf x &=ax_1+bx_2-(a+b)x_5\\
    L_2 \mathbf x &=x_1-x_2+\lambda x_3-\lambda x_4\\
    L_3 \mathbf x &=(a+b)x_1+b\lambda x_3-b\lambda x_4-(a+b)x_4\\
    L_4 \mathbf x &=(a+b)x_2-a\lambda x_2+a\lambda x_3-(a+b)x_4.
\end{align*}
For all $f:\FF_p^n\to[0,1]$ with $\EE f=\alpha\in[0,1]$, again we can rewrite
\begin{align*}
    t_L(f)+t_L(1-f)&=\alpha^5+(1-\alpha)^5
     +(2\alpha-1)\sum_{r\in\widehat{\FF_p^n}\setminus\{0\}}\widehat f(ar)\widehat f(br)\overline{\widehat f((a+b)r)}
     + \sum_{r\in\widehat{\FF_p^n}\setminus\{0\}}|\widehat f(r)|^2|\widehat f(\lambda r)|^2\\
     &\quad+\sum_{r\in\widehat{\FF_p^n}\setminus\{0\}}|\widehat f(b\lambda r)|^2|\widehat f((a+b)r)|^2+\sum_{r\in\widehat{\FF_p^n}\setminus\{0\}}|\widehat f(a\lambda r)|^2|\widehat f((a+b)r)|^2.
\end{align*}
Let
\[
X=\sum_{r\in\widehat{\FF_p^n}\setminus\{0\}}\widehat f(ar)\widehat f(br)\overline{\widehat f((a+b)r)},\qquad Y=\sum_{r\in\widehat{\FF_p^n}\setminus\{0\}}|\widehat f(r)|^2|\widehat f(\lambda r)|^2.
\]
Since one of $|a/b|,|a/b|^{-1},|(a+b)/b|,|(a+b)/b|^{-1},|(a+b)/a|,|(a+b)/a|^{-1}$ equals $|\lambda|$,
we know from Cauchy--Schwarz and Parseval that
\begin{align*}
    X^2&\leq Y\sum_{r\in\widehat{\FF_p^n}\setminus\{0\}}|\widehat f(r)|^2
    =Y(\EE[|f|^2]-\alpha^2)\leq Y(||f||_\infty\EE[f]-\alpha^2)
    \leq Y(\alpha-\alpha^2).
\end{align*}
Since $0\leq Y\leq 1$, we get that
\[
t_L(f)+t_L(1-f)\geq \min_{0\leq\alpha,Y\leq 1}\alpha^5+(1-\alpha^5)-|2\alpha-1|\sqrt{Y(\alpha-\alpha^2)}+Y\geq \frac{1}{16}.
\]
Hence $L$ is common.

Finally, we show that linear systems
\begin{align*}
    \begin{pmatrix}
        a & -a & 0 & 0 & c\\
        0 & 0 & b & -b & c
    \end{pmatrix}\qquad
    \begin{pmatrix}
        1 & -1 & \lambda & -\lambda & 0\\
        a & b & 0 & 0 & -a-b
    \end{pmatrix}
\end{align*}
not isomorphic to \cref{eqn:common-length-3,eqn:common-length-3-2} are uncommon over sufficiently large $\FF_p$. For the first linear system, define
\begin{align*}
    L_1 \mathbf x &=ax_1-ax_2+cx_5\\
    L_2 \mathbf x &=bx_3-bx_4+cx_5\\
    L_3 \mathbf x &=ax_1-ax_2+bx_3-bx_4.
\end{align*}
Since the linear system is not isomorphic to \cref{eqn:common-length-3}, we have $|a|\neq|b|$. Since it is not isomorphic to \cref{eqn:common-length-3-2}, we also have $|a/b|\notin\{|a/c|,|a/c|^{-1}\}$. 
For all $f:\FF_p^n\to[0,1]$ with $\EE f=\alpha\in[0,1]$, we have
\begin{align*}
    t_{L}(f)+t_{L}(1-f)&=\alpha^5+(1-\alpha)^5+(2\alpha-1)\sum_{r\in\widehat{\FF_p}\setminus\{0\}}|\widehat f(ar)|^2\widehat f(cr)\\
    &\quad +(2\alpha-1)\sum_{r\in\widehat{\FF_p}\setminus\{0\}}|\widehat f(br)|^2\widehat f(cr)+\sum_{r\in\widehat{\FF_p}\setminus\{0\}}|\widehat f(ar)|^2|\widehat f(br)|^2.
\end{align*}
Suppose $p$ is sufficiently large. Define $f:\FF_p\to[0,1]$ by
\begin{align*}
    \widehat f(0)=0.4995:=\alpha,\qquad \widehat f(a)= \widehat f(-a)= \widehat f(c)= \widehat f(-c)=0.0834:=\beta.
\end{align*}
We then have 
\begin{align*}
    t_{L}(f)+t_{L}(1-f)&\leq  \alpha^5+(1-\alpha)^5+ (2\alpha-1)\sum_{r\in\widehat{\FF_p}\setminus\{0\}}|\widehat f(ar)|^2\widehat f(cr)\\
    &= \alpha^5+(1-\alpha)^5+ (2\alpha-1)2\beta^3=0.0624995<1/16.
\end{align*}

For the second linear system,  define
\begin{align*}
    L_1 \mathbf x &=ax_1+bx_2-(a+b)x_5\\
    L_2 \mathbf x &=x_1-x_2+\lambda x_3-\lambda x_4\\
    L_3 \mathbf x &=(a+b)x_1+b\lambda x_3-b\lambda x_4-(a+b)x_4\\
    L_4 \mathbf x &=(a+b)x_2-a\lambda x_2+a\lambda x_3-(a+b)x_4.
\end{align*}
Since the linear system is not isomorphic to 
\cref{eqn:common-length-3-2}, we know that $\lambda\notin\{1,-1\}$. Moreover, we have
\begin{align*}
    &\{ |a/b|,|(a+b)/b|,|(a+b)/a|\} \cap\{|\lambda|,|\lambda|^{-1},|(a+b)/(b\lambda)|,|b\lambda/(a+b)|,|(a+b)/(a\lambda)|,|a\lambda/(a+b)|\}=\emptyset.
\end{align*}
For all $f:\FF_p^n\to[0,1]$ with $\EE f=\alpha\in[0,1]$, we have
\begin{align*}
    t_{L}(f)+t_{L}(1-f)&=\alpha^5+(1-\alpha)^5+(2\alpha-1)\sum_{r\in\widehat{\FF_p}} \widehat f(ar)\widehat f(br)\overline{\widehat f((a+b)r)}\\
    &\quad +\sum_{r\in\widehat{\FF_p}\setminus\{0\}}|\widehat f(r)|^2|\widehat f(\lambda r)|^2+|\widehat f((a+b)r)|^2|\widehat f(b\lambda r)|^2+|\widehat f((a+b)r)|^2|\widehat f(a\lambda r)|^2.
\end{align*}
Suppose $p$ is sufficiently large.  Define $f:\FF_p\to[0,1]$ by
\begin{align*}
    \widehat f(0)&=0.4995:=\alpha,\qquad \widehat f(a)= \widehat f(-a)= \widehat f(b)= \widehat f(-b)=\widehat f(a+b)= \widehat f(-a-b)=0.0834:=\beta.
\end{align*}
We then have
\begin{align*}
    t_{L}(f)&=  \alpha^5+(1-\alpha)^5+ (2\alpha-1)\sum_{r\in\widehat{\FF_p}\setminus\{0\}}\widehat f(ar)\widehat f(br)\overline{\widehat f((a+b)r)}\\
    &= \alpha^5+(1-\alpha)^5+ (2\alpha-1)2\beta^3=0.0624995<1/16.
\end{align*}

\bibliographystyle{amsplain0}

\begin{thebibliography}{10}

\bibitem{Alt22a}
Daniel Altman, \emph{Local aspects of the {Sidorenko} property for linear equations},  (2022), arXiv:2210.17493.

\bibitem{Alt22b}
Daniel Altman, \emph{On a common-extendable, non-{S}idorenko linear system}, Comb. Theory \textbf{3} (2023), Paper No. 5, 12.

\bibitem{AL24}
Daniel Altman and Anita Liebenau, \emph{On the uncommonness of minimal rank-2 systems of linear equations},  (2024), arXiv:2404.18908.

\bibitem{BRS89}
P.~Baldi, Y.~Rinott, and C.~Stein, \emph{A normal approximation for the number of local maxima of a random function on a graph}, Probability, statistics, and mathematics, Academic Press, Boston, MA, 1989, pp.~59--81.

\bibitem{BC05}
A.~D. Barbour and Louis H.~Y. Chen, \emph{An introduction to {S}tein's method}, World Scientific Publishing Co. Pte. Ltd., Hackensack, NJ, 2005.

\bibitem{BR65}
G.~R. Blakley and Prabir Roy, \emph{A {H}\"{o}lder type inequality for symmetric matrices with nonnegative entries}, Proc. Amer. Math. Soc. \textbf{16} (1965), 1244--1245.

\bibitem{BR80}
Stefan~A. Burr and Vera Rosta, \emph{On the {R}amsey multiplicities of graphs---problems and recent results}, J. Graph Theory \textbf{4} (1980), 347--361.

\bibitem{CFS10}
David Conlon, Jacob Fox, and Benny Sudakov, \emph{An approximate version of {S}idorenko's conjecture}, Geom. Funct. Anal. \textbf{20} (2010), 1354--1366.

\bibitem{CKLL18}
David Conlon, Jeong~Han Kim, Choongbum Lee, and Joonkyung Lee, \emph{Some advances on {S}idorenko's conjecture}, J. Lond. Math. Soc. (2) \textbf{98} (2018), 593--608.

\bibitem{CL17}
David Conlon and Joonkyung Lee, \emph{Finite reflection groups and graph norms}, Adv. Math. \textbf{315} (2017), 130--165.

\bibitem{CL21}
David Conlon and Joonkyung Lee, \emph{Sidorenko's conjecture for blow-ups}, Discrete Anal. (2021), Paper No. 2, 13.

\bibitem{CLS23}
David Conlon, Joonkyung Lee, and Alexander Sidorenko, \emph{Extremal numbers and sidorenko's conjecture},  (2023), arXiv:2307.04588.

\bibitem{FPZ21}
Jacob Fox, Huy~Tuan Pham, and Yufei Zhao, \emph{Common and {S}idorenko linear equations}, Q. J. Math. \textbf{72} (2021), 1223--1234.

\bibitem{Goo59}
A.~W. Goodman, \emph{On sets of acquaintances and strangers at any party}, Amer. Math. Monthly \textbf{66} (1959), 778--783.

\bibitem{H10}
Hamed Hatami, \emph{Graph norms and {S}idorenko's conjecture}, Israel J. Math. \textbf{175} (2010), 125--150.

\bibitem{CST96}
Chris Jagger, Pavel \v{S}\v{t}ov\'{\i}\v{c}ek, and Andrew Thomason, \emph{Multiplicities of subgraphs}, Combinatorica \textbf{16} (1996), 123--141.

\bibitem{KLM21a}
Nina Kam{\v{c}}ev, Anita Liebenau, and Natasha Morrison, \emph{On uncommon systems of equations},  (2021), arXiv:2106.08986.

\bibitem{KLM21b}
Nina Kam\v{c}ev, Anita Liebenau, and Natasha Morrison, \emph{Towards a characterization of {S}idorenko systems}, Q. J. Math. \textbf{74} (2023), 957--974.

\bibitem{KLL16}
Jeong~Han Kim, Choongbum Lee, and Joonkyung Lee, \emph{Two approaches to {S}idorenko's conjecture}, Trans. Amer. Math. Soc. \textbf{368} (2016), 5057--5074.

\bibitem{L11}
L\'{a}szl\'{o} Lov\'{a}sz, \emph{Subgraph densities in signed graphons and the local {S}imonovits-{S}idorenko conjecture}, Electron. J. Combin. \textbf{18} (2011), Paper 127, 21.

\bibitem{R11}
Nathan Ross, \emph{Fundamentals of {S}tein's method}, Probab. Surv. \textbf{8} (2011), 210--293.

\bibitem{SW17}
A.~Saad and J.~Wolf, \emph{Ramsey multiplicity of linear patterns in certain finite abelian groups}, Q. J. Math. \textbf{68} (2017), 125--140.

\bibitem{S91}
A.~F. Sidorenko, \emph{Inequalities for functionals generated by bipartite graphs}, Diskret. Mat. \textbf{3} (1991), 50--65.

\bibitem{S93}
Alexander Sidorenko, \emph{A correlation inequality for bipartite graphs}, Graphs Combin. \textbf{9} (1993), 201--204.

\bibitem{S14}
Balazs Szegedy, \emph{An information theoretic approach to {S}idorenko's conjecture},  (2014), arXiv:1406.6738.

\bibitem{Tho89}
Andrew Thomason, \emph{A disproof of a conjecture of {E}rdős in {R}amsey theory}, J. London Math. Soc. (2) \textbf{39} (1989), 246--255.

\bibitem{Ver21}
Leo Versteegen, \emph{Linear configurations containing 4-term arithmetic progressions are uncommon}, J. Combin. Theory Ser. A \textbf{200} (2023), Paper No. 105792, 38.

\end{thebibliography}

\appendix
\section{Proof of \cref{lem:dep-stein}}
\label{app:stein}

In this section, we prove \cref{lem:dep-stein}.
\begin{proof}[Proof of \cref{lem:dep-stein}]

We will prove that \[ d_{\text{Wass}}(W,Z) \leq \rho^{-3} \sum_{u \in V} \left( 3 \sum_{v,w \in N_u}\left| \EE [X_uX_vX_w] \right|  + 4 \sum_{v \in N_u} | \EE [X_uX_v]|\cdot \EE \left[ \left| \sum_{w \in N_u \cup N_v} X_w  \right| \right]  \right). \]
This will  imply the desired conclusion because we have the following (see for instance \cite[Theorem 3.1]{BC05}) for any $x \in \RR$:
\[ |\PP[W \leq x] - \PP[Z \leq x]| \leq \sqrt{2 d_\textup{Wass}(W,Z)/\pi}. \]

Define the set of test functions  \[\mathcal{D} = \left\{ f\in \mathcal{C}^1(\RR): f' \text{ absolutely continuous},\, \norm{f}_{\infty} \leq 1,\, \norm{f'}_{\infty} \leq \sqrt{2/\pi}, \,\norm{f''}_{\infty} \leq 2 \right\}.\] 

By \cref{cor:Stein}, it follows that
\[ d_\text{Wass}(W,Z) \leq \sup_{f \in \mathcal{D}} \left| \EE[f'(W) - Wf(W)] \right|.\]

Define
\begin{align*}
    &Y_v = \rho^{-1} \sum_{u \in N_v} X_u, \quad J_v = \rho^{-1}\sum_{u \not \in N_v} X_u,\quad  K_{vw} = \rho^{-1} \sum_{u \not \in N_w \cup N_v}X_u,  
     \quad T_{vw} = \rho^{-1}\sum_{u \in N_w \setminus N_v} X_u. 
\end{align*}
In particular, observe that for any choice of $v,w \in V$, we have $Y_v+K_{vw}+T_{vw}=W$.

Write $\EE [Wf(W) -  f'(W)] = (\textrm{I}) + (\textrm{II}) + (\textrm{III})$ as a telescoping sum, where
\begin{align*}
       (\textrm{I})  &= \EE Wf(W) - \rho^{-1} \sum_{v \in V} \EE X_v Y_v f'(J_v), \\
   (\textrm{II}) &= \rho^{-1} \sum_{v \in V} \EE X_v Y_v f'(J_v) - \rho^{-2} \sum_{v \in V} \sum_{u \in N_v} \EE[X_uX_v] \EE[f'(K_{uv})], \\
    (\textrm{III}) &= \rho^{-2} \sum_{v \in V} \sum_{u \in N_v} \EE[X_uX_v] \left( \EE f'(K_{uv}) - \EE f'(W) \right).
\end{align*}
By the triangle inequality, we have 
\begin{equation}\label{eq:tri-ineq}
    |\EE Wf(W) - \EE f'(W)| \leq |(\textrm{I})| + |(\textrm{II})| + |(\textrm{III})|.
\end{equation}
In what remains, we upper bound each of the individual pieces $|(\textrm{I})|$, $|(\textrm{II})|$ and $|(\textrm{III})|$. 

For $(\textrm{I})$, we write $W = Y_v + J_v$ and by Taylor expansion about $J_v$ to obtain 
\[ Wf(W) = \rho^{-1} \sum_{v \in V} \left(X_v f(J_v) + X_v Y_v f'(J_v) + \frac{1}{2} X_v Y_v^2 f''(J_1) \right),\]
where $J_1$ is a random variable in the interval $[J_v, W]$. It follows by the assumptions on $f$ that 
\begin{align*}
    |(\textrm{I})| &\leq \rho^{-1} \sum_{v \in V} |\EE (X_v Y_v^2)| \leq \rho^{-3} \sum_{v \in V} \sum_{u,w \in N_v} |\EE X_u X_v X_w|. 
\end{align*}
For $(\textrm{II})$, fix $v \in V$ and by writing $J_v = K_{vw} + T_{vw}$, we know from Taylor expansion about $K_{vw}$ that 
\[ X_vY_vf'(J_v) = \rho^{-1} \sum_{w \in N_v} X_v X_w(f'(K_{vw}) + T_{vw}f''(J_2))\]
where $J_2$ is a random variable in the interval $[K_{vw}, J_v]$. By the assumptions on $f$, we have 
\begin{align*}
|(\textrm{II})| &\leq 2 \rho^{-2} \sum_{v \in V} \left |\EE \left[X_v \sum_{w \in N_v}X_w T_{vw} \right]\right| \leq 2 \rho^{-3} \sum_{v \in V} \sum_{u, w \in N_v} |\EE X_vX_wX_u|. 
\end{align*}
Finally, for $(\textrm{III})$, fix $u, v \in V$. Since $K_{uv} = W - (Y_u + T_{uv})$, by the mean value theorem,  we have 
\[ f'(K_{uv}) = f'(W) - (Y_u + T_{uv})f''(J_3)\]
for some $J_3 \in [K_{uv}, W]$. Consequently, we have 
\begin{align*}
    |\EE f'(K_{uv}) - \EE f'(W)| &\leq 2 \EE|Y_u + T_{uv}| \leq 2 \rho^{-1} \EE \left|\sum_{w \in N_u \cup N_v}X_w \right|.
\end{align*}
Putting all the various pieces into \cref{eq:tri-ineq} gives the desired bound.
\end{proof}

\section{Deferred proofs from \cref{sec:2*5}}

\subsection{Case A} \label{app:A}

\begin{proof}[Proof of \cref{lem:finding-non-coincidental}] Suppose first that exactly one of $L_1,\dots,L_5$ is common. Without loss of generality, let $$L_5 \mathbf x =x_1-x_2+\lambda x_3-\lambda x_4,\quad \lambda\in\QQ\setminus\{ -1,0,1\}.$$
We use $a\sim b$ to denote that $a,b$ are $L$-coincidental, i.e., $\frac{a}{b}\in\{\lambda,-\lambda, 1/\lambda,-1/\lambda\}$. Up to replacing $\lambda$ by one of $-\lambda,1/\lambda,-1/\lambda$ and permuting the variables, we have the following two cases:
\begin{enumerate}
    \item For every $i=1,\dots,4$, letting $L_i \mathbf x =a_{i,1}x_1+\dots+a_{i,5}x_5$, we have $a_{i,1}\sim a_{i,2}$ or $a_{i,3}\sim a_{i,4}$.
    \item Letting $L_4 \mathbf x =a_{4,1}x_1+a_{4,2}x_2+a_{4,3}x_3+a_{4,5}x_5$, we have $a_{4,1}\sim a_{4,3}$ and $a_{4,2}\sim a_{4,5}$.
\end{enumerate}

Write
\begin{alignat*}{3}
    L_5 \mathbf x &=x_1-x_2+\lambda x_3-\lambda x_4 && A_5=\{\!\!\{1,-1,\lambda,-\lambda\}\!\!\}\\
    L_4 \mathbf x &=ax_1+bx_2+c x_3+d x_5 && A_4=\{\!\!\{a,b,c,d\}\!\!\}\\
    L_3 \mathbf x &=(\lambda a-c)x_1+(\lambda b+c)x_2+\lambda cx_4+\lambda dx_5\qquad &&A_3=\{\!\!\{\lambda a-c,\lambda b+c,\lambda c,\lambda d\}\!\!\}\\
    L_2 \mathbf x &=(a+b)x_1+(\lambda b+c)x_3-\lambda bx_4+dx_5 &&A_2=\{\!\!\{a+b,\lambda b+c,-\lambda b,d\}\!\!\}\\
    L_1 \mathbf x &=(a+b)x_2+(c-\lambda a)x_3+\lambda ax_4+dx_5 &&A_1=\{\!\!\{a+b,c-\lambda a,\lambda a,d\}\!\!\}.
\end{alignat*}
Suppose (1) occurs. if each of $A_1,\dots,A_4$ can be partitioned into two $L$-coincidental pairs, then we must have
\begin{align*}
    a\sim b, \quad c\sim d,\quad (\lambda a-c)\sim (\lambda b+c), \quad(\lambda b+c)\sim \lambda b,\quad (c-\lambda a)\sim \lambda a,\quad a+b\sim d.
\end{align*}
Recall that $a\sim b$ corresponds to one of the four identities $\lambda a=b$, $\lambda a=-b$, $\lambda b=a$, $\lambda b=-a$. Thus, for the above relation to hold, $a,b,c,d,\lambda$ must satisfy a system of six equations on five variables, and there are $4^6$ possible such systems of equations. A computer search  \footnote{The code can be found at \texttt{`case-a-1.nb'} in the supplemental file.} yields that none of these $4^6$ systems of equations has a solution with $a,b,c,d\in\QQ\setminus\{0\}$ and $\lambda\setminus\{-1,0,1\}$.

Suppose (2) occurs. Similarly, if each of $A_1,\dots,A_4$ can be partitioned into two $L$-coincidental pairs, then we must have
\begin{align*}
    &a\sim c, \quad b\sim d,\\
    &((\lambda a-c)\sim (\lambda b+c),\quad \lambda c\sim\lambda d)\quad  \text{ or }\quad ((\lambda a-c)\sim\lambda d,\quad (\lambda b+c)\sim\lambda c),\\
    &(-\lambda b\sim (\lambda b+c),\quad(a+b)\sim d)\quad  \text{ or }\quad ((\lambda b+c)\sim  d,\quad (a+b)\sim-\lambda b),\\
    &((c-\lambda a) \sim \lambda a,\quad(a+b)\sim d)\quad  \text{ or }\quad ((c-\lambda a)\sim  (a+b),\quad \lambda a\sim d).
\end{align*}
This is because $b\sim d$, so  we cannot have $-\lambda b\sim d$ in $A_2$. Then we either have $(\lambda b+c)\sim d$ or $(\lambda b+c)\sim -\lambda b$ in $A_2$, which means that $|(\lambda b+c)/ d|$ is an odd power of $|\lambda|$, so we cannot have $(\lambda b+c)\sim \lambda d$ in $A_3$. Then we either have $(\lambda a-c)\sim (\lambda b+c)$ or $(\lambda a-c)\sim \lambda d$ in $A_3$, which means that $|(\lambda a-c)/ d|$ is an even power of $|\lambda|$, so we cannot have $(c-\lambda a)\sim d$ in $A_1$. Therefore, we obtain  $8\cdot 4^6$ possible systems of eight equations on five variables. Again, using a computer search \footnote{The code can be found at \texttt{`case-a-2.nb'} in the supplemental file.}, one can verify that none of these $8\cdot 4^6$  systems  of equations has a solution with $a,b,c,d\in\QQ\setminus\{0\}$ and $\lambda\setminus\{-1,0,1\}$.

Now suppose two of $L_1,\dots,L_5$ are common. Without loss of generality, suppose
\begin{alignat*}{3}
    L_5 \mathbf x &=x_1-x_2+\lambda x_3-\lambda x_4 && A_5=\{\!\!\{1,-1,\lambda,-\lambda\}\!\!\}\\
    L_4 \mathbf x &=x_1+\mu x_2- x_3-\mu x_5  && A_4=\{\!\!\{1,-1,\mu,-\mu\}\!\!\}\\
    L_3 \mathbf x &=(\lambda +1)x_1+(\lambda \mu-1)x_2-\lambda x_4-\lambda \mu x_5 \qquad && A_3=\{\!\!\{\lambda+1,\lambda\mu-1,-\lambda,-\lambda\mu\}\!\!\}\\
    L_2 \mathbf x &=(\mu+1)x_1+(\lambda \mu-1)x_3-\lambda \mu x_4-\mu x_5  && A_2=\{\!\!\{\mu+1,\lambda \mu-1,-\mu,-\lambda\mu\}\!\!\}\\
    L_1 \mathbf x &=(\mu+1)x_2+(-\lambda -1)x_3+\lambda x_4-\mu x_5  && A_1=\{\!\!\{\mu+1,-\lambda-1,\lambda,-\mu\}\!\!\}
\end{alignat*}
with $\lambda,\mu\in\QQ\setminus\{-1,0,1\}$. Since $L_4$ and $L_5$ are both common, $a\sim b$ corresponds to one of the eight  identities $\lambda a=b$, $\lambda a=-b$, $\lambda b=a$, $\lambda b=-a$, $\mu a=b$, $\mu a=-b$, $\mu b=a$, $\mu b=-a$. Since $A_1$ can be partitioned into two cancelling pairs, we must have
$$(\mu+1)\sim(-\lambda-1), \quad  \lambda\sim-\mu$$
or
$$(\mu+1)\sim\lambda, \quad (-\lambda-1)\sim-\mu$$
or
$$(\mu+1)\sim-\mu,\quad \lambda\sim(-\lambda-1).$$
Thus, $\lambda,\mu\in\QQ\setminus\{-1,0,1\}$ must satisfy one of the $3\cdot 8^2$ possible systems of two equations on two variables. A computer search \footnote{The code can be found at \texttt{`case-a-3.nb'} in the supplemental file.} gives that this is only possible when $(\lambda,\mu)$ have the following values:
\begin{alignat*}{3}
    &(1/2,-1/4),\, (-1/4,1/2),\,(-2,-1/3), \, (-1/3,-2), \\
    &(-2,-2/3),\, (-2/3,-2),\, (1/2,-3/4),\, (-3/4,1/2),\\
    &(-2,3),\, (3,-2),\, (-2,-5),\, (-5,2),\,
    (-2,-2), \, (\lambda,-\lambda/(1+\lambda)).
\end{alignat*}
This is what we obtain just by assuming that $A_1$ can be partitioned into two cancelling pairs. One can then verify that $A_2,A_3$ cannot be both partitioned into $L$-coincidental pairs in any of these solutions.
\end{proof}

\begin{proof}[Proof of \cref{lem:coincidence-isolated}]
Let $E$ denote the collection of those $i\in[5]$ that satisfies the condition in \cref{lem:coincidence-isolated}. That is, for every $i\in E$, there exists some $a\in A_{i}$ such that  $a,b$ are not $L$-coincidental for any $b\in A_i$.

Suppose there exists $i_0\in E$ with $A_{i_0}=\{\!\!\{1,1,1,1\}\!\!\}$ or $\{\!\!\{-1,-1,-1,-1\}\!\!\}$. In this case,  the Fourier template $g:\ZZ\to\CC$ defined by
\begin{align*}
    \begin{cases}
    g(1)=e(1/8)\\
     g(-1)=e(-1/8)\\
     g(r)=0 &\text{otherwise}
    \end{cases}
\end{align*}
has $\sum_{i=1}^5\sigma_{A_i}(g)\leq -2<0$.

Otherwise, suppose there exists $i_0\in E$ with $A_{i_0}=\{\!\!\{1,1,1,-1\}\!\!\}$ or $\{\!\!\{1,-1,-1,-1\}\!\!\}$. In this case, the Fourier template $g:\ZZ\to\CC$  defined by
\begin{align*}
    \begin{cases}
    g(1)=i\\
    g(-1)=-i\\
    g(r)=0 &\text{otherwise}
    \end{cases}
\end{align*}
has $\sum_{i=1}^5\sigma_{A_{i}}(g)\leq -2<0$.

Otherwise, suppose there exists $i_0\in E$ with $A_{i_0}=\{\!\!\{a, a, a,b\}\!\!\}$ or $\{\!\!\{a,a,-a,b\}\!\!\}$ with $|a|\neq |b|$. In this case,  the Fourier template $g:\ZZ\to\CC$ defined by
\begin{align*}
    \begin{cases}
    g(a)= g(-a)=C\\
    g(b)= g(-b)=-1\\
    g(r)=0 \quad \text{otherwise}
    \end{cases}
\end{align*}
has $\sum_{i=1}^5\sigma_{A_i}(g)=-2C^3+O(C^2)$, which is negative for $C>0$ sufficiently large.

Otherwise, suppose there exists $i_0\in  E$ with  $A_{i_0}=\{\!\!\{a,a,b,b\}\!\!\}$ with $|a|\neq|b|$. In this case, consider the joined Fourier template $h:\ZZ^2\to\CC$ defined by $h(r_1,r_2)=g(r_1)\widetilde{g}(r_2)$, where   $g,\widetilde{g}:\ZZ\to\CC$ are 1-dimensional Fourier templates given by
\begin{align*}
    \begin{cases}
        g(a)=g(b)=e(1/8)\\
        g(-a)=g(-b)=e(-1/8)\\
        g(r)=0 \quad \text{otherwise}
    \end{cases}\qquad 
    \begin{cases}
        \widetilde g(a)=\widetilde g(-b)=e(1/8)\\
        \widetilde g(-a)=\widetilde g(b)=e(-1/8)\\
        \widetilde g(r)=0 \quad \text{otherwise.}
    \end{cases}
\end{align*}
Observe that, since $a,b$ are not $L$-coincidental, there is no $i\in[5]$ with $A_i=\{\!\!\{a,-a,b,-b\}\!\!\}$. Thus, $\sigma_{A_i}(h)=\sigma_{A_i}(g)\sigma_{A_i}(\widetilde{g})\neq 0$ if and only if $A_i$ equals one of $\pm S_1,\pm S_2,\pm S_3,\pm S_4$, defined by
\begin{align*}
    S_1=\{\!\!\{a,a,b,b\}\!\!\},\quad S_2=\{\!\!\{a,-a,b,b\}\!\!\},\quad S_3=\{\!\!\{a,a,b,-b\}\!\!\},\quad S_4=\{\!\!\{a,a,-b,-b\}\!\!\}.
\end{align*}
One can check that
\begin{align*}
     \sigma_{S_1}(g)&=g(a)^2g(b)^2+g(-a)^2g(-b)^2=-2,\qquad 
      \sigma_{S_1}(\widetilde{g})=\widetilde{g}(a)^2\widetilde{g}(b)^2+\widetilde{g}(-a)^2\widetilde{g}(-b)^2=2,
\end{align*}
and similarly
\begin{alignat*}{3}
    \sigma_{S_2}(g)=0\qquad  & \sigma_{S_3}(g)=0 \qquad &&\sigma_{S_4}(g)=2\\
  \sigma_{S_2}(\widetilde g)=0 \qquad & \sigma_{S_3}(\widetilde g)=0 &&\sigma_{S_4}(\widetilde g)=-2.
\end{alignat*}
Therefore, we have $\sigma_{A_i}(h)\leq 0$ for all $i\in[5]$. Since $A_{i_0}=S_1$ with $\sigma_{A_{i_0}}(h)=\sigma_{A_{i_0}}(g)\sigma_{A_{i_0}}(\widetilde g)=-4$, we get that $\sum_{i=1}^5\sigma_{A_i}(h)<0$.

Otherwise, suppose there exists $i_0\in  E$ with $A_{i_0}=\{\!\!\{a,a,b,-b\}\!\!\}$ with $|a|\neq|b|$. In this case, consider the joined Fourier template $h:\ZZ^2\to\CC$ defined by $h(r_1,r_2)=g(r_1)\widetilde{g}(r_2)$, where   $g,\widetilde{g}:\ZZ\to\CC$ are 1-dimensional Fourier templates given by
\begin{align*}
    \begin{cases}
        g(a)=g(-a)=1\\
        g(b)=i,\,\, g(-b)=-i\\
        g(r)=0 \quad \text{otherwise}
    \end{cases}\qquad 
    \begin{cases}
        \widetilde g(b)=\widetilde g(-b)=1\\
        \widetilde g(a)=i,\,\,
        \widetilde g(-a)=-i\\
        \widetilde g(r)=0 \quad \text{otherwise.}
    \end{cases}
\end{align*}
Again,  since $a,b$ are not $L$-coincidental, there is no $i\in[5]$ with $A_i=\{\!\!\{a,-a,b,-b\}\!\!\}$. Thus, $\sigma_{A_i}(h)=\sigma_{A_i}(g)\sigma_{A_j}(\widetilde{g})\neq 0$ if and only if $A_i$ is one of $\pm S_2,\pm S_3$ defined above.
One can check that
\begin{alignat*}{3}
      & \sigma_{S_2}(g)=2 \qquad \sigma_{S_3}(g)=-2 \qquad  \sigma_{S_2}(\widetilde g)=-2 \qquad \sigma_{S_3}(\widetilde g)=2.
\end{alignat*}
Therefore, we have $\sigma_{A_i}(h)\leq 0$ for all $i\in[5]$. Since $A_{i_0}=S_3$ with $\sigma_{A_{i_0}}(h)=\sigma_{A_{i_0}}(g)\sigma_{A_{i_0}}(\widetilde g)=-4$, we get that $\sum_{i=1}^5\sigma_{A_i}(h)<0$.

Otherwise, suppose there exists $i_0\in E$ with $A_{i_0}=\{\!\!\{a, a, b,c\}\!\!\}$ or $\{\!\!\{a,-a,b,c\}\!\!\}$, $|a|, |b|,|c|$ distinct, and $a$ is not $L$-coincidental with $b $ or $c$. In this case, 
the Fourier template $g:\ZZ\to\CC$ defined by
\begin{align*}
    \begin{cases}
        g(a)=g(-a)=C\\
        g(b)=g(-b)=1\\
        g(c)=g(-c)=-1\\
        g(r)=0 \quad \text{otherwise}
    \end{cases}
\end{align*}
has $\sum_{i=1}^5\sigma_{A_i}(g)=-2C^2+O(C)$, which is negative for $C>0$ sufficiently large.

Finally, suppose for all  $i\in E$ and all $a\in A_i$ not $L$-coincidental with any element in $ A_i$, we have $|A_i\cap\{a,-a\}|=1$.
In this case, pick any  $i_0\in E$ and suppose $A_{i_0}=\{\!\!\{a_0,b_0,c_0,d_0 \}\!\!\}$, such that $a_0$ is not $L$-coincidental with any of $b_0,c_0,d_0$. Consider the Fourier template $g:\ZZ\to\CC$ defined by
\begin{align*}
    \begin{cases}
    g(a_0)=g(-a_0)=-C\\
    g(b_0)=g(-b_0)=
    g(c_0)=g(-c_0)=
    g(d_0)=g(-d_0)=1\\
   g(r)=0\quad  \text{otherwise.}
    \end{cases}
\end{align*}
Then we have $\sigma_{A_{i_0}}(g)=-2C$. Moreover, for all $i\in[5]\setminus\{i_0\}$, we either have $\sigma_{A_i}(g)=-2C$ or have $|\sigma_{A_i}(g)|=O(1)$. Hence for  sufficiently large $C>0$, we have $\sum_{i=1}^5 \sigma_{A_i}(g)<0$.
\end{proof}

\subsection{Case B}
\label{app:B}

We first prove \cref{lem:periodic}. To do so, we introduce the notion of ``periodic sum'' for $h_{A_i}$ on the multiplicative grid $G_L$.

\begin{definition}\label{def:periodic-sum}
    Suppose $h:\pm G_L\to\CC$ is  function such that $h_{A_1},\dots,h_{A_5}$ are $\vec u$-periodic (recall \cref{def:mult-grid}). Define
    \[
    \sigma(h_{A_i},\vec u)=\sum_{\substack{0 \leq d_1 \leq u_1-1 \\ \vdots \\ 0 \leq d_{\ell} \leq u_{\ell} -1}}  h_{A_i}(p_1^{d_1} \ldots p_{\ell}^{d_\ell}\cdot r).
    \]
\end{definition}

\begin{proof}[Proof of \cref{lem:periodic}]
    For $D>0$, define 
$G_D=\{p_1^{d_1}\cdots p_\ell^{d_\ell}:d_1,\dots,d_\ell\in\{0,1,\dots,D\}\}$ to be a finite truncation of $G_L$, and $g \colon \pm G_D \to \CC$  by $g(r) = h(r)$. Observe that, if $r\in \pm G_D$ satisfies
\[
\{ap_1^{d_1}\cdots p_\ell^{d_\ell}r: a\in A_1\cup\dots\cup A_5,\, 0\leq d_1<u_1,\,\dots,\,0\leq d_\ell<u_\ell\}\subseteq \pm G_D,  \tag{*}
\]
then we will have
$$\sum_{0\leq d_1<u_1}\cdots \sum_{0\leq d_\ell <u_\ell} g_{A_i}(p_1^{d_1}\cdots p_{\ell}^{d_\ell}r)=\sigma_{A_i}(h,\vec u).$$
As $D\to\infty$, we have
\[
\frac{|\{r\in\ZZ:g_{A_i}\neq 0\text{ for some }i\in[5]\}|}{|\{r\in\pm G_D:\text{$r$ satisfies (*)}\}|}\to 1.
\]
This captures the idea that the fraction of ``boundary terms'' become negligible and the main contribution comes from the internal portion of the grid. That is, we have
\begin{align*}
    1=\lim_{D\to\infty}\frac{\sum_{i=1}^5\sigma_{A_i}(g)}{\sum_{i=1}^5\sum_{\substack{r\in \pm G_D\text{ satisfies (*)}}}g_{A_i}(r)}=\lim_{D\to\infty}\frac{\sum_{i=1}^5\sigma_{A_i}(g)}{\sum_{i=1}^52\text{Re}(\sigma_{A_i}(h,\vec u))\cdot D^\ell / u_1\cdots u_\ell}.
\end{align*}
Therefore, given $\sum_{i=1}^5 \text{Re}(\sigma_{A_i}(h,\vec u))<0$, we know that $\sum_{i=1}^5\sigma_{A_i}(g)<0$ for $D$ sufficiently large.
\end{proof}

\begin{proof}[Proof of \cref{fact:cos-2}]
    For $\gamma_1,\gamma_2\in\QQ\setminus\{0\}$, we wish to find $t\in\RR$ such that $1+\cos(\gamma_1t)+\cos(\gamma_2t)<0$.  Without loss of generality, suppose $\gamma_1,\gamma_2>0$. The statement clearly holds for $\gamma_1=\gamma_2$, so suppose $\gamma_1<\gamma_2$. 
    
    If  $\gamma_1>\gamma_2/2$, we can take $t=\pi/\gamma_2$, so that $\gamma_2t=\pi$ and $\pi/2< \gamma_1t<\pi$. 
    
    If $\gamma_2/3\leq \gamma_1<\gamma_2/2$, we can take $t=\frac{8\pi}{3\gamma_2}$, so that $\gamma_2t=\frac{8\pi}{3}$ and $\frac{8\pi}{9}\leq \gamma_1t<\frac{4\pi}{3}$. 
    
    If $\gamma_1<\gamma_2/3$, then $(\frac{\gamma_2}{\gamma_1}\cdot \frac{2\pi}{3},\frac{\gamma_2}{\gamma_1} \cdot \frac{4\pi}{3})$ has length $>2\pi$, so we can find $t$ such that $\gamma_1t,\gamma_2t\in (\frac{2\pi}{3},\frac{4\pi}{3})$. 
    
    Finally, if $\gamma_2=2\gamma_1$, we can take $t=1.4\pi/\gamma_1$, so that $1+\cos(1.4\pi)+\cos(2.8\pi)<0$.
\end{proof}

\begin{proof}[Proof of \cref{fact:cos-1}]
    Without loss of generality, assume $0<\gamma_1 \leq \gamma_2 \leq \gamma_3 \leq \gamma_4 $. Suppose for the sake of contradiction that $f(t) = 1 + \cos(\gamma_1 t) + \cos(\gamma_2 t) + \cos(\gamma_3 t) + \cos(\gamma_4 t) \geq 0$ for all $t \in \RR$. In particular, we have that $f(t_k) \geq 0$ for all $k \in \ZZ$, with $t_k := \frac{(2k+1) \pi}{\gamma_4}$ . This choice of $t_k$ ensures that $\cos(\gamma_4 t_k) = -1$, so we have $f(t_k) = \cos(\gamma_1 t_k) + \cos(\gamma_2 t_k) + \cos(\gamma_3 t_k)\geq 0$.

    Observe that for any $m>0$, we have
    \[ \sum_{k=0}^{\gamma_4 - 1} \cos(m t_k) =\sum_{k=0}^{\gamma_4 - 1} \cos\left(\frac{m\pi}{\gamma_4}+\frac{2km\pi }{\gamma_4}\right) 
    = \begin{cases} 0 &\text{if } \gamma_4\nmid m \\ (-1)^{m/\gamma_4} &\text{if } \gamma_4\mid m. \end{cases}\]
    By the maximality of $\gamma_4$ and non-negativity of $f(\cdot)$, it follows that 
    \[ 0 \leq \sum_{k=0}^{\gamma_4 - 1}f(t_k) = \sum_{j=1}^{3}\sum_{k=0}^{\gamma_4 - 1} \cos(\gamma_j t_k) = - \gamma_4 |\{i \in [3]: \gamma_i = \gamma_4 \}|\leq 0.\]
    Consequently equality holds in the above chain of inequalities, which implies that we have:
    \begin{itemize}
        \item $f(t_k) = 0$ for all $0 \leq k \leq \gamma_4 - 1$,
        \item $0<\gamma_1, \gamma_2, \gamma_3 < \gamma_4$. 
    \end{itemize}
    Thus, for all $i,j \in [3]$, we have  $|\gamma_i - \gamma_j| < \gamma_4$ and $0 < \gamma_i + \gamma_j < 2 \gamma_4$. This gives 
    \begin{align*}
        0 = \sum_{k=0}^{\gamma_4 - 1}f(t_k)^2 &= \sum_{i,j = 1}^{3}\sum_{k=0}^{\gamma_4 - 1}\cos(\gamma_it_k)\cos(\gamma_jt_k)  
        = \frac{1}{2} \sum_{i,j=1}^{3}\sum_{k=0}^{\gamma_4 - 1}(\cos((\gamma_i + \gamma_j)t_k) + \cos((\gamma_i - \gamma_j)t_k)) \\
        &= \frac{\gamma_4}{2} (| \{(i,j): i,j \in [3], \gamma_i = \gamma_j \}| - |\{(i,j): i,j \in [3], \gamma_i + \gamma_j = \gamma_4| ).
    \end{align*}
Since $| \{(i,j): i,j \in [3], \gamma_i = \gamma_j \}|\geq 3$, we must have $|\{(i,j): i,j \in [3], \gamma_i + \gamma_j = \gamma_4|\geq 3$, which implies that $\gamma_1+\gamma_3=2\gamma_2=\gamma_4$.
In particular, we have 
\begin{align*}
    &\cos(\gamma_3\pi/\gamma_4) = \cos(\pi-\gamma_1\pi/\gamma_4)=-\cos(\gamma_1\pi/\gamma_4),\qquad \cos(\gamma_2\pi/\gamma_4)=\cos(\pi/2)=0.
\end{align*}
Thus, at $t_0=\pi/\gamma_4$, we have
    \begin{itemize}
        \item $f(t_0) = \cos(\gamma_1t_0)+\cos(\gamma_2t_0)+\cos(\gamma_3t_0)=0$, and
        \item $f'(t_0) = -\gamma_1\sin(\gamma_1t_0)-\gamma_2\sin(\gamma_2t_0)-\gamma_3\sin(\gamma_3t_0)<0$ since $0 < \gamma_1t_0, \gamma_2t_0, \gamma_3t_0 <\gamma_4t_0= \pi$.
    \end{itemize}
    It follows that  $f(t_0+ \epsilon) < 0$ for sufficiently small $\epsilon >0$, which is a contradiction as desired.
\end{proof}

In the remainder of this section we prove \cref{prop:cancelling}. In order to handle the more intricate cases when some of the $A_i$ are cancelling, we need to introduce more notation. 

\begin{definition}
    For primes $p\neq \widetilde p$ and multiset $A$, define $V_{p,\widetilde p}(A)=\{\!\!\{(v_{p}(a),v_{\widetilde p}(a)):a\in A\}\!\!\}$.
\end{definition}

\begin{example}
    Suppose $A=\{\!\!\{-1,2,6,-12\}\!\!\}$. Then
    \begin{itemize}
        \item $V_2(A)=\{\!\!\{0,1,1,2\}\!\!\}$, $V_3(A)=\{\!\!\{0,0,1,1\}\!\!\}$,
        \item $V_{2,3}(A)=\{\!\!\{(0,0),(1,0),(1,1),(2,1)\}\!\!\}$.
    \end{itemize} 
\end{example}

\begin{observation}\label{obs:v-additive}
    Consider $a,a'\in\ZZ\setminus\{0\}$ and prime number $p$. 
    
    If $v_{p}(a)\neq v_{p}(a')$, then $v_{p}(a+a')=v_{p}(a-a')=\min\{v_{p}(a),v_{p}(a')\}$.
    
    If $v_{p}(a)= v_{p}(a')$, then $v_{p}(a+a'),v_{p}(a-a')\geq v_{p}(a)$.
\end{observation}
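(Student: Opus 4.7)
The plan is to prove this observation directly from the definition of $v_p$, by writing $a$ and $a'$ in their canonical $p$-adic forms. By \cref{def:prime-exponent}, we may write
\[
a = p^{v_p(a)} u \quad \text{and} \quad a' = p^{v_p(a')} u'
\]
where $u, u'$ are integers coprime to $p$. The two cases then follow by elementary manipulation.

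For the first case, where $v_p(a) \neq v_p(a')$, I would assume without loss of generality that $v_p(a) < v_p(a')$. Factoring out the smaller power of $p$, one obtains
\[
a \pm a' = p^{v_p(a)}\bigl(u \pm p^{v_p(a') - v_p(a)} u'\bigr).
\]
Since $v_p(a') - v_p(a) \geq 1$, the expression in parentheses is congruent to $u \pmod{p}$, which is a unit mod $p$. Thus the parenthesized factor contributes no additional factors of $p$, giving $v_p(a \pm a') = v_p(a) = \min\{v_p(a), v_p(a')\}$.

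For the second case, where $v_p(a) = v_p(a') =: k$, factoring out $p^k$ yields $a \pm a' = p^k(u \pm u')$. Since $u \pm u'$ is an integer, this directly shows $v_p(a \pm a') \geq k$ (whenever $a \pm a' \neq 0$). Note that unlike the first case one cannot promote this to an equality, since $u + u'$ or $u - u'$ may itself be divisible by $p$.

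There is no real obstacle here; this is a textbook fact about non-archimedean valuations, and is being recorded as \cref{obs:v-additive} for convenient reference in subsequent arguments that track how $p$-adic valuations of entries of $A_i$ behave under the linear combinations occurring in \cref{setup:b}.
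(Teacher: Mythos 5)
Your proof is correct; the standard factorization $a=p^{v_p(a)}u$, $a'=p^{v_p(a')}u'$ with $u,u'$ coprime to $p$ gives exactly the two claimed cases. The paper records this observation without proof as an elementary fact about $p$-adic valuations, and your argument is precisely the routine verification it implicitly relies on (with the harmless caveat that in the equal-valuation case one needs $a\pm a'\neq 0$ for $v_p$ to be defined, as you note).
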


Recall that in Case B, we assume (\cref{setup:b})  that $L$ is of the form
\begin{alignat*}{3}
    L_5 \mathbf x &=x_1+x_2-x_3-x_4 && A_5=\{\!\!\{1,1,-1,-1\}\!\!\}\\
    L_4 \mathbf x &=ax_1+bx_2+cx_3+dx_5 && A_4=\{\!\!\{a,b,c,d\}\!\!\}\\
    L_3 \mathbf x &=(a+c)x_1+(b+c)x_2-cx_4+dx_5\qquad && A_3=\{\!\!\{a+c,b+c,-c,d\}\!\!\}\\
    L_2 \mathbf x &=(a-b)x_1+(b+c)x_3+bx_4+dx_5 && A_2=\{\!\!\{a-b,b+c,b,d\}\!\!\}\\
    L_1 \mathbf x &=(b-a)x_3+(a+c)x_3+ax_4+dx_5 && A_1=\{\!\!\{b-a,a+c,a,d\}\!\!\}
\end{alignat*}
with $a,b,c,d\in\ZZ\setminus\{0\}$, and none of $L_1,\dots,L_4$ is common. By replacing $a,b,c,d$ by their negations if necessary, we may suppose that at least two of $a,b,c,d$ are positive.

\begin{lemma}
    Suppose $L$ is an irredundant $2\times 5$ linear system that falls under Case B in \cref{cor:2*5-classification}. Assume \cref{setup:b}. Then there cannot be three cancelling sets among $A_1,\dots,A_4$.
\end{lemma}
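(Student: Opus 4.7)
Suppose for contradiction that three among $A_1,A_2,A_3,A_4$ are cancelling. Observe that the substitution $a\leftrightarrow b$ leaves $A_3$ and $A_4$ unchanged as multisets and swaps $A_1$ with $A_2$, so up to this symmetry there are three cases to consider: (i) $A_1,A_2,A_3$ cancelling, (ii) $A_1,A_2,A_4$ cancelling, and (iii) $A_2,A_3,A_4$ cancelling. The main tool is the definition: if $A_i=\{\!\!\{x_1,x_2,-y_1,-y_2\}\!\!\}$ with $x_j,y_j>0$, then $x_1x_2=y_1y_2$ and exactly two of the four entries are positive.

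In each case, I will enumerate the sign patterns on the multiset in question. Since $d$ (and often $a$, $b$, or $c$) appears in several $A_i$, the sign of such a common element is determined, which already constrains the other signs. For each surviving sign pattern, the cancelling identity $x_1x_2=y_1y_2$ yields a polynomial equation in $a,b,c,d$; combining the three equations obtained from the three cancelling sets produces a small system that can be solved by hand. For instance, in Case (ii) the three cancelling relations come from $\{a,b,c,d\}$, $\{a-b,b+c,b,d\}$, and $\{b-a,a+c,a,d\}$; the first gives (up to signs) a relation of the form $ab=\pm cd$ etc., and substituting into the other two forces a relation such as $a=\pm b$, $c=0$, or $a+c=0$. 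Any such degeneracy either violates $s(L)=k-1=4$ (by producing a length-$\le 3$ equation in the row span) or contradicts the standing assumption that $L_4$ is not common (e.g., if $\{\!\!\{a,b,c,d\}\!\!\}=\{\!\!\{x,-x,y,-y\}\!\!\}$). Cases (i) and (iii) are handled analogously, using that $A_3$ contains $-c$ as its unique ``free'' negative entry and that in Case (iii) the elements $a,b,c$ all appear separately in $A_4$ so the cancelling relation for $A_4$ pins down $ab=cd$ or $ac=bd$ or $ad=bc$, each of which restricts the other two cancelling identities drastically.

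\textbf{Main obstacle.} The enumeration of sign patterns: the cancelling condition requires exactly two positive and two negative entries in each $A_i$, but the signs of compound entries like $a+c$, $b+c$, $a-b$, $b-a$ depend on the relative magnitudes of $a,b,c$, which are \emph{a priori} unconstrained beyond the normalization that at least two of $a,b,c,d$ are positive. Carefully bookkeeping these sign possibilities — and ruling out those where the corresponding compound entry vanishes (since all elements of $A_i$ must be nonzero, lest $s(L)<4$) — is the bulk of the work. Once the sign patterns are fixed, each multiplicative identity is a low-degree polynomial relation, and solving the resulting finite system of two or three such relations in the variables $a,b,c,d$ is a direct (if tedious) computation that forces one of the forbidden degeneracies.
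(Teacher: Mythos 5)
Your overall strategy is the same as the paper's: enumerate the sign patterns compatible with three of the $A_i$ being cancelling, write down the multiplicative identities (product of positives equals product of absolute values of negatives) for each cancelling set, and show the resulting polynomial systems are incompatible with the standing assumptions. One organizational difference: you only exploit the $a\leftrightarrow b$ symmetry, leaving three cases; the paper uses the full symmetry group of the configuration fixing the additive quadruple $L_5$ (generated by $x_1\leftrightarrow x_2$, $x_3\leftrightarrow x_4$, and $\{x_1,x_2\}\leftrightarrow\{x_3,x_4\}$), which permutes $\{A_1,A_2,A_3,A_4\}$ transitively, so up to isomorphism one may assume the cancelling triple is $\{A_2,A_3,A_4\}$ and, after normalizing $a>0$, only four explicit sign systems remain to be checked.

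The genuine gap is that the decisive step — the finite verification itself — is only promised, not performed. You never list the admissible sign subcases, and the illustrative claim for your Case (ii), that the three identities "force a relation such as $a=\pm b$, $c=0$, or $a+c=0$," is asserted without derivation; moreover, the contradiction need not take the form of such a degeneracy at all, since some subcases die by outright inconsistency of the equations with the sign constraints. To see the kind of argument that is actually required, take the paper's first subcase ($A_2,A_3,A_4$ cancelling, with $a,b,-c,-d,\,a+c,\,-(b+c),\,a-b>0$): the identities are $ab=cd$, $-c(a+c)=(b+c)d$, $b(a-b)=(b+c)d$; equating the last two gives $(b-c)(b+c)=a(b+c)$, hence $a=b-c$, and substituting back yields $b^2=0$, contradicting $b\neq 0$. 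Since the whole content of the lemma is precisely this case-by-case elimination (the paper reduces it to four concrete systems and verifies them computationally), leaving it as "a direct (if tedious) computation" means the proof is not yet complete, even though the plan itself is sound.
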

\begin{proof}
    By contradiction, suppose three among these sets are cancelling. Up to isomorphism, we can suppose $A_2,A_3,A_4$ are cancelling and $a>0$. We then have four different cases dependings on the coefficients in $L_1,\dots,L_5$:
    \begin{enumerate}
        \item $a>0$, $b>0$, $c<0$, $d<0$, $a+c>0$, $b+c<0$, $a-b>0$,

        $ab=cd$, $-c(a+c)=(b+c)d$, $b(a-b)=d(b+c)$;

        \item $a>0$, $b>0$, $c<0$, $d<0$, $a+c<0$, $b+c>0$, $a-b<0$,

        $ab=cd$, $-c(b+c)=(a+c)d$, $b(b+c)=d(a-b)$;
        
        \item $a>0$, $c>0$, $b<0$, $d<0$, $a+c>0$, $b+c>0$, $a-b>0$,

        $ac=bd$, $(a+c)(b+c)=-cd$, $(a-b)(b+c)=bd$;

        \item $a>0$, $d>0$, $b<0$, $c<0$, $a+c<0$, $b+c<0$, $a-b>0$,

        $ad=bc$, $(a+c)(b+c)=-cd$, $d(a-b)=b(b+c)$.
    \end{enumerate}
    One can then check by a direct computation that none of these cases has real solutions.
\end{proof}

\begin{corollary}\label{cor:classification-case-b}
    Suppose $L$ is an irredundant $2\times 5$ linear system that falls under Case B in \cref{cor:2*5-classification}. Assume \cref{setup:b}. Then up to isomorphism, at least one of the following occurs: 
    \begin{enumerate}
        \item $A_4$ is cancelling and $V_p(A_4)=\{\!\!\{ 0,v_1,v_2,v_1+v_2 \}\!\!\}$ with $0<v_1<v_2$ for some prime $p$.
        \item $A_4$ is cancelling and $V_p(A_4)=\{\!\!\{ 0,v,v,2v \}\!\!\}$ with $v>0$ for some prime $p$.
        \item $A_4$ is cancelling and $V_{p,\widetilde p}(A_4)=\{\!\!\{ (0,0),(v_1,0),(0,v_2),(v_1,v_2) \}\!\!\}$ with $v_1,v_2>0$ for some primes $p\neq \widetilde p$.   Moreover, none of $A_1,A_2,A_3$ is cancelling.
        \item $A_4$ is cancelling and $V_{p,\widetilde p}(A_4)=\{\!\!\{ (0,0),(v_1,0),(0,v_2),(v_1,v_2) \}\!\!\}$ with $v_1,v_2>0$ for some primes $p\neq \widetilde p$.  Moreover, there is exactly one $i\in[3]$ such that the following two conditions hold:
        \begin{itemize}
            \item[(i)] $A_i$ is cancelling, and
            \item[(ii)] $V_{p',\widetilde p'}(A_i)=\{\!\!\{ (0,0),(w_1,0),(0,w_2),(w_1,w_2) \}\!\!\}$ with $w_1,w_2>0$ for some primes $p'\neq \widetilde p'$.
        \end{itemize}
    \end{enumerate}
\end{corollary}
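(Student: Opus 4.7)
Proof proposal. We work under \cref{setup:b}, where $L_5$ is the distinguished additive quadruple, and apply the hypothesis of \cref{prop:cancelling} that some $A_i$ with $i \in [4]$ is cancelling. The preceding lemma caps the total number of cancelling sets among $A_1, \ldots, A_4$ at two. Permutations of $x_1, \ldots, x_4$ preserve $L_5$ (up to sign) and act as the dihedral group on $\{L_1, L_2, L_3, L_4\}$, so ``up to isomorphism'' we may freely choose which cancelling set to label as $A_4$.

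The plan is to first classify the $p$-adic shape of a single cancelling, non-common set $A$, and then pick a good representative of the isomorphism class. Writing $A = \{\!\!\{a_1, a_2, -b_1, -b_2\}\!\!\}$ with $a_i, b_i > 0$ and $a_1 a_2 = b_1 b_2$, and scaling so that $\gcd(a_1, a_2, b_1, b_2) = 1$, the identity $v_p(a_1) + v_p(a_2) = v_p(b_1) + v_p(b_2)$ combined with $\min_{x \in A} v_p(x) = 0$ forces (after sorting) $V_p(A) = \{\!\!\{0, v_1, v_2, v_1 + v_2\}\!\!\}$ with $0 \leq v_1 \leq v_2$. Call $A$ of \emph{single-prime type} if some prime attains $v_1 > 0$, and of \emph{double-prime type} otherwise, in which case $V_p(A)$ equals $\{\!\!\{0, 0, 0, 0\}\!\!\}$ or $\{\!\!\{0, 0, v, v\}\!\!\}$ at every prime $p$.

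For a double-prime $A$ that is not common (i.e.\ $A \neq \{\!\!\{e, -e, f, -f\}\!\!\}$, which under Case B automatically holds for every cancelling $A_i$ with $i \in [4]$), I will extract the rectangle of the corollary. For each prime $p$ with $V_p(A) = \{\!\!\{0, 0, e_p, e_p\}\!\!\}$, the equality $v_p(a_1) + v_p(a_2) = v_p(b_1) + v_p(b_2) = e_p$ rules out $\{v_p(a_1), v_p(a_2)\} \in \{\{0, 0\}, \{e_p, e_p\}\}$ and forces $\{v_p(a_1), v_p(a_2)\} = \{v_p(b_1), v_p(b_2)\} = \{0, e_p\}$. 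Define $\epsilon_p, \delta_p \in \{0, 1\}$ by $v_p(a_1) = \epsilon_p e_p$ and $v_p(b_1) = \delta_p e_p$, so $A$ is encoded (up to signs) by these binary strings. The condition $A \neq \{\!\!\{e, -e, f, -f\}\!\!\}$ is equivalent to $\epsilon \notin \{\delta, \mathbf{1} - \delta\}$, which yields primes $p$ with $\epsilon_p \neq \delta_p$ and $\widetilde p$ with $\epsilon_{\widetilde p} = \delta_{\widetilde p}$. At these two primes the elements of $A$ occupy all four corners of $\{0, e_p\} \times \{0, e_{\widetilde p}\}$, giving $V_{p, \widetilde p}(A) = \{\!\!\{(0, 0), (e_p, 0), (0, e_{\widetilde p}), (e_p, e_{\widetilde p})\}\!\!\}$.

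Assembly. If some cancelling $A_i$ is of single-prime type, relabel to place it at position 4: then $V_p(A_4) = \{\!\!\{0, v_1, v_2, v_1 + v_2\}\!\!\}$ with $v_1 > 0$ directly gives case (1) when $v_1 < v_2$ and case (2) when $v_1 = v_2$. Otherwise every cancelling set is double-prime; place any one of them as $A_4$ to obtain the rectangle structure. By the preceding lemma at most one other $A_j$ with $j \in [3]$ is cancelling, and any such $A_j$ is also double-prime by our case assumption, hence likewise admits a rectangle. Zero or one such $A_j$ yields case (3) or case (4) respectively. The main technical step is the combinatorial extraction of $p$ and $\widetilde p$ in the double-prime analysis, where the Case B hypothesis (non-commonness of $L_1, \ldots, L_4$) is crucially invoked to exclude the degenerate configurations $\epsilon \in \{\delta, \mathbf{1} - \delta\}$.
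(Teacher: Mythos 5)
Your argument is correct: the relabeling of a chosen cancelling set to position $4$ via the stabilizer of $\pm L_5$, the observation that $v_p(a_1)+v_p(a_2)=v_p(b_1)+v_p(b_2)$ together with minimum valuation zero forces $V_p(A)=\{\!\!\{0,v_1,v_2,v_1+v_2\}\!\!\}$ at every prime, and the extraction of two distinct primes (one where the binary encodings $\epsilon,\delta$ agree, one where they differ) from non-commonness in the double-prime case all check out, and combined with the preceding at-most-two-cancelling-sets lemma they give exactly cases (1)--(4). The paper states \cref{cor:classification-case-b} without an explicit proof, treating it as a routine consequence of that lemma and the valuation analysis sketched in the ``witnessing'' examples, so your write-up is essentially the intended argument, including the sensible (and necessary) clarification that the corollary is to be read under the hypothesis of \cref{prop:cancelling} that some $A_i$ with $i\in[4]$ is cancelling.
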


We now state a reduction step which says that, when $h:\pm G_L\to\CC$ satisfies certain conditions, we can combine it with constructions in \cref{obs:b-1-1} and \cref{prop:b-1-2} appropriately, to obtain Fourier template $g$ with $\sum_{i=1}^5\sigma_{A_i}(g)<0$.

\begin{definition}
    Let $A$ be a multiset of size 4. We say that $A$ is (P2) if it contains two positive and two negative elements, and is (P3) if it contains three positive and one negative elements.
\end{definition}

\begin{proposition}
    \label{obs:b-reduction-2}
    Suppose $L$ is an irredundant $2\times 5$ linear system that falls under Case B in \cref{cor:2*5-classification}. Assume \cref{setup:b}. Moreover, suppose $A_4$ is cancelling, and one of the following holds:
    \begin{enumerate}
        \item One of $A_1,A_2,A_3$ is cancelling, and there exists $h:\pm G_L\to\CC$ such that $h_{A_1},\dots,h_{A_5}$ are $\vec u$-periodic, with $\sum_{i\in[5],\,A_i\text{ cancelling}}\sigma(h_{A_i},\vec u)<0$.
        \item None of $A_1,A_2,A_3$ is cancelling, and there exists $h:\pm G_L\to\CC$ such that $h_{A_1},\dots,h_{A_5}$ are $\vec u$-periodic, with $\sigma(h_{A_5},\vec u)+\sigma(h_{A_4},\vec u)=0$ and $|\{i\in[3]: \text{Re}(\sigma(h_{A_i},\vec u)) \neq 0\}| \in\{1,3\}$.
    \end{enumerate}
Then there exists a Fourier template $g:\ZZ^d\to\CC$ such that $\sum_{i=1}^5\sigma_{A_i}(g)<0$.
\end{proposition}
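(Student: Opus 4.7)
The plan is to construct $g$ as a join $g_h \otimes g'$ via \cref{prop:join}, where $g_h$ is the Fourier template obtained by truncating $h$ as in \cref{lem:periodic}, and $g'$ is an auxiliary template built from a suitable product (under $\otimes$) of the primitives $h_1, h_2, h_3$ from \cref{obs:b-1-1} and \cref{prop:b-1-2}. Writing $r_i := \textrm{Re}(\sigma(h_{A_i}, \vec u))$ and letting $s_i$ denote the (real) normalized asymptotic value of $\sigma_{A_i}(g')$, the joint truncation estimate from \cref{lem:periodic} combined with \cref{prop:join} gives $\sigma_{A_i}(g_h \otimes g') \sim (\text{positive constant}) \cdot r_i \cdot s_i$, so the sign of each joined term is determined by $r_i s_i$.

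The central observation (\cref{table:case-b}) is that each $h_j$ satisfies $(h_j)_A \equiv 1$ on $G_L$ for any cancelling $A$ (including the additive quadruple $A_5$). Hence all cancelling $A_i$ share a common value $s_i = C > 0$, independent of which product of $h_j$'s we choose for $g'$. For non-cancelling $A_i$, the value $s_i$ depends on the structural type: $h_1$ yields $s = -1$ on (P4), $s = 0$ on (P3)/(P1), and $s = 1$ on (P2) non-cancelling; $h_2$ yields $s = -1$ on (P3)/(P1) and $s = 1$ otherwise; and $h_3$ yields $s = \cos(\gamma_i t)$ on (P2) non-cancelling with $\gamma_i \in \QQ\setminus\{0\}$, which is tunable via \cref{fact:cos-2}. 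By joining several primitives under $\otimes$, we can access various combinations of sign patterns for the tuple $(s_i)_i$.

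For scenario (1), the cancelling contribution is $C \cdot \sum_{i: A_i\text{ cancelling}} r_i < 0$ by hypothesis, so we need the two non-cancelling contributions $r_{i_1}s_{i_1} + r_{i_2}s_{i_2}$ to be non-positive. This reduces to a short case analysis on the structural types of the two non-cancelling $A_{i_1}, A_{i_2}$: since (P3)/(P1) contributions vanish under $g_1$, (P4) contributions can be flipped by parity of $g_1$-joins, and (P2) non-cancelling signs can be tuned via $g_3$, we can always achieve compatible sign matching for any pair of types. For scenario (2), the cancelling contribution vanishes after joining (since $r_4 + r_5 = 0$), and the task reduces to $\sum_{i \in [3]} r_i s_i < 0$. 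The ``one nonzero $r_i$'' subcase is resolved by simply choosing $g'$ that sign-matches that single index; the ``three nonzero $r_i$'' subcase requires a more delicate joint sign-matching, invoking \cref{fact:cos-1} to align phases on any (P2) non-cancelling entries while using $g_1, g_2$ for the more rigid (P4) and (P3)/(P1) entries.

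The main obstacle is the ``three nonzero $r_i$'' subcase of scenario (2): we must verify that for every combination of structural types of $(A_1, A_2, A_3)$ compatible with \cref{setup:b} (certain uniform-type combinations, e.g.\ all three (P4), are ruled out because coefficients like $b-a$ and $a-b$ cannot both be positive), the discrete palette of sign patterns $(s_1, s_2, s_3)$ achievable from joining $h_1, h_2, h_3$ is rich enough to yield $\sum r_i s_i < 0$. This mirrors Case 4 in the proof of \cref{prop:case-B} but enjoys the additional advantage that the cancelling sum is already zero rather than positive, so we need only make the non-cancelling part strictly negative rather than overcome a positive offset.
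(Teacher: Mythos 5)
Your overall skeleton (truncate $h$ via \cref{lem:periodic}, then join with truncations of the primitives $h_1,h_2,h_3$ via \cref{prop:join} to adjust signs index-by-index) is close in spirit to the paper, but it diverges from the paper's mechanism in a way that leaves a real gap exactly where you flag it. The paper does not join $h$'s truncation with a truncation of $h_3$; it multiplies $h$ by $h_3$ \emph{pointwise on $\pm G_L$ before truncating}. This matters: the pointwise product rotates the full complex periodic sums, replacing $\sigma(h_{A_i},\vec u)=z_i$ by $e^{i\gamma_i t}z_i$ on non-cancelling (P2) sets while leaving cancelling sets literally unchanged, whereas your join only multiplies the real numbers $r_i=\mathrm{Re}(\sigma(h_{A_i},\vec u))$ by real scalars $s_i\in\{0,\pm1,\cos(\gamma_i t),\dots\}$. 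In the subcase of (2) with three nonzero $r_i$ (which you yourself leave as ``to be verified''), your palette gives $\sum_i r_i\cos(\gamma_i t)$, which is identically zero whenever the frequencies are forced to coincide and $r_1+r_2+r_3=0$ --- a configuration not excluded by the hypotheses --- so no choice of $\theta,t$ or of joins with $g_1,g_2$ rescues it; the paper's rotation $\mathrm{Re}\big(e^{i\gamma t}(z_1+z_2+z_3)\big)$ can still be made negative there as long as the complex sum is nonzero, because it exploits the imaginary parts that your construction discards. So the hardest case is not just unwritten: the specific tool you propose is weaker than the paper's and does not obviously close it. (A smaller point: under \cref{setup:b} with $A_4$ cancelling, every $A_i$ is (P2) or else exactly two of $A_1,\dots,A_4$ are (P3); the types (P4)/(P1) you invoke never occur, and the paper's proof begins with precisely this structural observation, which is what makes the case analysis finite and the $h_2$ ``either $g$ or $g\otimes g_2$'' flip work.)

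A second, load-bearing inaccuracy: it is not true that ``all cancelling $A_i$ share a common value $s_i=C$'' under a finite truncation $g'$ --- the boundary corrections differ between $A_4$, $A_5$ and a cancelling $A_{i_0}$, so $\sigma_{A_4}(g')\neq\sigma_{A_5}(g')$ in general. Since hypothesis (2) is an \emph{exact} cancellation of two possibly large opposite periodic sums ($\mathrm{Re}\,\sigma(h_{A_4},\vec u)=-\mathrm{Re}\,\sigma(h_{A_5},\vec u)=K$, typically $K\neq0$ in the applications, e.g.\ \cref{prop:b-2}), the joined cancelling contribution is roughly $K\cdot\big(\sigma_{A_4}(g')-\sigma_{A_5}(g')\big)$ times the main scale, which is of the \emph{same order} in the $h$-truncation parameter as your intended negative main term; you need a two-scale argument (first take the $g'$ truncation large enough that $|\sigma_{A_4}(g')-\sigma_{A_5}(g')|$ is small relative to $|\sigma_{A_{i_0}}(g')|$, then truncate $h$) to make it negligible. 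This is fixable, but as stated the ``common $C$'' step is false and the estimate $\sigma_{A_i}(g_h\otimes g')\sim c\,r_i s_i$ does not by itself control the cancelling block. The paper sidesteps most of this by keeping the cancelling sets pointwise fixed under the $h_3$ multiplication and only using a single join (with $g_2$) at the very end.
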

\begin{remark}
    The caveat here is that, if for some $h:\pm G_L\to\CC$ we have $\sigma(h_{A_5},\vec u)+\sigma(h_{A_4},\vec u)=0$ and $|\{i\in[3]:\text{Re}(\sigma(h_{A_i},\vec u))\neq 0\}|=2$, we might not be able to produce, based on this, a Fourier template $g$ with $\text{Re}\sum_{i=1}^5\sigma(h_{A_i},\vec u)$ \textit{strictly less than} 0. (For example, $1-1+xe^{i\gamma t}-xe^{i\gamma t}+0= 0$ no matter what value $t$ takes.)
\end{remark}
\begin{proof}[Proof of \cref{obs:b-reduction-2}]
We work under \cref{setup:b}. One can check that, since $A_4$ is cancelling, either $A_1,\dots,A_4$ are all (P2), or two of them are (P2) and the other two are (P3).

Recall that the function $h_2$ defined in \cref{obs:b-1-1} satisfies
\begin{align*}
    (h_2)_{A_i}=\begin{cases}
        1 & \text{$A_i$ is (P2)}\\
        -1 & \text{$A_i$  is (P3)}.
    \end{cases}
\end{align*}
The function $h_3$ (with $\theta_1,\dots,\theta_\ell,t$ to be determined) defined in \cref{prop:b-1-2} satisfies
\begin{align*}
    (h_3)_{A_i}=\begin{cases}
        1 & \text{$A_i$ is cancelling}\\
        e^{i\gamma t},\, \gamma\neq 0 & \text{$A_i$ is (P2) but  not cancelling.}
    \end{cases}
\end{align*}

 Suppose condition (1) in \cref{obs:b-reduction-2} occurs.  Then we can pick $\theta_1,\dots,\theta_\ell,t$ so that the corresponding $h_3$ has
\[
\text{Re}\sum_{\substack{\text{$A_i$ is (P2) but}\\\text{not cancelling}}}\sigma((hh_3)_{A_i},\vec u)\leq 0,
\]
which then gives
\[
\text{Re}\sum_{\text{$A_i$ is (P2)}}\sigma((hh_3)_{A_i},\vec u)< 0.
\]
Let $g$ be the finite truncation of $hh_3$ (recall \cref{lem:periodic}) and $g_2$ be the finite truncation of $h_2$. Then, by \cref{prop:join}, we have either $
\sum_{i=1}^5\sigma_{A_i}(g)<0$ or $\sum_{i=1}^5\sigma_{A_i}(gg_2)<0$.

Suppose condition (2) in \cref{obs:b-reduction-2} occurs. If exactly one $i_0\in[3]$ has $\text{Re}(\sigma(h_{A_{i_0}},\vec u))\neq 0$, and this $A_{i_0}$ is (P3), then letting $g$ be the finite truncation of $h$ and $g_2$ be the finite truncation of $h_2$, we have either 
$\sum_{i=1}^5\sigma_{A_i}(g)<0$ or $\sum_{i=1}^5\sigma_{A_i}(gg_2)<0$.

Otherwise, we have 
$$|\{i\in[3]: \text{Re}(\sigma(h_{A_i},\vec u))\neq 0,\, \text{$A_i$ is (P2)} \}|\in\{1,3\},$$ 
so we can pick $\theta_1,\dots,\theta_\ell,t$ such that
\[
\text{Re}\sum_{\text{$A_i$ is (P2)}}\sigma((hh_3)_{A_i},\vec u)< 0.
\]
Again, letting $g$ be the finite truncation of $hh_3$ and $g_2$ be the finite truncation of $h_2$, we have either 
$\sum_{i=1}^5\sigma_{A_i}(g)<0$ or $\sum_{i=1}^5\sigma_{A_i}(gg_2)<0$.
\end{proof}

We will show in \cref{prop:b-2,prop:b-3,prop:b-4,prop:b-5} that, in each of the four cases listed in \cref{cor:classification-case-b}, there exists some Fourier template $g:\ZZ^d\to\CC$ such that $\sum_{i=1}^5\sigma_{A_i}(g)<0$. 

\begin{proposition}\label{prop:b-2}
    Suppose $L$ is an irredundant $2\times 5$ linear system that falls under Case B in \cref{cor:2*5-classification}. Assume \cref{setup:b}. Moreover, suppose $A_4$ is cancelling and $$V_{p_1}(A_4)=\{\!\!\{ 0,v_1,v_2,v_1+v_2 \}\!\!\},\quad \, 0<v_1<v_2<v_1+v_2.$$ Then there exists a Fourier template $g:\ZZ^d\to\CC$ such that $\sum_{i=1}^5\sigma_{A_i}(g)<0$.
\end{proposition}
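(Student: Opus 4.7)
The plan is to apply \cref{lem:periodic} together with \cref{obs:b-reduction-2}. Define $h\colon \pm G_L \to \{\pm 1\}$ by $h(\pm r) = \phi(v_{p_1}(r))$ for some $\{\pm 1\}$-valued $N$-periodic function $\phi\colon \ZZ \to \{\pm 1\}$ to be chosen, so that $h_{A_1},\dots,h_{A_5}$ are automatically $\vec{u}$-periodic for $\vec{u}$ with $u_{p_1} = N$ (and $u_q = 1$ in the other coordinates). Since $\phi$ is real, the condition $h(-r) = \overline{h(r)}$ holds trivially and each $h_{A_i}$ is real. Moreover, $V_{p_1}(A_5) = \{\!\!\{0,0,0,0\}\!\!\}$ forces $h_{A_5} \equiv 1$, so $\sigma(h_{A_5}, \vec{u}) = N$.

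The central choice is $\phi$, tuned so that $\sigma(h_{A_4}, \vec{u})$ is as negative as possible. Since $V_{p_1}(A_4) = \{\!\!\{0, v_1, v_2, v_1+v_2\}\!\!\}$, writing $v = v_{p_1}(r)$ we have $h_{A_4}(r) = \phi(v)\phi(v+v_1)\phi(v+v_2)\phi(v+v_1+v_2)$. Taking $N$ to be a multiple of $v_1+v_2$ reduces this (via $\phi(v+v_1+v_2) = \phi(v)$) to $\phi(v+v_1)\phi(v+v_2)$. We then aim to force this product to equal $-1$ for every $v$, i.e.\ $\phi(w) \neq \phi(w+v_2-v_1)$ for all $w \in \ZZ/N\ZZ$. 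Such a $\phi$ exists precisely when every orbit of the shift $w \mapsto w+(v_2-v_1)$ in $\ZZ/N\ZZ$ has even length; $N = v_1+v_2$ suffices when this holds, and otherwise $N = 2(v_1+v_2)$ (or a larger even multiple) does. This yields $\sigma(h_{A_4}, \vec{u}) = -N$, so that $\sigma(h_{A_5}, \vec{u}) + \sigma(h_{A_4}, \vec{u}) = 0$, matching the hypothesis of \cref{obs:b-reduction-2}(2). \cref{ex:cancelling-2} illustrates the model computation with $v_1 = 1$, $v_2 = 3$, $N = 4$, and $\phi(v) = -1$ on $\{0,1\} \bmod 4$, $\phi(v) = 1$ on $\{2,3\} \bmod 4$.

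The remaining task is to control $\sigma(h_{A_i}, \vec{u})$ for $i = 1, 2, 3$. Using \cref{obs:v-additive} together with the explicit formulas $A_3 = \{\!\!\{a+c, b+c, -c, d\}\!\!\}$, $A_2 = \{\!\!\{a-b, b+c, b, d\}\!\!\}$, $A_1 = \{\!\!\{b-a, a+c, a, d\}\!\!\}$, we classify $V_{p_1}(A_i)$ according to which of $a,b,c,d$ realize which of the valuations $0, v_1, v_2, v_1+v_2$ (distinguished further by the two admissible cancelling pairings for $A_4$: positives with valuations $\{0, v_1+v_2\}$ or $\{v_1, v_2\}$). In each resulting sub-case we verify that either $\sigma(h_{A_i}, \vec{u}) = 0$ (the ``generic'' case, where $V_{p_1}(A_i)$ is not of cancelling shape under the chosen $\phi$), or $A_i$ is itself cancelling with a compatible $p_1$-structure so that $\sigma(h_{A_i}, \vec{u}) = -N$. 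In the first configuration we apply \cref{obs:b-reduction-2}(2) (the only nonzero real contributions among $i \in [3]$ come in counts $1$ or $3$); in the second, \cref{obs:b-reduction-2}(1) gives the required strictly negative sum over cancelling sets.

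The principal obstacle will be the systematic case analysis for the $V_{p_1}$-patterns of $A_1, A_2, A_3$: one must confirm that across every admissible assignment of valuations to $a, b, c, d$ (and every pairing of signs) one of the two clauses of \cref{obs:b-reduction-2} is applicable. A secondary but unavoidable technical point is the parity subtlety in the choice of $N$: checking that an alternating $\phi$ with $\phi(w) \neq \phi(w + v_2 - v_1)$ exists for every admissible pair $(v_1, v_2)$ with $0 < v_1 < v_2$, possibly after replacing $v_1+v_2$ by $2(v_1+v_2)$.
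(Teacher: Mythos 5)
Your overall strategy (compose a periodic sign pattern with the $p_1$-adic valuation, make the $A_4$-contribution negative, classify $V_{p_1}(A_1),V_{p_1}(A_2),V_{p_1}(A_3)$, then invoke \cref{lem:periodic} or \cref{obs:b-reduction-2}) is the same as the paper's, but the central construction step has a genuine gap. You want $\phi$ with $\phi(v)\phi(v+v_1)\phi(v+v_2)\phi(v+v_1+v_2)=-1$ for all $v$, and you obtain it by (a) imposing $\phi(v+v_1+v_2)=\phi(v)$ and (b) imposing $\phi(w+v_2-v_1)=-\phi(w)$. These two requirements are compatible only if $(v_1+v_2)/d$ is even and $(v_2-v_1)/d$ is odd, where $d=\gcd(v_1+v_2,v_2-v_1)$; in particular they are incompatible whenever $v_1+v_2$ is odd (with $\gcd(v_1,v_2)=1$), e.g.\ $v_1=1,v_2=2$, which arises from genuine cancelling sets such as $A_4=\{\!\!\{1,8,-2,-4\}\!\!\}$. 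Your fallback ``take $N=2(v_1+v_2)$'' does not repair this: once $N\nmid v_1+v_2$ the identity $\phi(v+v_1+v_2)=\phi(v)$ fails, so the reduction of the four-fold product to $\phi(v+v_1)\phi(v+v_2)$ is no longer valid; concretely, for $v_1=1,v_2=2$, $N=6$ and any alternating $\phi$ one gets $\phi(v)\phi(v+1)\cdot\phi(v+2)\phi(v+3)=(-1)(-1)=+1$ identically, so $\sigma(h_{A_4},\vec u)=+N$, the opposite of what you claim. (A valid choice does exist in this example --- period $4$ with $\phi=-1$ exactly at $v\equiv 0$ --- but it is not of your prescribed form.) The paper avoids this obstruction by a parity split: if one of $v_1,v_2$ is even it uses blocks of that even length with period $2u_e$, and the total $\sum_{i=1}^5\sigma(h_{A_i},\vec u)$ is already negative so \cref{lem:periodic} applies directly; if both are odd it takes $u_e\in\{v_1+v_2,v_2-v_1\}$ to be the one divisible by $4$ and uses a two-consecutive-residue block pattern, then appeals to \cref{obs:b-reduction-2}.

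Two further points are left unverified in your sketch. First, the ``systematic case analysis'' you defer is exactly the content of the paper's proof: one must list the possible triples $\{V_{p_1}(A_1),V_{p_1}(A_2),V_{p_1}(A_3)\}$ (there are four configurations) and check the period sums for each under the chosen $\phi$. Second, when you route through \cref{obs:b-reduction-2}(2) you must guarantee $|\{i\in[3]:\mathrm{Re}\,\sigma(h_{A_i},\vec u)\neq 0\}|\in\{1,3\}$; the remark after that proposition explicitly rules out the count $2$. In the configuration where exactly one of $A_1,A_2,A_3$ has shape $\{\!\!\{0,v_1,v_2,v_1+v_2\}\!\!\}$ and the other two have shapes $\{\!\!\{0,0,0,v_1+v_2\}\!\!\}$ and $\{\!\!\{0,v_1,v_1,v_1+v_2\}\!\!\}$, your $\phi$ could a priori leave exactly one of those two with nonzero real period sum, giving the forbidden count $2$; the paper's odd-odd construction is engineered so that both of these shapes contribute $\pm 2u_e$, forcing the count to be $1$ or $3$. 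Finally, the reduction from general $\gcd(v_1,v_2)$ to the coprime case (replacing $v$ by $\lfloor v/\gcd\rfloor$) should be stated, though that part is routine.
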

\begin{proof}Suppose $\text{gcd}(v_1,v_2)=1$. By \cref{obs:v-additive} and the fact that $V_{p_1}(A_4)=\{\!\!\{0,v_1,v_2,v_1+v_2\}\!\!\}$, one can check that $\{V_{p_1}(A_1),V_{p_1}(A_2),V_{p_1}(A_3)\}$ must be one of the following:
\begin{itemize}
    \item $\{\{\!\!\{0,0,0,v_1+v_2\}\!\!\},\{\!\!\{0,v_1,v_1,v_1+v_2\}\!\!\},\{\!\!\{0,v_1,v_2,v_1+v_2\}\!\!\}\}$,
    \item $\{\{\!\!\{0,0,0,v_1\}\!\!\},\{\!\!\{0,v_2,v_2,v_1\}\!\!\},\{\!\!\{0,v_2,v_1+v_2,v_1\}\!\!\}\}$,
    \item $\{\{\!\!\{0,0,0,v_2\}\!\!\},\{\!\!\{0,v_1,v_1,v_2\}\!\!\},\{\!\!\{0,v_1,v_2,v_1+v_2\}\!\!\}\}$,
    \item $\{\{\!\!\{0,v_1,v_1,v_1\}\!\!\},\{\!\!\{0,v_1,v_2,v_2\}\!\!\},\{\!\!\{v_1,v_2,v_1+v_2,0\}\!\!\}\}$.
\end{itemize}
Let
\begin{align*}
    \mathcal V&=\{ \{\!\!\{0,0,0,v_1\}\!\!\},\{\!\!\{0,0,0,v_2\}\!\!\},\{\!\!\{0,0,0,v_1+v_2\}\!\!\},\{\!\!\{0,v_1,v_1,v_1\}\!\!\},\\
    &\qquad \{\!\!\{0,v_1,v_1,v_2\}\!\!\}, \{\!\!\{0,v_1,v_1,v_1+v_2\}\!\!\}, \{\!\!\{0,v_1,v_2,v_2\}\!\!\}, \{\!\!\{0,v_1,v_2,v_1+v_2\}\!\!\}\}
\end{align*}
denote the collection of all 4-multisets that appeared in the above possibilities. Also, let $$W=V_{p_1}(A_4)=\{\!\!\{0,v_1,v_2,v_1+v_2\}\!\!\}.$$

Suppose first that one of $v_1,v_2$ is even. Call this even number $u_e$. Define $\phi:\{0,1,\dots\}\to \RR$ by
\begin{align*}
    \begin{cases}
        \phi(v)=-1 & v\equiv 0,2,\dots, u_e-2\mod 2u_e\\
        \phi(v)=1&\text{otherwise}
    \end{cases}
\end{align*}
and $h=\phi\circ v_{p_1}: \pm G_L\to\RR$.
Let $\vec u=(2u_e,1,\dots,1)$. Then $h_{A_1},\dots,h_{A_5}$ are $\vec u$-periodic, with
\begin{align*}
    \sigma(h_{A_i},\vec u)=\begin{cases}
        2u_e & A_i=A_5\\
        -2u_e & V_{p_1}(A_i)=W\\
        0 & V_{p_1}(A_i)\in \mathcal V\setminus\{W\}.
    \end{cases}
\end{align*}
Since exactly two elements $i\in[4]$ have $V_{p_1}(A_i)=W$, we get that $\sum_{i=1}^5\sigma(h_{A_i},\vec u)=-2u_e<0$, so \cref{lem:periodic} applies.

Suppose $v_1,v_2$ are both odd. In this case, we build $h=\phi\circ v_{p_1}:\pm G_L\to\CC$ that satisfies \cref{obs:b-reduction-2}.  If $v_1+v_2$ is a multiple of 4, define $u_e=v_1+v_2$ and $w_e=v_2-v_1$; if $v_1-v_2$ is a multiple of 4,   define $u_e=v_2-v_1$ and $w_e=v_1+v_2$.

Note that $w_e$ is a multiple of 2 but not 4, and $\text{gcd}(w_e/2,u_e/2)=1$. This means that $w_e$ has order $u_e/2$ in $\ZZ_{u_e}$, so we can define $\phi:\{0,1,\dots\}\to \RR$ by
\begin{align*}
\begin{cases}
      \phi(v)=  -1 & v\equiv 0,1,2w_e,2w_e+1,\dots,-2w_e, -2w_e+1\mod u_e\\
      \phi(v)=   1&\text{otherwise.}
    \end{cases}
\end{align*}
Let $\vec u=(u_e,1,\dots,1)$. Then $h_{A_1},\dots,h_{A_5}$ are $\vec u$-periodic, with
\begin{align*}
    \sigma(h_{A_i},\vec u)=\begin{cases}
        2u_e & A_i=A_5\\
        -2u_e & V_{p_1}(A_i)=\{\!\!\{0,v_1,v_2,v_1+v_2\}\!\!\}\\
        2u_e\text{ or }-2u_e & V_{p_1}(A_i)\in \{ \{\!\!\{0,0,0,v_1+v_2\}\!\!\},\, \{\!\!\{0,v_1,v_1,v_1+v_2\}\!\!\}\}\\
        0 &\text{otherwise.}
    \end{cases}
\end{align*}
This implies the condition of \cref{obs:b-reduction-2}, so we are done.

In general, for $\text{gcd}(v_1,v_2)=w>1$, up to replacing $\phi (v)$ by $\phi (\floor{v/w})$ and the period $\vec u=(u_1,1,\dots,1)$ by $(u_1w_1,1,\dots,1)$, we can still use the previous constructions.
\end{proof}

\begin{proposition}\label{prop:b-3}
    Suppose $L$ is an irredundant $2\times 5$ linear system that falls under Case B in \cref{cor:2*5-classification}. Assume \cref{setup:b}. Moreover, suppose $A_4$ is cancelling and $$V_{p_1}(A_4)=\{\!\!\{ 0,v,v,2v \}\!\!\},\qquad v>0.$$ Then there exists a Fourier template $g:\ZZ^d\to\CC$ such that $\sum_{i=1}^5\sigma_{A_i}(g)<0$.
\end{proposition}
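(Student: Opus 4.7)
The plan is to mirror the strategy of \cref{prop:b-2}: construct a function $h:\pm G_L\to\CC$ of the form $h=\phi\circ v_{p_1}$, or if a one-prime construction is insufficient, $h=\phi\circ(v_{p_1},v_{p_2})$ for a well-chosen second prime $p_2$, so that $h_{A_1},\dots,h_{A_5}$ are $\vec u$-periodic and either $\sum_{i=1}^{5}\sigma(h_{A_i},\vec u)<0$ directly (invoking \cref{lem:periodic}), or the hypothesis of \cref{obs:b-reduction-2} is met. Since $A_4$ is cancelling with $V_{p_1}(A_4)=\{\!\!\{0,v,v,2v\}\!\!\}$, the two cancelling pairs in $A_4$ must have $p_1$-valuations $\{0,2v\}$ and $\{v,v\}$, because the sum of $p_1$-valuations within each cancelling pair must agree.

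First I would catalog the possible multisets $V_{p_1}(A_1),V_{p_1}(A_2),V_{p_1}(A_3)$. Using \cref{obs:v-additive} on the sums $a+c$, $b+c$, $a-b$, $b-a$ defining $A_1,A_2,A_3$, together with the cancelling constraint on $A_4$, this is a finite case analysis indexed by the assignment of the valuations $0,v,v,2v$ to the positions $(a,b,c,d)$. The outputs are multisets drawn from $\{0,v,2v\}$ together with possibly one value $w\geq v$ arising from an ``unexpected'' cancellation $v_{p_1}(a-b)>v$ when $v_{p_1}(a)=v_{p_1}(b)=v$; this phenomenon may produce an $A_i$, $i\in[3]$, that is itself cancelling at $p_1$, in which case we aim to apply \cref{obs:b-reduction-2}(1).

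Next I would attempt a one-prime construction with an alternating periodic $\phi:\{0,1,\dots\}\to\{\pm1\}$, as in \cref{prop:b-2}. The obstruction specific to this proposition is that any sign-valued $\phi$ with period dividing $2v$ satisfies $\phi(0)\phi(v)^2\phi(2v)=\phi(0)\phi(2v)$, so $\sigma(h_{A_4},\vec u)$ tends to be positive and cannot cancel the $A_5$ contribution on its own. To remedy this, pick a prime $p_2\neq p_1$ that distinguishes the two elements of $A_4$ with $v_{p_1}=v$; such a $p_2$ exists because those two elements are distinct nonzero integers, and if no prime distinguishes them they would be equal up to sign, violating the irredundancy of $L$ (recall that $A_4$ is not an additive quadruple in Case B). With $\phi$ now defined on the two-dimensional grid indexed by $(v_{p_1},v_{p_2})$, one can arrange $\sigma(h_{A_4},\vec u)$ to be negative while the remaining $\sigma(h_{A_i},\vec u)$ are either zero, small, or themselves negative.

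Finally I would verify that in every sub-case of the catalog above, the resulting template satisfies \cref{lem:periodic} directly or falls under one of the two branches of \cref{obs:b-reduction-2}: branch (1) when some $A_i$, $i\in[3]$, is also cancelling (which occurs precisely when the ``unexpected'' cancellation above propagates), and branch (2) otherwise, by tuning $\phi$ so that $\sigma(h_{A_5},\vec u)+\sigma(h_{A_4},\vec u)=0$ and the remaining $\text{Re}(\sigma(h_{A_i},\vec u))$ have the right support among the indices $i\in[3]$. The main obstacle is the bookkeeping in the case analysis rather than any single new idea: one must track which $A_i$ inherit a cancelling pair, because this determines whether the final combination with the auxiliary templates $h_1,h_2,h_3$ from \cref{obs:b-1-1} and \cref{prop:b-1-2} is needed and which branch of \cref{obs:b-reduction-2} applies.
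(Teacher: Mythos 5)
There is a genuine gap at the heart of your plan: the existence of the auxiliary prime $p_2$. You claim that some prime must distinguish the two elements of $A_4$ with $v_{p_1}=v$, "because otherwise they would be equal up to sign, violating the irredundancy of $L$." But in the cancelling split forced by $V_{p_1}(A_4)=\{\!\!\{0,v,v,2v\}\!\!\}$ the two valuation-$v$ elements lie on the same side (both negative, say), so "equal up to sign" means equal — and two equal coefficients in $L_4$ neither violate irredundancy nor make $A_4$ an additive quadruple. Concretely, $A_4=\{\!\!\{1,4,-2,-2\}\!\!\}$ (i.e.\ $L_4\mathbf x=x_1+4x_2-2x_3-2x_5$, which is uncommon and cancelling, with $V_2(A_4)=\{\!\!\{0,1,1,2\}\!\!\}$) is a legitimate instance of the proposition in which no prime separates the two valuation-$v$ elements, so your construction of the two-prime grid cannot even get started. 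A secondary inaccuracy: the obstruction you cite only rules out $\pm1$-valued $\phi$ of period dividing $2v$; a real $\pm1$-valued $\phi$ depending on the parity of $\lfloor v_{p_1}(r)/(2v)\rfloor$ (period $4v$, one prime) already makes $h_{A_4}\equiv-1$, so the second prime is not the right fix — but even with $\sigma(h_{A_5},\vec u)+\sigma(h_{A_4},\vec u)=0$ you still must meet the parity requirement of \cref{obs:b-reduction-2}(2), namely $|\{i\in[3]:\mathrm{Re}(\sigma(h_{A_i},\vec u))\neq 0\}|\in\{1,3\}$, and your plan only gestures at this ("the right support among the indices") without an argument; "zero, small, or themselves negative" is not sufficient, since the $A_5$ term is exactly $+|$period$|$ and must be cancelled exactly or beaten.

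For comparison, the paper's proof never leaves the single prime $p_1$: it exploits the sign pattern of the cancelling set with complex phases. When some $A_i$, $i\in[3]$, is also cancelling it either defers to \cref{prop:b-2} (the $x=3$ subcase) or uses the $\{1,i,-i\}$-valued function $h=\phi\circ v_{p_1}$ with period $2$, which gives $\sigma(h_{A_5},\vec u)=2$ and $\sigma(h_{A_i},\vec u)=-2$ for every cancelling $A_i$, landing in branch (1) of \cref{obs:b-reduction-2}; when no $A_i$, $i\in[3]$, is cancelling it uses $h(r)=e(\pm 1/8)$ keyed to the parity of $v_{p_1}(r)$ and the sign of $r$, which forces $h_{A_4}\equiv-1$, $h_{A_5}\equiv 1$, and a clean trichotomy for the remaining $A_i$ that verifies the $\{1,3\}$ condition of branch (2); the general-$v$ wrinkle (valuations in $A_1,A_2,A_3$ not divisible by $v$) is handled by the period-$4v$ real construction mentioned above. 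Your cataloguing step and the intended appeal to \cref{lem:periodic} and \cref{obs:b-reduction-2} match the paper, but the mechanism that actually makes $\sigma(h_{A_4},\vec u)$ negative while controlling the other four sums is missing, and the proposed substitute fails on the example above.
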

\begin{proof}
    We first assume that $v=1$, i.e., $V_{p_1}(A_4)=\{\!\!\{0,1,1,2\}\!\!\}$. Using \cref{obs:v-additive},
    one can check that $\{V_{p_1}(A_1),V_{p_1}(A_2),V_{p_1}(A_3)\}$ must be one of the following form:
    \begin{itemize}
    \item $\{\{\!\!\{0,0,0,2\}\!\!\},\{\!\!\{0,1,x,2\}\!\!\},\{\!\!\{0,1,x,2\}\!\!\}\}$ for some $x\geq 1$,
    \item $\{\{\!\!\{0,0,0,1\}\!\!\},\{\!\!\{0,1,1,1\}\!\!\},\{\!\!\{0,1,2,1\}\!\!\}\}$,
    \item $\{\{\!\!\{1,x,2,0\}\!\!\},\{\!\!\{1,x,2,0\}\!\!\},\{\!\!\{1,1,2,0\}\!\!\}\}$ for some $x\geq 1$.
\end{itemize}

Suppose $A_i$ is cancelling for some $i\in[3]$. Then we have $V_{p_1}(A_i)=\{\!\!\{0,1,x,2\}\!\!\}$ with $x=1$ or 3. If $x=3$, then we can apply \cref{prop:b-2} to find the desired $g$. If $x=1$, define $h: \pm G_L\to \CC$ by
\begin{align*}
    \begin{cases}
        h(r)=i & v_{p_1}(r)\text{ odd}, \, r>0\\
        h(r)=-i & v_{p_1}(r)\text{ odd}, \, r<0\\
        h(r)=1 & v_{p_1}(r)\text{ even}
    \end{cases}
\end{align*}
so that for every $i\in[5]$, $h_{A_i}$ is $\vec u$-periodic with $\vec u=(2,1,\dots,1)$. Moreover, we have
\begin{align*}
    \sigma(h_{A_i},\vec u)=\begin{cases}
        2 & A_i=A_5\\
        -2 & i\in[4],\,\text{$A_i$ is cancelling.}
    \end{cases}
\end{align*}
This gives $\sum_{\text{$A_i$ cancelling}}\sigma(h_{A_i},\vec u)\leq -2<0$, so \cref{obs:b-reduction-2} applies.

Now suppose that none of $A_1,A_2,A_3$ is cancelling.
In this case, define $h: \pm G_L\to \CC$ by
\begin{align*}
    \begin{cases}
        h(r)=e(1/8) & v_{p_1}(r)\text{ odd}, \, r>0 \text{ or }v_{p_1}(r)\text{ even}, \, r<0\\
        h(r)=e(-1/8) & v_{p_1}(r)\text{ odd}, \, r<0 \text{ or }v_{p_1}(r)\text{ even}, \, r>0.
    \end{cases}
\end{align*}
One can check that
\begin{itemize}
    \item $h_{A_5}=1$ and $h_{A_4}=-1$ on $ \pm G_L$;
    \item if $A_i$ is (P3), and $V_{p_1}(A_i)$ contains an odd number of even elements, then $h_{A_i}$ is a constant function that is either 1 or $-1$;
    \item if $A_i$ is (P2), and $V_{p_1}(A_i)$ contains an even number of even elements, then $h_{A_i}$ is a constant function that is either 1 or $-1$;
    \item otherwise, $\text{Re}(\sigma(h_{A_i},\vec u))=0$.
\end{itemize}
This implies the condition of \cref{obs:b-reduction-2}(2), so we are done.

In general, suppose $v> 1$. If all elements in $V_{p_1}(A_1)\cup V_{p_1}(A_2)\cup V_{p_1}(A_3)$ are divisible by $v$, then we can  use the above constructions  up to replacing $v_{p_1}(r)$ by $\floor{v_{p_1}(r)/v}$. Thus, it suffices to assume that $\{V_{p_1}(A_1),V_{p_1}(A_2),V_{p_1}(A_3)\}$ equals $\{\{\!\!\{0,0,0,2v\}\!\!\},\{\!\!\{0,v,x,2v\}\!\!\},\{\!\!\{0,v,x,2v\}\!\!\}\}$ or 
 $\{\{\!\!\{v,x,2v,0\}\!\!\},\{\!\!\{v,x,2v,0\}\!\!\},\{\!\!\{v,v,2v,0\}\!\!\}\}$ with $v\nmid x$. Define $h:\pm G\to \CC$ by
 \begin{align*}
    \begin{cases}
        h(r)=1 & \floor{v_{p_1}(r)/(2v)}\text{ even}\\
        h(r)=-1 & \floor{v_{p_1}(r)/(2v)}\text{ odd}.
    \end{cases}
\end{align*}
In this case, one can check that
\begin{itemize}
    \item $h_{A_5}=1$ and $h_{A_4}=-1$ on $ \pm G_L$;
    \item $h_{A_1}$, $h_{A_2}$, $h_{A_3}$ are $\vec u$-periodic with $\vec u=(4v,1,\dots,1)$;
    \item $\sigma(h_{A_1},\vec u)$, $\sigma(h_{A_2},\vec u)$, $\sigma(h_{A_3},\vec u)$ are all nonzero.
\end{itemize}
Again, this implies the condition of \cref{obs:b-reduction-2}(2),  so we are done.
\end{proof}

\begin{proposition}\label{prop:b-4}
    Suppose $L$ is an irredundant $2\times 5$ linear system that falls under Case B in \cref{cor:2*5-classification}. Assume \cref{setup:b}. Moreover, suppose $A_4$ is cancelling, none of $A_1,A_2,A_3$ is cancelling, and $$V_{p_1,p_2}(A_4)=\{\!\!\{ (0,0),(v_1,0),(0,v_2),(v_1,v_2) \}\!\!\},\qquad v_1,v_2>0.$$ Then there exists a Fourier template $g:\ZZ^d\to\CC$ such that $\sum_{i=1}^5\sigma_{A_i}(g)<0$.
\end{proposition}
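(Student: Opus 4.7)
The plan is to construct a $\pm 1$-valued function $h \colon \pm G_L \to \CC$ that uses the joint $(p_1,p_2)$-valuation to ``witness'' the cancelling structure of $A_4$. Specifically, generalizing \cref{ex:cancelling-1}, I would set
\[ h(r) = (-1)^{\lfloor v_{p_1}(r)/v_1 \rfloor \cdot \lfloor v_{p_2}(r)/v_2 \rfloor}. \]
This is even (so $h(-r) = h(r) = \overline{h(r)}$) and $\vec u$-periodic for $\vec u = (2v_1, 2v_2, 1, \ldots, 1)$ (assuming $p_1, p_2$ are listed first among the primes of $G_L$). A direct parity calculation, identical in spirit to \cref{ex:cancelling-1}, shows that $h_{A_4}(r) \equiv -1$ on $\pm G_L$: the four exponent pairs $(0,0), (v_1,0), (0,v_2), (v_1,v_2)$ associated to the elements of $A_4$ close up a $2 \times 2$ block, and the sum of $\lfloor \cdot / v_1 \rfloor \lfloor \cdot / v_2 \rfloor$ over the four corners is always odd. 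Since $h_{A_5} \equiv 1$ trivially, we obtain $\sigma(h_{A_5}, \vec u) + \sigma(h_{A_4}, \vec u) = 0$, which gives the first half of the hypothesis of \cref{obs:b-reduction-2}(2).

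Next, I would analyze $\sigma(h_{A_i}, \vec u)$ for $i \in \{1,2,3\}$ by a case analysis on $V_{p_1,p_2}(A_i)$. Since $A_1, A_2, A_3$ are obtained from $a,b,c,d$ via the sums and differences listed in \cref{setup:b}, their $(p_1,p_2)$-valuations are controlled by \cref{obs:v-additive}. Moreover, by the hypothesis that none of $A_1, A_2, A_3$ is cancelling, each must have a $(p_1,p_2)$-exponent multiset that does \emph{not} form the same square pattern as $A_4$. For each allowed pattern, a calculation analogous to those in the proofs of \cref{prop:b-2,prop:b-3} yields that $\sigma(h_{A_i}, \vec u)$ is either $0$ or $\pm 4 v_1 v_2$, and one can read off the value from the parity of $\sum_{(\alpha,\beta) \in V_{p_1,p_2}(A_i)} \lfloor \alpha/v_1 \rfloor \lfloor \beta/v_2 \rfloor \pmod 2$ after telescoping.

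The goal of the case analysis is to confirm that $|\{ i \in [3] : \sigma(h_{A_i}, \vec u) \neq 0 \}| \in \{1, 3\}$, after which \cref{obs:b-reduction-2}(2) produces the desired Fourier template. This is where I expect the main difficulty to lie: enumerating how the four elements of $A_4$ can be assigned to the four $(p_1,p_2)$-exponent pairs (there are a small number of assignments up to symmetry), and for each assignment tracking $v_{p_1}$ and $v_{p_2}$ of $a \pm b, a+c, b+c, b-a$ via \cref{obs:v-additive} is tedious. If in some sub-case the parity count turns out to be even, I would modify $h$ by multiplying by an auxiliary $\pm 1$-valued function depending on a third prime in $P_L$ (chosen to appear in some $A_i$ but to preserve $h_{A_4} = -1$ and $h_{A_5} = 1$), in the spirit of the adjustment used in \cref{ex:cancelling-2}; such a factor toggles the parity of the ``nonzero'' count without disturbing the cancellation between $A_4$ and $A_5$. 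Once odd parity is achieved, \cref{obs:b-reduction-2}(2) completes the proof.
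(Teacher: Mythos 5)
Your construction of $h(r)=(-1)^{\floor{v_{p_1}(r)/v_1}\floor{v_{p_2}(r)/v_2}}$ is sound as far as it goes: the parity computation does give $h_{A_4}\equiv -1$ and $h_{A_5}\equiv 1$, so $\sigma(h_{A_5},\vec u)+\sigma(h_{A_4},\vec u)=0$. This is exactly the construction the paper uses in one of its sub-cases. The gap is in the second half: it is \emph{not} true that this single $\pm1$-valued template always leaves an odd number of nonvanishing $\sigma(h_{A_i},\vec u)$ among $i\in[3]$, and your proposed repair cannot fix the failures. Concretely, take $(a,b,c,d)=(1,6,-2,-3)$ in \cref{setup:b}, so $A_4=\{\!\!\{1,6,-2,-3\}\!\!\}$ is cancelling with $V_{2,3}(A_4)=\{\!\!\{(0,0),(1,0),(0,1),(1,1)\}\!\!\}$ (so $v_1=v_2=1$), while $A_1=\{\!\!\{5,-1,1,-3\}\!\!\}$, $A_2=\{\!\!\{-5,4,6,-3\}\!\!\}$, $A_3=\{\!\!\{-1,4,2,-3\}\!\!\}$ are neither cancelling nor common. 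With your $h$ and $\vec u=(2,2,1,\dots,1)$ one computes $h_{A_1}=(-1)^{d_1}$, $h_{A_2}=(-1)^{d_2+1}$, $h_{A_3}=(-1)^{d_1+d_2}$ on the period, so all three period sums vanish, the count is $0\notin\{1,3\}$, and \cref{obs:b-reduction-2}(2) does not apply (nor does \cref{lem:periodic}, since the total periodic sum is $0$). Your fallback of multiplying by a $\pm1$-valued function of a third prime cannot rescue this: your $h$ depends only on $(v_{p_1},v_{p_2})$ and the auxiliary factor only on the third prime, so each period sum factorizes as a product of a $(p_1,p_2)$-factor and a third-prime factor, and the vanishing $(p_1,p_2)$-factor stays zero no matter what toggle you choose; moreover in other admissible instances (e.g.\ $(a,b,c,d)=(-2,-3,6,1)$) the set $P_L$ contains no third prime at all, so the fallback is not even available.

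What is missing is precisely what the paper's proof supplies: a case split according to divisibility relations between the ``offset'' exponents $x,y$ occurring in $V_{p_1,p_2}(A_1),V_{p_1,p_2}(A_2),V_{p_1,p_2}(A_3)$ and $v_1,v_2$. Your real-valued template is essentially the paper's construction for the generic sub-case (roughly, when $2v_1\nmid x$ or $2v_2\nmid y$), where all three period sums are indeed nonzero. In the degenerate sub-cases (such as the example above, which has $v_1\mid x$, $v_2\mid y$), the paper switches to genuinely complex, sign-sensitive templates — values $e(\pm 1/8)$ or $\pm i$ depending on $\mathrm{sgn}(r)$ and on the parity of $\floor{v_{p_1}(r)/v_1}-\floor{v_{p_2}(r)/v_2}$ — so that $h_{A_4}\equiv-1$, $h_{A_5}\equiv1$ still hold while the (P2)/(P3) sign structure of the remaining $A_i$ forces exactly one or three of $\mathrm{Re}\,\sigma(h_{A_i},\vec u)$ to be nonzero; only then is \cref{obs:b-reduction-2} invoked. (As a smaller point, your claim that each $\sigma(h_{A_i},\vec u)$ equals $0$ or $\pm4v_1v_2$ and is read off from a single telescoped parity is also not accurate in general — the sums can take intermediate values such as $2v_2$ times a count depending on $x \bmod 2v_1$.) So the overall plan needs the additional sign-dependent constructions and the accompanying case analysis to be a complete proof.
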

\begin{proof}
     Using \cref{obs:v-additive}, one can check that $\{V_{p_1,p_{2}}(A_1),V_{p_1,p_{2}}(A_2),V_{p_1,p_{2}}(A_3)\}$ must be one of the following form:
\begin{itemize}
    \item $\{\{\!\!\{(v_1,0),(0,0),(x,0),(0,0)\}\!\!\},\{\!\!\{(0,0),(0,v_2),(0,y),(0,0)\}\!\!\},\{\!\!\{(x,0),(0,y),(v_1,v_2),(0,0)\}\!\!\}\}$
    \item $\{\{\!\!\{(0,0),(x,0),(0,y),(v_1,v_2)\}\!\!\},\{\!\!\{(x,0),(0,v_2),(0,0),(v_1,v_2)\}\!\!\},\{\!\!\{(0,y),(0,0),(v_1,0),(v_1,v_2)\}\!\!\}\}$
    \item $\{\{\!\!\{(0,0),(x,0),(0,0),(v_1,0)\}\!\!\},\{\!\!\{(x,0),(0,v_2),(0,y),(v_1,0)\}\!\!\},\{\!\!\{(0,0),(0,y),(v_1,v_2),(v_1,0)\}\!\!\}\}$
    \item $\{\{\!\!\{(0,0),(0,y),(0,0),(0,v_2)\}\!\!\},\{\!\!\{(0,y),(v_1,0),(x,0),(0,v_2)\}\!\!\},\{\!\!\{(0,0),(x,0),(v_1,v_2),(0,v_2)\}\!\!\}\}$
\end{itemize}
where $x,y\geq 0$.

The proof idea is similar to \cref{prop:b-3}. Suppose first that $v_1\mid x$ and $v_2\mid y$. 
In this case, define $h: \pm G_L\to \CC$ by
\begin{align*}
    \begin{cases}
        h(r)=e(1/8) & \floor{v_{p_1}(r)/v_1}\equiv \floor{v_{p_{2}}(r)/v_2} \mod 2, \, r>0 \\
        & \text{ or } \floor{v_{p_1}(r)/v_1}\not\equiv \floor{v_{p_{2}}(r)/v_2} \mod 2, \, r<0\\
        h(r)=e(-1/8) & \floor{v_{p_1}(r)/v_1}\not\equiv \floor{v_{p_{2}}(r)/v_2} \mod 2, \, r>0 \\
        &\text{ or } \floor{v_{p_1}(r)/v_1}\equiv \floor{v_{p_{2}}(r)/v_2} \mod 2, \, r<0.
    \end{cases}
\end{align*}
In this case, one can check that
\begin{itemize}
    \item $h_{A_5}=1$ and $h_{A_4}=-1$;
    \item if $A_i$ is cancelling for some $i\in[3]$, then $A_i=A_4$ and $h_{A_i}=h_{A_4}=-1$;
    \item if $A_i$ is (P3) and $V_{p_1}(A_i)$ contains an odd number of elements in $\{(0,0),(v_1,v_2)\}$ (mod $(2v_1,2v_2)$), then $h_{A_i}$ is a constant function that is either 1 or $-1$;
    \item if $A_i$ is (P2) and $V_{p_1}(A_i)$ contains an even number of elements in $\{(0,0),(v_1,v_2)\}$ (mod $(2v_1,2v_2)$), then $h_{A_i}$ is a constant function that is either 1 or $-1$;
    \item otherwise, $\text{Re}(\sigma(h_{A_i},\vec u))=0$.
\end{itemize}
Thus, in each of the above possibilities of $\{V_{p_1,p_{2}}(A_1),V_{p_1,p_{2}}(A_2),V_{p_1,p_{2}}(A_3)\}$, the condition of \cref{obs:b-reduction-2} holds.

Suppose now that $2x\mid v_1$ and $2y\mid v_2$; moreover, either $x\equiv v_1/2\mod v_1$ or $y\equiv v_2/2\mod v_2$. In this case, define $h: \pm G_L\to \CC$ by $h=h_1h_2$ or $h=h_1h_3$, where
\begin{align*}
    &\begin{cases}
        h_1(r)=1 & \floor{v_{p_1}(r)/v_1}\equiv \floor{v_{p_2}(r)/v_2} \equiv 0\mod 2 \\
         h_1(r)=-1 & \text{otherwise}
    \end{cases}\\
    &\begin{cases}
        h_2(r)=1 & \floor{v_{p_1}(r)/x}+ \floor{v_{p_2}(r)/y} \equiv 0\mod 2 \\
         h_2(r)=i &\floor{v_{p_1}(r)/x}+ \floor{v_{p_2}(r)/y} \equiv 1\mod 2,\, r>0\\
         h_2(r)=-i &\floor{v_{p_1}(r)/x}+ \floor{v_{p_2}(r)/y} \equiv 1\mod 2,\, r<0
    \end{cases}\\
    &\begin{cases}
        h_2(r)=1 & \floor{v_{p_1}(r)/x}+ \floor{v_{p_2}(r)/y} \equiv 1\mod 2 \\
         h_2(r)=i &\floor{v_{p_1}(r)/x}+ \floor{v_{p_2}(r)/y} \equiv 0\mod 2,\, r>0\\
         h_2(r)=-i &\floor{v_{p_1}(r)/x}+ \floor{v_{p_2}(r)/y} \equiv 0\mod 2,\, r<0.
    \end{cases}
\end{align*}
In this case, one can check that
\begin{itemize}
    \item $h_{A_5}=1$ and $h_{A_4}=-1$;
    \item By picking  $h\in\{h_1h_2,h_1h_3\}$ apprpriately, we have $|\{i\in[3]:\text{Re}(\sigma(h_{A_i},\vec u))\neq 0|\in\{1,3\}$.
\end{itemize}

Finally, suppose we have $2v_1\nmid y$ or $2v_2\nmid y$. Set
\begin{align*}
    \begin{cases}
        h(r)=-1&\floor{v_{p_1}(r)/v_1},\floor{v_{p_{2}}(r)/v_2}\text{ both odd}\\
        h(r)=1&\text{otherwise,}
    \end{cases}
\end{align*}
so that  $h_{A_1},\dots,h_{A_5}$ are $\vec u$-periodic with period $\vec u=(2v_1,2v_2,1,\dots,1)$. Moreover, we have $\sigma(h_{A_5},\vec u)=4v_1v_2$, $\sigma(h_{A_4},\vec u)=-4v_1v_2$, and $\sigma(h_{A_3},\vec u),\sigma(h_{A_2},\vec u),\sigma(h_{A_1},\vec u)$ are all nonzero. Thus the condition of \cref{obs:b-reduction-2} always holds, so we are done.
\end{proof}

\begin{proposition}\label{prop:b-5}
    Suppose $L$ is an irredundant $2\times 5$ linear system that falls under Case B in \cref{cor:2*5-classification}. Assume \cref{setup:b}. Moreover, suppose 
    \begin{itemize}
        \item $A_4$ is cancelling and $V_{p_1,p_2}(A_4)=\{\!\!\{ (0,0),(v_1,0),(0,v_2),(v_1,v_2) \}\!\!\}$  with $v_1,v_2>0$.
        \item  $A_{i_0}$ is cancelling for some $i_0\in[3]$, and $V_{p_{j_1},p_{j_2}}(A_i)=\{\!\!\{ (0,0),(w_1,0),(0,w_2),(w_1,w_2) \}\!\!\}$ with $w_1,w_2>0$.
        \item $A_i$ is not cancelling for all $i\in[3]\setminus\{i_0\}$.
    \end{itemize}
    Then there exists a Fourier template $g:\ZZ^d\to\CC$ such that $\sum_{i=1}^5\sigma_{A_i}(g)<0$.
\end{proposition}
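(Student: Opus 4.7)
\medskip
\noindent\emph{Proof proposal for \cref{prop:b-5}.}
The plan is to mirror the strategy of Propositions \ref{prop:b-2}--\ref{prop:b-4}: construct a function $h:\pm G_L\to\CC$ whose associated functions $h_{A_1},\dots,h_{A_5}$ are $\vec u$-periodic for some period $\vec u\in\NN^\ell$, and then invoke \cref{obs:b-reduction-2}(1). Under the present hypotheses there are exactly three cancelling sets among $A_1,\dots,A_5$, namely $A_5$, $A_4$, and $A_{i_0}$. Thus the required condition reduces to exhibiting $h$ and $\vec u$ with
\[
\sigma(h_{A_5},\vec u)+\sigma(h_{A_4},\vec u)+\sigma(h_{A_{i_0}},\vec u)<0.
\]

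I would split into three subcases based on the overlap of $\{p_1,p_2\}$ with $\{p_{j_1},p_{j_2}\}$. In the disjoint case I would build $h$ as a product $h_1 h_2$, where $h_1$ is the $\pm 1$-valued construction of \cref{prop:b-4} tuned to make $(h_1)_{A_4}=-1$ using the primes $p_1,p_2$, and $h_2$ is the analogous construction for $A_{i_0}$ using $p_{j_1},p_{j_2}$. Using the pointwise identity $(h_1 h_2)_A(r)=(h_1)_A(r)(h_2)_A(r)$ together with the disjointness of primes, the two constructions interact trivially on their ``foreign'' prime coordinates, so the $\sigma$--contributions from $A_4$ and $A_{i_0}$ each become negative while $\sigma(h_{A_5},\vec u)$ just equals the number of cells in the period box. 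In the case where the two prime pairs share exactly one prime, I would enlarge the period in that shared prime to $\mathrm{lcm}(v_1,w_1)$ (or $\mathrm{lcm}(v_2,w_2)$), so that both cancelling shift-structures are respected, and combine this with an independent parity mask on the remaining two primes; a verification analogous to \cref{prop:b-4} then shows the three cancelling contributions sum to something negative.

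The third subcase, when $\{p_1,p_2\}=\{p_{j_1},p_{j_2}\}$, is the main obstacle. Here both $A_4$ and $A_{i_0}$ impose cancelling structure through the same two primes, so there is no ``free dimension'' to separate them. My plan is to enumerate the possible joint profiles of $V_{p_1,p_2}(A_4)$ and $V_{p_1,p_2}(A_{i_0})$; these are severely constrained by \cref{obs:v-additive}, by the identities among $a,b,c,d$ coming from the specific shape of $A_1,A_2,A_3,A_4$ in \cref{setup:b}, and by the hypothesis that only one of $A_1,A_2,A_3$ is cancelling. For each surviving profile I would pick a periodic $\pm 1$-valued $h$ of period $\mathrm{lcm}(2v_1,2w_1)\times \mathrm{lcm}(2v_2,2w_2)$ in $(v_{p_1},v_{p_2})$, chosen so that $h_{A_4}$ and $h_{A_{i_0}}$ both become $-1$ on a positive-density subgrid and vanish in average elsewhere; a handful of degenerate profiles (when $(v_1,v_2)$ and $(w_1,w_2)$ are proportional) will require promoting $h$ to complex values as in the $h_3$-construction of \cref{prop:b-1-2} and applying the more flexible clause of \cref{obs:b-reduction-2}.

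The bulk of the work, and the main technical difficulty, lies in verifying in this overlapping case that the constraints imposed by the two cancelling relations $V_{p_1,p_2}(A_4)=\{\!\!\{(0,0),(v_1,0),(0,v_2),(v_1,v_2)\}\!\!\}$ and $V_{p_1,p_2}(A_{i_0})=\{\!\!\{(0,0),(w_1,0),(0,w_2),(w_1,w_2)\}\!\!\}$ are jointly compatible with the algebraic relations among $a,b,c,d$ that force exactly one of $A_1,A_2,A_3$ to be cancelling. This is a finite but delicate case check, carried out in the spirit of the profile enumeration already used in \cref{prop:b-2} and \cref{prop:b-3}.
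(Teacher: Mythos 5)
Your overall skeleton (a $\vec u$-periodic mask $h$ on $\pm G_L$ fed into \cref{obs:b-reduction-2}(1), with a case split according to how $\{p_1,p_2\}$ overlaps $\{p_{j_1},p_{j_2}\}$) is the same as the paper's, but two of your three cases rest on claims that are not justified as stated. In the disjoint-prime case, the assertion that the two masks ``interact trivially on their foreign prime coordinates'' is not literally true: elements of $A_{i_0}$ may be divisible by $p_1$ and $p_2$ to various degrees, so the $A_4$-mask $h_1$, which reads off $\floor{v_{p_1}(\cdot)/v_1}$ and $\floor{v_{p_2}(\cdot)/v_2}$, could in principle oscillate along $A_{i_0}$ and spoil the sign of $\sigma((h_1h_2)_{A_{i_0}},\vec u)$. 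The idea your proposal is missing is the one the paper uses: because $A_4$ and $A_{i_0}$ are cancelling (under the structure assumed here), the multisets $V_{p_1,p_2}(A_{i_0})$ and $V_{p_{j_1},p_{j_2}}(A_4)$ can be partitioned into pairs with equal valuation vectors, which forces $(h_1)_{A_{i_0}}\equiv 1$ and $(h_2)_{A_4}\equiv 1$; hence $(h_1h_2)_{A_4}=(h_1h_2)_{A_{i_0}}\equiv -1$, and \cref{obs:b-reduction-2}(1) applies with $h_{A_5}\equiv 1$. The same product argument handles your ``share exactly one prime'' subcase, where your proposal only asserts that ``a verification analogous to \cref{prop:b-4}'' will go through without exhibiting the construction.

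Your assessment of the equal-prime-pair case is also inverted, and the program you sketch there (enumerating joint valuation profiles, possibly passing to complex-valued masks as in \cref{prop:b-1-2}) is a plan rather than a proof. In fact this is the easy case: the hypotheses force $v_1=w_1$ and $v_2=w_2$, and then the single mask with $h(r)=-1$ when $\floor{v_{p_1}(r)/v_1}\equiv\floor{v_{p_2}(r)/v_2}\equiv 0\pmod{2}$ and $h(r)=1$ otherwise already works: for any $r$, the four factors of $h_{A_4}(r)$ (and likewise of $h_{A_{i_0}}(r)$) carry the shifted parity vectors $(x,y),(x+1,y),(x,y+1),(x+1,y+1)$, exactly one of which is $(0,0)$ modulo $2$, so $h_{A_4}=h_{A_{i_0}}\equiv -1$ while $h_{A_5}\equiv 1$, giving the negative sum over cancelling sets required by \cref{obs:b-reduction-2}(1). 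So the concrete gaps are: the cancelling-pair observation that neutralizes the cross terms in the unequal-prime case, and the fact that the equal-prime case needs (and admits) only this one-line mask rather than the unexecuted enumeration you defer to.
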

\begin{proof}
    Suppose first that $p_1=p_{j_1}$ and $p_2=p_{j_2}$. In this case, we must have $v_1=w_1$ and $v_2=w_2$. Consider $h:\pm G_L\to\CC$ defined by
    \begin{align*}
        \begin{cases}
            h(r)=-1 & \floor{v_{p_1}(r)/v_1} \equiv \floor{v_{p_2}(r)/v_2}\equiv 0\mod 2\\
            h(r)=1 &\text{otherwise}
        \end{cases}
    \end{align*}
    so that $h_{A_4},h_{A_{i_0}}$ are the constant $-1$ function on $\pm G_L$. This implies  \cref{obs:b-reduction-2}(1).

Now suppose $\{p_1,p_2\}\neq \{p_{j_1},p_{j_2}\}$. In this case, since $A_4,A_{i_0}$ are cancelling, $V_{p_1,p_2}(A_{i_0})$ and $V_{p_{j_1},p_{j_2}}(A_4)$ can be partitioned into cancelling pairs.
Consider functions $h,h':\pm G\to\CC$ defined by
    \begin{align*}
        &\begin{cases}
            h(r)=-1 & \floor{v_{p_1}(r)/v_1} \equiv \floor{v_{p_2}(r)/v_2}\equiv 0\mod 2\\
            h(r)=1 &\text{otherwise,}
        \end{cases}\\
        &\begin{cases}
            h'(r)=-1 & \floor{v_{p_{j_1}}(r)/w_1} \equiv \floor{v_{p_{j_2}}(r)/w_2}\equiv 0\mod 2\\
            h'(r)=1 &\text{otherwise.}
        \end{cases}
    \end{align*}
We already know that $h_{A_4}$ and $h_{A_{i_0}}'$ are the constant $-1$ function. Moreover, since $V_{p_1,p_2}(A_{i_0})$ and $V_{p_{j_1},p_{j_2}}(A_4)$ can be partitioned into cancelling pairs, we additionally know that $h'_{A_4}$ and $h_{A_{i_0}}$ are the constant 1 function. Therefore,  $(hh')_{A_4}$ and $(hh')_{A_{i_0}}$ are the constant $-1$ function. Again, this implies the condition of \cref{obs:b-reduction-2}(1), so we are done.
\end{proof}

\subsection{Case D}\label{app:D}

In this subsection, we give the reduction to \cref{eq:reduced-D}. We begin by recalling the setup. Recall that we had the following system $L$, 
\begin{align*}
        &L_5=x_1-x_2+\alpha x_3-\alpha x_4\\
        &L_4=x_1+\beta x_2- x_3-\beta x_5\\
        &L_3=(\alpha+1)x_1+(\alpha\beta -1)x_2-\alpha x_4-\alpha\beta x_5\\
        &L_2=(\beta+1)x_1+(\alpha\beta-1)x_3- \alpha\beta x_4-\beta x_5\\
        &L_1=(\beta+1) x_2+(-1-\alpha)x_3+ \alpha x_4-\beta x_5
    \end{align*}
with $\alpha,\beta\in\QQ$ and $\alpha,\beta>0$, and at least one of $L_1,L_2,L_3$ is common. We then have  one of the following:
\begin{enumerate}
    \item $L_3$ is common, which means that $(\alpha+1)(\alpha\beta-1)=\alpha\cdot\alpha\beta$;
    \item $L_2$ is common, which means that $(\beta+1)(\alpha\beta-1)=\beta\cdot\alpha\beta$;
    \item $L_1$ is common, which means that $\alpha(\beta+1)=\beta(\alpha+1)$.
\end{enumerate}
Note that the first two cases are the same up to switching $\alpha$ and $\beta$. 
The first case implies that $\alpha\beta=\alpha+1$, which gives
\begin{align*}
        &L_5=x_1-x_2+\alpha x_3-\alpha x_4\\
        &L_4=x_1+\frac{\alpha+1}{\alpha}\cdot  x_2- x_3-\frac{\alpha+1}{\alpha}\cdot x_5.
    \end{align*}
The third case implies that $\alpha=\beta$, which gives
\begin{align*}
        &L_5=x_1-x_2+\alpha x_3-\alpha x_4\\
        &L_4=x_1+\alpha x_2- x_3-\alpha x_5.
\end{align*}
From here, we can see that these two cases are the same up to sending $\alpha\mapsto \frac{\alpha+1}{\alpha}$. Therefore, we may restrict our attention to the third case.

\subsection{Case E}\label{app:E}

\begin{proof}[Proof that \cref{eq:E-case 2} is uncommon when $\lambda = 1$]
Recall that the setup is as follows: the elements in $\{L_B:B\in\mathcal C(L)\}$ are
\begin{alignat*}{2}
    &L_5=x_1-x_2+ x_3- x_4 &&A_5=\{\!\!\{1,-1,1,-1\}\!\!\}\\
    &L_1=ax_2+(b-a) x_3+ax_4+cx_5\qquad  &&A_1=\{\!\!\{b-a,a,a,c\}\!\!\}\\
    &L_3=(a-b)x_1+b x_2+b x_4+c x_5 &&A_3=\{\!\!\{a-b,b,b,c\}\!\!\}.
\end{alignat*}

Suppose first that $|a-b|\neq |c|$. Take prime $p$ such that $v:=|v_p(a-b)-v_p(c)|>0$. Then the map $h:\pm G_L\to\CC$ with 
\[
\begin{cases}
    h(r)=1 & \floor{v_p(r)/v}\equiv 0 \mod 2\\
    h(r)=-1 & \floor{v_p(r)/v}\equiv 1 \mod 2
\end{cases}
\]
gives $\text{Re}(h_{A_1})+\text{Re}(h_{A_3})+\text{Re}(h_{A_5})=1-1-1=-1$.

Now suppose $|a-b|= |c|$. Without loss of generality, let $b-a=c$, so we have $A_5=\{\!\!\{1,1,-1,-1\}\!\!\}$, $A_1=\{\!\!\{a,a,c,c\}\!\!\}$, $A_3=\{\!\!\{a+c,a+c,c,-c\}\!\!\}$. If $a,c$ are of the same sign, then we can choose $z\in\CC$ with $|z|=1$ and $\text{Re}(z^4+z^2+1)<0$, and set $h:\pm G_L\to\CC$ by
$$
\begin{cases}
    h(r)=z & r>0\\
    h(r)=\overline{z} &r<0,
\end{cases}$$
so that $\text{Re}(h_{A_5})$, $\text{Re}(h_{A_1})$, $\text{Re}(h_{A_3})$ are constant on $\pm G_L$, with $\text{Re}(h_{A_5})+\text{Re}(h_{A_1})+\text{Re}(h_{A_3})=\text{Re}(z^4+z^2+1)<0$.

If $a,c$ are of different signs, then there exists some prime $p$ that divides exactly one of $|a|$ and $|c|$. Suppose $\{v_p(a),v_p(c)\}=\{v,0\}$ with $v\in\NN$. Again, we choose $z\in\CC$ with $|z|=1$ and $\text{Re}(z^4+z^2+1)<0$. Set $h:\pm G_L\to\CC$ by
$$
h(r)=\begin{cases}
    z & \floor{v_{p}(r)/v}\text{ is even, } r>0\\
    \overline{z} & \floor{v_{p}(r)/v}\text{ is odd, } r>0\\
    \overline{z} & \floor{v_{p}(r)/v}\text{ is even, } r<0\\
    z & \floor{v_{p}(r)/v}\text{ is odd, } r<0.
\end{cases}$$
One can check that $\text{Re}(h_{A_5})$, $\text{Re}(h_{A_1})$, $\text{Re}(h_{A_3})$ are constant on $\pm G_L$, with $\text{Re}(h_{A_5})+\text{Re}(h_{A_1})+\text{Re}(h_{A_3})=\text{Re}(z^4+z^2+1)<0$.
\end{proof}

\end{document}